\documentclass[a4paper]{book}
\usepackage[latin1]{inputenc}
\usepackage[greek,english]{babel} 
\usepackage{amsmath}
\usepackage{amsthm}
\usepackage{amsfonts}
\usepackage{mathrsfs}
\usepackage{amssymb}
\usepackage{stmaryrd}
\usepackage{comment}
\usepackage[all,cmtip]{xy}
\usepackage{graphicx}
\usepackage{tikz-cd}
\usepackage{tikz}
\usetikzlibrary{shapes,arrows}

\usepackage[nottoc,notlot,notlof]{tocbibind}
\usepackage{tocloft}
\setlength\cftbeforechapskip{5pt}

\usepackage{imakeidx}

\linespread{1.3}

\tikzset{
	symbol/.style={
		draw=none,
		every to/.append style={
			edge node={node [sloped, allow upside down, auto=false]{$#1$}}}
	}
}

\usepackage[width=15.00cm, height=22.00cm]{geometry}
\usepackage{color}

\usepackage{multirow}

\usepackage{mathrsfs}


\usepackage{enumerate}
\usepackage[perpage]{footmisc}
\usepackage{hyperref}
\hypersetup{
	colorlinks,
	citecolor=black,
	filecolor=black,
	linkcolor=black,
	urlcolor=black
}

\usepackage{setspace}

\usepackage[numbers]{natbib}
\usepackage[titletoc]{appendix}

\usepackage{afterpage}

\newcommand\blankpage{%
	\null
	\thispagestyle{empty}%
	\addtocounter{page}{-1}%
	\newpage}

\allowdisplaybreaks

\title{$\mathbb{Z}$-graded supergeometry: Differential graded modules, higher algebroid representations, and linear structures}
\author{Theocharis Papantonis} \date{}

\theoremstyle{definition}
\newtheorem{theorem}{Theorem}[section]
\newtheorem{lemma}[theorem]{Lemma}
\newtheorem*{lemma*}{Lemma}
\newtheorem{proposition}[theorem]{Proposition}
\newtheorem*{proposition*}{Proposition}
\newtheorem{corollary}[theorem]{Corollary}
\newtheorem{definition}[theorem]{Definition}
\newtheorem*{definition*}{Definition}
\newtheorem*{question*}{Question}
\newtheorem*{conjecture*}{Conjecture}
\theoremstyle{remark}
\newtheorem*{remark*}{Remark}
\newtheorem{example}[theorem]{Example}
\newtheorem{remark}[theorem]{Remark}
\DeclareMathOperator{\Hom}{Hom}
\DeclareMathOperator{\End}{End}
\newcommand{\Sym}{\underline{S}}
\DeclareMathOperator{\T}{T}
\DeclareMathOperator{\sgn}{sgn}
\newcommand{\diff}{\mathrm{d}}
\DeclareMathOperator{\Der}{Der}
\DeclareMathOperator{\Jac}{Jac}
\DeclareMathOperator{\ad}{ad}
\DeclareMathOperator{\Ad}{Ad}
\DeclareMathOperator{\id}{id}
\DeclareMathOperator{\Pont}{Pont}
\DeclareMathOperator{\gtr}{str}
\DeclareMathOperator{\tr}{tr}

\newcommand{\CM}{\mathcal{C}^\infty(\mathcal{M})}
\newcommand{\cin}{\mathcal{C}^\infty} \newcommand{\Z}{\mathbb{Z}}
\newcommand{\R}{\mathbb{R}} \newcommand{\M}{\mathcal{M}}
 \newcommand{\Em}{\mathcal{E}}
\newcommand{\Fm}{\mathcal{F}} \newcommand{\N}{\mathcal{N}}
\newcommand{\Q}{\mathcal{Q}} \newcommand{\D}{\mathcal{D}}
\newcommand{\n}{[n]}

\newcommand{\lb}{\llbracket} \newcommand{\rb}{\rrbracket}
\newcommand{\E}{\underline{E}} \newcommand{\A}{\underline{A}}
\newcommand{\F}{\underline{F}}
\DeclareMathOperator{\GHom}{\underline{Hom}}
\DeclareMathOperator{\GEnd}{\underline{End}}
\DeclareMathOperator{\GL}{GL}
\DeclareMathOperator{\rank}{rank} \DeclareMathOperator{\pr}{pr}
\newcommand{\mx}{\mathfrak{X}}
\newcommand{\dr}{\text{d}}
\newcommand{\ldr}[1]{{{\pounds}}_{#1}}

\newcommand{\X}{\mathcal{X}}
\newcommand{\Y}{\mathcal{Y}}

\makeatletter
\newcommand\xleftrightarrow[2][]{%
	\ext@arrow 9999{\longleftrightarrowfill@}{#1}{#2}}
\newcommand\longleftrightarrowfill@{%
	\arrowfill@\leftarrow\relbar\rightarrow}
      \makeatother

\begin{document}
		
		\renewcommand\indexname{Alphabetical Index}
		
		\frontmatter
		
	\thispagestyle{empty}
	\begin{center}
	{\setstretch{2.0}
		\hspace{0pt}
		\vfill
		
		{\LARGE  \textbf{$\mathbb{Z}$-graded supergeometry: Differential graded modules, higher algebroid representations, and linear structures}}
		
	}
		
		\vspace{3cm}
		
		Dissertation \\
		for the award of the degree \\
		``Doctor rerum naturalium" (Dr.rer.nat.) \\
		of the Georg-August-Universität Göttingen 
		
		\vspace{2cm}
		
		within the doctoral program ``Mathematical Sciences" \\
		of the Georg-August University School of Science (GAUSS)
		
		\vspace{3cm}
		
		submitted by \\
		Theocharis Papantonis
		
		\vspace{2cm}
		
		from Athens, Greece\\
		Göttingen, 2021
		
		\vfill
		\hspace{0pt}

	\end{center}
	
	\pagebreak
	
	\thispagestyle{empty}
	\noindent
	\textbf{Thesis Committee}

	\vspace{.5cm}
	\noindent
	Prof.~Dr.~Madeleine Jotz Lean \\
	Mathematisches Institut, Georg-August-Universit\"at G\"ottingen
	
	\vspace{.5cm}
	\noindent
	Prof.~Dr.~Chenchang Zhu \\
	Mathematisches Institut, Georg-August-Universit\"at G\"ottingen
	
	\vspace{.5cm}
	\noindent
	Prof.~Dr.~Thomas Schick \\
	Mathematisches Institut, Georg-August-Universit\"at G\"ottingen
	
	\vspace{1cm}
	\noindent
	\textbf{Members of the Examination Board}
	
	\vspace{.5cm}
	\noindent
	Reviewer: Prof.~Dr.~Madeleine Jotz Lean \\
	Mathematisches Institut, Georg-August-Universit\"at G\"ottingen
    
    \vspace{.5cm}
    \noindent
	Second Reviewer: Prof.~Dr.~Chenchang Zhu \\
	Mathematisches Institut, Georg-August-Universit\"at G\"ottingen
	
	\vspace{.5cm}
	\noindent
	Further members of the Examination Board:
	
	\vspace{.5cm}
	\noindent
	Prof.~Dr.~Thomas Schick \\
	Mathematisches Institut, Georg-August-Universit\"at G\"ottingen
	
	\vspace{.5cm}
	\noindent
	Prof.~Dr.~Frank Gounelas \\
	Mathematisches Institut, Georg-August-Universit\"at G\"ottingen
	
	\vspace{.5cm}
	\noindent
	Prof.~Dr.~Rajan Amit Mehta \\
	Department of Mathematics and Statistics, Smith College, Northampton, MA USA
	
	\vspace{.5cm}
	\noindent
	Prof.~Dr.~Christoph Lehrenfeld \\
	Institut f\"ur Numerische und Angewandte Mathematik, Georg-August-Universit\"at G\"ottingen
	
	\vspace{.5cm}
	\noindent

	\vspace{1cm}
	\noindent
	\textbf{Date of the oral examination:} May 19, 2021

	\pagebreak
	
	\hspace{0pt}
	\vfill
	\hfill \textit{To my family and Julia...}
	
	\hfill \textit{...for their encouragement and support}
	\vfill
	\hspace{0pt}
	
	\pagebreak
	\blankpage	

\phantomsection
\addcontentsline{toc}{chapter}{Abstract}
\begin{center}
	{\large \textbf{Abstract}}
\end{center}
The purpose of this thesis is to present a self-standing review of $\mathbb{Z}$- (respectively $\mathbb{N}$-)graded supergeometry with emphasis in the development and study of two particular structures therein. Namely, representation theory and linear structures of $\Q$-manifolds and higher Lie algebroids (also known in the mathematics and physics literature as $\mathbb{Z}\Q$- and $\mathbb{N}\Q$-manifolds, respectively). 

Regarding the first notion, we introduce differential graded modules (or for short DG-modules) of $\Q$-manifolds and the equivalent notion of representations up to homotopy in the case of Lie $n$-algebroids ($n\in\mathbb{N}$). These are generalisations of the homonymous structures of \cite{Vaintrob97,GrMe10,ArCr12} that exist already in the case of ordinary Lie algebroids, i.e. when $n=1$. The adjoint and coadjoint modules are described, and
the corresponding split versions of the adjoint and coadjoint representations up to homotopy of Lie $n$-algebroids are
explained. In particular, the case of Lie $2$-algebroids is analysed in detail. The compatibility of a graded
Poisson bracket with the homological vector field on a $\mathbb{Z}$-graded manifold is shown to be equivalent
to an (anti-)morphism from the coadjoint module to the adjoint module, leading to an alternative
characterisation of non-degeneracy of graded Poisson structures. Applying this result to symplectic Lie $2$-algebroids, gives another algebraic characterisation of Courant algebroids in terms of their adjoint and coadjoint representations. In addition, the Weil algebra of a general $\Q$-manifold is defined and is computed explicitly in the case of Lie $n$-algebroids over a base (smooth) manifold $M$ together with a choice of a splitting and linear $TM$-connections. Similarly to the work of Abad and Crainic in \cite{ArCr12}, our computation involves the coadjoint representation of the Lie $n$-algebroid and the induced $2$-term representations up to homotopy of the tangent bundle $TM$ on the vector bundles of the underlying complex of the Lie $n$-algebroid given by the choice of the linear connections.

The second object that we define and explore in this work is the linear structures on $\mathbb{Z}$-graded manifolds, for which we see the connection with DG-modules and representations up to homotopy. In the world of split Lie $n$-algebroids, this leads to the notion of higher VB-algebroids, which we call VB-Lie $n$-algebroids; that is, Lie $n$-algebroids that are in some sense linear over another Lie $n$-algebroid. We prove that there is an equivalence between the category of VB-Lie $n$-algebroids over a Lie $n$-algebroid $\A$ and the category of $(n+1)$-term representations up to homotopy of $\A$, generalising a well-known result from the theory of ordinary VB-algebroids over Lie algebroids, i.e., in our setting, VB-Lie $1$-algebroids over Lie $1$-algebroids.

\vspace{1cm}
\noindent
\textbf{Keywords:} graded manifold, supermanifold, $\Q$-manifold, Lie $n$-algebroid, VB-algebroid, representation up to homotopy, differential graded module, adjoint representation, adjoint module, Weil algebra.

    \newpage
	\blankpage
	
	\phantomsection
	\addcontentsline{toc}{chapter}{Acknowledgements}
	\begin{center}
		{\large \textbf{Acknowledgements}}
	\end{center}
	First of all, I would like to thank my supervisor in G\"ottingen, Prof.~Madeleine Jotz Lean, without whom the completion of this thesis would have been impossible. Through these few lines, I would like to express my sincere gratitude to her for trusting me to be her PhD student as well as for helping and guiding me through these three years of my studies in G\"ottingen. Because of her, my studying time has successfully come to an end and my long-standing goal of pursuing a PhD in pure mathematics is finally achieved. 
	
	Next, I would like to thank my second supervisor in G\"ottingen, Prof.~Chenchang Zhu. The discussions, seminars, and courses that we had together, provided me with ideas and knowledge that were extremely valuable to my mathematical development.
	
	Many thanks to Prof.~Thomas Schick for accepting to be my third supervisor in G\"ottingen. Attending his lectures at the university expanded my knowledge in new topics outside my research field and served for me as a nice break from the intensive focus on my projects.
	
	I would like to express the deepest appreciation to Prof.~Rajan Amit Mehta from the Smith College, Northampton, MA USA, for our great collaboration which had very positive impact on my research. Our discussions and exchanging of ideas were extremely helpful to me for completing this journey and are highly appreciated.
	
	Special thanks go also to my good friend in G\"ottingen, Miquel Cueca Ten. Our endless chatting about mathematical and everyday topics were crucial to me for both my research and my life in G\"ottingen. His comments on the early version of this thesis have made the final result much more readable.
	
	I would like to express my gratitude to the University of G\"ottingen and especially the Mathematical Institute for providing me with a nice and inspiring place to complete my research and expand my knowledge in many other topics. It was a great honour to be part of the miraculous mathematical history of this department.
	
	Nobody has been more important to me than my family and Julia. My sincere gratefulness for their constant love and support throughout all these years. This thesis is dedicated to them.
	
	Last but not least, I would like to thank all my friends from our research group in G\"ottingen, Miquel, David, Ilias (especially for correcting my spelling mistakes), and many others. You made my three years in G\"ottingen such a great time and I wish you all the best!
	
	\newpage
	
	\phantomsection
	\addcontentsline{toc}{chapter}{Contents}
	\tableofcontents
	
	\mainmatter
	
	\chapter{Introduction}
	
	Throughout history mathematics and physics have always been complementary to each other in our way of understanding and explaining our universe. They have constantly influenced one another, with the exchanging of ideas and results going both ways: On the one hand physicists have applied theoretical results of mathematics to obtain physical and practical outcomes, while on the other hand, many physical ideas have served as inspiration for mathematicians to develop abstract mathematical theories. Their notable interaction in modern research can be seen by two of the most elegant and closely related branches of modern mathematics and theoretical physics, namely \textit{Differential Geometry} and \textit{Mathematical Physics}; in short, the former studies \textit{(differentiable) manifolds} which is the idea of ``space" in its most general form and the latter deals with the mathematical methods suitable for physical theories.
	
	There are two fundamental concepts of Differential Geometry that are needed in physics and are relevant for this thesis. The first is the notion of \textit{Poisson} and \textit{symplectic} structures on manifolds which serve as phase spaces in the Hamiltonian description of classical mechanics. The second comes from the spaces known as \textit{Lie groups} together with their infinitesimal counterparts called \textit{Lie algebras}, or more generally \textit{Lie groupoids} and \textit{Lie algebroids}, which provide a rigorous mathematical description of the concept of \textit{symmetry}. A symmetry of a physical system is a characteristic of the system that is preserved under some transformation and its physical importance can be deduced from the famous Noether's Theorem which states that each physical symmetry has a corresponding conservation law. Some remarkable examples of symmetry groups in physics are the Poincaré group $\mathbb{R}^{1,3}\rtimes\operatorname{O}(1,3)$ in Special Relativity, the unitary group $\operatorname{U}(1)$ in quantum electrodynamics and the product group $\operatorname{U}(1)\times \operatorname{SU}(2) \times \operatorname{SU}(3)$ in the Standard Model of particle physics. Roughly, the symmetries of the Poincar\'e group correspond to conservation of mass-energy, conservation of linear momentum, and conservation of angular momentum; the conservation law of the unitary group is the conservation of electric charge; the conservation laws of $\operatorname{U}(1)\times \operatorname{SU}(2) \times \operatorname{SU}(3)$ are the conservation of electric charge, the conservation of weak isospin and the conservation of color charge.
	
	The present work studies a common subfield of Differential Geometry and Mathematical Physics known as \textit{$\mathbb{Z}$-graded supergeometry}. Our main mathematical objects will be a generalisation of manifolds called \textit{graded manifolds} as well as \textit{higher Poisson structures} and \textit{$\Q$-manifolds}, with emphasis on the special case of \textit{higher (split) Lie algebroids} for which we will develop a well-behaved \textit{Representation Theory}. These terms will be explained briefly in the rest of the introduction where we provide some historical background and sketch of what will be analysed later in great detail.
	
	\paragraph*{Poisson and symplectic structures -- A historical overview}
	
	The first appearance of the Poisson bracket was considered already by Sim\'eon Denis Poisson himself more than 200 years ago in his pioneering improvement of Lagrangian mechanics \cite{Poisson1809} as an operation on physical observables, i.e.~functions of the phase space of classical mechanical systems. In this work, Poisson introduced the bracket\index{Poisson bracket}
	\[
	\{f,g\} := \sum_{i=1}^m\left( \frac{\partial f}{\partial q^i}\frac{\partial g}{\partial p^i} - \frac{\partial g}{\partial q^i}\frac{\partial f}{\partial p^i} \right),
	\]
	where $f$ and $g$ are functions of the variables $q^i$ and $p^i:=\frac{\partial L}{\partial \dot{q}^i}$ for a mechanical system with Lagrangian $L=L(q^i,\dot{q}^i)$, and proved that if the functions $f$ and $g$ are first integrals of the system, i.e.~they are preserved under the dynamics of the system, then so is the function $\{f,g\}$. The Poisson bracket was then used by William Rowan Hamilton in 1835 \cite{Hamilton1835} to express his equations of motion\index{Hamilton's equations of motion}
	\[
	\dot{p}^i = - \frac{\partial H}{\partial q^i}
	\qquad \text{and} \qquad
	\dot{q}^i = \frac{\partial H}{\partial p^i},
	\]
	with the function $H$ given by $H = \sum_{i=1}^{m}\dot{q}^ip^i - L$
	and called \textit{Hamiltonian} function\index{Hamiltonian function} of the system. Thus the theory that is now called Hamiltonian mechanics was established as an improvement of Newtonian mechanics. Seven to eight years later, Carl Jacobi \cite{Jacobi1884} proved that the Poisson bracket satisfies what is now known as the \textit{Jacobi equation}\index{Jacobi identity}:
	\[
	\{f,\{g,h\}\} = \{\{f,g\},h\} + \{g,\{f,h\}\}. 
	\]
	
	In a modern terminology, a \textit{Poisson structure} on a smooth manifold $M$ is a bivector field\index{Poisson bivector field} $\pi\in\mathfrak{X}^2(M):=\Gamma(\wedge^2TM)$ such that the corresponding bracket $\{f,g\} := \left\langle \diff f\wedge\diff g,\pi \right\rangle$
	satisfies the Jacobi identity. The prototype example is $M = \mathbb{R}^{2m}$ with coordinates $(q^1,\ldots,q^m,p^1,\ldots,p^m)$ and
	\begin{equation}\label{Basic Poisson tensor}
	\pi = \sum_{i=1}^{m}\frac{\partial}{\partial p^i}\wedge\frac{\partial}{\partial q^i}
	\end{equation}
	with the corresponding bracket being the one discovered by Poisson. Given a Poisson structure on the manifold $M$, there is an induced $C^\infty(M)$-linear map $\pi^\sharp:\Omega^1(M)\to \mathfrak{X}(M)$ which sends the $1$-form $\alpha$ to the vector field $i_\alpha\pi$, and in particular, the exact $1$-form $\diff f$ to the vector field $X_f:=\{f,\cdot\}$ called the \textit{Hamiltonian vector field of $f$}\index{Hamiltonian vector field}.
	If the map $\pi^\sharp$ is non-degenerate (i.e.~it is an isomorphism), the Poisson structure is called \textit{symplectic}\index{symplectic structure} and is equivalent to a 2-form $\omega\in\Omega^2(M)$ (called \textit{symplectic form})\index{symplectic form} which is closed and non-degenerate, i.e.~$\diff\omega = 0$ and the map $(\pi^\sharp)^{-1}=\omega^\flat:\mathfrak{X}(M)\to\Omega^1(M),X\mapsto i_X\omega=\omega(X,\cdot)$ is an isomorphism. The 2-form that corresponds to the Poisson bracket on $M=\mathbb{R}^{2m}$ is defined by $\omega = \sum_{i=1}^{m}\diff q^i\wedge\diff p^i$.
	Using the above formalism, Hamilton's equations of motion on a symplectic manifold $(M,\omega)$\index{symplectic manifold} with Hamiltonian $H$ are just the integral curves of the vector field $\{\cdot\,,H\}=-X_H$. 
	
	Although not all Poisson structures come from a symplectic form and thus cannot be written in the form of equation (\ref{Basic Poisson tensor}), symplectic structures are still present in their local structure. This breakthrough was achieved in 80's with the famous paper of Weinstein \cite{Weinstein83} which proved the \textit{local splitting of Poisson manifolds}: Locally around every point, a Poisson structure is a product of a symplectic structure with a degenerate Poisson structure that vanishes at that point. The theorem is essentially a generalisation of Darboux's theorem in symplectic geometry. In the next years, many splitting theorems were proved for other similar structures such as Lie algebroids \cite{Dufour01,Fernandes02,Weinstein00}, generalised complex manifolds \cite{AbBo06}, Dirac structures \cite{Blohmann17}, Courant algebroids and $L_\infty$-algebroids \cite{BiBuMe20}, etc.
	
	Nowadays, Poisson geometry is a very active field of research in mathematics with connections to numerous areas, such as non-commutative geometry, topological field theories, representation theory, etc. The interested reader may find many graduate texts written on the topic, for example \cite{DuZu05,GePiVa12,Vaisman94,daSiWe99,Waldmann07}.
	
	\paragraph*{Lie groupoids and Lie algebroids}
	
	The reader who has taken a standard undergraduate course in differential geometry may be familiar with Lie groups and Lie algebras. A Lie group is a group (in the algebraic sense) which is also a smooth manifold and its elements can be smoothly inverted and multiplied in pairs. A Lie algebra is a vector space equipped with a bracket operation $[\cdot\,,\cdot]$ that is bilinear, anti-symmetric and satisfies the Jacobi identity. As it was mentioned earlier, the importance of Lie groups and Lie algebras is that they offer a mathematical treatment for simple symmetries in physics. A Lie groupoid, denoted $\mathcal{G}\rightrightarrows M$, is in some sense a ``smooth" collection of Lie groups and hence provides a systematic way of describing more complicated symmetries that a single group would fail to capture \cite{Weinstein01}. As a first short definition we give the following: a \textit{Lie groupoid}\index{Lie groupoid} is a small category such that every morphism is an isomorphism with all the objects being smooth. The detailed description of its definition is long and so we postpone it to Chapter \ref{Chapter: Conclusion, open problems and further research}. The situation gets a little bit easier if one considers Lie algebroids, which can be thought of as a ``linear approximation" of Lie groupoids and so may reflect some of their properties. They are in some sense infinite dimensional Lie algebras over a possible ``curved'' space, and serve as a common generalisation of Lie algebras and tangent vector bundles. Their close relation to Poisson manifolds comes from the fact that a Lie algebroid structure on a vector bundle $A\to M$ is equivalent to a Poisson bracket on the manifold $A^*$ that is fibre-wise linear. The precise mathematical definition is as follows.
	\begin{definition*}
		A \textit{Lie algebroid}\index{Lie algebroid} is a smooth vector bundle $A\to M$ together with a smooth vector bundle map $\rho:A\to TM$ over the identity on $M$, called \textit{anchor}\index{anchor map}, and a Lie bracket\index{Lie algebroid!Lie bracket} $[\cdot\,,\cdot]$ on its space of sections $\Gamma(A)$ which are compatible: $\rho([a,b]) = [\rho(a),\rho(b)]$ and satisfy the \textit{Leibniz identity}\index{Leibniz identity}
		\[
		[a,fb] = \rho(a)f\cdot b + f[a,b]
		\]
		for all $f\in C^\infty(M),a,b\in\Gamma(A)$.
		Equivalently, a Lie algebroid can be defined as a vector bundle $A\to M$ together with an operator $\diff_{A}:\Omega^\bullet(A):=\Gamma(\wedge^\bullet A^*) \to \Omega^{\bullet+1}(A)$\index{Lie algebroid!differential} that squares to zero: $\diff_{A}^2 = 0$ and satisfies
		\[
		\diff_{A}(\alpha\wedge\beta) = \diff_{A}\alpha\wedge\beta + (-1)^{\deg(\alpha)}\alpha\wedge\diff_{A}\beta.
		\]
	\end{definition*}
	\noindent One obtains a Lie algebroid from a Lie groupoid pretty much the same way that a Lie algebra is obtained by a Lie group, i.e.~by differentiation. The interesting problem was the integration of Lie algebroids, i.e.~under what circumstances a Lie algebroid comes in this manner from a Lie groupoid. This was solved in 2001 in the work of Cattaneo and Felder \cite{CaFe01}, and in 2003 in the paper by Crainic and Fernandes \cite{CrFe03}.
	
	\paragraph*{Courant algebroids}
	
	In the 90's the works of Courant \cite{CourantTheo90} and Dorfman \cite{Dorfman93} in mechanical systems with constraints led them to the discovery of what is now known as the \textit{Courant bracket}\index{Courant bracket} on sections of the vector bundle $\mathbb{T}M:=TM\oplus T^*M$:
	\[
	\llbracket X + \omega, Y + \eta  \rrbracket = [X,Y] + \ldr{X}\eta - i_Y\diff\omega
	\]
	for all $X,Y\in\mathfrak{X}(M),\omega,\eta\in\Omega^1(M)$. The abstraction of this idea was done a few years later by Liu, Weinstein and Xu \cite{LiWeXu97}, and gave rise to Lie bialgebroids and consequently to Courant algebroids.
	\begin{definition*}
		A \textit{Courant algebroid}\index{Courant algebroid} is a smooth vector bundle $E\to M$
		equipped with a fibre-wise non-degenerate symmetric bilinear form $\langle \cdot\,,\cdot \rangle:E\times_M E\to\mathbb{R}$, a bilinear
		bracket $\llbracket \cdot\,,\cdot \rrbracket$ on the smooth sections $\Gamma(E)$, and an anchor map $\rho: E \to TM$, which satisfy
		the following conditions: for $f\in C^\infty(M),e_1,e_2,e_3\in\Gamma(E)$
		\begin{enumerate}
			\item $\rho(\llbracket e_1,e_2 \rrbracket) = [\rho(e_1),\rho(e_2)]$
			\item $\llbracket e_1,fe_2 \rrbracket = \rho(e_1)f\cdot e_2 + f\llbracket e_1,e_2 \rrbracket$
			\item $\llbracket e_1, \llbracket e_2,e_3 \rrbracket \rrbracket =
			\llbracket \llbracket e_1,e_2 \rrbracket, e_3 \rrbracket + 
			\llbracket e_2, \llbracket e_1,e_3 \rrbracket \rrbracket$
			\item $\rho(e_1)\langle e_2,e_3 \rangle = 
			\langle \llbracket e_1,e_2 \rrbracket, e_3 \rangle + 
			\langle e_2, \llbracket e_1,e_3 \rrbracket \rangle$
			\item $\llbracket e_1,e_2 \rrbracket + \llbracket e_2,e_1 \rrbracket = \mathcal{D}\langle e_1,e_2 \rangle$.
		\end{enumerate}
	Here, we use the notation $\mathcal{D}:=\rho^*\circ\diff$ and identify $E$ with $E^*$ via the pairing: $\langle \mathcal{D}(f),e \rangle = \rho(e)f$ for all $f\in C^\infty(M),e\in\Gamma(E)$. Later it was shown that the first two axioms above can be deduced from the other three.
	\end{definition*}
	
	Introducing \textit{Dirac structures}\index{Dirac structure}, which were also the original insight of Courant and Weinstein's work, leads one to view Courant algebroids as a suitable framework for simultaneous treatment of pre-symplectic and Poisson structures. A Dirac structure of a Courant algebroid $E\to M$ is a vector subbundle $D\to M$ which is maximally isotropic with respect to $\langle\cdot\,,\cdot\rangle$ and integrable:
	\[
	\langle D,D \rangle = 0,
	\qquad
	\rank(D) = \frac{1}{2}\rank(E),
	\qquad
	\llbracket \Gamma(D),\Gamma(D) \rrbracket \subset \Gamma(D).
	\]
	Then the graphs of pre-symplectic forms $\omega\in\Omega^2(M)$ (i.e. $\diff\omega = 0$) and Poisson tensors $\pi\in\mathfrak{X}^2(M)$ are Dirac structures of the standard Courant algebroid $\mathbb{T}M$:
	\[
	\text{Graph}(\omega) := \{ v + i_v\omega\ |\ v\in TM \}
	\qquad\text{and}\qquad
	\text{Graph}(\pi) := \{ i_\alpha\pi + \alpha\ |\ \alpha\in T^*M \}.
	\]
	
	In addition, Courant algebroids and Dirac structures are important in modern mathematical physics due to the fact that they are the fundamental objects in generalised complex geometry and hence in mirror symmetry.
	
	\paragraph*{Origin of supergeometry}
	
	Supermanifolds with a $\mathbb{Z}_2$-grading were introduced already in the 60's in the pioneering work of Berezin \cite{Berezin66}. They appear often in the physics literature, as they have applications in superstring theory due to their success in providing a formal mathematical description of supersymmetric field theories \cite{SaSt74}. Intuitively, one should think of them as spaces with two kinds of coordinate functions: \textit{even} and \textit{odd}, or in physicists' language \textit{bosonic} and \textit{fermionic}. The characteristic difference of these two coordinates is that the former are commutative, while the latter are anti-commutative; that is, a \textit{$\mathbb{Z}_2$-graded supermanifold} is a space which locally admits a coordinate representation of the form $(x^i,\theta^j)$, where $x^i$ are ordinary real-valued spacetime coordinates and $\theta^j$ are ``formal" Grassmann-valued coordinates satisfying
	\[
	x^ix^j = x^jx^i
	\qquad \text{and} \qquad
	\theta^\alpha\theta^\beta = -\, \theta^\beta\theta^\alpha.
	\]
	This setting encodes the quantum viewpoint of the cosmos in which particles are divided into two categories depending on their spin: \textit{bosons} and \textit{fermions}; the former commute, while the latter anti-commute.
	
	In a more mathematical terminology, the definition of a $\mathbb{Z}_2$-graded supermanifold\index{$\mathbb{Z}_2$-graded supermanifold} of dimension $p|q$ is sheaf-theoretic, defining it as an ordinary $p$-dimensional manifold equipped with a sheaf of $\mathbb{Z}_2$-graded algebras that locally looks like the algebra of functions of the $p|q$-dimensional \textit{superspace}: 	$C^\infty(\mathbb{R}^p)\otimes\Lambda^\bullet(\xi^1,\ldots,\xi^q)$;
	here, $\Lambda^\bullet(\xi^1,\ldots,\xi^q)$ is the Grassmann algebra on $q$ generators.
	Locally, the functions (or physical observables) on supermanifolds take the form of a ``power series'' in the odd coordinates $\theta^j$ with coefficients in the ring of smooth functions over $\mathbb{R}^p$ with coordinates $x=(x^1,\ldots x^p)$: 
	\[
	f(x,\theta) = f_0(x) + \sum_{r=1}^{q}\sum_{i_1<\ldots<i_r} f_{i_1\ldots i_r}(x)\,\theta^{i_1}\ldots\theta^{i_r}.
	\]
	Some mathematical works on this subject were written by Berezin \cite{Berezin87}, Kostant \cite{Kostant77}, Deligne and Morgan \cite{DeMo99}, Manin \cite{Manin97}, Tuynman \cite{Tuynman04}, Varadarajan \cite{Varadarajan04}, and Carmeli, Caston and Fioresi \cite{CaCaFi11}.
	
	\paragraph*{Introduction of $\mathbb{Z}$- and $\mathbb{N}$-gradings}
	
	The possibility of more general gradings was mentioned already in the works of Kostant \cite{Kostant77} and Tuynman \cite{Tuynman04} but was not studied there. The use of a $\mathbb{Z}$-grading was treated properly in the PhD thesis of Mehta \cite{Mehta06}, who was inspired by previous works on graded geometry from Kontsevich \cite{Kontsevich03},
	Roytenberg \cite{Roytenberg02}, \v{S}evera \cite{Severa05}, and Voronov \cite{Voronov02}. Similarly to the $\mathbb{Z}_2$-case, a $\mathbb{Z}$- (respectively $\mathbb{N}$-)graded manifold\index{$\mathbb{Z}$-graded manifold}\index{$\mathbb{N}$-graded manifold} can be thought of as a space whose coordinates admit a $\mathbb{Z}$- (respectively $\mathbb{N}$-)grading and may commute or anti-commute depending on their degree:
	\[
	\xi_1\xi_2 = (-1)^{\text{deg}(\xi_1)\text{deg}(\xi_2)}\xi_2\xi_1.
	\]
	In sheaf-theoretic terminology, the structure sheaf of the graded manifold is now a sheaf of $\mathbb{Z}$- (respectively $\mathbb{N}$-)graded algebras and locally the functions look like a sum of the form
	\[
	f(x,\xi) = f_0(x) + \sum_{r=1}^\infty \sum_{i_1\leq\ldots\leq i_r} f_{i_1\ldots i_r}(x)\,\xi^{i_1}\ldots\xi^{i_r}.
	\]
	The important result of the works mentioned above was the realisation that many complicated definitions of classical differential geometric objects can be equivalently described in very simple terms in the language of graded geometry. The table at the end of this chapter offers a summary of the equivalences between graded and classical geometries.
	
	\paragraph*{$\Q$-structures, higher (split) Lie algebroids and Poisson brackets}
	
	A \textit{$\Q$-manifold}\index{$\Q$-manifold} (also known in the literature as differential graded manifold\index{differential graded manifold}, or DG-manifold\index{DG-manifold}) is a $\mathbb{Z}$-graded manifold equipped with a degree 1 vector field $\Q$ that squares to zero: $\Q^2 = 0$. In the special case where the underlying manifold is $\mathbb{N}$-graded of degree $n\in\mathbb{N}$ (i.e.~the highest degree of its coordinates is $n$), it is also called $\mathbb{N}\Q$-manifold\index{$\mathbb{N}\Q$-manifold} of degree $n$ or Lie $n$-algebroid\index{Lie $n$-algebroid}. Alternatively, a Lie $n$-algebroid can be described by means of a graded vector bundle together with a family of higher brackets on its space of sections \cite{ShZh17}. In this case, we refer to them as \textit{split} Lie $n$-algebroids\index{Lie $n$-algebroid!split}. The name comes from the equivalence of Lie 1-algebroids and ordinary Lie algebroids as in the definition above \cite{Vaintrob97}. Other correspondences of graded structures with classical objects can be realised by extending Poisson geometry to the graded world. A Poisson structure on a graded manifold, known as $\mathcal{P}$-manifold\index{$\mathcal{P}$-manifold}, is a bracket on its space of (graded) functions which satisfies graded versions of anti-symmetry, Leibniz and Jacobi identities. The compatibility of the vector field $\Q$ and the Poisson bracket is known as a $\mathcal{PQ}$-structure\index{$\mathcal{PQ}$-manifold} or Poisson Lie $n$-algebroid\index{Poisson Lie $n$-algebroid} in the case of an $\mathbb{N}$-grading.
	
	\paragraph*{Representations of Lie groupoids and Lie algebroids}
	
	The goal of Representation Theory\index{representation theory} in mathematics is to study general abstract structures by ``representing" them as simpler and more concrete objects. Informally, a representation\index{representation} is an action of a mathematical structure which is in some sense ``linear". A crucial representation theory is that of Lie groups and Lie algebras, whose objects are represented as matrices over a linear vector space. Formally, a representation of a Lie group $G$\index{representation!of a Lie group} or a Lie algebra $\mathfrak{g}$\index{representation!of a Lie algebra} is a vector space $V$ together with a Lie group homomorphism $G\to \GL(V)$, respectively a Lie algebra homomorphism $\mathfrak{g}\to \mathfrak{gl}(V)$, where $\GL(V)$ is the general linear group of all invertible linear transformations of $V$ under their composition and $\mathfrak{gl}(V)$ is the space of endomorphisms of $V$ equipped with the Lie bracket given by the commutator of endomorphisms 
	$[\phi,\psi] := \phi\circ\psi - \psi\circ\phi$, for $\phi,\psi\in\mathfrak{gl}(V)$.
	The basic example of a representation of a Lie group or a Lie algebra is the \textit{adjoint representation}\index{adjoint representation}: The adjoint representation of a Lie algebra $\mathfrak{g}$ is given by the vector space $\mathfrak{g}$ together with the Lie algebra homomorphism $\mathfrak{g}\to\mathfrak{gl}(\mathfrak{g}),\ x\mapsto \ad_x:=[x,\cdot]$.
	Given a Lie group $G$ with corresponding Lie algebra $\mathfrak{g}$, its adjoint representation is the vector space $\mathfrak{g}$ together with the Lie group homomorphism $G\to \mathfrak{gl}(\mathfrak{g}),\ g\mapsto \Ad_g:=\diff\Psi_g|_e$, where $e\in G$ is the identity element of $G$ and $\Psi_g:G\to G,\Psi_g(h):=ghg^{-1}$ is the conjugation with respect to the element $g\in G$. In fact, one can differentiate a Lie group representation to obtain a representation of its Lie algebra and this process links the two adjoint representations defined above. The converse procedure of integrating a Lie algebra representation to obtain a representation of its Lie group is also possible under the extra condition that $G$ is a simply connected Lie group.
	
	Passing to representations of Lie groupoids and Lie algebroids\index{representation!of a Lie groupoid}\index{representation!of a Lie algebroid}, the role of the vector space $V$ is played by a smooth vector bundle $E\to M$. Precisely, a representation of a Lie groupoid $\mathcal{G}\rightrightarrows M$ is a vector bundle $E\to M$ together with a groupoid homomorphism $\mathcal{G} \to \GL(E)$ covering the identity map on $M$; here, $\GL(E)\rightrightarrows M$ denotes the \textit{general linear groupoid} of $E$. Similarly, a representation of a Lie algebroid $A\to M$ is a vector bundle $E\to M$ together with a Lie algebroid homomorphism $A\to \Der(E)$, where $\Der(E)$ is the \textit{derivation Lie algebroid}\index{derivation Lie algebroid} of $E$ equipped with the commutator bracket; explicitly, $\Der(E)$ consists of $\mathbb{R}$-linear operators $\delta:\Gamma(E)\to\Gamma(E)$ such that there exists a vector field $X\in\mathfrak{X}(M)$ with the property that $\delta(f\sigma) = X(f)\sigma + f\delta(\sigma)$ for all $f\in C^\infty(M),\sigma\in\Gamma(E)$. That is, a representation of a Lie algebroid $A\to M$ can be defined as a vector bundle $E\to M$ together with an $A$-connection $\nabla$ on the sections of $E$ which is \textit{flat}: $R_{\nabla} = 0$, i.e.~$\nabla_{[a,b]} = \nabla_a\nabla_b - \nabla_b\nabla_a =: [\nabla_a,\nabla_b]$ for all $a,b\in\Gamma(A)$. Equivalently, it can be described by an exterior derivative $\diff_{\nabla}:\Omega^\bullet(A,E)\to \Omega^{\bullet+1}(A,E)$ that squares to zero, called the \textit{(action) differential}.
	From a more geometric point of view, a Lie algebroid representation of $A\to M$ on $E\to M$ is a Lie algebra map from the sections of $A$ to the vector fields over the manifold $E$ that are fibre-wise linear, i.e.~their (local) flows consist of vector bundle automorphisms of $E$. 
	
	\paragraph*{Representations up to homotopy -- DG-modules\footnote{This paragraph coincides with the introduction written for \cite{JoMePa19}}.}
	
	Although the above notion of representations of a Lie algebroid seems to generalise well the notion of a Lie algebra representation, it suffers from a non-trivial problem: It does not include a well-defined notion of the adjoint representation of the Lie algebroid. Some early attempts to define the adjoint representation of a Lie algebroid can be traced back to Evens, Lu and Weinstein \cite{EvLuWe99}. The nowadays accepted solution to this problem was done by Gracia-Saz and Mehta \cite{GrMe10}, and
	independently by Abad and Crainic \cite{ArCr12}, who showed that the notion of \textit{representation up to homotopy of a Lie algebroid}\index{representation!up to homotopy} is
	a good notion of representations of Lie algebroids which includes the adjoint representation. Roughly, the idea is to
	let the Lie algebroid $A$ act via a differential on Lie algebroid forms $\Omega^\bullet(A,\E)$ which take values on a cochain
	complex of vector bundles $\E = \bigoplus_i E_i[i]$ instead of just a single vector bundle $E$. This notion is essentially a $\mathbb{Z}$-graded analogue of Quillen's super-representations \cite{Quillen85}.
	
	After their discovery, representations up
	to homotopy have been extensively studied in other works, see e.g.~\cite{Mehta09,ArCrDh11,ArSc11,ArSc13,DrJoOr15,Mehta15,TrZh16,GrJoMaMe18,CaBrOr18,Jotz19b,BrOr19,Vitagliano15b}. In particular, the adjoint representation up to homotopy of a Lie algebroid is
	proving to be as fundamental in the study of Lie algebroids, as the adjoint representation of a
	Lie algebra is in the study of Lie algebras. In that context, the adjoint representation controls
	deformations of Lie algebras (see e.g.~\cite{CrScSt14} and references therein), symmetries of a Lie algebra,
	and it is a key to the classification and the algebraic integration of Lie algebras \cite{vEs62a,vEs62b}. The
	adjoint representation up to homotopy of a Lie algebroid appears naturally in each attempt to
	understand a feature of Lie algebras in the more general context of Lie algebroids. In particular,
	the deformations of a Lie algebroid are controlled by the cohomology with values in its adjoint
	representation up to homotopy \cite{Vaintrob97,Mehta14}, and an ideal in a Lie algebroid is a subrepresentation
	of the adjoint representation up to homotopy \cite{DrJoOr15}. While a Lie bialgebra $(\mathfrak{g}, \mathfrak{g}^*)$ is a pair of Lie algebras on dual vector spaces whose coadjoint representations form a matched pair, a Lie bialgebroid  is a pair of Lie algebroids on vector bundles in duality, whose coadjoint 2-representations form a matched pair
	(see Theorem 3.11 in \cite{GrJoMaMe18}). From another point of view, 2-term representations up to homotopy, which are equivalent to decompositions of VB-algebroids \cite{GrMe10}, have proved to be
	a powerful tool in the study of multiplicative structures on Lie groupoids (se e.g.~\cite{JoOr14,DrJoOr15,BuCaOr09,ArCr11}),
	which, at the infinitesimal level, can be described as linear structures on algebroids. An algebraic viewpoint of representations up to homotopy of Lie algebroids was achieved by Mehta in \cite{Mehta09}, where he showed that representations up to homotopy of a Lie algebroid $A\to M$
	are equivalent, up to isomorphism, to \textit{Lie algebroid modules over $A$}\index{Lie algebroid!module} in the sense of \cite{Vaintrob97}; that is, differential graded modules over the differential graded algebra $(\Omega(A),\diff_{A})$.
	
	In this work, we extend the above notions of modules, and consequently of representations up to
	homotopy, to the context of higher Lie algebroids. The definition is the natural generalisation of
	the case of usual Lie algebroids explained above, i.e.~differential graded modules over the space of
	smooth functions of the underlying graded manifold. The obtained notion is analysed in detail,
	including the two most important examples of representations, namely, the adjoint and the coadjoint representations (up to homotopy).
	
	In addition to the impact of representations up to homotopy in
	the study of Lie algebroids in the last ten years, our general
	motivation for studying representations up to homotopy of
	higher Lie $n$-algebroids comes from the case $n=2$, and in
	particular from Courant algebroids. As it was pointed out by Mehta, in light of AKSZ theory,
	it seems reasonable to expect that the category of
	representations (up to homotopy) of Courant algebroids might
	have interesting connections to 3-dimensional topology. The
	results in this thesis should be useful in the study of such
	representations.  The first step is the search for a good
	notion not only of the adjoint representation of a Courant
	algebroid, but also of its ideals, similar to the work done in
	\cite{JoOr14}. Since Courant algebroids are equivalent to Lie $2$-algebroids
	with a compatible symplectic structure
	\cite{Roytenberg02,Severa05}, the natural question that arises is the following:
	\begin{question*}
		Is a compatible Poisson or symplectic structure on a Lie $n$-algebroid encoded in its adjoint representation?
	\end{question*}
	The answer to this question is positive, since it turns out that a Poisson bracket on a Lie $n$-algebroid gives rise to a natural map from the coadjoint to the adjoint representation which is an
	anti-morphism of left representations and a morphism of right representations (see Theorem \ref{thm_poisson},
	Corollary \ref{cor_poisson} and Section \ref{Section: Poisson Lie algebroids of low degree}), i.e.~it anti-commutes with the differentials of their structure as
	left representations and commutes with the differentials of their structure as right representations.
	Further, the Poisson structure is symplectic if and only if this map is in fact a left anti-isomorphism
	and right isomorphism. This result is already known in some special cases, including Poisson Lie
	0-algebroids, i.e.~ordinary Poisson manifolds $(M,\{\cdot\,,\cdot\})$, and Courant algebroids over a point,
	i.e.~quadratic Lie algebras $(\mathfrak{g},[\cdot\,,\cdot],\langle\cdot\,,\cdot\rangle)$. In the former case the map reduces to the natural map
	$\sharp:T^*M\to TM$ obtained from the Poisson bracket on $M$, and in the latter case it is the inverse of
	the map defined by the non-degenerate pairing $\mathfrak{g}\to\mathfrak{g^*},x\mapsto\langle x,\cdot\rangle$.

	\paragraph*{Linear structures}
	
	Similarly to the case of ordinary manifolds, one can consider vector bundles over $\mathbb{Z}$-manifolds \cite{Mehta06}. Geometrically, they can be viewed as graded manifolds that admit two kinds of (graded) local coordinates: smooth and linear. The structures explained before may be defined to be fibre-wise linear leading to \textit{linear $\Q$-manifolds}\index{$\Q$-manifold!linear}, \textit{linear $\mathcal{P}$-manifolds}\index{$\mathcal{P}$-manifold!linear}, etc. In this setting, a linear split Lie $n$-algebroid gives rise to the notion of \textit{split VB-Lie $n$-algebroids}\index{VB-Lie $n$-algebroid}\index{VB-Lie $n$-algebroid!split} (Chapter \ref{Chapter: Higher split VB-algebroid structures}); that is, double vector bundles in the sense of Mackenzie \cite{Mackenzie05} with a linear split Lie $n$-algebroid structure over a split Lie $n$-algebroid side bundle. In fact, a linear $\Q$-structure on a vector bundle in the category of graded manifolds is the same as a differential graded module in the sense described above. From a classical geometric point of view, a representation up to homotopy of a split Lie $n$-algebroid is equivalent to a split VB-Lie $n$-algebroid (Theorem \ref{Theorem RUTHS and higher VB-Lie algebroids}, Proposition \ref{Correspondence of morphisms} and Theorem \ref{Equivalence of categories}).
	
	\paragraph*{Remark on collaboration}
	
	Parts of the present work (differential graded modules, representations up to homotopy of Lie $n$-algebroids, and some results on VB-Lie $n$-algebroids) were done jointly with the author's PhD supervisor Madeleine Jotz Lean and Rajan Mehta. The idea of studying representations up to homotopy of higher Lie $n$-algebroids was proposed by Mehta, whose approach was more algebraic via module theory, while Jotz Lean's approach was differential geometric via split structures and representations up to homotopy. For example, the two view points of the adjoint module/representation of a Lie $n$-algebroid $(\M,\Q)$ are given by $(\mathfrak{X}(\M),\ldr{\Q})$ (algebraic definition proposed by Mehta -- Chapter \ref{Chapter: Differential graded modules}) and the complex $TM[0]\oplus A_1[1]\oplus\ldots\oplus A_n[n]$ (geometric, or split, definition proposed by Jotz Lean -- Chapter \ref{Chapter: Representations up to homotopy}), where $A_1[1]\oplus\ldots\oplus A_n[n]\simeq \M$ is a splitting of the underlying $[2]$-manifold over the base smooth manifold $M$. As it was claimed by Mehta, it should be possible that the two approaches can be unified. The present thesis achieves this result by giving the precise isomorphism connecting the adjoint module and the adjoint representation up to homotopy (Section \ref{adjoint_module_adjoint_representation_isomorphism} for the case of Lie $2$-algebroids and Section \ref{Adjoint representation of a Lie n-algebroid} for the general case of Lie $n$-algebroids). At the time of submission of this thesis, the results of the joint work with Madeleine Jotz Lean and Rajan Mehta are submitted for publication and can also been found online in the following preprint \cite{JoMePa19}:
	
	\vspace{.2cm}
	\noindent
	\begin{center}
		\textit{``Modules and representations up to homotopy of Lie $n$-algebroids''}, arXiv:2001.01101, 2020
	\end{center}
	\vspace{.2cm}
	
	\noindent
	A more detailed list of the results found (in a more concise form) in the above preprint is the following: 
	
	\begin{itemize}
		\item Differential graded modules (Chapter \ref{Chapter: Differential graded modules}): Basic definitions, adjoint and coadjoint modules, map between coadjoint and adjoint modules of $\mathcal{PQ}$-manifolds (sections \ref{Section: The category of differential graded modules}, \ref{Section: Adjoint and coadjoint modules}, and \ref{Section PQ-manifolds: coadjoint vs adjoint modules}).
		
		\item Representations up to homotopy (Chapter \ref{Chapter: Representations up to homotopy}): Basic definitions, detailed analysis of the case of a Lie $2$-algebroid and its adjoint representation, comparison of adjoint module with adjoint representation (sections \ref{Section: The category of representations up to homotopy}, \ref{Section: The case of (split) Lie 2-algebroids}, \ref{Section: Adjoint representation of a split Lie $2$-algebroid}, \ref{adjoint_module_adjoint_representation_isomorphism}, \ref{Section: Coordinate transformation of the adjoint representation}, \ref{Adjoint representation of a Lie n-algebroid}, \ref{Section: The Weil algebra of a Lie n-algebroid}, \ref{Section: Poisson Lie algebroids of low degree}).
		
		\item Higher VB-algebroid structures (Chapter \ref{Chapter: Higher split VB-algebroid structures}): Differential geometric definition of VB-Lie $n$-algebroids and correspondence of VB-Lie $n$-algebroids with $(n+1)$-representations (parts of sections \ref{Section: Classical interpretation} and \ref{Section: Split VB-Lie n-algebroids and (n+1)-representations}).
	\end{itemize}
	
	\paragraph*{New results and main achievements of the thesis}
	
	\begin{itemize}
		\item The development of a well-behaved representation theory of Lie $n$-algebroids for general $n\in\mathbb{N}$ -- Chapter \ref{Chapter: Differential graded modules} and Chapter \ref{Chapter: Representations up to homotopy}.
		
		\item The precise connection between the adjoint module and the adjoint representation of Lie $n$-algebroids. In particular, a recipe for computing explicitly the structure objects of the adjoint representation of any Lie $n$-algebroid -- Chapter \ref{Chapter: Representations up to homotopy} and more precisely Section \ref{adjoint_module_adjoint_representation_isomorphism} and Section \ref{Adjoint representation of a Lie n-algebroid}.
		
		\item The explicit formulae for the structure objects that make up 3-term representations of a split Lie $2$-algebroid and in particular the thorough analysis of its adjoint representation -- Chapter \ref{Chapter: Representations up to homotopy} and more precisely Section \ref{Section: The case of (split) Lie 2-algebroids}, Section \ref{Section: Adjoint representation of a split Lie $2$-algebroid}, and Section \ref{Section: Coordinate transformation of the adjoint representation}.
		
		\item An alternative description of Courant algebroids in terms of the adjoint and coadjoint modules of the underlying Lie $2$-algebroid -- Chapter \ref{Chapter: Differential graded modules} and more precisely Section \ref{Section PQ-manifolds: coadjoint vs adjoint modules}.
		
		\item The description of linear structures on vector bundles in the category of graded manifolds and, in particular, the introduction of the notion of higher VB-Lie algebroids in the split setting -- Chapter \ref{Chapter: Linear structures on vector bundles} and Chapter \ref{Chapter: Higher split VB-algebroid structures}. 
		
		\item The equivalence between the category of representations up to homotopy of higher Lie algebroids and the category of higher VB-Lie algebroids -- Chapter \ref{Chapter: Higher split VB-algebroid structures} and more precisely Section \ref{Section: Split VB-Lie n-algebroids and (n+1)-representations} and Section \ref{Section: Change of decomposition}.
	\end{itemize}
	
	\noindent
	A list of important propositions/theorems of the thesis grouped by chapter is the following:
	
	\begin{itemize}
		\item \underline{Chapter \ref{Chapter: Differential graded modules}:} Theorem \ref{thm_poisson}, Theorem \ref{thm_Weil_Poisson-Weil}.
		
		\item \underline{Chapter \ref{Chapter: Representations up to homotopy}:} Proposition \ref{Adjoint_representation_of_Lie_2-algebroid} (together with Remark \ref{Adjoint representation of Courant algebroid}), Proposition \ref{Isomorphism with change of connections} and Proposition \ref{Isomorphism with change of splitting}.
		
		\item \underline{Chapter \ref{Chapter: Higher split VB-algebroid structures}:} Theorem \ref{Theorem RUTHS and higher VB-Lie algebroids}, Proposition \ref{Correspondence of morphisms} and Theorem \ref{Equivalence of categories}.
	\end{itemize}
	
	\paragraph*{Outline of the thesis}
	
	\begin{itemize}
		\item \underline{Chapter \ref{Chapter: Preliminaries}:} This chapter sets some notation and conventions, and introduces the fundamental constructions needed for the rest of the work. In particular, it recalls graded vector bundles and complexes, different notions of algebroids, double vector bundles, and the language of sheaves.
		
		\item \underline{Chapter \ref{Chapter: Z- and N-graded supergeometry}:} In this chapter, we recall the basic notions from graded geometry. Specifically, we define $\mathbb{Z}$- and $\mathbb{N}$-graded manifolds, graded vector fields and $\Q$-structures, Lie $n$-algebroids, and the graded generalisation of Poisson brackets.
		
		\item \underline{Chapter \ref{Chapter: Graded tangent and cotangent bundles}:} This chapter is devoted to the category of vector bundles over graded manifolds and in particular to the tangent and cotangent bundles. We explain various geometric constructions such as (pseudo)differential forms and (pseudo)multivector fields, the Weil and the Poisson-Weil algebras of a $\Q$-manifold, homotopy Poisson structures, and graded symplectic forms. Moreover, we recall the correspondence of Poisson manifolds and Courant algebroids with degree 1 and 2 symplectic $\Q$-manifolds.
		
		\item \underline{Chapter \ref{Chapter: Differential graded modules}:} Here we introduce the notion of differential graded modules for $\Q$-manifolds and construct some important examples including the adjoint and coadjoint modules. We investigate the relation between adjoint and coadjoint modules, and Weil and Poisson-Weil algebras for the case of $\mathcal{PQ}$-manifolds and then the result is applied to Courant algebroids yielding the description in terms of its adjoint and coadjoint modules.
		
		\item \underline{Chapter \ref{Chapter: Representations up to homotopy}:} In this chapter, we generalise the notion of representations up to homotopy to higher Lie $n$-algebroids and spell out the structure objects for the case of a split Lie 2-algebroid. We give an explicit description of the adjoint and coadjoint representations of split Lie 2-algebroids, and explain how to compute the adjoint representation of any split Lie $n$-algebroid using its identification with the adjoint module. In addition, we describe the Weil algebra of a split Lie $n$-algebroid and compute the map between the coadjoint and the adjoint representation of a Poisson Lie $n$-algebroid for $n=0,1,2$.
		
		\item \underline{Chapter \ref{Chapter: Linear structures on vector bundles}:} This chapter studies linear structures on vector bundles over graded manifolds. In particular, it recalls the space of linear multivector fields and defines linear $\Q$-manifolds and higher linear Poisson structures.
		
		\item \underline{Chapter \ref{Chapter: Higher split VB-algebroid structures}:} In this chapter, we introduce higher split VB-Lie algebroids and define their Weil algebra and the induced higher fat Lie algebroid. Additionally, we prove the equivalence between higher split VB-Lie algebroids and representations up to homotopy.
		
		\item \underline{Chapter \ref{Chapter: Conclusion, open problems and further research}:} This chapter summarises the results of the thesis and suggests some further open research topics that are related to this work.
		
		\item \underline{Appendices:} In Appendix \ref{Chapter: More on graded geometry}, there are some extra minor results on graded geometry, namely the geometrisation of $\mathbb{N}$-graded vector bundles over $\mathbb{N}$-manifolds, characteristic classes of $1^\text{st}$ order for general Lie $n$-algebroids, and a brief reminder of the Cartan calculus for $\mathbb{Z}$-manifolds. In Appendix \ref{Chapter: Computations}, we provide some long computations that were skipped in the main body of the thesis. 
	\end{itemize}
	
	\paragraph*{Relation to other work}
	
	During the preparation of this work, the author learnt that Caseiro and
	Laurent-Gengoux also consider representations up to homotopy of Lie $n$-algebroids, in particular
	the adjoint representation, in a work in preparation.
	In \cite{Vitagliano15b}, Vitagliano considers representations of strongly homotopy of Lie-Rinehart algebras.
	This kind of algebras are the purely algebraic versions of graded vector bundles
	over graded manifolds equipped with a homological vector field that is tangent to the zero section.
	If the base manifold has grading concentrated in degree 0 and the vector bundle is negatively graded,
	the notion recovers the one of split Lie $n$-algebroids. In that case, Vitagliano's representations
	correspond to the representations up to homotopy considered in this work. In addition, since the DG $\M$-modules considered in this thesis are the sheaves of sections of
	$\Q$-vector bundles, they are themselves also special cases of Vitagliano's strongly homotopy Lie-Rinehart algebras.
	
	Our study of linear structures is related to the great detailed works of La Pastina and Vitagliano \cite{LaPastina20,LaVi18,LaVi19}, where they consider linear structures of Lie algebroids and Lie groupoids (called there vector bundles over Lie algebroids and Lie groupoids) and study their deformations. Most of the language they use is differential geometric, in contrast to our approach which is algebraic and more general since it covers Lie $n$-algebroids for general $n\in\mathbb{N}$.
	
	\paragraph*{Notation and conventions}
	Ordinary letters such as $M,N,\ldots, A,E,V,F,\ldots, X,Y,Z,\ldots$ denote classical differential geometric objects, i.e.~(smooth) manifolds, (smooth) vector bundles, vector fields over smooth manifolds, etc. Underlined ordinary symbols denote graded objects of classical differential geometry, e.g., $\E\to M$ is a smooth graded vector bundle over the smooth manifold $M$, $\underline{\Hom}(\E,\F)$ denotes the space of graded homomorphisms between $\E\to M$ and $\F\to M$, $\underline{S}(\E)$ denotes the graded symmetric algebra of $\E\to M$, etc. Calligraphic letters such as $\M,\N,\ldots,\mathcal{A},\Em,\mathcal{F},\ldots,\Q,\X,\Y,\mathcal{Z},\ldots$ are used for objects of graded geometry, e.g., graded manifolds, graded vector bundles and modules, graded vector fields on graded manifolds, etc. 
	
	We work in the smooth category, unless otherwise stated. Our bundles are always assumed to be vector bundles and of finite rank, even if it is not stated explicitly. For many of our formulae, we adopt Einstein's summation convention, i.e.~repeated upper and lower indices are assumed to be summed over all possible values.
	
	\newpage

	\vspace*{\fill}
	\begin{center}
		{\Large Summary of equivalences between graded and classical geometries}
	\end{center}
	
	\begin{center}
		\begin{tabular}{|| c | c | c ||} 
			\hline
			\textbf{Graded geometry} & \textbf{Classical geometry} & \textbf{Source} \\ 
			\hline\hline
			Degree 2 $\mathbb{N}$-manifolds & metric VB-algebroids & \cite{delCarpio-Marek15,Jotz18b} \\
			\hline
			Degree $n$ $\mathbb{N}\Q$-manifolds/Lie $n$-algebroids & split Lie $n$-algebroids & \cite{BoPo13,ShZh17} \\ 
			\hline
			Lie 1-algebroids & Lie algebroids & \cite{Vaintrob97} \\ 
			\hline
			Lie 2-algebroids & VB-Courant algebroids & \cite{Li-Bland12,Jotz19b} \\
			\hline
			Poisson Lie 1-algebroids & Lie bialgebroids & \cite{Voronov02} \\
			\hline
			Poisson Lie 2-algebroids & LA-Courant algebroids & \cite{Li-Bland12,Jotz18d} \\
			\hline
			Symplectic Lie 1-algebroids & Poisson manifolds & \cite{Severa05} \\
			(Lagrangian submanifolds) & (coisotropic submanifolds) & (\cite{CaFe04}) \\
			\hline
			Symplectic Lie 2-algebroids & Courant algebroids & \cite{Roytenberg02,Severa05} \\
			(Lagrangian submanifolds) & (Dirac structures) & (\cite{Roytenberg02,Li-Bland12}) \\
			\hline
			Symplectic $\Q$-manifolds $T^*[k]A[1]$ & higher Courant algebroids $A\oplus\wedge^{k-1}A^*$ & \cite{Cueca19a,Cueca19} \\ 
			\hline
			Degree 1 $\Q$-groupoids & LA-groupoids & \cite{Mehta09a} \\ 
			\hline
		\end{tabular}
	\end{center}
	\vspace*{\fill}
	
	\chapter{Preliminaries}\label{Chapter: Preliminaries}
	
	In this chapter, we provide all the necessary notation and background that is required for understanding the mathematical content of this thesis. In what follows here and in the next chapters, objects like manifolds, maps, vector bundles, etc., are considered to be smooth even if it is not explicitly mentioned.

	\section{Graded vector bundles and complexes}
	
	Given two ordinary vector bundles $E\to M$ and $F\to N$, there
	is a bijection between vector bundle morphisms $\phi\colon E\to F$
	covering $\phi_0\colon M\to N$ and morphisms of modules
	$\phi^\star\colon \Gamma(F^*)\to \Gamma(E^*)$
	over the pull-back
	$\phi_0^*\colon C^\infty(N)\to C^\infty(M)$. Explicitly, the
	map $\phi^\star$ is defined by
	$\phi^\star(f)(m)=\phi_m^*f_{\phi_0(m)}$ for
	$f\in\Gamma(F)$ and $m\in M$, where $\phi_m:E_m\to F_{\phi(m)}$ is the induced linear map between the fibres over the points $m\in M$ and $\phi_0(m)\in N$.
	
	Similarly as before, underlined symbols denote
	graded objects also in the geometric setting. For instance, a \textbf{graded vector bundle over a smooth manifold $M$}\index{graded vector bundle over a smooth manifold} is a
	vector bundle $q\colon \underline{E}\to M$ together with a
	direct sum decomposition
	\[
	\underline{E}=\bigoplus_{n\in\mathbb{Z}}E_n[n]
	\]
	of ordinary (non-graded) vector bundles $E_n$ over $M$. The finiteness assumption
	for the rank of $\E$ that we will generally assume implies that $\E$ is both upper and lower
	bounded, i.e.~there exists an $n_0\in\mathbb{N}$ such that
	$E_n=0$ for all $|n|>n_0$. Here, an element $e\in E_n$ is
	\textit{(degree-)homogeneous}\index{degree-homogeneous element} of degree $|e|=-n$. That is, for
	$k\in\Z$, the degree $k$ component of $\E$ (denoted with upper
	index $\E^k$) equals $E_{-k}$.
	
	All the usual algebraic constructions from the theory of ordinary
	vector bundles extend to the graded setting by  applying pointwise the constructions of linear algebra. More precisely,
	for graded vector bundles $\E,\underline{F}$, the dual $\E^*$,
	the direct sum $\E\oplus\underline{F}$, the space of graded
	homomorphisms $\underline{\Hom}(\E,\underline{F})$, the tensor
	product $\E\otimes\underline{F}$, and the symmetric and
	antisymmetric powers $\underline{S}(\E)$ and
	$\underline{A}(\E)$ are all well-defined in the realm of graded vector bundles.

	A \textbf{(cochain) complex of vector bundles}\index{cochain complex of vector bundles} is a graded vector
	bundle $\underline{E}$ over $M$ equipped with a degree one endomorphism
	(called the \textbf{differential}\index{cochain complex of vector bundles!differential})
	\[
	\ldots\overset{\partial}{\to}E_{i+1}\overset{\partial}{\to}E_{i}\overset{\partial}{\to}E_{i-1}\overset{\partial}{\to}\ldots
	\]
	which squares to zero; $\partial^2=0$.
	
	Given two complexes $(\underline{E}, \partial)$ and
	$(\underline{F},\partial)$, one may construct new complexes by
	considering all the constructions discussed
	above. Namely, the bundles $\underline{S}(\underline{E})$,
	$\underline{A}(\underline{E})$, $\underline{E}^*$,
	$\underline{\Hom}(\underline{E},\underline{F})$ and
	$\underline{E}\otimes\underline{F}$ inherit a degree one
	operator that squares to $0$. The basic principle for all the
	constructions is formal the graded derivation rule. For example, for
	$\phi\in\underline{\Hom}(\underline{E},\underline{F})$ and
	$e\in \underline{E},f\in\F$:
	\[
	\partial_{\F}(\phi(e)) = \partial_{\underline{\Hom}}(\phi)(e) + (-1)^{|\phi|}\phi(\partial_{\E}(e))	
	\]
	\[
	\partial_{\E\otimes\F}(e\otimes f) = \partial_{\E}(e)\otimes f + (-1)^{|e|}e\otimes\partial_{\F}(f).
	\]
	Using the language of (graded) commutators, the differential of $\underline{\Hom}(\underline{E},\underline{F})$ can be expressed formally as
	\[
	\partial(\phi) := \partial\circ\phi -
	(-1)^{|\phi|}\phi\circ\partial=\partial\circ\phi -
	(-1)^{|\phi|\cdot|\partial|}\phi\circ\partial =: [\partial,\phi].
	\]
	
	The \textbf{shift functor $[k]$}\index{shift functor $[k]$ for complexes of vector bundles}, for $k\in\mathbb{Z}$, yields a new
	complex $(\E[k],\partial[k])$ whose $i$-th component is
	$\E[k]^i=\E^{i+k}=E_{-i-k}$ with differential
	$\partial[k]=\partial$. Formally, $\E[k]$ is obtained by
	tensoring with $(\mathbb{R}[k],0)$ from the right\footnote{If
		one chose to tensor from the left, the resulting complex
		would still have $i$-th component $\E[k]^i=\E^{i+k}$, but
		the Leibniz rule would give the differential
		$\partial[k] = (-1)^k \partial$.}:
	\[
	\partial[k](e\otimes 1) = \partial(e)\otimes1 + (-1)^{|e|}e\otimes 0 = \partial(e)\otimes1
	\]
	for all homogeneous $e\in\E$. A \textbf{degree $k$ morphism}\index{cochain complex of vector bundles!degree $k$ morphism}
	between two complexes $(\underline{E},\partial)$ and
	$(\underline{F},\partial)$ over $M$, or simply \textbf{$k$-morphism}\index{cochain complex of vector bundles!$k$-morphism},
	is, by definition, a degree preserving morphism
	$\phi\colon \underline{E}\to \underline{F}[k]$ over the
	identity on $M$; that is, a family of vector bundle maps
	$\phi_i\colon \E^i\to F[k]^i$ over the identity on $M$ that
	commutes with the differentials:\footnote{This becomes
		$\phi\circ\partial=(-1)^k\partial\circ\phi$ for the other
		convention.} $\phi\circ\partial=\partial\circ\phi$.
	
	\section{Algebroids and basic connections}
	
	In this section, we give the basic definitions and constructions regarding the different notions of algebroids which will be used in the rest of this work.
	
	A \textbf{dull algebroid}\index{dull algebroid} \cite{Jotz18a} is a vector bundle $Q\to M$
	endowed with an \textbf{anchor map}\index{anchor map}  $\rho_Q\colon Q\to TM$ and a \textbf{bracket}\index{bracket}
	(i.e.~an $\mathbb R$-bilinear map)
	$[\cdot\,,\cdot]\colon\Gamma(Q)\times\Gamma(Q)\to\Gamma(Q)$ on
	its space of sections $\Gamma(Q)$, such that
	\begin{equation}\label{comp_anchor_bracket}
	\rho_Q[q_1,q_2] = [\rho_Q(q_1),\rho_Q(q_2)]
	\end{equation}
	and the Leibniz identity is satisfied in both entries:
	\[
	[f_1q_1,f_2q_2] = f_1f_2[q_1,q_2] +
	f_1\rho_Q(q_1)f_2\cdot q_2 - f_2\rho_Q(q_2)f_1\cdot
	q_1,
	\]
	for all $q_1,q_2\in\Gamma(Q)$ and all $f_1,f_2\in
	C^\infty(M)$. If the bracket is in addition skew-symmetric, dull algebroids are also found in the literature as \textbf{almost Lie algebroids}\index{almost Lie algebroid} \cite{PoPo19}.

	A dull algebroid gives the usual notion of a \textbf{Lie algebroid}\index{Lie algebroid} if its
	bracket is also skew-symmetric and satisfies the Jacobi
	identity
	\[
	\Jac_{[\cdot\,,\cdot]}(q_1,q_2,q_3) := [q_1,[q_2,q_3]] - [[q_1,q_2],q_3] - [q_2, [q_1,q_3]] = 0,
	\]
	for all $q_1,q_2,q_3\in\Gamma(Q)$. 
	
	Given a skew-symmetric dull algebroid
	$Q$, there is an associated operator $\diff_Q$ of degree 1 on
	the \textbf{space of $Q$-forms}\index{dull algebroid!forms}
	$\Omega^\bullet(Q) = \Gamma(\wedge^\bullet Q^*)$, defined by
	the formula
	\begin{align*}
	\diff_Q\tau(q_1,\ldots,q_{k+1}) = & \sum_{i<j}(-1)^{i+j}\tau([q_i,q_j],q_1,\ldots,\hat{q_i},\ldots,\hat{q_j},\ldots,q_{k+1}) \\
	& + \sum_i(-1)^{i+1}\rho_Q(q_i)(\tau(q_1,\ldots,\hat{q_i},\ldots,q_{k+1})),
	\end{align*}
	for $\tau\in\Omega^k(Q)$ and $q_1,\ldots,q_{k+1}\in\Gamma(Q)$; the notation $\hat{q}$ means that $q$ is omitted. The operator $\diff_Q$ satisfies as usual
	\[
	\diff_Q(\tau_1\wedge\tau_2) = (\diff_Q\tau_1)\wedge\tau_2 + (-1)^{|\tau_1|}\tau_1\wedge \diff_Q\tau_2,
	\]
	for $\tau_1,\tau_2\in\Omega^\bullet(Q)$. In general, the
	operator $\diff_Q$ squares to zero only on 0-forms
	$f\in \Omega^0(M)=C^\infty(M)$, since $\diff_Q^2f=0$ for all
	$f\in C^\infty(M)$ is equivalent to the compatibility of the
	anchor with the bracket \eqref{comp_anchor_bracket}. The
	identity $\diff_Q^2 = 0$ on $\Omega^1(Q) = \Gamma(Q^*)$ and consequently to all forms in $\Omega^{\bullet}(Q)$ due to $\Omega^1(Q)$-linearity of $\diff_{Q}^2$ is equivalent to
	$(Q,\rho_Q,[\cdot\,,\cdot])$ being a Lie algebroid. 
	
	Let now $Q\to M$ be a skew-symmetric dull algebroid and
	suppose that $E\to M$ is another vector bundle. A
	\textbf{$Q$-connection on $E$}\index{dull algebroid!connection on a vector bundle} is defined similarly as the usual case of Lie algebroids, as a map
	$\nabla\colon\Gamma(Q)\times\Gamma(E)\to\Gamma(E),(q,e)\mapsto
	\nabla_q e$ that is $C^\infty(M)$-linear in the first
	argument and satisfies
	\[
	\nabla_q(fe) = \ldr{\rho_Q(q)}f\cdot e + f\nabla_q e,
	\]
	for all $q\in\Gamma(Q),e\in\Gamma(E)$ and $f\in
	C^\infty(M)$. The \textbf{dual connection}\index{dull algebroid!dual connection}
	$\nabla^*$ is the $Q$-connection on $E^*$ defined by the
	formula
	\[
	\langle \nabla_q^* \varepsilon,e \rangle = \ldr{\rho_Q(q)}\langle \varepsilon,e \rangle - \langle \varepsilon,\nabla_q e \rangle,
	\]
	for all $\varepsilon\in\Gamma(E^*),e\in\Gamma(E)$ and
	$q\in\Gamma(Q)$, where $\langle\cdot\,,\cdot\rangle$ is the natural
	pairing between $E$ and its dual $E^*$.

	A \textbf{$Q$-connection on a graded vector bundle
	$(\underline{E}=\bigoplus_{n\in\mathbb Z} E_n[n], \partial)$}\index{dull algebroid!connection on a graded vector bundle}
	is a family of $Q$-connections $\nabla^n$, $n\in\mathbb Z$, on
	each of the bundles $E_n$. If $\underline{E}$ is a complex
	with differential $\partial$, then the $Q$-connection is a
	\textbf{connection on the complex $(\underline{E},\partial)$}\index{dull algebroid!connection on a complex of vector bundles}
	if it commutes with $\partial$, i.e.~$\partial(\nabla^{n}_qe)=\nabla_q^{n-1}(\partial e)$ for
	$q\in\Gamma(Q)$ and $e\in \Gamma(E_n)$.
	
	The \textbf{curvature of a $Q$-connection}\index{curvature of a connection} on a vector bundle $E$ is
	defined by
	\[
	R_\nabla(q_1,q_2)e = \nabla_{q_1}\nabla_{q_2} e - \nabla_{q_2}\nabla_{q_1} e - \nabla_{[q_1,q_2]} e,
	\] 
	for all $q_1,q_2\in\Gamma(Q)$ and $e\in\Gamma(E)$, and
	generally, it is an element of
	$\Gamma(Q^*\otimes Q^*\otimes E^*\otimes E)$. If the dull
	bracket of $Q$ is skew-symmetric, then the curvature is a
	2-form with values in the endomorphism bundle
	$\End(E)=E^*\otimes E$: $R_\nabla\in\Omega^2(Q,\End(E))$. A
	connection is called as usual \textbf{flat}\index{dull algebroid!flat connection} if its curvature
	$R_\nabla$ vanishes identically.
	
	Given a $Q$-connection $\nabla$ on $E$, and assuming that
	$[\cdot\,,\cdot]$ is skew-symmetric, there is an induced
	operator $\diff_\nabla$ on the space of $E$-valued $Q$-forms
	$\Omega^\bullet(Q,E) = \Omega^\bullet(Q)\otimes_{C^\infty(M)} \Gamma(E)$
	given by the usual Koszul formula
	\begin{align*}
	\diff_\nabla\tau(q_1,\ldots,q_{k+1}) = & \sum_{i<j}(-1)^{i+j}\tau([q_i,q_j],q_1,\ldots,\hat{q_i},\ldots,\hat{q_j},\ldots,q_{k+1}) \\
	& + \sum_i(-1)^{i+1}\nabla_{q_i}(\tau(q_1,\ldots,\hat{q_i},\ldots,q_{k+1})),
	\end{align*}
	for all $\tau\in\Omega^k(Q,E)$ and $q_1,\ldots,q_{k+1}\in\Gamma(Q)$. It satisfies
	\[
	\diff_\nabla(\tau_1\wedge\tau_2) = \diff_Q\tau_1\wedge\tau_2 + (-1)^{k}\tau_1\wedge \diff_\nabla\tau_2,
	\]
	for all $\tau_1\in\Omega^k(Q)$ and $\tau_2\in\Omega^\bullet(Q,E)$,
	and squares to zero if and only if $Q$ is a Lie algebroid and $\nabla$ is flat.
	
	Suppose that
	$\nabla\colon \mathfrak{X}(M)\times\Gamma(Q)\to\Gamma(Q)$ is a
	$TM$-connection on the vector bundle $Q$. The induced
	\textbf{basic connections}\index{basic connections} on $Q$ and $TM$ are defined
	similarly as the ones associated to Lie algebroids \cite{GrMe10,ArCr12}:
	\[
	\nabla^{\text{bas}}=\nabla^{\text{bas},Q}\colon \Gamma(Q)\times\Gamma(Q)\to\Gamma(Q),\
	\nabla^{\text{bas}}_{q_1} q_2 = [q_1,q_2] +
	\nabla_{\rho_Q(q_2)} q_1
	\]
	and
	\[
	\nabla^{\text{bas}}=\nabla^{\text{bas},TM}\colon\Gamma(Q)\times\mathfrak{X}(M)\to\mathfrak{X}(M),\
	\nabla^{\text{bas}}_{q} X = [\rho_Q(q),X] + \rho_Q(\nabla_X
	q).
	\]
	The \textbf{basic curvature}\index{basic curvature} is the 2-form
	$R_\nabla^{\text{bas}}\in \Omega^2(Q, \Hom(TM,Q))$
	defined by
	\[
	R_\nabla^{\text{bas}}(q_1,q_2)X = -\nabla_X[q_1,q_2] +
	[q_1,\nabla_Xq_2] +[\nabla_Xq_1,q_2] +
	\nabla_{\nabla_{q_2}^\text{bas}X}q_1 -
	\nabla_{\nabla_{q_1}^\text{bas}X}q_2.
	\]
	The
	basic connections and the basic curvature satisfy
	\begin{equation}\label{eq_bas_Q_1}
	\nabla^{\text{bas},TM}\circ\rho_Q = \rho_Q\circ\nabla^{\text{bas},Q},
	\end{equation}
	\begin{equation}\label{eq_bas_Q_2}
	\rho_Q\circ R_\nabla^{\text{bas}} = R_{\nabla^{\text{bas},TM}},
	\end{equation}
	\begin{equation}\label{eq_bas_Q_3}
	R_\nabla^{\text{bas}}\circ\rho_Q + \Jac_{[\cdot\,,\cdot]} = R_{\nabla^{\text{bas},Q}}.
	\end{equation}
	
	\section{Double vector bundles, linear splittings and duals}\label{Section: Double vector bundles, linear splittings and duals}
	
	Recall that a \textbf{double vector bundle $(D,V,F,M)$}\index{double vector bundle} is a commutative diagram
	\[
	\begin{tikzcd}
	D \arrow[dd,"\pi_V"'] \arrow[rr,"\pi_F"] & & F \arrow[dd,"q_F"] \\
	& & \\
	V \arrow[rr,"q_V"']             &          & M           
	\end{tikzcd}
	\]
	such that all the arrows are vector bundle projections and the
	structure maps of the bundle $D\to V$ are bundle morphisms over the
	corresponding structure maps of $F\to M$. More precisely, a commutative square $(D,V,F,M)$ as above is a double vector bundle if the following three conditions hold: 
	\begin{enumerate}
		\item all four arrows are vector bundles;
		
		\item the bundle projection $\pi_V$ is a vector bundle morphism over the bundle projection $q_F$;
		
		\item the addition map $+_V\colon D\times_V D\to D$ of the vector bundle $D\to V$ is a vector bundle morphism over the addition map $+\colon F\times_M F\to F$ of the vector bundle $F\to M$.
	\end{enumerate}
	We refer the reader to \cite{Mackenzie05} for a more detailed treatment of double vector bundles. The above conditions for the structure maps of $D\to V$
	are equivalent to the same conditions holding for the structure maps of $D\to F$
	over $V\to M$. The bundles $V$ and $F$ are called \textbf{the side bundles of
	$D$}\index{double vector bundle!sides}. The intersection of the kernels
	$C:=\pi_V^{-1}(0^V)\cap\pi_F^{-1}(0^F)$ is the \textbf{core of $D$}\index{double vector bundle!core}
	and is naturally a vector bundle over $M$, with projection denoted by
	$q_C\colon C\to M$. The inclusion $C\hookrightarrow D$ is denoted by
	$C_m\ni c_m\mapsto\overline{c_m}\in
	\pi_V^{-1}(0^V_m)\cap\pi_F^{-1}(0^F_m)$.
	
	A \textbf{morphism $(G_D,G_V,G_F,g)$ of two double vector bundles}\index{double vector bundle!morphism} $(D,V,F,M)$ and $(D',V',F',M')$ is a commutative cube
	\[
	\begin{tikzcd}
	& D \arrow[dl, "G_D"] \arrow[rr] \arrow[dd] & & F \arrow[dl, "G_F"] \arrow[dd] \\
	D' \arrow[rr, crossing over] \arrow[dd] & & F' \\
	& V \arrow[dl, "G_V"] \arrow[rr] & & M \arrow[dl, "g"] \\
	V' \arrow[rr] & & M' \arrow[from=uu, crossing over]
	\end{tikzcd}
	\]
	such that all the faces are vector bundle maps. 
	
	Given a double vector bundle $(D,V,F,M)$, the space of sections of $D$
	over $V$, denoted by $\Gamma_V(D)$, is generated as a
	$C^\infty(V)$-module by two special types of sections, called
	\textbf{core}\index{double vector bundle!core section} and \textbf{linear}\index{double vector bundle!linear section} sections and denoted by
	$\Gamma_V^c(D)$ and $\Gamma^l_V(D)$, respectively (see \cite{Mackenzie05}). The core sections are induced from sections of $C$ and more precisely the
	core section $c^\dagger\in\Gamma_V^c(D)$ corresponding to
	$c\in\Gamma(C)$ is defined as
	\[
	c^\dagger(v_m) = 0_{v_m}^D +_F \overline{c(m)},\, \text{ for }\, m\in M \, \text{ and }\, v_m\in V_m. 
	\]
	A section $\delta\in\Gamma_V(D)$ is linear over $f\in\Gamma(F)$, if $\delta\colon V\to D$ is a
	vector bundle morphism  $V\to D$ over $f\colon M\to F$.
	
	Finally, a section $\psi\in\Gamma(V^*\otimes C)$ defines a linear
	section $\psi^\wedge\colon V\to D$ over the zero section $0^F\colon M\to F$ by
	\[
	\psi^\wedge(v_m) = 0_{v_m}^D +_F \overline{\psi(v_m)} 
	\]
	for all $m\in M$ and $v_m\in V_m$. This type of linear section
	is called a \textbf{core-linear}\index{double vector bundle!core-linear section} section. In terms of the generators
	$\theta\otimes c\in\Gamma(V^*\otimes C)$, the correspondence above reads
	$(\theta\otimes c)^\wedge=\ell_\theta\cdot c^\dagger$, where
	$\ell_\theta$ is the linear function on $V$ associated to
	$\theta\in\Gamma(V^*)$.
	
	\begin{example}[Decomposed double vector bundle]\label{Example decomposed DVB}
		Let $V,F,C$ be vector bundles over the same manifold $M$. Set
		$D:=V\times_M F\times_M C$ with vector bundle structures
		$D=q_V^!(F\oplus C)\to V$ and $D=q_F^!(V\oplus C)\to F$. Then
		$(D,V,F,M)$ is a double vector bundle, called the \textbf{decomposed double
		vector bundle}\index{decomposed double
		vector bundle} with sides $V$ and $F$ and with core $C$. Its core
		sections over $F$ have the form $c^\dagger\colon f_m\mapsto(0^V_m,f_m,c(m))$, for
		$m\in M,f_m\in F_m$ and $c\in\Gamma(C)$, and the space of linear
		sections $\Gamma_V^l(D)$ is naturally identified with
		$\Gamma(F)\oplus\Gamma(V^*\otimes C)$ via 
		$(f,\psi)\colon v_m\mapsto(f(m),v_m,\psi(v_m))$ where
		$\psi\in\Gamma(V^*\otimes C)$ and $f\in\Gamma(F)$. This yields the
		canonical \textbf{linear horizontal lift}\index{decomposed double vector bundle!linear horizontal lift}
		$h\colon \Gamma(F)\hookrightarrow\Gamma_V^l(D)$. Given two decomposed double vector bundles $D:=V\times_M F\times_M C$ and $D':=V'\times_M F'\times_M C'$ over the same base manifold $M$, any double vector bundle morphism\index{decomposed double vector bundle!morphism} from $D$ to $D'$ is of the form
		\[
		(v,f,c) \mapsto (G_V(v),G_F(f),G_C(c) + \Phi_v(f))
		\]
		for all sections $v\in\Gamma(V),f\in\Gamma(F),\Gamma(C)$ \cite{DrJoOr15}, where $G_C\colon C\to C'$ is a vector bundle morphism and $\Phi\in\Gamma(V^*\otimes F^* \otimes C')$.
	\end{example}
	
	\begin{example}[Tangent bundle of a vector bundle]
		Given a vector bundle $q\colon E\to M$, its tangent bundle $TE$ is
		naturally a vector bundle over the manifold $E$. In addition,
		applying the tangent functor to all the structure maps of $E\to M$
		yields a vector bundle structure on $Tq\colon TE\to TM$ which is
		called the \textbf{tangent prolongation}\index{tangent prolongation of a vector bundle} of $E$. Hence,
		$(TE,TM,E,M)$ has a natural double vector bundle structure with
		sides $TM$ and $E$:
		\[
		\begin{tikzcd}
		TE \arrow[dd,"Tq"'] \arrow[rr,"p_E"] & & E \arrow[dd,"q"] \\
		& & \\
		TM \arrow[rr,"p_M"']         &              & M           
		\end{tikzcd}
		\]
		Its core is naturally identified with $E\to M$
		and the inclusion $E\hookrightarrow TE$ is given by
		$E_m\ni e_m\mapsto\left.\frac{d}{dt}\right|_{t=0}te_m\in
		T^q_{0_m^E}E$. For $e\in\Gamma(E)$, the section
		$Te\in\Gamma_{TM}^l(TE)$ is linear over $e$. The  core
		vector field $e^\dagger \in\Gamma_{TM}(TE)$ is defined by
		$e^\dagger(v_m)=T_m0^E(v_M)+_{E}\left.\frac{d}{dt}\right.\arrowvert_{t=0}te(m)$
		for $m\in M$ and $v_m\in T_ MM$ and the \textbf{vertical
			lift}\index{vertical lift} $e^\uparrow\in \Gamma_E(TE)=\mathfrak{X}(E)$ is the (core)
		vector field defined by the flow
		$\mathbb{R}\times E\to E,(t,e'_m)\mapsto e'_m + te(m)$. Elements of
		$\Gamma_E^l(TE)=:\mathfrak{X}^l(E)$ are called \textbf{linear
			vector fields}\index{linear vector field} and are equivalent to derivations
		$\delta\colon \Gamma(E)\to\Gamma(E)$ over some element in
		$\mathfrak{X}(M)$ \cite{Mackenzie05}. We denote by $X_\delta$ the linear vector field which corresponds to the derivation $\delta$. The construction goes as follows: First note that derivations of the vector bundle $q:E\to M$ are in correspondence with derivations of the dual vector bundle $q':E^*\to M$ via the map that sends $\delta$ over $X\in\mathfrak{X}(M)$ to the derivation $\delta^*$ over $X$ defined by $X\langle\varepsilon,e\rangle = \langle \delta^*\varepsilon,e \rangle + \langle \varepsilon,\delta e \rangle$ for $e\in\Gamma(E)$ and $\varepsilon\in\Gamma(E^*)$. Then the correspondence $\delta \longleftrightarrow X_\delta$ is obtained via the formulae
		\[
		X_\delta(\ell_\varepsilon) = \ell_{\delta^*\varepsilon}
		\qquad \text{and} \qquad
		X_\delta(q^*f) = q^*(X(f))
		\]
		for $f\in C^\infty(M)$ and $\varepsilon\in\Gamma(E^*)$.
	\end{example}
	
	\begin{example}[Double tangent bundle]\label{Double tangent bundle}
		Applying the construction of the previous example to the vector bundle $q_M:TM\to M$, we obtain the double vector bundle
		\[
		\begin{tikzcd}
		TTM \arrow[dd,"Tq_M"'] \arrow[rr,"q_{TM}"] & & TM \arrow[dd,"q_M"] \\
		& & \\
		TM \arrow[rr,"q_M"']            &           & M           
		\end{tikzcd}
		\]
		called the \textbf{double tangent bundle}\index{double tangent bundle} of the manifold $M$. There is a canonical isomorphism of double vector bundles $J_M:TTM\to TTM$  over the identities on the sides $TM$ and $M$ which interchanges the sides, called the \textbf{canonical involution}\index{canonical involution} of $TTM$ \cite{Mackenzie05}. To describe it, we write elements $(d,X^h,X^v,\gamma)\in TTM$ as second derivatives
		\[
		d=\frac{\partial^{2}\gamma}{\partial t\,\partial s}(0,0)
		\]
		where $\gamma:(-\varepsilon,\varepsilon)\times(-\varepsilon,\varepsilon)\rightarrow M$ is smooth, and the notation means that
		$\gamma$ is first differentiated with respect to the variable $s$, yielding a smooth curve of tangent vectors
		$X^v_{t}=\frac{\partial \gamma}{\partial s}(t,0)$ in $TM$ whose velocity at $0$ gives $d$: $\frac{d}{dt}|_{t=0}X^v_{t}=d$.
		That is, we have that
		\[
		\frac{\partial \gamma}{\partial s}(0,0)=q_{TM}(d)
		\qquad \text{and} \qquad
		\frac{\partial \gamma}{\partial t}(0,0)=Tq_{M}(d).
		\]
		The map $J_M:TTM\to TTM$ is then given as
		\[
		d=\frac{\partial^{2}\gamma}{\partial t\,\partial s}(0,0)\mapsto
		J_M(d):=\frac{\partial^{2}\gamma}{\partial s\, \partial t}(0,0).
		\]
	\end{example}
	
	A \textbf{linear splitting}\index{double vector bundle!linear splitting} of a double vector bundle $(D,V,F,M)$ with
	core $C$ is a double vector bundle embedding $\Sigma$ of the
	decomposed double vector bundle $V\times_M F$ into $D$ over the
	identities on $V$ and $F$. It is well-known that every double vector
	bundle admits a linear splitting, see
	\cite{GrRo09,delCarpio-Marek15,Pradines77} or \cite{HeJo18} for the
	general case. Moreover, a linear splitting is equivalent to a
	\textbf{decomposition}\index{double vector bundle!decomposition} of $D$, i.e.~to an isomorphism of double vector
	bundles $S:V\times_M F\times_M C\to D$ over the identity on $V, F$ and
	$C$. Given $\Sigma$, the decomposition is obtained by setting
	$S(v_m,f_m,c_m)=\Sigma(v_m,f_m) +_F (0_{f_m} +_V \overline{c_m})$, and
	conversely, given $S$, the splitting is defined by
	$\Sigma(v_m,f_m)=S(v_m,f_,,0_m^C)$.
	
	A linear splitting of $D$, and consequently a decomposition of $D$, is also equivalent to a \textbf{(linear) horizontal lift}\index{double vector bundle!linear horizontal lift}, i.e.~a right splitting of
	the short exact sequence
	\begin{equation}\label{Short exact sequence of linear sections}
		0\longrightarrow\Gamma(V^*\otimes C)\longrightarrow \Gamma_V^l(D)\longrightarrow \Gamma(F)\longrightarrow 0
	\end{equation}
	of $C^\infty(M)$-modules.  The correspondence is given by
	$\sigma_F(f)(v_m)=\Sigma(v_m,f_m)$ for $f\in\Gamma(F)$, $m\in M$ and
	$v_m\in V_m$. Note that all the previous constructions can be done
	similarly if one interchanges the roles of $V$ and $F$.
	
	\begin{example}
		For the tangent bundle $TE$ of a vector bundle $E\to M$, a linear
		splitting is equivalent to a choice of a $TM$-connection on
		$E$. Specifically, given a horizontal lift
		$\sigma\colon \mathfrak{X}(M)\to\mathfrak{X}^l(E)$, the corresponding
		connection $\nabla$ is defined by formula
		\[
		\sigma(Y) = X_{\nabla_Y}\in\mathfrak{X}^l(E) \qquad \text{for all}\  Y\in\mx(M).
		\]
	\end{example}
	
	Double vector bundles can be dualised in two ways, namely, as the dual
	of $D$ either over $V$ or over $F$ \cite{Mackenzie05}. Precisely, from a double vector
	bundle $(D,V,F,M)$ with core $C$, one obtains the \textbf{dual double vector bundles}\index{double vector bundle!duals}
	\begin{center}
		\begin{tabular}{l r}
			\begin{tikzcd}
			D^*_V \arrow[rr, "\pi_{C^*}"] \arrow[dd, "\pi_V"'] & & C^* \arrow[dd, "q_{C^*}"] \\
			& & \\
			V \arrow[rr, "q_V"']       &                        & M                       
			\end{tikzcd}
			&
			\begin{tikzcd}
			D^*_F \arrow[rr, "\pi_F"] \arrow[dd, "\pi_{C^*}"'] & & F \arrow[dd, "q_F"] \\
			& & \\
			C^* \arrow[rr, "q_{C^*}"']            &                   & M                       
			\end{tikzcd}
		\end{tabular}
	\end{center}
	with cores $F^*$ and $V^*$, respectively; here, there is a little abuse of notation for the names of the bundle projections. The vertical bundles structures of $(D_V^*,V,C^*,M)$ are the usual duals of the vector bundles $D\to V$ and $C\to M$. The projection $\pi_{C^*}:D_V^*\to C^*$ is defined by
	\[
	\langle \pi_{C^*}(\Phi),c_m \rangle = \langle \Phi,0_{v_m}^D +_F \overline{c_m} \rangle,
	\]
	where $m\in M, c_m\in C_m, \Phi:D_V|_{v_m} = \pi_V^{-1}(v_m)\to \mathbb{R}$ and $v_m\in V_m$, and the addition $+_{C^*}$ of $D_V^*$ over $C^*$ is defined by
	\[
	\langle \Phi_1 +_{C^*} \Phi_2,d_1 +_F d_2 \rangle = \langle \Phi_1,d_1 \rangle + \langle \Phi_2, d_2 \rangle,
	\]
	where $\Phi_1,\Phi_2$ are elements of $(D_V^*,V,C^*,M)$ of the form $(\Phi_1,v_1,\gamma,m)$ and $(\Phi_2,v_2,\gamma,m)$, respectively (i.e.~$v_1=\pi_V(\Phi_1), \gamma=\phi_{C^*}(\Phi_1), m=q_V(\pi_V(\Phi_1))=q_{C^*}(\pi_{C^*}(\Phi_1))$ and similarly for $\Phi_2$). In this definition we use that every element $(d,v_1+v_2,\gamma,m)$ can be written as a sum $d = d_1 +_F d_2$ with $(\Phi_1,v_1,\gamma,m)$ being arbitrary and $d_2:=d -_F d_1$. It can be shown that the definition is independent of the choice of $d_1$ and $d_2$ (see \cite{Mackenzie05} for more details).
	
	Given a linear splitting $\Sigma\colon V\times_M F\to D$, the \textbf{dual (linear) splitting}\index{double vector bundle!dual linear splitting}
	$\Sigma^*\colon V\times_M C^*\to D^*_V$ \cite{GrJoMaMe18,DrJoOr15,Jotz18b} is defined by
	\[
	\left\langle \Sigma^*(v_m,\gamma_m),\Sigma(v_m,f_m) \right\rangle_{D_V} = 0
	\qquad \text{and} \qquad
	\left\langle \Sigma^*(v_m,\gamma_m), c^\dagger(v_m) \right\rangle_{D_V} = \left\langle \gamma_m,c(m) \right\rangle_C,
	\]
	for all $m\in M$ and  $v_m\in V_m$, $f_m\in F_m$, $\gamma_m\in C^*_m$, $c\in\Gamma(C)$, where $\langle\cdot\,,\cdot\rangle_{D_V}$ denotes the pairing of $D\to V$ with $D_V^*\to V$ and $\langle\cdot\,,\cdot\rangle_{C}$ denotes the pairing of $C\to M$ with $C^*\to M$.
	
	\section{Sheaves on topological spaces}
	
	Since graded manifolds, which are the main object of this thesis, are defined in terms of sheaves on ordinary smooth manifolds, it is worth  recalling some basic definitions. Generally, there are many sources on this topic but what follows in this section is based on \cite{Hartshorne77}. Intuitively, the language of sheaf theory is a formal way of expressing local algebraic data on a general topological space (not necessarily a smooth manifold).
	
	Let $X$ be a topological space. We define the category $\text{Top}(X)$ whose objects are the open subsets of $X$ and the set of morphisms between two objects is given by the inclusion of open sets, i.e.~if $U,V\subset X$ are open then
	\[
	\Hom(V,U) := \begin{cases}
	\{*\}, & \text{if}\ V\subset U \\
	\emptyset, & \text{if}\ V\not\subset U,   
	\end{cases}
	\]
	where $\{*\}$ denotes the one-element set.
	
	\begin{definition}
		A \textbf{presheaf $\mathscr{F}$ on $X$ with values in a (fixed) category $\mathcal{C}$}\index{presheaf} is a contravariant functor from $\text{Top}(X)$ to $\mathcal{C}$.   
	\end{definition}
	
	\begin{remark}
		Equivalently, the presheaf $\mathscr{F}$ can be defined as a covariant functor $\text{Top}(X)^{\text{op}}\to\mathcal{C}$; here, the notation means the opposite category, i.e.~for a category $\mathcal{D}$, $\mathcal{D}^{\text{op}}$ has the same objects with $\mathcal{D}$ but its morphisms are given by $\Hom_{\mathcal{D^{\text{op}}}}(A,B)=\Hom_{\mathcal{D}}(B,A)$ for all objects $A,B$ of $\mathcal{D}$.
	\end{remark}
	
	Unravelling the above definition yields that a presheaf $\mathscr{F}$ on $X$ with values in the category $\mathcal{C}$ consists of the following data:
	\begin{itemize}
		\item an object $\mathscr{F}(U)$ of $\mathcal{C}$ for every open subset $U\subset X$,
		
		\item a morphism $\varrho_{UV}:\mathscr{F}(U)\to\mathscr{F}(V)$ for every inclusion of open sets $V\subset U$ such that
		\begin{enumerate}
			\item $\varrho_{UU}$ is the identity map $\mathscr{F}(U)\to\mathscr{F}(U)$,
			
			\item if $W\subset V\subset U$ are three open subsets of $X$, then $\varrho_{UW} = \varrho_{VW}\circ\varrho_{UV}$, i.e.~the following diagram is commutative
			\begin{center}
				\begin{tikzcd}
				\mathscr{F}(U) \arrow[rr, "\varrho_{UW}"] \arrow[dd, "\varrho_{UV}"'] & & \mathscr{F}(W) \\
				& & \\
				\mathscr{F}(V) \arrow[rruu, "\varrho_{VW}"']                          &               
				\end{tikzcd}
			\end{center}
		\end{enumerate}  
	\end{itemize}
	
	\begin{remark}
		\begin{enumerate}
			\item Most frequently, the category $\mathcal{C}$ is taken to be the category of sets, abelian groups, rings, etc. For our purposes, the base space will be a smooth manifold $M$ and the target category $\mathcal{C}$ will be the category of $\mathbb{Z}$-graded, graded commutative, associative, unital $C^\infty(M)$-algebras with degree-preserving morphisms.
			
			\item Usually, if the category $\mathcal{C}$ has a terminal object $0$, then $\mathscr{F}(\emptyset)=0$ is also required, e.g.~$\mathscr{F}(\emptyset) = \{*\}$ for sets, $\mathscr{F}(\emptyset) = \{0\}$ for abelian groups, etc.
		\end{enumerate}
	\end{remark}
	
	The elements of $\mathscr{F}(U)$ are called \textbf{sections over $U$}\index{presheaf!sections} and the maps $\varrho_{UV}$ are called \textbf{restrictions}\index{presheaf!restriction maps}. The restriction $\varrho_{UV}(\sigma), \sigma\in\mathscr{F}(U)$, for $V\subset U$ is also written as $\sigma|_V$.
	
	\begin{definition}
		Let $\mathscr{F}_1$ and $\mathscr{F}_2$ be two sheaves on $X$ with values in $\mathcal{C}$. A \textbf{morphism of presheaves}\index{presheaf!morphism} is a natural transformation of functors. $\phi:\mathscr{F}_1\to\mathscr{F}_2$. An \textbf{isomorphism of sheaves}\index{presheaf!isomorphism} is a natural equivalence of functors.
	\end{definition}
	
	The above definition implies that a morphism of presheaves $\phi:\mathscr{F}_1\to\mathscr{F}_2$ consists of morphisms $\phi_U:\mathscr{F}_1(U)\to\mathscr{F}_2(U)$ of $\mathcal{C}$ which are compatible with the restriction maps: $\phi_V\circ\varrho^1_{UV} = \varrho^2_{UV}\circ\phi_U$, i.e the following diagram commutes for all open subset $V\subset U$ of $X$:
	\begin{center}
		\begin{tikzcd}
		\mathscr{F}_1(U) \arrow[dd, "\varrho^1_{UV}"'] \arrow[rr, "\phi_U"] & & \mathscr{F}_2(U) \arrow[dd, "\varrho^2_{UV}"] \\
		& & \\
		\mathscr{F}_1(V) \arrow[rr, "\phi_V"']                     &        & \mathscr{F}_2(V)                            
		\end{tikzcd}
	\end{center}
	The morphism $\phi:\mathscr{F}_1\to\mathscr{F}_2$ is an isomorphism if the map $\phi_U:\mathscr{F}_1(U)\to\mathscr{F}_2(U)$ is an isomorphism in $\mathcal{C}$ for all open subsets $U\subset X$.
	
	From the discussion above, it follows that presheaves on $X$ with values in the category $\mathcal{C}$ form a category which is the category of functors $\text{PreSh}_\mathcal{C}(X) := \text{Fun}(\text{Top}^{\text{op}}(X),\mathcal{C})$.
	
	\begin{example}[Continuous functions]\label{Example: sheaf of continuous functions}
		The presheaf of continuous functions $C^0$ on $X$
		\[
		C^0(U) = \{f:U\to\mathbb{R}\ |\ f\ \text{continuous}\}
		\]
		with $\varrho_{UV}(f) = f|_V$ is a presheaf of sets, abelian groups, rings and $\mathbb{R}$-algebras. More generally, given another topological space $Y$, one obtains the sheaf of sets
		\[
		C^0_U(X,Y) = \{f:U\to Y\ |\ f\ \text{continuous}\}
		\]
		with $\varrho_{UV}(f) = f|_V$. 
	\end{example}
	
	\begin{example}[Smooth functions]\label{Example: sheaf of smooth functions}
		If $M$ is a smooth manifold, then $C^{\infty}$ defines the presheaf of smooth functions on $M$ which again is a presheaf of sets, abelian groups, rings and $\mathbb{R}$-algebras. Moreover, the inclusion $C^\infty(U) \subset C^0(U)$ for every open $U\subset M$ defines a morphism of presheaves $C^\infty\to C^0$.
	\end{example}
	
	\begin{definition}
		Given two objects $A,B$ of a category $\mathcal{C}$ together with two morphisms $f,g:A\to B$, the \textbf{equaliser of $f,g:A\to B$}\index{equaliser} consists of an object $E$ in $\mathcal{C}$ and a morphism $e:E\to A$ satisfying
		\begin{enumerate}
			\item $f\circ e = g\circ e$, and such that
			
			\item given any object $C$ in $\mathcal{C}$ and morphism $h:C\to A$, then there exists a unique $u:C\to E$ such that $e\circ u = h$.
		\end{enumerate}
	\end{definition}
	
	That is, the equaliser of $f,g:A\to B$ fits in the following commutative diagram
	\begin{center}
		\begin{tikzcd}
		E \arrow[rr, "e"]              &                    & A \arrow[rr, shift left=.75ex, "f"]\arrow[rr, shift right=.75ex, "g"'] & & B \\
		&&&& \\
		C \arrow[rruu, "h"'] \arrow[uu, "\exists!u", dashed] &           &            &  
		\end{tikzcd}
	\end{center}
	
	Assume now that all products $\prod_{i\in I}$ exist in the category $\mathcal{C}$.
	
	\begin{definition}
		A \textbf{sheaf}\index{sheaf} is a presheaf $\mathscr{F}$ such that for all open covers $U=\bigcup_{i\in I} U_i$, the map
		\[
		e_I:\mathscr{F}(U)\to\prod_{i\in I}\mathscr{F}(U_i), \qquad \sigma\mapsto \left( \sigma|_{U_i} \right)_{i\in I}
		\]
		is an equaliser for
		\[
		f_I,g_I:\prod_{i\in I}\mathscr{F}(U_i) \to \prod_{i,j\in I}\mathscr{F}(U_{ij}),
		\qquad
		f_I((\sigma_i)_{i\in I}) = \left(\sigma_i|_{U_{ij}}\right)_{i,j\in I}, 
		g_I((\sigma_i)_{i\in I}) = \left(\sigma_j|_{U_{ij}}\right)_{i,j\in I}
		\]
		where $U_{ij}:=U_i\cap U_j$.
	\end{definition}
	
	Explicitly, the above definition formalises the following statements:
	\begin{enumerate}
		\item If $U$ is an open set of $X$ together with an open cover $U=\bigcup_{i\in I}U_i$ and $\sigma,\tau\in\mathscr{F}(U)$ are sections over $U$ such that $\sigma|_{U_i} = \tau|_{U_i}$ for all $i$, then $\sigma=\tau$. In particular, if the category $\mathcal{C}$ is the category of abelian groups, rings, etc. then $\sigma|_{U_i}=0$ for all $i$ implies $\sigma=0$.
		
		\item If $U$ is an open set of $X$ together with an open cover $U=\bigcup_{i\in I}U_i$ and we have elements $\sigma_i\in\mathscr{F}(U_i)$ for each $i$ with the property that for each $i,j$, $\sigma_i|_{U_i\cap U_j} = \sigma_j|_{U_i\cap U_j}$, then there is a (unique due to the first statement) element $\sigma\in\mathscr{F}(U)$ such that $\sigma|_{U_i} = \sigma_i$ for all $i$.
	\end{enumerate}
	
	\begin{remark}
		Note that not all presheaves are automatically sheaves. One such example is the sheaf of continuous and bounded functions on $\mathbb{R}$.
	\end{remark}
	
	\begin{definition}
		A \textbf{morphism of sheaves}\index{sheaf!morphism} is just a morphism of the underline presheaves.
	\end{definition}
	
	Hence, one obtains the category of sheaves $\text{Sh}_\mathcal{C}(X)$ as a full subcategory of $\text{PreSh}_{\mathcal{C}}(X)$.
	
	\begin{example}
		The presheaves of continuous and smooth functions $C^0$ and $C^\infty$ given in Example \ref{Example: sheaf of continuous functions} and Example \ref{Example: sheaf of smooth functions} are also examples of sheaves.
	\end{example}
	
	\begin{example}
		Other examples of sheaves coming from differential geometry are the sheaf of vector fields over a smooth manifold, the sheaf of differential forms, and more generally the sheaf of sections of any fibre bundle.
	\end{example}
	
	\begin{remark}
		Note that if $\mathcal{B}$ is a basis for the topology on $X$, then one can define a sheaf $\mathscr{F}$ on $X$ by defining only objects $\mathscr{F}(U)$ for each $U\in\mathcal{B}$, which are compatible with the restrictions. 
	\end{remark}
	
	Let now $(I,\leq)$ be a directed set, i.e.~$\leq$ is a reflexive and transitive relation on the set $I$, and suppose that $(A_i)_{i\in I}$ if a family of objects in $\mathcal{C}$ together with a collection of morphisms $f_{ij}:A_i\to A_j$ for all $i\leq j$. The pair $(A_i,f_{ij})$ is called a \textbf{direct system}\index{direct system} over $I$ if $f_{ii} = \id_{A_i}$ for all $i\in I$, and $f_{ik} = f_{jk}\circ f_{ij}$ for all $i\leq j\leq k$. In a more categorical language, $I$ is an index category and a direct system is a covariant functor $f:I\to \mathcal{C}, i\mapsto A_i, (i\leq j)\mapsto (f_{ij}:A_i\to A_j)$.
	
	\begin{definition}
		Let $(A_i,f_{ij})$ be a direct system of objects and morphisms in $\mathcal{C}$. A \textbf{direct limit}\index{direct limit} of $(A_i,f_{ij})$ is a pair $(L,g_i)$ with $L$ an object in $\mathcal{C}$ and $g_i:A_i\to L$ a morphism for all $i\in I$, such that
		\begin{enumerate}
			\item $g_i = g_j\circ f_{ij}$ for all $i\leq j$,
			
			\item given another pair $(B,h_i)$ for all $i\in I$ with $B$ an object in $\mathcal{C}$ and $h_i:A_i\to B$ a morphism which has the property $h_i = h_j\circ f_{ij}$ for all $i\leq j$, then there is a unique morphism $u:L\to B$ such that $u\circ g_i = h_i$ for all $i\in I$.
		\end{enumerate}
	The object of the direct limit is denoted by $L=\lim\limits_{\underset{i\in I}{\longrightarrow}}A_i=\lim\limits_{\longrightarrow}A_i$. 
	\end{definition}
	In the language of diagrams, the direct limit of the direct system $(A_i,f_{ij})$ fits in the following commutative diagram
	\[
	\begin{tikzcd}
	A_i \arrow[rr, "f_ij"] \arrow[rd, "g_i"'] \arrow[rddd, "h_i"', bend right] &                           & A_j \arrow[ld, "g_j"] \arrow[lddd, "h_j", bend left] \\
	& L \arrow[dd, "\exists!u", dashed] &                                                      \\
	&                           &                                                      \\
	& B                         &                                                     
	\end{tikzcd}
	\]
	If the category $\mathcal{C}$ is the category of the usual algebraic structures such as groups, rings, algebras, etc. then the direct limit of $(A_i,f_{ij})$ is given by
	\[
	\lim_{\longrightarrow} A_i = \bigsqcup_{i\in I}A_i\Big/\sim,
	\]
	where by definition $A_i\ni a_i\sim a_j\in A_j$ if and only if there is $k\in I$ such that $i,j\leq k$ and $f_{ik}(a_i) = f_{jk}(a_j)$. 
	
	\begin{definition}
		Suppose that $\mathscr{F}$ is a presheaf on $X$ with values in $\mathcal{C}$ and suppose also that direct limits exist in $\mathcal{C}$. Given a point $x\in X$, the \textbf{stalk}\index{presheaf!stalk} of $\mathscr{F}$ at $x$ is defined as
		\[
		\mathscr{F}_x:=\lim_{\underset{x\in U}{\longrightarrow}}\mathscr{F}(U),
		\]
		where the direct limit is taken over all open sets $U$ containing $x$ together with the restriction maps.
	\end{definition}
	\begin{remark}
		For all $x\in U$, there is a natural map $\mathscr{F}(U)\to\mathscr{F}_x$ which sends $\sigma\in\mathscr{F}(U)$ to $\sigma_x$ called the \textbf{germ of $\sigma$ at $x$}\index{germ}.
	\end{remark}
	
	The importance of the last constructions is shown in the following definitions.
	
	\begin{definition}
		Let $\mathscr{F}$ be a presheaf on $X$. The \textbf{sheaf $\mathscr{F}^+$ associated to $\mathscr{F}$}\index{sheaf!associated to a presheaf} assigns to an open $U\subset X$ the set of functions $\sigma:U\to\bigsqcup_{x\in U}\mathscr{F}_x$ such that 
		\begin{enumerate}
			\item $\sigma(x)\in \mathscr{F}_x$ for all $x\in U$,
			
			\item for each $x\in U$, there is an open $V\subset U$ containing $x$ and a section $\tau\in$ such that $\sigma(y) = \tau_y$ for all $y\in V$. 
		\end{enumerate}
	\end{definition} 
	
	The sheaf $\mathscr{F}^+$ comes together with a natural morphism (of presheaves) $f:\mathscr{F}\to\mathscr{F}^+$ with the property that for any sheaf $\mathscr{G}$ and any morphism $g:\mathscr{F}\to\mathscr{G}$, there is a unique morphism $h:\mathscr{F}\to\mathscr{G}$ such that $g=h\circ f$:
	\[
	\begin{tikzcd}
	\mathscr{F} \arrow[rr, "f"] \arrow[dd, "g"'] &  & \mathscr{F}^+ \arrow[lldd, "\exists!h", dashed] \\
	&  &                                                 \\
	\mathscr{G}                                  &  &                                                
	\end{tikzcd}
	\]
	
	\begin{remark}
		Note that in case $\mathscr{F}$ is already a sheaf, then $\mathscr{F}^+ = \mathscr{F}$.
	\end{remark}
	
	\begin{example}
		It was mentioned above that an example of a presheaf that is not a sheaf is given by the sheaf of continuous and bounded functions on the real line:
		\[
		B^0(\mathbb{R}):=\{f\in C^0(\mathbb{R})\ |\ f\ \text{is bounded}\}.
		\]
		The sheaf associated to the presheaf $B^0$ is the sheaf of continuous functions on $\mathbb{R}$:
		\[
		(B^0)^+ = C^0(\mathbb{R}).
		\]
	\end{example}
	
	Using the sheaf associated to a presheaf, one can construct many sheaves from the usual algebraic constructions such as tensor products, inverse images, etc. Here we give the examples that will be relevant to us later.
	
	\begin{definition}
		Let $f:X\to Y$ be a continuous map and $\mathscr{F}_1,\mathscr{F}_2$ be sheaves on $X$ and $Y$, respectively. We define the \textbf{direct image sheaf $f_*\mathscr{F}_1$}\index{sheaf!direct image} on $Y$ as the sheaf which sends an open $V\subset Y$ to $f_*\mathscr{F}_1(V) = \mathscr{F}_1(f^{-1}(V))$, and the \textbf{inverse image sheaf $f^{-1}\mathscr{F}_2$}\index{sheaf!inverse image} on $X$ as the sheaf associated to the presheaf
		\[
		U\mapsto\lim_{\underset{V\supset f(U)}{\longrightarrow}}\mathscr{F}(V),
		\]	
		where the limit is taken over all open sets $V\subset Y$ containing $f(U)$.
	\end{definition}
	
	\begin{definition}
		A \textbf{ringed space}\index{ringed space} is a pair $(X,\mathscr{O}_X)$ where $X$ is a topological space and $\mathscr{O}_X$ is a sheaf of rings. A \textbf{morphism of ringed spaces $f:(X,\mathscr{O}_X)\to(Y,\mathscr{O}_Y)$}\index{ringed space!morphism} consists of a continuous map $f:X\to Y$ and a map of sheaves $f^\star:\mathscr{O}_Y\to f_*\mathscr{O}_X$. A \textbf{sheaf of $\mathscr{O}_X$-modules}\index{sheaf!of modules}, or \textbf{$\mathscr{O}_X$-module}, is a sheaf (of groups) $\mathscr{F}$ on $X$, such that $\mathscr{F}(U)$ is an $\mathscr{O}_X(U)$-module for all open subsets $U\subset X$, and such that for all open subsets $V\subset U \subset X$ the restriction $\mathscr{F}(U)\to\mathscr{F}(V)$ is compatible with the ring homomorphism $\mathscr{O}(U)\to\mathscr{O}(V)$, i.e.~$\mathscr{F}(U)\to\mathscr{F}(V)$ is a map of modules. A \textbf{morphism $\mathscr{F}_1\to\mathscr{F}_2$ of $\mathscr{O}_X$-modules}\index{sheaf!morphism of modules} is a morphism of sheaves such that for each open subset $U\subset X$, the map $\mathscr{F}_1(U)\to\mathscr{F}_2(U)$ is a morphism of $\mathscr{O}_X(U)$-modules.
	\end{definition}
	
	\begin{definition}
		Let $(X,\mathscr{O}_X)$ be a ringed space. Given two sheaves $\mathscr{F}_1,\mathscr{F}_2$ of $\mathscr{O}_X$-modules, we define the \textbf{tensor product sheaf $\mathscr{F}_1\otimes_{\mathscr{O}_X}\mathscr{F}_2$}\index{sheaf!tensor product} as the sheaf associated to the presheaf defined by the following assignment:
		\[
		U\mapsto\mathscr{F}_1(U)\otimes_{\mathscr{O}_X(U)}\mathscr{F}_2(U).
		\]
	\end{definition}
	
	Let $f:(X,\mathscr{O}_X)\to (Y,\mathscr{O}_Y)$ be a morphism of ringed spaces and $\mathscr{F}$ be a sheaf of $\mathscr{O}_Y$-modules. Then $f^{-1}\mathscr{F}$ is a sheaf of $f^{-1}\mathscr{O}_Y$-modules. Moreover, by the properties of $f$ and the universal property of direct limits, we obtain the following commutative diagram for all open subsets $U\subset X$ and all open subsets $W\subset V\subset Y$ containing $f(U)$:
	
	\[
	\begin{tikzcd}
	& \mathscr{O}_Y(V) \arrow[rr, "\varrho_{\mathscr{O}_Y}"] \arrow[ld, "f_V^\star"', bend right] \arrow[rd] &                                                          & \mathscr{O}_Y(W) \arrow[ld] \arrow[rd, "f_W^\star", bend left] &                                                     \\
	\mathscr{O}_X(f^{-1}(V)) \arrow[rrdd, "\varrho_{\mathscr{O}_X}"] \arrow[rrrr, "\varrho_{\mathscr{O}_X}", bend right=67, shift right=2] &                                                                                                        & f^{-1}\mathscr{O}_Y(U) \arrow[dd, "\exists!\widehat{f}_U", dashed] &                                                                & \mathscr{O}_X(f^{-1}(W)) \arrow[lldd, "\varrho_{\mathscr{O}_X}"'] \\
	&                                                                                                        &                                                          &                                                                &                                                     \\
	&                                                                                                        & \mathscr{O}_X(U)                                         &                                                                &                                                    
	\end{tikzcd}
	\]
	That is, the sheaf $\mathscr{O}_X$ is also a sheaf of $f^{-1}\mathscr{O}_Y$-modules via a natural morphism $\widehat{f}:f^{-1}\mathscr{O}_Y\to\mathscr{O}_X$ and thus we may give the following definition.
	
	\begin{definition}
		Let $f:(X,\mathscr{O}_X)\to (Y,\mathscr{O}_Y)$ be a morphism of ringed spaces and $\mathscr{F}$ a sheaf of $\mathscr{O}_Y$-modules. We define the \textbf{pull-back sheaf $f^!\mathscr{F}$}\index{sheaf!pull-back} as the $\mathscr{O}_X$-module 
		\[
		\mathscr{O}_X\otimes_{f^{-1}\mathscr{O}_Y}f^{-1}\mathscr{F}.
		\]
	\end{definition}
	
	\chapter{$\mathbb{Z}$- and $\mathbb{N}$-graded supergeometry}\label{Chapter: Z- and N-graded supergeometry}
	
	The goal of this chapter is to cover the basics of graded geometry. In particular, it offers quick but sufficient introduction to the theory of $\mathbb{Z}$- and $\mathbb{N}$-manifolds together with various geometric constructions on them. 
	
	\section{The categories of $\mathbb{Z}$- and $\mathbb{N}$-graded manifolds}\label{Section: The categories of Z- and N-graded manifolds}
	
	A \textbf{$\mathbb{Z}$-graded manifold $\M$ of dimension $(m;(r_i)_{i\in\mathbb{Z}})$}\index{graded manifold} with $m,r_i\in\mathbb{N}$\footnote{Only finitely many of the variables $r_i$ are assumed to be non-zero.}, for short \textbf{$\mathbb{Z}$-manifold}\index{$\mathbb{Z}$-graded manifold}\index{$\mathbb{Z}$-manifold}, is a ringed space $\M=(M,\cin(\M))$, where $M$ is an ordinary $m$-dimensional smooth manifold and
	$\cin(\M)$ is a sheaf of $\mathbb{Z}$-graded,
	graded commutative, associative, unital
	$C^\infty(M)$-algebras, such that every point $p\in M$ has an open neighbourhood $U$ which satisfies
	\begin{equation}\label{local structure of graded manifold}
	\cin_U(\M)\cong C^\infty(U)\otimes \underline{S}(\underline{V})
    \end{equation}
	as sheaves of graded commutative algebras, where $\underline{V}=\bigoplus_{i\in\mathbb{Z}} V_i[i]$ is a fixed $\mathbb{Z}$-graded vector space of finite dimension and such that $\dim(V_{i}) = r_{i}$ for all $i\in\mathbb{Z}$; here $\underline{S}(\underline{V})$ denotes the graded symmetric algebra of the graded vector space $\underline{V}$. The smooth manifold $M$ is sometimes called \textbf{base manifold}\index{graded manifold!base manifold} or \textbf{body}\index{graded manifold!body} of $\M$. An \textbf{$\mathbb{N}$-graded manifold}\index{$\mathbb{N}$-graded manifold}, or simply \textbf{$\mathbb{N}$-manifold}\index{$\mathbb{N}$-manifold} is a $\mathbb{Z}$-graded manifold $\M$, such that the vector space $\underline{V}$ is only positively graded, i.e.~$V_i=\{0\}$ for all $i\geq0$\footnote{This means that only positively graded summands of $\underline{V}$ survive as the space $V_i$ is in the $-i$ position of the graded sum.}; we write \textbf{$\n$-manifold}\index{$\n$-manifold} for an $\mathbb{N}$-graded manifold $\M$ if the model space $\underline{V}$ has $V_n\neq\{0\}$ and $V_i=\{0\}$ for all $i>n$.  \textbf{A morphism of $\mathbb{Z}$-} (respectively \textbf{$\mathbb{N}$-})\textbf{manifolds}\index{graded manifold!morphism} $\mu\colon \N\to\M$
	over a smooth map $\mu_0\colon N\to M$ of the underlying
	smooth manifolds is a (degree 0) morphism of sheaves of graded
	algebras $\mu^\star\colon \cin(\M)\to \cin(\N)$
	over $\mu_0^\ast\colon C^\infty(M)\to C^\infty(N)$. As usual, the degree of a (degree-)homogeneous element
	$\xi\in \cin(\M)$ will be denoted $|\xi|$.
	
	Given a coordinate chart $(U,x^1,\ldots,x^m)$ of $M$ as in (\ref{local structure of graded manifold}) and a choice of a basis $\xi_i^1,\ldots,\xi_i^{r_i}$ for each vector space $V_i$, one obtains the $m+\sum_i r_i$ (graded) ``local coordinates"\index{graded manifold!local coordinates} $(x^1,\ldots,x^m,\xi_i^1,\ldots,\xi_i^{r_i})$ of the graded manifold $\M$ equipped with the following degrees:
	
	\begin{itemize}
		\item the coordinates $(x^1,\ldots,x^m)$ have degree 0,
		\item the coordinates $(\xi_i^1,\ldots,\xi_i^{r_i})$ have degree $-i$, for $i\in\mathbb{Z}$ and
		$j\in \{1,\ldots,r_i\}$.
	\end{itemize} 
	
	\begin{remark}
		Note that the degree $0$ coordinates of an $\mathbb{N}$-manifold consist entirely of the coordinates of the base manifold $M$. This is not the case for the degree $0$ coordinates of a general $\mathbb{Z}$-manifold $\M$, unless the vector space $V_0$ is trivial. Even in this case, the degree $0$ functions of the $\mathbb{Z}$-manifold may have a ``polynomial" part that comes from the graded coordinates of non-zero degree, since, for instance, the local functions $\xi_i^{j}\xi_{-i}^k$ have degree $0$ for all $i\in\mathbb{Z}\setminus\{0\}$. On the contrary, by definition of the structure sheaf of a graded manifold, the degree $0$ functions of an $\mathbb{N}$-manifold $\M$ are given by $\cin(\M)^0=C^\infty(M)$.
	\end{remark}
	
	Many examples of $\mathbb{Z}$- (respectively $\mathbb{N}$-)graded manifolds come from finite graded vector bundles $\E=\bigoplus_{i\in\mathbb Z}E_i[i]\to M$ as follows: The construction of a graded manifold from $\E$ is done by assigning the degree $i$ to the fibre coordinates of the vector bundle $E_i^*$, for each $i\in\mathbb{Z}$. The resulting graded manifold has dimension $(\dim(M);(\rank(E_{-i}))_{i\in\mathbb{Z}})$ and is denoted by the same notation $\M:=\E$. The structure sheaf of $\M=\E$ is given by the sections of the (graded) symmetric algebra of $\E^*$:
	\begin{equation}\label{Functions of split [n]-manifold}
	\cin(\M) = \Gamma(\underline{S}(\underline{E}^*)) =
	\Gamma\left( \wedge E^*_{\text{odd}} \otimes S E^*_{\text{even}} \right),
	\end{equation}
	where
	\[
	E^*_{\text{odd}}= \bigoplus_{i\in2\mathbb Z + 1} E^*_i[-i]
	\qquad \text{and} \qquad
	E^*_{\text{even}}= \bigoplus_{i\in2\mathbb Z} E^*_i[-i].
	\]
	
	It is clear from the construction that $\M=\E$ is an $\mathbb{N}$-manifold if $E_i$ is zero for all $i\geq0$. In particular, as the next proposition shows, every $\mathbb{N}$-manifold is (non-canonically) isomorphic to an $\mathbb{N}$-manifold obtained in that way.
	
	\begin{proposition}[\cite{Roytenberg02,BoPo13}]
		Any $[n]$-manifold over  $M$ is isomorphic to an $[n]$-manifold corresponding to a graded vector bundle $\E=\bigoplus_{i=1}^n E_i[i]\to M$.
	\end{proposition}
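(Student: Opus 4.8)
The plan is to argue by induction on the degree $n$, reducing the assertion to a local-to-global (partition of unity) statement: the local splittings are already supplied by the defining condition \eqref{local structure of graded manifold}, so the whole content of the proposition lies in patching these local models into a single global isomorphism over $\id_M$. First I would extract the candidate vector bundles intrinsically. Let $\mathcal{J}\subset\cin(\M)$ be the sheaf of ideals generated by the functions of positive degree. For each $i\geq 1$ the degree $i$ component of $\mathcal{J}/\mathcal{J}^2$ is a sheaf of $C^\infty(M)$-modules which, by the local model \eqref{local structure of graded manifold}, is locally free of rank $r_i$; it is therefore the section sheaf of a vector bundle that I name $E_i^\ast\to M$, so that $E_i$ is responsible for the degree $i$ coordinates. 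Setting $\E=\bigoplus_{i=1}^n E_i[i]$ produces the target split $[n]$-manifold, whose structure sheaf is $\Gamma(\underline{S}(\underline{E}^\ast))$ by \eqref{Functions of split [n]-manifold}. It then remains to construct a degree $0$ isomorphism of sheaves of graded algebras $\Phi^\star\colon\Gamma(\underline{S}(\underline{E}^\ast))\to\cin(\M)$ covering $\id_M$.

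Next I would organize the induction. The base cases $n=0$ (where $\M=M$) and $n=1$ (where the structure sheaf is forced to be $\Gamma(\wedge E_1^\ast)$, with no freedom) are immediate. For the inductive step, assume the claim in degree $n-1$. Since $\M$ is $\mathbb N$-graded and $n$ is the top degree, a degree $n$ coordinate can only enter a degree $n$ function linearly; hence $\cin(\M)^n$ splits as $\Gamma(E_n^\ast)$ together with products of generators of strictly lower degree. Quotienting out this linear top part yields an $[n-1]$-manifold $\M'$ built from the lower coordinates, and by the inductive hypothesis $\M'\cong\bigoplus_{i=1}^{n-1}E_i[i]$. Thus it suffices to extend a chosen lower-degree isomorphism across the top degree $n$, i.e.\ to choose a $C^\infty(M)$-linear lift of $\Gamma(E_n^\ast)$ into $\cin(\M)^n$ compatible with the already-constructed part.

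The main obstacle, and the only step that is not formal, is globalizing these top-degree splittings. On a good cover $\{U_\alpha\}$ of $M$ I would select local isomorphisms $\Phi^\star_\alpha$ from the local model; on each overlap the discrepancy $\Phi^\star_\alpha\circ(\Phi^\star_\beta)^{-1}$ is an automorphism of the split model inducing the identity on the associated graded. Such automorphisms are filtered by the degree by which they lower the filtration, and the successive (abelian) quotients are derivations of fixed degree, i.e.\ sections of vector bundles of the form $\GHom(E_\bullet,\underline{S}(\underline{E}^\ast))$. The patching datum is therefore, stage by stage, a \v{C}ech $1$-cocycle valued in a sheaf of sections of a vector bundle, which is a fine sheaf over the smooth manifold $M$; hence its first cohomology vanishes and each cocycle is a coboundary. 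Splitting the cocycle (equivalently, averaging the $\Phi^\star_\alpha$ in the affine space of local splittings against a partition of unity subordinate to $\{U_\alpha\}$) yields the desired global $\Phi^\star$. This is precisely the graded analogue of Batchelor's theorem, and it is exactly here that the resulting isomorphism is non-canonical, depending on the chosen partition of unity; the filtration trick reducing the nonabelian automorphism sheaf to these abelian pieces is the one point requiring care.
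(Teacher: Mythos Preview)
Your argument is correct and shares the same inductive skeleton as the paper's proof: both pass through the tower $\M\to\M_{n-1}\to\cdots\to M$ and reduce to extending a splitting of $\M_{n-1}$ across the top degree. The difference lies in how the extension is obtained. You patch local splittings via a \v{C}ech/fine-sheaf argument, filtering the automorphism group of the split model and killing cocycles with a partition of unity. The paper instead observes directly that $\cin(\M)^n$ is a locally free $C^\infty(M)$-module, hence $\Gamma(E)$ for some ordinary vector bundle $E\to M$, and then simply splits the short exact sequence of vector bundles
\[
0\longrightarrow \underline{S}^n(\E_0^\ast)\longrightarrow E\longrightarrow E_n^\ast\longrightarrow 0
\]
over $M$, which is automatic in the smooth category. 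This is more economical: a single global splitting of one vector-bundle sequence replaces your entire \v{C}ech step, and no filtration of the automorphism group is needed. Your Batchelor-style argument has the advantage of being the template that generalizes (e.g.\ to supermanifolds or other sheaf-theoretic contexts), but in the present $\mathbb{N}$-graded situation the paper's shortcut is available and cleaner.
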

	
	\begin{proof}
		The sketch of the proof can be found in \cite{Roytenberg02,BoPo13}. Here we follow \cite{Cueca19a}, where a complete proof using induction over $n$ is given.
		
		First observe that for each $\n$-manifold $\M$, there is a tower of $[i]$-manifolds
		\[
		\M_n=\M\longrightarrow\M_{n-1}\longrightarrow\ldots\longrightarrow\M_1\longrightarrow\M_0=M
		\]
		for $i=0,1,\ldots,n$, whose sheaf of functions is given by:
		\[
		\cin(\M_i) := \text{span}\{\xi\in\cin(\M)\ |\ |\xi|\leq i\}.
		\] 
		We will now prove the result by induction on the degree $n\geq1$ of the $\mathbb{N}$-manifold $\M$.
		
		For $n=1$, the local description of the $[1]$-manifold in (\ref{local structure of graded manifold}) implies that $\cin(\M)$ is generated as an algebra by $\cin(\M)^0 = C^\infty(M)$ and $\cin(\M)^1$. In other words, we have that  $\cin(\M)^k = \underline{S}^k(\cin(\M)^1)$. Moreover, the sheaf $\cin(\M)^1$ is a locally free and finitely generated sheaf of $C^\infty(M)$-modules, and thus there exists a vector bundle $E\to M$ whose sheaf of sections is given by $\Gamma(E) = \cin(\M)^1$. That is, $\M \simeq E$.
		
		Suppose the result holds for some degree $n-1\geq1$ and let $\M$ be an $\n$-manifold. This implies that there is a graded vector bundle $\E_0=\bigoplus_{i=1}^{n-1} E_i[i]\to M$ such that $\M_{n-1} \simeq \E_0$ and consequently $\cin(\M_{n-1})^k \simeq \Gamma(\underline{S}^k(\E_0^*))$. The graded algebra $\cin(\M)$ is generated by $\cin(\M_{n-1})$ and $\cin(\M)^n$, and the latter fits in the short exact sequence
		\begin{equation}\label{SES for functions of split N-manifolds}
			0\longrightarrow \cin(\M_{n-1})^n \longrightarrow \cin(\M)^n \longrightarrow \cin(\M)^n/\cin(\M_{n-1})^n\longrightarrow 0.
		\end{equation}
		It follows from $C^\infty(M)\,\cdot\,\cin(\M)^n\subset \cin(\M)^n$ that $\cin(\M)^n$ is a $C^\infty(M)$-module and from equation (\ref{local structure of graded manifold}) that it is locally freely generated. Therefore, there exists a vector bundle $E\to M$ with the property that $\Gamma(E)=\cin(\M)^n$. Setting $E_n^*:=E/\underline{S}^n(\E_0^*)\to M$ and choosing a splitting of the short exact sequence of vector bundles
		\[
		0\longrightarrow \underline{S}^n(\E_0^*) \longrightarrow E \longrightarrow E/\underline{S}^n(\E_0^*)\longrightarrow 0,
		\]
		we obtain that $E\cong E_n^* \oplus \underline{S}^n(\E_0^*)$. Hence, we conclude that $\M\simeq \E$, where we define the graded vector bundle $\E\to M$ to be the direct sum $\E:=\E_0 \oplus E[n]$.
	\end{proof}
	
	\begin{definition}
		A graded manifold associated to a graded vector bundle $\E\to M$ as above is called \textbf{split $\mathbb{Z}$-} (respectively \textbf{$\mathbb{N}$-})\textbf{graded manifold}\index{$\mathbb{N}$-graded manifold!split}\index{$\mathbb{Z}$-graded manifold!split}.
	\end{definition}
	
	\begin{remark}
		\begin{enumerate}
			\item Note that different choices of splittings of the $\mathbb{N}$-graded manifold $\M$, i.e.~isomorphisms $\M\cong\E=\bigoplus_i E_i[i]$, lead to isomorphic vector bundles $E_i$. What changes is the way to view the sections of $\Gamma(E_i^*)$ as functions of the graded manifold via the splittings of the short exact sequence (\ref{SES for functions of split N-manifolds}) (see also Example \ref{Change of splitting for [2]-manifolds}).
			
			\item Suppose that $U\subset M$ is a trivialising neighbourhood for all the vector bundles $E_i\to M$, i.e.~$E_i|_U\cong U \times \mathbb{R}^{k_i}$ for all $i\in\mathbb{Z}$, where $k_i=\rank(E_i)$. Then the graded coordinates of the manifold $\M$ are given by the following:
			\begin{itemize}
				\item a set of coordinates $(x^1,\ldots,x^m)$ of $M$ over $U$ have degree $0$,
				\item the sections of a local frame $(\varepsilon_i^1,\ldots,\varepsilon_i^{k_i})$ of $E_i^*$ over $U$ have degree $i$.
			\end{itemize}
		\end{enumerate}
	\end{remark}
	
	\begin{example}[Linear $\mathbb{Z}$-manifolds]
		Consider a finite collection of natural numbers $k_i\in\mathbb{N}$, for $i\in\mathbb{Z}$ (necessarily including $0$). Then one can define the trivial vector bundles
		\[
		E_i:=\mathbb{R}^{k_0}\times\mathbb{R}^{k_i}\to\mathbb{R}^{k_0}
		\]
		for all $i\in\mathbb{Z}$, and sum over $\mathbb{R}^{k_0}$ to obtain the graded vector bundle
		\[
		\E:=\bigoplus_{i\in\mathbb Z} E_i[i]\to\mathbb{R}^{k_0}.
		\]
		The split $\mathbb{Z}$-manifold obtained by $\E$ with body $\mathbb{R}^{k_0}$, denoted $\mathbb{R}^{\{k_i\}}$, is a called \textbf{linear $\mathbb{Z}$-manifold}\index{graded manifold!linear manifold}. Its structure sheaf is given locally by
		\[
		C^\infty(U)\otimes\underline{S}\left( \bigoplus_{i\neq0} \mathbb{R}^{k_i}[-i] \right),
		\]
		where $U\subset\mathbb{R}^{k_0}$ is open.
	\end{example}
	
	\begin{example}[Product $\mathbb{Z}$-manifold]
		Given two $\mathbb{Z}$-manifolds $\M$ and $\N$ with bodies $M$ and $N$, respectively, one can construct the \textbf{product $\mathbb{Z}$-manifold}\index{graded manifold!product} $\M\times\N$ with body $M\times N$ by defining its sheaf of functions as
		\[
		\cin(\M\times\N) = \pr_M^!\cin(\M)\, \widehat{\otimes} \pr_N^!\cin(\N),
		\]
		where $\widehat{\otimes}$ denotes the completion of the projective tensor topology; that is, the unique topology on $C^\infty(U)\otimes C^\infty(U')$ such that its completion is isomorphic to $C^\infty(U\times U')$ for all open subsets $U\subset \mathbb{R}^m$ and $U'\subset \mathbb{R}^n$ (see \cite{CaCaFi11} and references therein for the case of $\mathbb{Z}_2$-graded supermanifolds). Explicitly, given two open sets $U\subset M, U'\subset N$ with
		\[
		\cin_U(\M)\cong C^\infty(U)\otimes\underline{S}(\underline{V})
		\qquad \text{and} \qquad
		\cin_{U'}(\N)\cong C^\infty(U')\otimes\underline{S}(\underline{W}),
		\]
		the open set $U\times U'\subset M\times N$ is sent to 
		\[
		\cin_U(\M)\, \widehat{\otimes}\, \cin_{U'}(\N) \cong C^\infty(U\times U')\otimes \underline{S}(\underline{V}\oplus\underline{W}).
		\]
		In particular, the sets of coordinates $\{\xi^j\}$ and $\{\zeta^i\}$ on $\M$ and $\N$, respectively, induce the set of coordinates $\{\xi^j,\zeta^i\}$ on $\M\times \N$ and hence the homogeneous functions $\xi\in\cin(\M\times\N)^k$ can be written as a finite sum
		\[
		\xi = \sum_j \eta_j\theta_j, 
		\]
		where $\eta_j\in\cin(\M)^{p_j},\theta_j\in\cin(\M)^{q_j}$ and $p_j+q_j=k$ for all $j$.
	\end{example}
	
	\begin{example}
		From the discussion above, it is clear that every graded vector space
		\[
		\underline{V}=\bigoplus_{i\in\mathbb Z} V_i[i]
		\]
		gives a $\mathbb{Z}$-manifold over the trivial base manifold $M=\{*\}$. In particular, applying this to the $[1]$-shift of a non-graded vector space $\mathfrak{g}$, one obtains the $[1]$-manifold $\mathfrak{g}[1]$ with structure sheaf 
		$\cin(\mathfrak{g}[1]) = \wedge \mathfrak{g}^*$. The interesting scenario comes from the case where $\mathfrak{g}$ is a Lie algebra, as it is the motivation for the term ``Lie $n$-algebra" (or more generally ``Lie $n$-algebroid") that we will see later. 
	\end{example}
	
	\begin{example}
		Again from the discussion above, it follows that for every (non-graded) vector bundle $E\to M$, one obtains the $[1]$-manifold $E[1]$ whose sheaf of functions is given by $\cin(E[1]) = \Gamma(\wedge E^*)$. Applying this to the tangent and cotangent bundles of the manifold $M$ yields the $[1]$-manifolds $T[1]M$ and $T^*[1]M$ whose function sheaves are given by
		\[
		\cin(T[1]M) = \Omega(M)
		\qquad \text{and} \qquad
		\cin(T^*[1]M) = \wedge\mathfrak{X}(M).
		\]
		That is, the spaces of differential forms and multivector fields on a smooth manifold $M$ can be realised as graded functions of $[1]$-manifolds with base $M$.
	\end{example}
	
	In the split setting, a morphism $\mu\colon \F\to\E$ between two split $\n$-manifolds $\F=F_1[1]\oplus\ldots\oplus F_n[n]$ and $\E=E_1[1]\oplus\ldots\oplus E_n[n]$ over $N$ and $M$, respectively, is a map $\mu^\star\colon\Gamma(\underline{S}(\underline{E}^*))\to\Gamma(\underline{S}(\underline{F}^*))$ which is a morphism of sheaves of graded algebras over $\mu_0^*\colon C^\infty(M)\to C^\infty(N)$, where $\mu_0\colon N\to M$ is a smooth map on the base manifolds. The map $\mu^\star$ decomposes into a family of maps $\mu_i^\star\colon \Gamma(E_i^*)\to \Gamma(\underline{S}^i(\F^*)),i=1,\ldots,n$ and since we have that
	\[
	\Gamma(\underline{S}^i(\F^*))=\bigoplus_{j_1+2j_2+\ldots+nj_n=i} \Gamma\left( (F_1^*)^{\otimes^{j_1}}\otimes(F_2^*)^{\otimes^{j_2}}\otimes\ldots\otimes(F_n^*)^{\otimes^{j_n}} \right),
	\]
	the components $\mu_i^\star$ admit a decomposition
	\[
	\mu_i^\star=\sum_{j_1+2j_2+\ldots+nj_n=i} (\mu_i^{j_1,j_2\ldots,j_n})^\star,
	\]
	where all $(\mu_i^{j_1,j_2\ldots,j_n})^\star\colon \Gamma(E_i^*)\to\Gamma\left( (F_1^*)^{\otimes^{j_1}}\otimes(F_2^*)^{\otimes^{j_2}}\otimes\ldots\otimes(F_n^*)^{\otimes^{j_n}} \right)$ are morphisms over the pull-back map $\mu_0^*\colon C^\infty(M)\to C^\infty(N)$. Note that by abuse of notation, $\otimes$ in the above equation denotes the multiplication of the graded symmetric algebra. Hence, we obtain the vector bundle maps
	\[
	\mu_i^{j_1,j_2\ldots,j_n}\colon F_1^{\otimes^{j_1}}\otimes F_2^{\otimes^{j_2}}\otimes\ldots\otimes F_n^{\otimes^{j_n}}\to E_i
	\]
	covering $\mu_0\colon N\to M$. In particular, we have the component maps $\mu_i^{0,\ldots,1,\ldots,0}\colon F_i\to E_i$ ($1$ in the $i$-th slot) covering the map $\mu_0$ which we call \textbf{the principal part of $\mu$}\index{principal part of morphism of split manifolds}.
	
	\begin{example}[Change of splitting for $\text{[}2\text{]}$-manifolds]\label{Change of splitting for [2]-manifolds}
		By construction, splittings of a $[2]$-manifold $\M$ are given by splittings of the short exact sequence
		\[
		0\longrightarrow \Gamma(\wedge^2 E_1) \longrightarrow \cin(\M)^{2} \longrightarrow \Gamma(E_2)\longrightarrow 0,
		\]
		where $E_1\to M$ and $E_2\to M$ are the vector bundles whose sheaves of sections are defined by
		\[
		\Gamma(E_1) \simeq \cin(\M)^1
		\qquad \text{and} \qquad 
		\Gamma(E_2) \simeq \frac{\cin(\M)^2}{\cin(\M)^1\,\cdot\,\cin(\M)^1}.
		\]
		A change of splitting for the exact sequence above is equivalent to a section $\sigma\in\Hom(\wedge^2 E_1,E_2)$. Then the induced isomorphism of $[2]$-manifolds $\mu_\sigma:E_1[1]\oplus E_2[2]\to E_1[1]\oplus E_2[2]$ over the identity on $M$ is given by the components $\mu_\sigma^\star(\varepsilon_1) = \varepsilon_1$ and $\mu_\sigma^\star(\varepsilon_2) = \varepsilon_2 + \sigma^\star\varepsilon_2$ for all $\varepsilon_1\in\Gamma(E_1^*),\varepsilon_2\in\Gamma(E_2^*)$. The principal part of $\mu_\sigma$ consists of $\id_{E_1}$ and $\id_{E_2}$.
	\end{example}

	\section{Vector fields on graded manifolds}
	
	Using the language of graded derivations, the usual notion of  vector
	field can be generalised to a notion of vector field on a $\mathbb{Z}$-manifold
	$\M$.
	\begin{definition}
		A \textbf{vector field of degree $j$}\index{vector field} on $\M$ is a degree $j$ (graded)
		derivation of $\cin(\M)$, i.e.~a map
		$\mathcal{X}\colon\cin(\M)\to \cin(\M)$ such that
		$|\mathcal{X}(\xi)|=j+|\xi|$ and
		$\mathcal{X}(\xi\zeta)=\mathcal{X}(\xi)\zeta+(-1)^{j|\xi|}\xi
		\mathcal{X}(\zeta)$, for homogeneous elements
		$\xi,\zeta\in C^\infty(\M)$.
	\end{definition}
	As usual, $|\mathcal{X}|$ denotes the degree of a homogeneous vector field $\mathcal{X}$. The \textbf{Lie bracket of two vector fields}\index{Lie bracket of vector fields}
	$\mathcal{X},\mathcal{Y}$ on $\M$ is the graded commutator
	\begin{equation}\label{graded Lie bracket}
	[\mathcal{X},\mathcal{Y}]=\mathcal{X}\mathcal{Y}-(-1)^{|\mathcal{X}||\mathcal{Y}|}\mathcal{Y}\mathcal{X}
	\end{equation}
	which is again a vector field of degree $|\X| + |\Y|$. A straightforward computation shows that the following relations hold:
	\begin{enumerate}[(i)]
		\item $[\mathcal{X},\mathcal{Y}]=-(-1)^{|\mathcal{X}||\mathcal{Y}|}[\mathcal{Y},\mathcal{X}]$,
		\item
		$[\mathcal{X},\xi
		\mathcal{Y}]=\mathcal{X}(\xi)\mathcal{Y}+(-1)^{|\mathcal{X}||\xi|}\xi[\mathcal{X},\mathcal{Y}]$,
		\item $(-1)^{|\mathcal{X}||\mathcal{Z}|}[\mathcal{X},[\mathcal{Y},\mathcal{Z}]]
		+(-1)^{|\mathcal{Y}||\mathcal{X}|}[\mathcal{Y},[\mathcal{Z},\mathcal{X}]]
		+(-1)^{|\mathcal{Z}||\mathcal{Y}|}[\mathcal{Z},[\mathcal{X},\mathcal{Y}]]=0$,
	\end{enumerate}
	for $\mathcal{X},\mathcal{Y},\mathcal{Z}$ homogeneous vector fields on $\M$, and $\xi$
	a homogeneous element of $\cin(\M)$.
	
	\begin{example}[Coordinate vector fields]
		The local generators $\xi_i^j$ for $i\in\mathbb{Z},j=1,\ldots,r_i$ of $\cin(\M)$ over an open coordinate chart
		$U\subseteq M$ given by the definition of the $\mathbb{Z}$-manifold $\M$ define the (local)
		vector fields $\partial_{\xi_i^j}$ of degree $-|\xi^j_i|$ (alternatively denoted $\frac{\partial}{\partial \xi_i^j}$), which send
		$\xi_i^j$ to $1$ and the other local generators to $0$. The sheaf
		$\underline{\Der}_U(\cin(\M))$ of graded derivations of $\cin_U(\M)$ is locally freely generated as a $C^\infty_U(\M)$-module by all $\partial_{x^k}$
		and all $\partial_{\xi_i^j}$, where $x^1,\ldots,x^m$ are coordinates of
		$M$ defined on $U$.
	\end{example}
	
	Note that in the case of a split $\mathbb{Z}$-manifold
	$\E = \bigoplus_i E_i[i]$, each section $e\in\Gamma(E_j)$
	defines a derivation $\hat{e}$ of degree $-j$ on $\M$ by the
	relations: $\hat{e}(f)=0$ for $f\in\cin(M)$,
	$\hat{e}(\varepsilon)=\langle\varepsilon,e\rangle$ for
	$\varepsilon\in\Gamma(E_j^*)$ and $\hat{e}(\varepsilon)=0$ for
	$\varepsilon\in\Gamma(\E^*)$ with $|\varepsilon|\neq j$. In
	particular, $\widehat{e_j^i}=\partial_{\varepsilon_j^i}$ for
	$\{e_j^i\}$ a local basis of $E_j$ and $\{\varepsilon_j^i\}$ the dual
	basis of $E_j^*$.
	
	Given now an $\mathbb{N}$-manifold $\M$ of degree $n$ together with a choice of splitting $\M\cong\E$ for some graded vector bundle
	\[
	\E=\bigoplus_{i=1}^{n} E_i[i],
	\]
	the space of vector fields over $\M$ can be explicitly described in terms of ordinary vector fields over the body $M$ and connections on the bundles $E_i$. More precisely, given $TM$-connections $\nabla^i\colon\mathfrak{X}(M)\to\Der(E_i)$ on the vector bundles $E_i$ for each
	$i$, the space of vector fields on the split $\mathbb{N}$-manifold $\M\cong\E$ is generated as a
	$\cin(\M)$-module by
	\begin{equation}
	\left\{ \bigoplus_{i=1}^n\nabla^i_X \ |\
	X\in\mathfrak{X}(M) \right\}\cup\left\{ \hat{e}\ |\ e\in\Gamma(E_i)\ \text{for
		some}\ i \right\}.
	\end{equation}
	The vector fields of the form
	$\bigoplus_i\nabla^i_X$ have degree 0 and are
	understood to send $f\in C^\infty(M)$ to $X(f)\in C^\infty(M)$, and
	$\varepsilon\in\Gamma(E_i^*)$ to
	$\nabla_X^{i,*}\varepsilon\in\Gamma(E_i^*)$. The non-zero degree vector
	fields are generated by those of the form $\hat{e}$.
	
	\section{$\Q$-structures and Lie $n$-algebroids}\label{Section: Q-structures and Lie n-algebroids}
	
	Due to the sign convention for the Lie bracket in (\ref{graded Lie bracket}), it follows that if a vector field $\X$ on $\M$ has odd degree, then the equation
	\begin{equation}
	[\X,\X] = 2\X^2 = 0 
	\end{equation}
	is not necessarily satisfied and thus the search for such a vector field is not trivial. If in particular the degree of the vector field is $1$, then the structure sheaf $\cin(\M)$ becomes a complex. This leads to the following definition. 
	
	\begin{definition}
		A \textbf{homological vector field}\index{homological vector field} $\Q$ on a $\mathbb{Z}$-manifold $\M$ is a degree 1 derivation of $\cin(\M)$ such that
		$\Q^2=\frac{1}{2}[\Q,\Q]=0$.
	\end{definition}
	
	\begin{definition}
		A $\mathbb{Z}$-manifold $\M$ equipped with a homological vector field $\Q$ is called a \textbf{$\Q$-manifold}\index{$\Q$-manifold}. A \textbf{morphism between two $\Q$-manifolds}\index{$\Q$-manifold!morphism} $(\N,\Q_\N)$ and $(\M,\Q_\M)$ is a morphism $\mu:\N\to\M$ of the underlying $\mathbb{Z}$-manifolds that commutes with the homological vector fields:
		\[
		\mu^\star\circ\Q_\M = \Q_\N\circ\mu^\star.
		\]
	\end{definition}
	
	\begin{remark}
		A homological vector field on an $[1]$-manifold $\M=A[1]$ is the
		differential $\diff_A$ associated to a Lie algebroid structure on the
		vector bundle $A$ over $M$ \cite{Vaintrob97}: The functions of $A[1]$ are given by the sheaf of $A$-forms $\Omega^\bullet(A)$, and the bracket and the anchor for the Lie algebroid structure on the vector bundle $A$ are connected to the homological vector field $\diff_{A}$ via
		\[
		\langle \diff_{A}f,a \rangle = \rho(a)f
		\qquad \text{and} \qquad
		\diff_{A}\alpha(a_1,a_2) = - \alpha(a_1,a_2) + \rho(a_1)\left( \alpha(a_2) \right) - \rho(a_2)\left( \alpha(a_1) \right)
		\]
		for all $a,a_1,a_2\in\Gamma(A),\alpha\in\Gamma(A^*)$ and $f\in C^\infty(M)$.
		The following definition
		generalises this to arbitrary degrees.
	\end{remark}
	
	\begin{definition}\label{abstract Lie algebroids}
		A \textbf{Lie $n$-algebroid}\index{Lie $n$-algebroid} is an $[n]$-manifold $\M$ endowed with a
		homological vector field $\Q$ -- the pair $(\M, \Q)$ is also called
		\textbf{$\mathbb{N}\Q$-manifold of degree $n$}\index{$\mathbb{N}\Q$-manifold}. A \textbf{split Lie
			$n$-algebroid}\index{Lie $n$-algebroid!split} is a split $[n]$-manifold $\M$ endowed with a
		homological vector field $\Q$. A \textbf{morphism of (split) Lie
			$n$-algebroids}\index{Lie $n$-algebroid!morphism} is a morphism $\mu$ of the underlying [$n$]-manifolds
		such that $\mu^\star$ commutes with the homological vector fields.
	\end{definition}
	
	\begin{example}[Product $\Q$-manifold]
		Given two $\Q$-manifolds $(\M,\Q_\M)$ and $(\N,\Q_\N)$ with bodies $M$ and $N$, respectively, the product $\M\times\N$ inherits a $\Q$-manifold structure called \textbf{product $\Q$-manifold}\index{$\Q$-manifold!product}. Its homological vector field $\Q_{\M\times\N}$ is given on the generating functions by
		\[
		\Q_{\M\times\N}(\xi\zeta) = \Q_{\M}(\xi)\zeta + (-1)^{|\xi|}\xi\Q_{\N}(\zeta)
		\]
		for all homogeneous $\xi\in\cin(\M)$ and $\zeta\in\cin(\N)$.
	\end{example}
	
	The homological vector field associated to a split Lie $n$-algebroid
	$\A=A_1[1]\oplus\ldots\oplus A_n[n]\to M$ can be equivalently
	described by a family of brackets which satisfy some Leibniz and
	higher Jacobi identities \cite{ShZh17}. More precisely, a homological
	vector field on $\A$ is equivalent to an $L_\infty$-algebra
	structure\footnote{We note that the sign convention agrees with,
		e.g.~\cite{KaSt06,Voronov05}. In \cite{Schatz09}, the term
		``$L_\infty[1]$-algebra'' was used for brackets with this sign
		convention.}  on $\Gamma(\A)$ that is anchored by a vector bundle
	morphism $\rho\colon A_1\to TM$.  Such a structure is given by
	multibrackets
	$\llbracket\cdot\,,\ldots,\cdot\rrbracket_i\colon\Gamma(\A)^i\to
	\Gamma(\A)$ of degree $1$ for $1\leq i \leq n+1$ such that
	\begin{enumerate}
		\item $\llbracket\cdot\,,\cdot\rrbracket_2$ satisfies the Leibniz identity with
		respect to $\rho$,
		\item $\llbracket\cdot\,,\ldots,\cdot\rrbracket_i$ is $C^\infty(M)$-linear in each entry
		for all $i\neq2$,
		\item \textbf{(graded skew symmetry)}\index{graded skew symmetry} each
		$\llbracket\cdot,\ldots,\cdot\rrbracket_i$ is graded
		alternating: for a permutation $\sigma\in S_i$ and for all
		$a_1,\ldots,a_i\in\Gamma(\A)$ degree-homogeneous sections
		\[\llbracket a_{\sigma(1)}, a_{\sigma(2)},\ldots,a_{\sigma(i)}\rrbracket_i
		=\text{Ksgn}(\sigma,a_1,\ldots,a_i)\cdot \llbracket a_1,a_2,\ldots,a_i\rrbracket_i, \] and
		\item \textbf{(strong homotopy Jacobi identity)}\index{strong homotopy Jacobi identity} for $k\in \mathbb N$
		and $a_1,\ldots, a_k\in\Gamma(\A)$ sections of homogeneous
		degree:
		\[
		\sum_{i+j=k+1}(-1)^{i(j-1)}\sum_{\sigma\in\text{Sh}_{i,k-i}}
		\text{Ksgn}(\sigma,a_1,\ldots,a_k)\llbracket \llbracket
		a_{\sigma(1)},\ldots,a_{\sigma(i)} \rrbracket_i,
		a_{\sigma(i+1)},\ldots,a_{\sigma(k)} \rrbracket_j = 0.
		\]
	\end{enumerate}
	Here, $\text{Sh}_{i,k-i}$ is the set of all
	$(i,k-i)$-shuffles\footnote{A $(i,k-i)$-shuffle is an element
		$\sigma\in S_k$ such that $\sigma(1)<\ldots<\sigma(i)$ and
		$\sigma(i+1)<\ldots<\sigma(k)$.  } and
	$\text{Ksgn}(\sigma,a_1,\ldots,a_k)$ is the $(a_1,\dots,a_k)$-graded
	signature of the permutation $\sigma\in S_k$, i.e.
	\[
	a_1\wedge\ldots\wedge a_k = \text{Ksgn}(\sigma, a_1,
	\ldots,a_k)a_{\sigma(1)}\wedge\ldots\wedge a_{\sigma(k)}.
	\]
	
	The explicit correspondence between the homological vector field $\Q$ and the brackets $\llbracket\cdot,\ldots,\cdot\rrbracket_i$ is given by the following equations:
	\begin{enumerate}
		\item $\Q(f) = \rho^*(\diff\! f)$;
		\item $\Q(\alpha_k)(a_1^{i_1},\ldots,a_j^{i_j}) = -\langle \alpha_k,\llbracket a_1^{i_1},\ldots,a_j^{i_j}\rrbracket_j \rangle,\ j\neq2$;
		\item $\Q(\alpha_k)(a_1^1,a_2^k) = \rho(a_1^1)\langle \alpha_k,a_2^k \rangle - \langle \alpha_k,\llbracket a_1^1,a_2^k \rrbracket_2 \rangle,\ k\neq1$; 
		\item $\Q(\alpha_1)(a_1^1,a_2^1) = \rho(a_1^1)\langle \alpha_1,a_2^1 \rangle - \rho(a_2^1)\langle \alpha_1,a_1^1 \rangle - \langle \alpha_1,\llbracket a_1^1,a_2^1 \rrbracket_2 \rangle$
	\end{enumerate}
	for all $f\in C^\infty(M),\alpha_k\in\Gamma(A^*_k),a_p^{i_p}\in\Gamma(A_{i_p}),i_1+\ldots+i_j=k+2-j$.
	
	In particular, the vector bundle $A_1\to M$ of a split Lie $n$-algebroid is always endowed with skew-symmetric dull bracket on its space of sections $\Gamma(A_1)$. The following alternative geometric description in the special case of
	a split Lie $2$-algebroid
	$(\mathcal M=A_1[1]\oplus A_2[2],\mathcal Q)$ will be used extensively in the rest of the work (see \cite{Jotz19b}). For consistency with the notation in \cite{Jotz19b}, set $A_1:=Q$ and
	$A_2^*=:B$.

	\begin{definition}\label{geom split 2-alg}
		A \textbf{split Lie 2-algebroid $Q[1]\oplus B^*[2]$}\index{split Lie $2$-algebroid} is given by a pair of
		an anchored vector bundle $(Q\to M,\rho_Q)$ and a vector bundle
		$B\to M$, together with a vector bundle map $\ell\colon B^*\to Q$, a
		skew-symmetric dull bracket
		$[\cdot\,,\cdot]\colon \Gamma(Q)\times\Gamma(Q)\to \Gamma(Q)$, a
		linear $Q$-connection $\nabla$ on $B$, and a vector valued 3-form
		$\omega\in\Omega^3(Q,B^*)$ such that
		\begin{enumerate}[(i)]
			\item $\nabla^*_{\ell(\beta_1)}\beta_2 + \nabla^*_{\ell(\beta_2)}\beta_1 = 0$, for all $\beta_1,\beta_2\in\Gamma(B^*)$,
			\item $[q,\ell(\beta)]=\ell(\nabla_q^*\beta)$ for all
			$q\in\Gamma(Q)$ and $\beta\in\Gamma(B^*)$,
			\item $\Jac_{[\cdot\,,\cdot]} = \ell\circ\omega\in\Omega^3(Q,Q)$,
			\item $R_{\nabla^*}(q_1,q_2)\beta = -\omega(q_1,q_2,\ell(\beta))$ for $q_1,q_2\in\Gamma(Q)$ and $\beta\in\Gamma(B^*)$,
			\item $\diff_{\nabla^*}\omega = 0$.
		\end{enumerate}  
	\end{definition}
	To pass
	from the definition above to the homological vector field $\Q$,
	set $\Q(f)=\rho^*\diff f \in\Gamma(Q^*)$,
	$\Q(\tau)=\diff_{Q}\tau+\partial_B\tau \in \Omega^2(Q)\oplus
	\Gamma(B)$, and
	$\Q(b)=\diff_{\nabla}b - \langle\omega, b\rangle \in \Omega^1(Q,
	B)\oplus \Omega^3(Q)$ for $f\in C^\infty(M),\tau\in\Omega(Q)$
	and $b\in\Gamma(B)$, where $\partial_B:=\ell^*$. For a detailed proof of this equivalence see Appendix \ref{Appendix: Split Lie $2$-algebroids in the geometric setting} or the original source \cite{Jotz19b}.

	On the other hand we may obtain as follows the data of Definition
	\ref{geom split 2-alg} from a given homological vector field
	$\Q$. Define the vector bundle map $\ell$ to be the 1-bracket and
	$\rho$ to be the anchor. The 2-bracket induces the skew-symmetric dull bracket
	on $Q$ and the $Q$-connection on $B^*$ via the formula
	\[
	\llbracket q_1\oplus\beta_1,q_2\oplus\beta_2\rrbracket_2 =
	[q_1,q_2]_Q\oplus(\nabla_{q_1}^*\beta_2 - \nabla_{q_2}^*\beta_1).
	\]
	Finally, the 3-bracket induces the 3-form $\omega$ via the formula
	\[
	\llbracket q_1\oplus0,q_2\oplus0,q_3\oplus0\rrbracket_3 = 0\oplus\omega(q_1,q_2,q_3). 
	\]
	
	\begin{example}[Lie 2-algebras]
		If we consider a Lie 2-algebroid over a point, then we recover the
		notion of \textbf{Lie 2-algebra}\index{Lie 2-algebra} \cite{BaCr04}. Specifically, a Lie
		2-algebroid over a point consists of a pair of vector spaces
		$\mathfrak{g}_0,\mathfrak{g}_1$, a linear map
		$\ell\colon\mathfrak{g}_0\to\mathfrak{g}_1$, a skew-symmetric
		bilinear bracket
		$[\cdot\,,\cdot]\colon
		\mathfrak{g}_1\times\mathfrak{g}_1\to\mathfrak{g}_1$, a bilinear
		action bracket
		$[\cdot\,,\cdot]\colon\mathfrak{g}_1\times\mathfrak{g}_0\to\mathfrak{g}_0$,
		and an alternating trilinear bracket
		$[\cdot\,,\cdot\,,\cdot]\colon\mathfrak{g}_1\times\mathfrak{g}_1\times\mathfrak{g}_1\to\mathfrak{g}_0$
		such that
		\begin{enumerate}
			\item $[\ell(x),y] + [\ell(y),x] = 0$ for $x,y\in\mathfrak{g}_0$,
			\item $[x,\ell(y)] = \ell([x,y])$ for $x\in\mathfrak{g}_1$ and $y\in\mathfrak{g}_0$,
			\item $\Jac_{[\cdot,\cdot]}(x,y,z) = \ell([x,y,z])$ for
			$x,y,z\in\mathfrak{g}_1$,
			\item $[[x,y],z] +[y,[x,z]] -[x,[y,z]] = [x,y,\ell(z)]$ for
			$x,y\in\mathfrak{g}_1$ and $z\in\mathfrak{g}_0$,
			\item and the higher Jacobi identity
			\begin{align*}
			0 = & [x,[y,z,w]] - [y,[x,z,w]]+[z,[x,y,w]]- [w,[x,y,z]]\\ 
			&  - [[x,y],z,w] + [[x,z],y,w] - [[x,w],y,z]- [[y,z],x,w] + [[y,w],x,z] - [[z,w],x,y].
			\end{align*}
			holds for $x,y,z,w\in\mathfrak{g}_1$.
		\end{enumerate}
	\end{example}
	
	\begin{example}[Derivation Lie 2-algebr(oid)]
		For any Lie algebra $(\mathfrak{g},[\cdot\,,\cdot]_\mathfrak{g})$, the
		\textbf{derivation Lie 2-algebra}\index{derivation Lie 2-algebra} is defined as the complex
		\[
		\ad\colon \mathfrak{g}\to\Der(\mathfrak{g})
		\]
		with brackets given by
		$[\delta_1,\delta_2] = \delta_1\delta_2 - \delta_2\delta_1$,
		$[\delta,x] = \delta x$, $[\delta_1,\delta_2,\delta_3] = 0$
		for all $\delta,\delta_i\in\Der(\mathfrak{g}),i=1,2,3$, and
		$x\in\mathfrak{g}$.
		
		A global analogue of this construction can be achieved only
		under strong assumptions on the Lie algebroid $A\to
		M$. Precisely, let $A\to M$ be a Lie algebra bundle. Then the
		space of all derivations $D$ of the vector bundle $A$ which
		preserve the bracket
		\[
		D[a_1,a_2] = [Da_1,a_2] + [a_1,Da_2] \qquad \text{for all}\ a_1,a_2\in\Gamma(A)
		\]
		is the module of sections of a vector bundle over $M$, denoted
		$\Der_{[\cdot,\cdot]}(A)\to M$. Together with the usual
		commutator bracket and the anchor $\rho'(D)=X$, where $D$ is a
		derivation of $\Gamma(A)$ covering $X\in\mathfrak{X}(M)$, the
		vector bundle $\Der_{[\cdot,\cdot]}(A)$ is a Lie algebroid
		over $M$ \cite{Mackenzie05}. Since the anchor of $A$
		is trivial, the complex
		\[
		A\overset{\ad}{\to}\Der_{[\cdot\,,\cdot]}(A)\overset{\rho'}{\to}TM
		\]
		becomes a Lie 2-algebroid with the
		$\Der_{[\cdot,\cdot]}(A)$-connection on $A$ given by
		$\nabla_Da = Da$ and $\omega=0$.
	\end{example}

	\begin{example}[Courant algebroids]\label{Split_symplectic_Lie_2-algebroid_example}
		Let    $E\to   M$    be   a    Courant   algebroid    with   pairing
		$\langle\cdot\,,\cdot\rangle\colon E\times_M E\to  E$, anchor $\rho$
		and bracket  $\llbracket\cdot\,,\cdot\rrbracket$, and choose
		a               metric               linear               connection
		$\nabla\colon   \mathfrak{X}(M)\times\Gamma(E)\to\Gamma(E)$.    Then
		$E[1]\oplus T^*M[2]$  becomes as follows a  split Lie $2$-algebroid.
		The     skew-symmetric     dull      bracket     is     given     by
		$[e,e'] =  \llbracket e,e'  \rrbracket -  \rho^*\langle \nabla_.e,e'
		\rangle$ for all $e,e'\in\Gamma(E)$.  The basic connection is
		$\nabla^\text{bas}\colon\Gamma(E)\times\mathfrak{X}(M)\to\mathfrak{X}(M)$
		and the basic curvature is given by
		$\omega_\nabla\in\Omega^2(E,\Hom(TM,E))$
		\[
		\omega_\nabla(e,e')X = -\nabla_X\llbracket e,e' \rrbracket +
		\llbracket \nabla_Xe,e' \rrbracket + \llbracket e,\nabla_Xe'
		\rrbracket + \nabla_{\nabla_{e'}^{\text{bas}} X} e -
		\nabla_{\nabla_{e}^{\text{bas}} X} e' - P^{-1}\langle
		\nabla_{\nabla_{.}^{\text{bas}}X}e,e' \rangle,
		\]
		where $P\colon E\to E^*$ is the isomorphism defined by the
		pairing, for all $e,e'\in\Gamma(E)$ and
		$X\in\mathfrak{X}(M)$. The map $\ell$ is
		$\rho^*\colon T^*M\to E$, the $E$-connection on $T^*M$ is
		$\nabla^{\text{bas},*}$ and the form
		$\omega\in\Omega^3(E,T^*M)$ is given by
		$\omega(e_1,e_2,e_3)=\langle \omega_\nabla(e_1,e_2)(\cdot),e_3
		\rangle$. The kind of split Lie 2-algebroids that arise in
		this way are the \textbf{split symplectic Lie 2-algebroids}\index{split symplectic Lie 2-algebroid}
		\cite{Roytenberg02}. They are splittings of the symplectic Lie
		$2$-algebroid which is equivalent to the tangent prolongation
		of $E$. The tangent prolongation of $E$ is an LA-Courant algebroid
		\cite{Jotz19b,Jotz18d}.
	\end{example}
	
	\section{Generalised functions of a Lie $n$-algebroid}
	
	We now turn to the space of generalised functions\index{Lie $n$-algebroid!generalised functions} (or alternatively the vector valued functions\index{Lie $n$-algebroid!vector valued functions}) of a Lie $n$-algebroid. In the following, $(\M,\Q)$ is a Lie $n$-algebroid with underlying
	manifold $M$. Consider the space
	$\cin(\M)\otimes_{C^\infty(M)}\Gamma(\E)$ for a graded vector bundle
	$\E\to M$ of finite rank. For simplicity, the tensor product
	$\cin(\M)\otimes_{C^\infty(M)}\Gamma(\E)$ is sometimes written as
	$\cin(\M)\otimes\Gamma(\E)$. That is, in the
	rest of work this particular kind of tensor products is always considered as a tensor product of $C^\infty(M)$-modules.
	
	First suppose that $(\M,\Q)=(A[1],\diff_A)$ is a Lie algebroid. The
	space of \textbf{$\E$-valued differential forms}\index{vector valued differential form}
	$\Omega(A;\E):=\Omega(A)\otimes_{C^\infty(M)}\Gamma(\E)=\cin(A[1])\otimes_{C^\infty(M)}\Gamma(\E)$
	has a natural grading given by
	\[
	\Omega(A;\E)_p=\bigoplus_{i-j=p}\Omega^i(A;E_j).
	\]
	It is well-known (see \cite{ArCr12}) that any degree preserving vector
	bundle map $h\colon \E\otimes \underline{F}\to \underline{G}$ induces a wedge product operation\index{generalised wedge product}
	\[
	(\cdot\wedge_h\cdot)\colon\Omega(A;\E)\times\Omega(A;\underline{F})\to
	\Omega(A;\underline{G})
	\]
	which is defined on  $\omega\in\Omega^p(A;E_i)$ and $\eta\in\Omega^q(A;F_j)$ by
	\[
	(\omega\wedge_h\eta)(a_1,\ldots,a_{p+q})=\sum_{\sigma\in
		\text{Sh}_{p,q}}(-1)^{qi}\sgn(\sigma)h\left(\omega(a_{\sigma(1)},\ldots,a_{\sigma(p)}),\eta(a_{\sigma(p+1)},\ldots,a_{\sigma(p+q)})\right),
	\]
	for all $a_1,\ldots,a_{p+q}\in\Gamma(A)$. 
	
	In particular, the above rule reads
	\[
	\theta\wedge_h\zeta=(-1)^{qi}\left(\omega\wedge\eta\right)\otimes h(e,f),
	\]
	for all $\theta=\omega\otimes e$ and $\zeta=\eta\otimes f$ where
	$\omega$ is a $p$-form, $\eta$ is a $q$-form, and $e$ and $f$
	are homogeneous sections of $\E$ and $\underline{F}$ of degree $i$ and $j$,
	respectively.
	
	Some notable cases for special choices of the map $h$ are
	given by the identity, the composition of endomorphisms, the
	evaluation and the `twisted' evaluation maps, the graded
	commutator of endomorphisms and the natural pairing of a
	graded vector bundle with its dual. In particular, the evaluation $(\Phi,e)\mapsto \Phi(e)$ and the twisted
	evaluation $(e,\Phi)\mapsto(-1)^{|\Phi||e|}\Phi(e)$ make $\Omega(A;\E)$ a graded
	$\Omega(A;\underline{\End}(\E))$-bimodule.
	
	In the general case of a Lie $n$-algebroid $(\M,\Q)$, the space $\Omega(A)$ is
	replaced by the generalised smooth functions $\cin(\M)$ of
	$\M$. The space $\cin(\M)\otimes\Gamma(\E)$ has a natural grading,
	where the homogeneous elements of degree $p$ are given by
	\[
	\bigoplus_{i-j=p}\cin(\M)^i\otimes\Gamma(E_j).
	\]
	
	Similarly as in the case of a Lie algebroid, given a degree preserving map
	\[
	h\colon \E\otimes \underline{F}\to \underline{G},
	\]
	one obtains the multiplication
	\begin{align*}
	\left(\cin(\M)\otimes\Gamma(\E)\right)\times \left(\cin(\M)\otimes\Gamma(\underline{F})\right)\to\, & \cin(\M)\otimes\Gamma(\underline{G})\\
	(\omega,\eta)\mapsto\, & \omega\wedge_h\eta.
	\end{align*}
	In particular, for elements of the form
	$\xi\otimes e\in \cin(\M)^i\otimes\Gamma(E_j),\zeta\otimes f\in
	\cin(\M)^k\otimes\Gamma(F_\ell)$ the above rule reads
	\[
	\left(\xi\otimes e\right)\wedge_h\left(\zeta\otimes f\right)=(-1)^{(-j)k}\xi\zeta\otimes h(e,f),
	\]
	where on the right hand side the multiplication $\xi\zeta$ is the one
	in $\cin(\M)$. The special cases above are defined similarly for the
	$n$-algebroid case. Moreover,
	$\cin(\M)\otimes\Gamma(\E)$ is endowed with the
	structure of a graded
	$\cin(\M)\otimes\Gamma(\underline{\End}(\E))$-bimodule.
	
	Finally, the following lemma will be useful later as it is a
	generalisation of \cite[Lemma A.1]{ArCr12}, and gives the connection
	between the space
	$\cin(\M)\otimes\Gamma(\underline{\Hom}(\E,\underline{F}))$ and the
	homomorphisms from $\cin(\M)\otimes\Gamma(\E)$ to
	$\cin(\M)\otimes\Gamma(\underline{F})$.
	\begin{lemma}\label{wedge_product-operators_Correspondence_Lemma}
		There is a one-to-one correspondence between the degree $n$ elements of
		$\cin(\M)\otimes\Gamma(\underline{\Hom}(\E,\underline{F}))$ and the operators
		$\Psi\colon \cin(\M)\otimes\Gamma(\E)\to \cin(\M)\otimes\Gamma(\underline{F})$
		of degree $n$ which are $\cin(\M)$-linear in the graded sense:
		\[
		\Psi(\xi\wedge\eta)=(-1)^{nk}\xi\wedge \Psi(\eta),
		\]
		for all $\xi\in \cin(\M)^k$, and all $\eta\in \cin(\M)\otimes\Gamma(\E)$.
	\end{lemma}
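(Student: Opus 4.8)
The plan is to realise the correspondence through the evaluation pairing and then to invert it by restricting operators to the generating submodule $\Gamma(\E)$. First I would take for $h$ the evaluation map $\mathrm{ev}\colon \underline{\Hom}(\E,\underline{F})\otimes\E\to\underline{F}$, $\mathrm{ev}(\phi,e)=\phi(e)$, and to a degree $n$ element $\Phi\in\cin(\M)\otimes\Gamma(\underline{\Hom}(\E,\underline{F}))$ assign the operator
\[
\Psi_\Phi\colon\cin(\M)\otimes\Gamma(\E)\to\cin(\M)\otimes\Gamma(\underline{F}),\qquad \Psi_\Phi(\eta)=\Phi\wedge_{\mathrm{ev}}\eta.
\]
This is a special case of the wedge product already introduced, so it is automatically well defined, and a degree count on generators $\Phi=\xi\otimes\phi$, $\eta=\zeta\otimes e$ shows that $\Psi_\Phi$ raises degree by $|\Phi|=n$, as required.

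Next I would check that $\Psi_\Phi$ is graded $\cin(\M)$-linear with exactly the sign appearing in the statement. Writing $\Phi=\xi\otimes\phi$ with $\xi\in\cin(\M)^i$, $|\phi|=d$ and $i+d=n$, and $\eta=\zeta\otimes e$ with $\zeta\in\cin(\M)^k$, the multiplication rule for $\wedge_{\mathrm{ev}}$ gives $\Psi_\Phi(\eta)=(-1)^{dk}\xi\zeta\otimes\phi(e)$. For $g\in\cin(\M)^m$ one then compares
\[
\Psi_\Phi(g\wedge\eta)=(-1)^{d(m+k)}\xi g\zeta\otimes\phi(e)\qquad\text{and}\qquad (-1)^{nm}\,g\wedge\Psi_\Phi(\eta)=(-1)^{nm+dk}\,g\xi\zeta\otimes\phi(e),
\]
and since graded commutativity of $\cin(\M)$ yields $\xi g=(-1)^{im}g\xi$, the two signs agree precisely because $dm+im=(d+i)m=nm$. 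Thus $\Phi\mapsto\Psi_\Phi$ lands in the space of degree $n$ operators satisfying $\Psi(\xi\wedge\eta)=(-1)^{nk}\xi\wedge\Psi(\eta)$ (the scalar being factored out is here called $g$ of degree $m$, to avoid clashing with the $\zeta$ inside $\eta$).

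For the inverse I would restrict a given such operator $\Psi$ to $\Gamma(\E)\cong 1\otimes\Gamma(\E)\subset\cin(\M)\otimes\Gamma(\E)$. Graded linearity with $\xi\in\cin(\M)^0=C^\infty(M)$ shows that $\Psi|_{\Gamma(\E)}\colon\Gamma(\E)\to\cin(\M)\otimes\Gamma(\underline{F})$ is $C^\infty(M)$-linear and of degree $n$. Since $\Gamma(\E)$ is finitely generated and projective over $C^\infty(M)$, tensor--hom gives
\[
\Hom_{C^\infty(M)}\bigl(\Gamma(\E),\,\cin(\M)\otimes\Gamma(\underline{F})\bigr)\cong \cin(\M)\otimes\Gamma(\underline{F})\otimes_{C^\infty(M)}\Gamma(\E^*)\cong\cin(\M)\otimes\Gamma(\underline{\Hom}(\E,\underline{F})),
\]
so $\Psi|_{\Gamma(\E)}$ determines a canonical element $\Phi_\Psi$ of the right degree; concretely, choosing a local frame $\{e_a\}$ of $\E$ with dual frame $\{e^a\}$, one sets $\Phi_\Psi=\sum_a \Psi(e_a)\otimes e^a$ (under the identification $\underline{F}\otimes\E^*\cong\underline{\Hom}(\E,\underline{F})$, up to the sign dictated by the Koszul rule) and verifies independence of the frame.

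Finally I would prove that the two assignments are mutually inverse. The essential point is that $\cin(\M)\otimes\Gamma(\E)$ is generated as a graded $\cin(\M)$-module by $\Gamma(\E)$, so that graded linearity forces any such operator to be recovered from its restriction via $\Psi(\zeta\otimes e)=(-1)^{n|\zeta|}\zeta\wedge\Psi(e)$; this both shows $\Psi_{\Phi_\Psi}=\Psi$ and, evaluating $\Psi_\Phi$ on frame sections, shows $\Phi_{\Psi_\Phi}=\Phi$. I expect the main obstacle to be purely bookkeeping: keeping the Koszul signs consistent between the wedge convention, the graded commutativity of $\cin(\M)$, and the identification $\underline{\Hom}(\E,\underline{F})=\underline{F}\otimes\E^*$, together with checking that the local reconstruction of $\Phi_\Psi$ is independent of the chosen frame so that it glues to a global element. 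No genuinely new idea beyond the Lie algebroid case of \cite{ArCr12} is needed, since replacing $\Omega(A)$ by $\cin(\M)$ leaves the module-theoretic argument untouched.
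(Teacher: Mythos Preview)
Your proposal is correct and follows essentially the same approach as the paper: forward via left wedge multiplication $\Phi\mapsto(\eta\mapsto\Phi\wedge\eta)$, backward via restriction of $\Psi$ to $\Gamma(\E)$, with mutual inverseness coming from the fact that $\cin(\M)\otimes\Gamma(\E)$ is generated over $\cin(\M)$ by $\Gamma(\E)$. The paper phrases the inverse by decomposing $\Psi(e)$ according to $\cin(\M)$-degree rather than invoking tensor--hom and projectivity, but this is the same content; your version is simply more explicit about the sign bookkeeping.
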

	\begin{proof}
		The element $\Phi\in \cin(\M)\otimes\Gamma(\underline{\End}(\E))$
		induces the operator $\widehat{\Phi}$ given by left multiplication
		by $\Phi$:
		\[
		\widehat{\Phi}(\eta)=\Phi\wedge\eta.
		\]
		This clearly satisfies
		$\widehat{\Phi}(\xi\wedge\eta)=(-1)^{nk}\xi\wedge\widehat{\Phi}(\eta)$,
		for all
		$\xi\in \cin(\M)^k,\ \eta\in
		\cin(\M)\otimes\Gamma(\E)$.  Conversely, an operator $\Psi$ of degree $n$
		must send a section $e\in\Gamma(E_k)$ into the sum
		\[
		\Gamma(F_{k-n}) \oplus \left(
		\cin(\M)^1\otimes\Gamma(F_{k-n+1}) \right) \oplus
		\left( \cin(\M)^2\otimes\Gamma(F_{k-n+2}) \right)
		\oplus\dots,
		\]
		defining the elements
		\[
		\Psi_i\in C^\infty(\M)^i\otimes\Gamma(\underline{\Hom}^{n-i}(\E,\underline{F})).
		\]
		Thus, this yields the (finite) sum
		$\widetilde{\Psi} = \sum_i \Psi_i\in
		\Big(\cin(\M)\otimes\Gamma(\underline{\Hom}(\E,\underline{F}))\Big)^n$.
		Clearly,
		\[
		\widetilde{\widehat{\Phi}} = \Phi\ \text{and}\ \widehat{\widetilde{\Psi}} = \Psi.\qedhere
		\]
	\end{proof}
	Schematically, for a Lie $n$-algebroid $\M$, the above lemma gives the following diagram:
	\[
	\Big(\cin(\M)\otimes\Gamma(\underline{\Hom}(\E,\underline{F}))\Big)^n\xleftrightarrow{\text{1-1}}
	\left\{\begin{array}{c}
	\text{Degree}\ n\ \text{operators}\ \Psi \\
	\cin(\M)\otimes\Gamma(\E)\to \cin(\M)\otimes\Gamma(\underline{F}) \\
	\text{which are}\ \cin(\M)\text{-linear in the graded sense}
	\end{array}\right\}.
	\]
	In particular, if $\E = \underline{F}$, then
	\[
	\Big(\cin(\M)\otimes\Gamma(\underline{\End}(\E))\Big)^n\xleftrightarrow{\text{1-1}}
	\left\{\begin{array}{c}
	\text{Degree}\ n\ \text{operators}\ \Psi\ \text{on}\ \cin(\M)\otimes\Gamma(\E)\ \text{which}\\
	\text{are}\ \cin(\M)\text{-linear in the graded sense}
	\end{array}\right\}.
	\]
	
	\section{Graded Poisson structures}
	
	Poisson structures can be defined on $\mathbb{Z}$-manifolds for any $k\in\mathbb{Z}$. More precisely, a \textbf{degree $k$ Poisson bracket}\index{graded Poisson bracket} on a $\mathbb{Z}$-manifold $\M$ is an $\mathbb{R}$-bilinear map
	$\{\cdot\,,\cdot\}_k\colon \cin(\M)\times \cin(\M)\to \cin(\M)$ of degree $k$, i.e.~$|\{\xi_1,\xi_2\}_k| = |\xi_1| + |\xi_2| + k$, such that it satisfies the following conditions:
	\begin{enumerate}
		\item $\{ \xi_1,\xi_2 \}_k = - (-1)^{(|\xi_1|+k)(|\xi_2|+k)}\{ \xi_2,\xi_1 \}_k$,
		
		\item $\{\xi_1,\xi_2\xi_3\}_k = \{\xi_1,\xi_2\}_k\xi_3 + (-1)^{(|\xi_1|+k)|\xi_2|}\xi_2\{\xi_1,\xi_3\}_k$,
		
		\item $\{\xi_1,\{\xi_2,\xi_3\}\}_k = \{\{\xi_1,\xi_2\}_k,\xi_3\}_k + (-1)^{(|\xi_1|+k)(|\xi_2|+k)}_k\{\xi_2,\{\xi_1,\xi_3\}\}_k$,
	\end{enumerate} 
	for homogeneous elements $\xi_1,\xi_2,\xi_3\in \cin(\M)$. $\mathbb{Z}$-graded manifolds endowed with a Poisson bracket of degree $k$ are called \textbf{$\mathcal{P}_k$-manifolds}\index{$\mathcal{P}_k$-manifold}, or simply \textbf{$\mathcal{P}$-manifolds}\index{$\mathcal{P}$-manifold}. An $\mathbb{N}$-manifold together with a Poisson bracket of degree $k$ is called \textbf{$\mathbb{N}\mathcal{P}_k$-manifold}\index{$\mathbb{N}\mathcal{P}_k$-manifold}, or simply \textbf{$\mathbb{N}\mathcal{P}$-manifold}\index{$\mathbb{N}\mathcal{P}$-manifold}. A \textbf{morphism of two
		$\mathcal{P}$-manifolds}\index{$\mathcal{P}$-manifold!morphism} (respectively $\mathbb{N}\mathcal{P}$-manifolds\index{$\mathbb{N}\mathcal{P}$-manifold!morphism}) $(\N,\{\cdot\,,\cdot\}_\N)$ and $(\M,\{\cdot\,,\cdot\}_\M)$ with both brackets of degree $k$ is a morphism
	of graded manifolds $\mathcal{F}\colon\N\to\M$ which respects the Poisson
	brackets:
	$\mathcal{F}^\star\{\xi_1,\xi_2\}_\M =
	\{\mathcal{F}^\star\xi_1,\mathcal{F}^\star\xi_2\}_\N$ for all
	$\xi_1,\xi_2\in \cin(\M)$.
	
	As in the ordinary case, every degree $k$ Poisson bracket on $\M$ induces a degree $k$ map
	\[
	\sharp\colon \cin(\M)\to \mathfrak{X}(\M)
	\]
	which sends $\xi$ to its \textbf{Hamiltonian vector field}\index{Hamiltonian vector field} $\mathcal{X}_\xi=\{\xi\,,\cdot\}_k$ (of degree $|\xi|+k$). A graded manifold is called \textbf{symplectic} if it is equipped with a degree $k$ Poisson bracket whose Hamiltonian vector fields generate all of $\mathfrak{X}(\M)$. Note that the Hamiltonian map characterises the morphisms between graded Poisson manifolds as morphisms $\mathcal{F}:\N\to\M$ such that the Hamiltonian vector field $\X_{\xi}$ on $\M$ is $\mathcal{F}$-related to the Hamiltonian vector field $\X_{\mathcal{F}^\star\xi}$ on $\N$ for all $\xi\in\cin(\M)$.
	
	A homogeneous vector field $\X$ of a graded Poisson manifold $\M$ is called \textbf{Poisson vector field}\index{Poisson vector field} if it is a derivation of the Poisson bracket:
	\[
	\X\{\xi_1,\xi_2\}_k = \{\X(\xi_1),\xi_2\}_k + (-1)^{|\X|(|\xi_1|+k)}\{\xi_1,\X(\xi_2)\},
	\]
	for all homogeneous $\xi_1,\xi_2\in\cin(\M)$. Equivalently, using the Hamiltonian vector field notation the defining formula of Poisson vector fields can be expressed as $\X_{\X(\xi)}=[\X,\X_\xi]$ for all $\xi\in\cin(\M)$. Yet another equivalent form of the compatibility condition of $\X$ with $\{\cdot\,,\cdot\}_k$ will be given later using the generalisation of the Schouten bracket on the space of (pseudo)multivector fields on $\M$. Note that similarly to the case of ordinary Poisson manifolds, the Jacobi identity of a Poisson bracket can be restated as the commutativity of the Hamiltonian map $\sharp$ with the brackets $\{\cdot\,,\cdot\}_k$ and $[\cdot\,,\cdot]$:
	\[
	\X_{\{\xi_1,\xi_2\}_k} = [\X_{\xi_1},\X_{\xi_2}],
	\]
	for all $\xi_1,\xi_2\in\cin(\M)$, or equivalently, every Hamiltonian vector field is Poisson:
	\[
	\X_{\xi_1}\{\xi_2,\xi_3\}_k = \{\X_{\xi_1}(\xi_2),\xi_3\}_k + (-1)^{(|\xi_1|+k)(|\xi_2|+k)}\{\xi_2,\X_{\xi_1}(\xi_3)\}_k,
	\]
	for all $\xi_1,\xi_2,\xi_3\in\cin(\M)$.
	
	A \textbf{$\mathcal{PQ}$-manifold}\index{$\mathcal{PQ}$-manifold} $(\M,\Q,\{\cdot\,,\cdot\}_k)$ is a graded manifold
	$\M$ endowed with a compatible pair of a homological vector field $\Q$ and a Poisson
	bracket $\{\cdot\,,\cdot\}_k$, i.e~a \textbf{Poisson homological vector field}\index{Poisson homological vector field}:
	\[
	\Q\{\xi_1,\xi_2\}_k = \{\Q(\xi_1),\xi_2\}_k + (-1)^{|\xi_1|+k}\{\xi_1,\Q(\xi_2)\}_k
	\]
	for all $\xi_1,\xi_2\in \cin(\M)$. If the underlying manifold $\M$ is $\mathbb{N}$-graded of degree $n\in\mathbb{N}$, then the $(\M,\Q,\mathcal{P})$ is called \textbf{Poisson Lie $n$-algebroid}\index{Poisson Lie $n$-algebroid}, or \textbf{symplectic Lie $n$-algebroid}\index{symplectic Lie $n$-algebroid} if the Poisson bracket is symplectic. A \textbf{morphism of $\mathcal{PQ}$-manifolds}\index{$\mathcal{PQ}$-manifold!morphism} (similarly of Poisson and symplectic Lie $n$-algebroids) is a morphism of the underlying
	graded manifolds which is a morphism of both $\Q$- and $\mathcal{P}$-manifolds.
	
	Clearly, a Poisson (symplectic) Lie 0-algebroid is a usual Poisson (symplectic) manifold $M$. A Poisson Lie 1-algebroid is a Lie bialgebroid $(A,A^*)$ and a symplectic Lie 1-algebroid is again in correspondence with a usual Poisson manifold as it becomes the Lie bialgebroid $(T^*M,TM)$ of a Poisson manifold $M$. In \cite{Roytenberg02}, it was shown by Roytenberg that split symplectic Lie 2-algebroids correspond to Courant algebroids. More details about these correspondences are given in Chapter \ref{Chapter: Graded tangent and cotangent bundles} and in particular in Section \ref{Section: Homotopy Lie bialgebroids} and Section \ref{Section: Poisson manifolds and Courant algebroids in the graded setting}.
	
	\chapter{Graded tangent and cotangent bundles}\label{Chapter: Graded tangent and cotangent bundles}
	
	This chapter defines (graded) vector bundles in the category of graded manifolds and analyses, in particular, two fundamental examples of vector bundles accompanying any graded manifold. Namely, the tangent and cotangent bundles of a graded manifold. As the tangent and cotangent bundles are our main objects for the theory of modules and representations in the next chapters, we provide here many details about their structure. Moreover, we explain the equivalence of Poisson manifolds and Courant algebroids with symplectic Lie 1- and 2-algebroids, respectively. The following is based on \cite{Mehta06}.
	
	\section{Vector bundles over graded manifolds}\label{Section: Vector bundles over graded manifolds}
	
	Suppose that $\{k_i\}$ is a finite collection of natural numbers. As usual, a \textbf{vector bundle of rank $\{k_i\}$}\index{vector bundle over a graded manifold} \cite{Mehta06,Mehta09} in the category of $\mathbb{Z}$-graded manifolds is defined as a surjection of graded manifolds $q\colon \mathcal{E}\to \M$ endowed with an atlas of local trivialisations $\mathcal{E}|_{q^{-1}(U)} \cong \M|_U\times\mathbb{R}^{\{k_i\}}$ over the open set $U\subset M$, such that the transition functions between two local trivialisations are linear in the fibre coordinates.
	
	\begin{remark}
		The standard algebraic constructions such as tensor products, direct sums, duals, etc., can be applied in this category yielding new vector bundles. In particular, one may apply the degree shifting functor $[j]$ to $\mathcal{E}$ to produce another vector bundle $q[j]\colon \mathcal{E}[j]\to\M$ obtained by $\mathcal{E}$ after shifting the fibre coordinates by $j$, i.e.~the fibre coordinates $\{e^i\}$ of $\Em$ induce the fibre coordinates $\{e^i\}$ of degree $|e^i|-j$ for $\Em[j]$.
	\end{remark}
	
	The \textbf{space of sections of $\mathcal{E}$}\index{vector bundle over a graded manifold!sections} is defined as $\Gamma(\mathcal{E}):=\bigoplus_{j\in\mathbb{Z}}\Gamma^j(\mathcal{E})$, where $\Gamma^j(\mathcal{E})$ consists of the (homogeneous) \textbf{degree $j$ sections} of $\mathcal{E}$, i.e.~morphisms of graded manifolds $\sigma\colon\M\to\mathcal{E}[j]$ such that $q\circ\sigma=\id_\M$. There is another equivalent definition for a vector bundle given by its sheaf of sections $\Gamma(\mathcal{E})$ as an analogue of the Serre-Swan theorem of ordinary vector bundles over smooth manifolds \cite{Mehta06}. Explicitly, a vector bundle over $\M$ can be defined as a sheaf $\mathcal{E}$ of locally freely generated (left) graded $\cin(\M)$-modules over the smooth manifold $M$.
	
	Note that there are two special classes of functions associated to every vector bundle $\mathcal{E}$ called \textbf{basic}\index{vector bundle over a graded manifold!basic functions} and \textbf{linear}\index{vector bundle over a graded manifold!linear functions} functions and denoted $\cin_{\text{bas}}(\mathcal{E})$ and $\cin_{\text{lin}}(\mathcal{E})$, respectively. The former consists of the pull-back of functions of $\cin(\M)$ via the projection $q$ while the latter is given by the functions in $\cin(\mathcal{E})$ which are linear in the fibre coordinates. In other words, any linear function $\ell\in\cin(\Em)$ can be written locally as $\ell = e^j\xi_j$, where ${e^j}$ is a set of fibre coordinates of $\mathcal{E}$ and $\xi_j$ are local smooth functions on $\M$. In fact, the linear functions of a vector bundle $\Em$ are identified with $\Gamma(\Em^*)$, the $\cin(\M)$-module of sections of the dual bundle $\Em^*\to\M$. We define the bigrading $(p,q)$ on the space of functions $\cin(\Em)$ of the total space of $q:\Em\to\M$ so that it satisfies
	\[
	\cin(\Em)^i = \bigoplus_{p+k=i} \cin(\Em)^{p,k},
	\]
	for all $i\in\mathbb{Z}$; that is, the horizontal degree $p$ is the ``polynomial'' degree obtained from the linear coordinates. This implies in particular that a basic function $q^\star(\xi)\in\cin_{\text{bas}}(\mathcal{E})^i$ has bidegree $(0,i)$ for all $\xi\in\cin(\M)^i$ and a linear function $\ell\in\cin_{\text{lin}}(\mathcal{E})^i$ has bidegree $(1,i-1)$.
	
	\begin{example}[Split vector bundles]\label{Splitting of vector bundles example}
		Suppose $\M$ is a $\mathbb{Z}$-manifold over $M$ and $\E=\bigoplus_{i\in\mathbb{Z}} E[i]\to M$ is a graded vector bundle over $M$. Then the tensor product $\cin(\M)\otimes\Gamma(\E)$
		defines the sheaf of sections of a vector bundle $\Em\to\M$. In the case of a split $\mathbb{Z}$-manifold $\M=\A=\bigoplus_iA_i[i]$, this implies in particular that the coordinates of the $\mathbb{Z}$-manifold $\Em$ are given by
		\begin{enumerate}
			\item \textbf{Basic:}\index{vector bundle over a graded manifold!basic coordinates} coordinates $x^i$ of $M$ of degree $0$ (bidegree $(0,0)$) and coordinates $\alpha^i_k$ of degree $i$ (bidegree $(0,i)$), where $\{\alpha^i_k\}$ is a local frame of $A_i^*$. 
			\item \textbf{Linear:}\index{vector bundle over a graded manifold!linear coordinates} coordinates $\varepsilon_k^i$ of degree $i$ (bidegree $(1,i-1)$), where $\{\varepsilon_k^i\}$ is a local frame of $E_i^*$.
		\end{enumerate}
	\end{example}
	
	\begin{remark}\label{Remark: Splitting of vector bundles}
		Mehta proved in \cite{Mehta14} that all the vector bundles over $\mathbb{N}$-graded manifolds come non-canonically from this construction. Briefly, the proof goes as
		follows: Let $\Em\to\M$ be a vector bundle over the $\mathbb{N}$-manifold $\M$. The pull-back\footnote{Obtained by the induced $C^\infty(M)$-module structure of $\Gamma(\mathcal{E})$} $0_{\M}^!\mathcal{E}$ with respect to the zero embedding
		$0_{\M}\colon M\to\M$ is an ordinary graded
		vector bundle $\E$ over $M$ and hence splits as a direct sum
		$\E=\bigoplus_i E_i[i]$. The double pull-back $\pi_{\M}^!0_{\M}^!\mathcal{E}$, where
		$\pi_{\M}\colon \M\to M$ is the projection map, is non-canonically isomorphic
		to $\mathcal{E}$ as vector bundles over $\M$. Then, as a sheaf over $M$,
		$\Gamma(\mathcal{E})$ is identified with
		$\Gamma(\pi_{\M}^!0_{\M}^!\mathcal{E})=\Gamma(\pi_{\M}^!\E)$, which
		in turn is canonically isomorphic to
		$\cin(\M)\otimes\Gamma(\E)$.
	\end{remark}
	
	\begin{example}[Tangent and cotangent bundles]
		For every graded manifold $\M$, the \textbf{tangent bundle $T\M$}\index{tangent bundle of a graded manifold} is a vector bundle over $\M$ whose sheaf of sections is given by the vector fields $\mathfrak{X}(\M)$. For a description of $T\M$ in terms of local trivialisations and transition maps, we proceed as follows \cite{Mehta06,Mehta09}: Given a local chart $U\subset M$ of $\M$ with coordinates $\xi_i^j,i\in\mathbb{Z},j=1,\ldots,r_i,$ of degree $|\xi_i^j|=-i$ as in (\ref{local structure of graded manifold}), we form the product manifold $\T\M|_U:=\M|_U\times \mathbb{R}^{\{r_i\}}$ with coordinates $\xi_i^j,\dot{\xi}_i^j$ of degree $|\xi_i^j| = -i = |\dot{\xi}_i^j|$. Given two charts $U,V\subset M$ of $\M$ with coordinates $\xi_i^j$ and $\zeta_i^j$, respectively, and the coordinate transformation $\mu_{UV}:\M|_U\to \M|_V$ from $\xi_i^j$ to $\zeta_i^j$, we obtain the coordinate transformation $T\mu_{UV}:T\M|_U\to T\M|_V$ defined by
		\[
		\left(T\mu_{UV}\right)^\star\left(\zeta_i^j\right) = \mu_{UV}^\star\left(\zeta_i^j\right)
		\qquad \text{and} \qquad
		\left(T\mu_{UV}\right)^\star \left(\dot{\zeta}_i^j\right) = \sum_{s,t} \dot{\xi}_s^t\frac{\partial}{\partial \xi_s^t}\mu_{UV}^\star\left( \zeta_i^j \right).
		\]
		The graded manifold $T\M$ is obtained by gluing the local charts $T\M|_U$ with the transformations defined above. The dual of $T\M$ is the \textbf{cotangent bundle $T^*\M$} with sheaf of sections the $1$-forms $\Omega^1(\M)$. If $\M$ is $\mathbb{N}$-graded, then in terms of the isomorphism of Example \ref{Splitting of vector bundles example}, the tangent and cotangent bundles are represented by the sheaves
		\[
		\Gamma(T\M)\cong\cin(\M)\otimes\Gamma(\A\oplus TM[0])
		\qquad \text{and} \qquad
		\Gamma(T^*\M)\cong\cin(\M)\otimes\Gamma(T^*M[0]\oplus\A^*),
		\]
		where the vector bundle $\A=\bigoplus_{i} A_i[i]\to M$ is a splitting of the base manifold $\M$, i.e.~$\cin(\M)\cong\Gamma(\underline{S}(\A^*))$. We will see more geometric structures on $T\M$ and $T^*\M$ in the coming sections.
	\end{example}
	
	\section[(Pseudo)differential forms and the Weil algebra]{(Pseudo)differential forms and the Weil algebra of a $\Q$-manifold}\label{Section: (Pseudo)differential forms and the Weil algebra of a Q-manifold}
	
	Let $\M$ be an $\mathbb{Z}$-manifold over a smooth manifold $M$ and
	$\xi_i^1,\ldots,\xi_i^{r_i},i\in\mathbb{Z}$
	be its local generators over some open $U\subset M$, as in (\ref{local structure of graded manifold}). By definition, the \textbf{(shifted) tangent bundle}\index{shifted tangent bundle}\footnote{Note that here there is a
		sign difference in the notation with \cite{Mehta06} and
		\cite{Mehta09}. $T[1]\M$ here is the same as $T[-1]\M$ in these papers.} $T[1-k]\M$
	of $\mathcal M$, for $k\in\mathbb{Z}$, is the $\mathbb{Z}$-manifold \cite{Mehta06,Mehta09} whose structure sheaf assigns to every coordinate domain
	$(U,x^1,\ldots,x^m)$ of $M$ that trivialises $\mathcal M$, the space
	\[
	\cin_U(T[1-k]\M) =
	\bigoplus_{i\in\mathbb{Z}}\underset{\text{$(0,i)$}}{\underbrace{\cin_U(\M)^i}}\left<
	\underset{(1,-k)}{\underbrace{\left(\diff x^s\right)_{s=1}^m}};
	\underset{(1,-k+|\xi_j^s|)}{\underbrace{\left(\diff\xi_j^s\right)_{s=1}^{r_j}}}, j\in\mathbb{Z} 
	\right>.
	\]
	It carries the bigrading $(p,q)$, where $p$ is the grading of ``(pseudo)differential forms'' (or ``polynomial degree"\footnote{This is obtained by counting the ``$\diff$'s" and thus it is non-negative; e.g., the element $\xi\in\cin(\M)$ viewed as an element in $\cin(T[1-k]\M)$ has polynomial degree $0$ and the element $\diff\xi_1\ldots\diff\xi_n$ has polynomial degree $n$.}) and $q$ comes from the grading of
	$\M$. Note that as the structure sheaf of a $\mathbb{Z}$-manifold the above algebra of functions is considered with the \textit{total degree}. That is, an element of bidegree $(p,q)$ is a function of degree $p+q$; e.g.~the degrees of the functions $\xi$ and $\diff\xi$ in $\cin(T[1-k]\M)$ are $|\xi|$ and $|\xi|+1-k$, respectively, for all homogeneous $\xi\in\cin(\M)$.
	
	We are interested in the case of $k=0$. Suppose that $(\M,\Q)$ is a $\Q$-manifold. Then $T[1]\M$ is a $\mathbb{Z}$-manifold,
	which inherits the two commuting differentials $\ldr{\Q}$ and $\dr$ defined as follows:
	\begin{itemize}
		\item $\dr\colon\cin(T[1]\M)^\bullet\to\cin(T[1]\M)^{\bullet+1}$ is the \textbf{de Rham differential}\index{de Rham differential} defined on generators by $C^\infty(M)\ni f  \mapsto \diff f$,
		$\xi_i^j  \mapsto \diff \xi_i^j$, $\diff f\mapsto 0$ and 
		$\diff \xi_i^j  \mapsto 0$,
		and is extended to the whole algebra as a derivation of bidegree $(1,0)$:
		\[
		\diff(\xi_1\diff\xi_2) = \diff\xi_1\xi_2 + (-1)^{|\xi_1|}\xi_1\diff\xi_2,
		\qquad \text{for homogeneous}\ \xi_1\in\cin(\M)\ \text{and all}\ \xi_2\in\cin(\M). 
		\]
		Note that this does not require the existence of the $\Q$-manifold structure on $\M$ and is hence well-defined for all $\mathbb{Z}$-manifolds.

		\item
		$\ldr{\Q}\colon\cin(T[1]\M)^\bullet\to\cin(T[1]\M)^{\bullet+1}$
		is the \textbf{Lie derivative}\index{Lie derivative} with respect to the vector
		field $\Q$, i.e.~the graded commutator
		$\ldr{\Q} = [i_{\Q},\dr] = i_\Q\circ\dr - \dr\circ i_Q$, and
		it is a derivation of bidegree $(0,1)$. Here, $i_{\Q}$ is the bidegree
		$(-1,1)$-derivation on $T[1]\M$, which sends
		$\xi\in \cin(\M)$ to $0$, $\diff \xi$ to $\Q(\xi)$ for
		$\xi\in \cin(\M)$, and is extended to the whole algebra as
		a derivation of bidegree $(-1,1)$:
		\[
		i_\Q(\xi_1\diff\xi_2) = i_\Q(\xi_1)\diff\xi_2 + (-1)^{|\xi_1|(1-1)}\xi_1i_\Q(\diff(\xi_2)) = \xi_1\Q(\xi_2).
		\]
	\end{itemize}
	
	Note that the above operators $\dr,i_\Q$ and $\ldr{\Q}$ are vector fields on the manifold $T[1]\M$ of degree $1,0$ and $1$, and bidegree $(1,0),(-1,1)$ and $(0,1)$, respectively. In particular, $i_\Q$ and $\ldr{\Q}$ are special cases of the two vector fields $i_\X$ and $\ldr{\X}$ on $T[1]\M$ obtained for any $\X\in\mathfrak{X}(\M)$; the former is of bidegree $(-1,|\X|)$ and is defined on the generators by $i_\X(\xi) = 0$ and $i_\X(\diff\xi) = \X(\xi)$, while the latter is of bidegree $(0,|\X|)$ and is defined as the Lie bracket $\ldr{\X}=[i_\X,\diff]$. Locally, they have the following coordinate representations: Given local coordinates $\{\xi^j\}$ on $\M$, the family $\{\xi^j,\dot{\xi}^j\}$ forms a local system of coordinates for the graded manifold $T[1]\M$ with $\xi^j$ and $\dot{\xi}^j$ of degree $|\xi^j|$ and $|\xi^j|+1$, respectively; here we do not use the letter $x^i$ for the coordinates of the body and we write $\dot{\xi}^j$ for the linear coordinate $\diff\xi^j$ for simplicity. Then for a homogeneous $\X=\zeta^j\frac{\partial}{\partial\xi^j}\in\mathfrak{X}(\M)$, the vector fields above become
	\[
	\diff = \dot{\xi}^j\frac{\partial}{\partial\xi^j},\qquad 
	i_\X = \zeta^j\frac{\partial}{\partial\dot{\xi}^j},\qquad 
	\ldr{\X} = \zeta^j\frac{\partial}{\partial \xi^j} + (-1)^{|\X|}\diff\zeta^j\frac{\partial}{\partial\dot{\xi}^j}.
	\]
	
	By checking their values on local generators, it is easy to see that
	$\ldr{\Q}^2 = 0, \dr^2 = 0$ and
	$[\ldr{\Q},\dr] = \ldr{\Q}\circ\dr + \dr\circ\ldr{\Q} = 0$. Hence, the pair
	\[
	W(\M):=\left( \Omega(\M):=\bigoplus_{i\in\mathbb Z}\Omega^{\bullet,\bullet}(\M),\ldr{{\Q}} + \dr \right),
	\]
	where $\Omega^{\bullet,\bullet}(\M):=\cin(T[1]\M)^{\bullet,\bullet}$, forms a double complex. This double complex is called the \textbf{Weil algebra}\index{Weil algebra} of $(\M,\Q)$.
	
	\begin{remark}
		The terminology ``Weil algebra" becomes clear from the case of a Lie $n$-algebroid $(\M,\Q)$, which is analysed in detail in the next chapter.
	\end{remark}
	
	\begin{remark}\label{Cohomologies from differential forms}
		In total, one has three differential complexes, and consequently three cohomologies, associated to any $\Q$-manifold $(\M,\Q)$:
		
		\begin{enumerate}
			\item The \textbf{de Rham complex}\index{de Rham complex} consisting of the columns of $\Omega^{\bullet,\bullet}(\M)$ with differential $\dr$ and the induced \textbf{de Rham cohomology}\index{de Rham cohomology} $H_{dR}^\bullet(\M)$ of $\M$.
			\item The \textbf{Lie complex}\index{Lie complex} of differential forms consisting of the rows of $\Omega^{\bullet,\bullet}(\M)$ with differential $\ldr{{\Q}}$ and the induced \textbf{Lie cohomology}\index{Lie cohomology} of (pseudo)differential forms $H_{\Omega,L}^\bullet(\M)$ of $\M$.
			\item The \textbf{Weil complex}\index{Weil complex} $W(\M)$ and \textbf{Weil cohomology}\index{Weil cohomology} $H^\bullet_W(\M)$ of $(\M,\Q)$ consisting of the the differential graded algebra given by the Weil algebra defined above, i.e.~the total complex of $\Omega^{\bullet,\bullet}(\M)$ and its cohomology.
		\end{enumerate}
	\end{remark}
	
	\section[(Pseudo)multivector fields and the Poisson-Weil algebra]{(Pseudo)multivector fields and the Poisson-Weil algebra of a $\mathcal{PQ}$-manifold}\label{Section:(Pseudo)multivector fields and the Poisson-Weil algebra}
	
	From the theory of ordinary Poisson manifolds, it is well-known that a Poisson bracket $\{\cdot\,,\cdot\}$ on a smooth manifold $M$ is equivalent to a bivector field $\pi\in\mathfrak{X}^2(M):=\Gamma(\wedge^2TM)$ that satisfies the differential equation $[\pi,\pi]=0$, where $[\cdot\,,\cdot]$ is the Schouten bracket on the space $\mathfrak{X}^\bullet(M)$ of multivector fields on $M$. The precise correspondence is given by the derived bracket formula\index{derived bracket formula}
	\[
	\{f,g\} = [[f,\pi],g]
	\]
	for all $f,g\in C^\infty(M)$. Furthermore, the last equality defines a cohomology theory for Poisson manifolds on $\mathfrak{X}^\bullet(M)$ with differential given by $\diff_\pi:=[\pi,\cdot]$. As we will see now, this is also true in the setting of graded manifolds carrying the graded version of a Poisson bracket as defined above.
	
	Suppose that $\M$ is a $\mathbb{Z}$-graded manifold over $M$ and $k\in\mathbb{Z}$. The algebra of functions of the graded manifold $T^*[1-k]\M$ is, by definition, $\underline{S}(\mathfrak{X}(\M)[k-1])$ considered with the total grading, and locally it is given by
	\[
	\cin_U(T^*[1-k]\M) = \bigoplus_{i\in\mathbb{Z}}\underset{(0,i)}{\underbrace{\cin_U(\M)^i}}\left< \underset{(1,-k)}{\underbrace{\left(\frac{\partial}{\partial x^s}\right)_{s=1}^m}};
	\underset{(1,-k-|\xi_j^s|)}{\underbrace{\left(\frac{\partial}{\partial \xi_j^s}\right)_{s=1}^{r_j}}},\ j\in\mathbb{Z} \right>.
	\]
	where $U\subset M$ is a local chart with coordinates $\{x^s\}$, and $\{\xi^s_j\}$ are local coordinates of $\M$ as in (\ref{local structure of graded manifold}). The resulting $\mathbb{Z}$-manifold $T^*[1-k]\M$ is called \textbf{(shifted) cotangent bundle}\index{shifted cotangent bundle} and its structure sheaf will be denoted by $\mathfrak{A}_k^{\bullet,\bullet}(\M)$. The bigrading $(p,q)$ is such that the horizontal degree $p$ counts the ``multivector field" (or ``polynomial") degree\footnote{This is obtained by counting the partial derivatives ``$\frac{\partial}{\partial \xi_j^s}$" and is always non-negative.} and the vertical degree $q$ counts the ``derivation" degree. These are denoted $|\cdot|_p$ and $|\cdot|_q$, respectively. The total degree is denoted $|\cdot|:=|\cdot|_p+|\cdot|_q$. If $k=0$, the resulting function algebra of the graded manifold $T^*[1]\M$ is given by the space of \textbf{(pseudo)multivector fields}\index{pseudomultivector fields} $\mathfrak{X}^{\bullet,\bullet}(\M):=\mathfrak{A}_{0}^{\bullet,\bullet}(\M)$, and if in particular $\M$ is an ordinary (non-graded) smooth manifold $M$ then $\mathfrak{A}_0(M)=\Gamma(\wedge TM)$. See \cite{Cueca19} for more on graded cotangent bundles over $\mathbb{N}$-manifolds. 
	
	The \textbf{bidegree $(-1,k)$ Schouten bracket}\index{Schouten bracket of bidegree $(-1,k)$} on $\mathfrak{A}_{k}^{\bullet,\bullet}(\M)$ is defined inductively by the following conditions:
	\begin{enumerate}
		\item For all $\xi\in\cin(\M)$ and all vector fields $\X,\Y\in\mathfrak{X}^\bullet(\M)$:
		\[
		[\X,\xi]_k:=\X(\xi)\qquad \text{and}\qquad [\X,\Y]_k:=[\X,\Y].
		\]
		\item For homogeneous elements of higher degree $\X,\Y,\mathcal{Z}\in\mathfrak{A}_k^{\bullet,\bullet}(\M)$, it is extended by the rules:
		\begin{enumerate}
			\item $[\X,\Y]_k = -(-1)^{(|\X|+k-1)(|\Y|+k-1)}[\Y,\X]_k$,
			\item $[\X,\Y\wedge\mathcal{Z}]_k = [\X,\Y]_k\wedge\mathcal{Z} + (-1)^{(|\X|+k-1)|\Y|}\Y\wedge[\X,\mathcal{Z}]_k$.
		\end{enumerate}
	\end{enumerate}
	Using an induction argument, it can be shown that the Schouten bracket on $\mathfrak{A}_{k}^{\bullet,\bullet}(\M)$ satisfies the following graded version of the Jacobi identity:
	\[
	[\X,[\Y,\mathcal{Z}]_k]_k = [[\X,\Y]_k,\mathcal{Z}]_k + (-1)^{(|\X|+k-1)(|\Y|+k-1)}[\Y,[\X,\mathcal{Z}]_k]_k,
	\]
	for all homogeneous elements $\X,\Y,\mathcal{Z}\in\mathfrak{A}_k^{\bullet,\bullet}(\M)$.
	
	As in the ordinary case, the antisymmetry and the Leibniz identity of a degree $k$ Poisson bracket on $\M$ imply the existence of a bivector field $\pi$ of bidegree $(2,-k)$, i.e.~$\pi\in\mathfrak{A}_k^{2,-k}(\M)$. The correspondence is given again by Voronov's derived bracket formula\index{derived bracket formula}
	\begin{equation*}\label{Derived bracket formula}
	\{ \xi_1,\xi_2 \}_k = [[\xi_1,\pi]_k,\xi_2]_k
	\end{equation*}
	for all $\xi_1,\xi_2\in\cin(\M)$. Note that this implies the equality 
	\[
	[\xi,\pi]_k = \X_{\xi}\qquad \text{for all } \xi\in\cin(\M).
	\]
	
	A similar formula can be used to determine the component functions of any multivector field over a local coordinate chart of $\M$. Specifically, given a (local) element $\X\in\mathfrak{A}_k^{\ell,\bullet}(\M)$ whose coordinate representation is a finite sum of the form
	\[
	\X = \sum \xi_{i_1\ldots i_\ell} \frac{\partial}{\partial \xi_{i_1}}\wedge\ldots\wedge\frac{\partial}{\partial \xi_{i_\ell}}
	\]  
	with $\xi_{i_1},\ldots,\xi_{i_\ell}$ coordinate functions and $\xi_{i_1\ldots i_\ell}$ a local smooth function, one has
	\[
	[\xi_{i_\ell},[\xi_{i_{\ell-1}},[\ldots[\xi_{i_1},\X]_k]_k]_k]_k = \pm \xi_{i_1\ldots i_\ell}.
	\]
	In particular, we obtain the following Lemma.
	\begin{lemma}\label{Triviality of a multivector field}
		An $\ell$-vector field $\X\in\mathfrak{A}_k^{\ell,\bullet}(\M)$ is zero if and only if 
		\[
		[\xi_{\ell},[\xi_{\ell-1},[\ldots[\xi_{1},\X]_k]_k]_k]_k = 0
		\]
		for all $\xi_{1},\ldots,\xi_{\ell}\in\cin(\M)$.
	\end{lemma}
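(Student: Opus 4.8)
The plan is to prove both implications, the forward one being immediate and the content lying in the converse. For the forward direction, if $\X=0$ then bilinearity of the Schouten bracket $[\cdot\,,\cdot]_k$ forces every iterated bracket $[\xi_\ell,[\ldots,[\xi_1,\X]_k\ldots]_k]_k$ to vanish, for any choice of $\xi_1,\ldots,\xi_\ell\in\cin(\M)$. So nothing needs to be done there.

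For the converse, I would first reduce to a local statement. Vanishing of a section of the sheaf $\mathfrak{A}_k^{\ell,\bullet}(\M)$ is a local condition, so it suffices to show that $\X$ restricts to zero on every coordinate domain $U\subset M$ trivialising $\M$. Over such a $U$, I write $\X$ in the local form $\X=\sum\xi_{i_1\ldots i_\ell}\frac{\partial}{\partial\xi_{i_1}}\wedge\ldots\wedge\frac{\partial}{\partial\xi_{i_\ell}}$ with coordinate functions $\{\xi_i\}$ and local component functions $\xi_{i_1\ldots i_\ell}$; the goal becomes showing that each component function vanishes, since these determine $\X|_U$.

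The key tool is the extraction identity $[\xi_{i_\ell},[\ldots,[\xi_{i_1},\X]_k\ldots]_k]_k=\pm\,\xi_{i_1\ldots i_\ell}$ recorded just before the statement, which recovers the component functions (up to sign) by iterated bracketing with the coordinate functions. The obstacle is that the hypothesis is phrased in terms of global functions, whereas the coordinate functions appearing in this identity are only locally defined. I would bridge this gap with a bump function argument: fix $p\in U$, choose $\chi\in C^\infty(M)$ with compact support in $U$ and $\chi\equiv1$ near $p$, and extend each $\chi\,\xi_i$ by zero to a global element of $\cin(\M)$. Applying the hypothesis to these global functions and invoking the locality of the Schouten bracket — which holds because $[\cdot\,,\cdot]_k$ is built inductively from the (local) Lie bracket of vector fields and the (local) action of vector fields on functions, so its value at $p$ depends only on germs at $p$ — the iterated bracket agrees near $p$ with the one formed from the genuine coordinate functions, since $\chi\,\xi_i$ and $\xi_i$ have the same germ at $p$. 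Hence it equals $\pm\,\xi_{i_1\ldots i_\ell}$ there.

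The hypothesis then gives $\xi_{i_1\ldots i_\ell}(p)=0$, and letting $p$ range over $U$ and the multi-index range over all components yields $\X|_U=0$; since $U$ was an arbitrary chart, $\X=0$. The main point to justify carefully is precisely this locality of the graded Schouten bracket, which is what legitimises the passage from the global hypothesis to the local extraction formula; everything else is bookkeeping with the bidegree $(-1,k)$ of the bracket, which guarantees that $\ell$ successive brackets with functions land in $\cin(\M)$ (polynomial degree $0$).
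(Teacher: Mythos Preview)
Your proof is correct and follows the same approach as the paper: write $\X$ in local coordinates and use the extraction identity $[\xi_{i_\ell},[\ldots,[\xi_{i_1},\X]_k\ldots]_k]_k=\pm\,\xi_{i_1\ldots i_\ell}$ to conclude that all component functions vanish. Your bump-function argument bridging the global hypothesis with the local coordinate functions is a point the paper leaves implicit (it simply plugs in the local coordinates directly), so your version is in fact slightly more careful.
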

	
	\begin{proof}
		If $\X = 0$, then clearly $[\xi_{\ell},[\xi_{\ell-1},[\ldots[\xi_{1},\X]_k]_k]_k]_k = 0$ for all $\xi_{1},\ldots,\xi_{\ell}\in\cin(\M)$. Conversely, suppose $[\xi_{\ell},[\xi_{\ell-1},[\ldots[\xi_{1},\X]_k]_k]_k]_k = 0$ for all $\xi_{1},\ldots,\xi_{\ell}\in\cin(\M)$ and write the vector field $\X$ in coordinates as
		\[
		\X = \sum \xi_{i_1\ldots i_\ell} \frac{\partial}{\partial \xi_{i_1}}\wedge\ldots\wedge\frac{\partial}{\partial \xi_{i_\ell}}.
		\]
		For the coordinate functions we have
		\[
		0 = [\xi_{i_\ell},[\xi_{i_{\ell-1}},[\ldots[\xi_{i_1},\X]_k]_k]_k]_k = \pm \xi_{i_1\ldots i_\ell}
		\]
		and hence $\X = 0$.
	\end{proof}
	
	\begin{proposition}\label{Differential equation for Poisson bracket}
		Let $\{\cdot\,,\cdot\}_k$ be Poisson bracket of even degree $k$ on a graded manifold $\M$ and denote the corresponding bivector field by $\pi\in\mathfrak{A}_k^{2,-k}(\M)$. Then the Jacobi identity for $\{\cdot\,,\cdot\}_k$ is equivalent to the differential equation
		\[
		[\pi,\pi]_k = 0.
		\]
	\end{proposition}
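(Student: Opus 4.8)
The plan is to reduce the Jacobi identity to the triviality of the $3$-vector field $[\pi,\pi]_k$ by combining the derived bracket formula with Lemma \ref{Triviality of a multivector field}. The starting point is the identity $\{\xi_1,\xi_2\}_k = [[\xi_1,\pi]_k,\xi_2]_k$ together with $[\xi,\pi]_k=\X_\xi$, which expresses the Poisson bracket entirely in terms of the bidegree $(-1,k)$ Schouten bracket and the bivector $\pi\in\mathfrak{A}_k^{2,-k}(\M)$. The crucial structural observation is that, although $[\cdot\,,\cdot]_k$ is not graded for the total degree $|\cdot|$, it becomes an honest graded Lie superbracket for the shifted degree $\|\X\|:=|\X|+k-1$: with this convention the bracket has degree $0$, and the antisymmetry and Jacobi rules listed above take the standard super form $[\X,\Y]_k=-(-1)^{\|\X\|\|\Y\|}[\Y,\X]_k$ and the corresponding graded Jacobi identity. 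Since $\|\pi\|=(2-k)+k-1=1$, the element $\pi$ is odd, which is precisely why $[\pi,\pi]_k$ need not vanish.

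With this in hand I would compute the Jacobiator $\Jac_{\{\cdot,\cdot\}_k}(\xi_1,\xi_2,\xi_3)$, i.e.~the difference of the two sides of the third Poisson axiom, by expanding each term through the derived bracket formula and reorganising the resulting nested Schouten brackets with the graded Jacobi identity. Two simplifications drive the computation: first, any two functions Schouten-commute, $[\xi_i,\xi_j]_k=0$, because such a bracket would lie in $\mathfrak{A}_k^{-1,\bullet}(\M)=0$; and second, since $k$ is even all Koszul signs reduce to signs depending only on the $|\xi_i|$, matching exactly those appearing in the axiom. The outcome is an identity of the form
\[
\Jac_{\{\cdot,\cdot\}_k}(\xi_1,\xi_2,\xi_3) = \pm\tfrac12\,\bigl[\bigl[[\xi_1,[\pi,\pi]_k]_k,\xi_2\bigr]_k,\xi_3\bigr]_k,
\]
in which every term not containing $[\pi,\pi]_k$ cancels.

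To finish, I would observe that $[\pi,\pi]_k$ has bidegree $(3,-k)$, so it is a $3$-vector field in $\mathfrak{A}_k^{3,-k}(\M)$. Applying Lemma \ref{Triviality of a multivector field} with $\ell=3$, the vanishing $[\pi,\pi]_k=0$ is equivalent to the vanishing of $[\xi_3,[\xi_2,[\xi_1,[\pi,\pi]_k]_k]_k]_k$ for all $\xi_1,\xi_2,\xi_3\in\cin(\M)$; using graded antisymmetry together with $[\xi_i,\xi_j]_k=0$, this iterated bracket coincides, up to a sign, with the right-hand side of the displayed identity. Hence $\{\cdot\,,\cdot\}_k$ satisfies the Jacobi identity for all arguments if and only if the displayed Jacobiator vanishes for all triples, if and only if $[\pi,\pi]_k=0$.

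I expect the main obstacle to be the sign bookkeeping in the middle step: carrying the Koszul signs through the repeated use of the graded Jacobi identity and verifying that, precisely because $k$ is even, the spurious terms cancel and the surviving term assembles into the single iterated derived bracket of $[\pi,\pi]_k$ with the constant and ordering demanded by Lemma \ref{Triviality of a multivector field}. The evenness hypothesis is essential here and should be invoked explicitly to control the parities $\|\xi_i\|\equiv|\xi_i|+1\pmod 2$ throughout the reorganisation.
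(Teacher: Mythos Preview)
Your proposal is correct and follows essentially the same strategy as the paper: compute iterated Schouten brackets of functions against $[\pi,\pi]_k$ using the derived bracket formula and the graded Jacobi identity, then invoke Lemma~\ref{Triviality of a multivector field}. The only difference is packaging: the paper stops after two insertions, obtaining $[\xi_1,[\xi_2,[\pi,\pi]_k]_k]_k = 2(-1)^{|\xi_2|+k-1}\bigl([\X_{\xi_1},\X_{\xi_2}]-\X_{\{\xi_1,\xi_2\}_k}\bigr)$ and then appeals to the already-stated reformulation of Jacobi as $\X_{\{\xi_1,\xi_2\}_k}=[\X_{\xi_1},\X_{\xi_2}]$, whereas you insert a third function to land directly on the Jacobiator; note also that the paper's short computation does not explicitly invoke the evenness of $k$, so you may be overemphasizing that hypothesis for the sign bookkeeping.
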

	\begin{proof}
		For $\xi_1,\xi_2\in\cin(\M)$ we compute
		\begin{align*}
		[\xi_1,[\xi_2,[\pi,\pi]_k]_k]_k = &\ 2(-1)^{|\xi_2|+k-1}[\xi_1,[\pi,[\xi_2,\pi]_k]_k]_k \\
		= &\ 2 (-1)^{|\xi_2|+k-1} \Big( [\X_{\xi_1},\X_{\xi_2}]_k - \X_{\{\xi_1,\xi_2\}_k} \Big).
		\end{align*}
		Applying Lemma \ref{Triviality of a multivector field} above to the last equation, one sees that $[\pi,\pi]_k=0$ if and only if the Hamiltonian map commutes with the brackets $\{\cdot\,,\cdot\}_k$ and $[\cdot\,,\cdot]$, or equivalently, if and only if $\{\cdot\,,\cdot\}_k$ satisfies the Jacobi identity. 
	\end{proof}
	
	In addition to the double complex and the Weil algebra explained above, we will see now that the shifted space of multivector fields $\mathfrak{A}_k^{\bullet,\bullet}(\M)$ of a $\mathbb{Z}$-graded manifold $\M$ with $\Q$ compatible with $\{\cdot\,,\cdot\}_k$ carries two commuting differentials, yielding another double complex and a differential graded algebra associated to $\M$.
	
	Suppose first that $(\M,\Q)$ is a $\Q$-manifold. Then one may use the Schouten bracket with $\Q$ to obtain a canonical operator of bidegree $(0,1)$ on $\mathfrak{A}_k^{\bullet,\bullet}(\M)$ which extends the canonical differential of the adjoint action. Explicitly, one has the \textbf{Lie derivative}\index{Lie derivative}
	\[
	\ldr{\Q}\colon\mathfrak{A}_k^{\bullet,\bullet}(\M)\to\mathfrak{A}_k^{\bullet,\bullet+1}(\M),\qquad \X \mapsto [\Q,\X]_k.
	\]
	The Jacobi identity of the Schouten bracket on $\mathfrak{A}^{\bullet,\bullet}(\M)$ together with $[\Q,\Q]_k=0$ yield that $\ldr{\Q}^2 = 0$ on the entire $\mathfrak{A}_k^{\bullet,\bullet}(\M)$ and hence $\ldr{\Q}$ is a differential operator:
	\[
	[\Q,[\Q,\X]_k]_k = [[\Q,\Q]_k,\X]_k + (-1)^{(1+1-k+k-1)^2}[\Q,[\Q,\X]_k]_k = - [\Q,[\Q,\X]_k]_k,
	\]
	for every $\X\in\mathfrak{A}_k^{\bullet,\bullet}(\M)$.
	
	Suppose now that the $\mathbb{Z}$-manifold $\M$ carries a Poisson bracket $\{\cdot\,,\cdot\}_k$ of degree $k$ with corresponding Poisson bivector field $\pi\in\mathfrak{A}_k^{2,-k}(\M)$. Then $\mathfrak{A}_k^{\bullet,\bullet}(\M)$ is equipped with another canonical differential operator of bidegree $(1,0)$ given by the Schouten bracket with respect to $\pi$:
	\[
	\diff_{\pi}\colon\mathfrak{A}_k^{\bullet,\bullet}(\M)\to\mathfrak{A}_k^{\bullet+1,\bullet}(\M),\qquad \X \mapsto [\pi,\X]_k.
	\]
	Due to Jacobi identity and Proposition \ref{Differential equation for Poisson bracket}, one computes that $\diff_\pi$ indeed squares to zero:
	\[
	[\pi,[\pi,\X]_k]_k = [[\pi,\pi]_k,\X]_k + (-1)^{(2-k+k-1)^2}[\pi,[\pi,\X]_k]_k = -[\pi,[\pi,\X]_k]_k,
	\]
	for all $\X\in\mathfrak{A}_k^{\bullet,\bullet}(\M)$.
	
	\begin{proposition}
		Let $\M$ be a $\mathbb{Z}$-manifold with a Poisson bivector field $\pi\in\mathfrak{A}_k^{2,-k}(\M)$. Then the following are equivalent for any 1-vector field $\Y\in\mathfrak{A}_k^{1,j}(\M)=\mathfrak{X}^{j+k}(\M)$:
		\begin{enumerate}
			\item $\Y$ is a Poisson vector field with respect to the bracket induced by $\pi$:
			\[
			[\Y,\X_\xi] = \X_{\Y(\xi)}, \qquad \text{for all}\ \xi\in\cin(\M).
			\]
			
			\item The vector field $\Y$ commutes with $\pi$, i.e.~$[\pi,\Y]_k=0$.
		\end{enumerate}
	\end{proposition}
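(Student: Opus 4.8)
The plan is to reduce everything to the graded Jacobi identity for the bidegree $(-1,k)$ Schouten bracket, together with the two derived-bracket relations $[\xi,\pi]_k=\X_\xi$ and $[\Y,\xi]_k=\Y(\xi)$, and then to invoke Lemma \ref{Triviality of a multivector field} to pass between the vanishing of a multivector field and the vanishing of its iterated brackets with functions. First I would note that $[\pi,\Y]_k$ is a $2$-vector field: since the Schouten bracket has bidegree $(-1,k)$ and $\pi\in\mathfrak{A}_k^{2,-k}(\M)$, $\Y\in\mathfrak{A}_k^{1,j}(\M)$, the bracket $[\pi,\Y]_k$ lands in $\mathfrak{A}_k^{2,j}(\M)$. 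By Lemma \ref{Triviality of a multivector field} it therefore vanishes if and only if $[\xi_2,[\xi_1,[\pi,\Y]_k]_k]_k=0$ for all $\xi_1,\xi_2\in\cin(\M)$, so the heart of the matter is the inner bracket $[\xi_1,[\pi,\Y]_k]_k$, which is itself a $1$-vector field.

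Applying the graded Jacobi identity with the function $\xi_1$, the bivector $\pi$, and the vector field $\Y$, and using $|\pi|+k-1=1$ to collapse the sign exponent, I expect to obtain
\[
[\xi_1,[\pi,\Y]_k]_k = [[\xi_1,\pi]_k,\Y]_k + (-1)^{|\xi_1|+k-1}[\pi,[\xi_1,\Y]_k]_k.
\]
The first term is $[\X_{\xi_1},\Y]_k$, which up to the antisymmetry sign of the Schouten bracket on vector fields is the commutator $[\Y,\X_{\xi_1}]$. For the second term, antisymmetry gives $[\xi_1,\Y]_k=\pm\Y(\xi_1)$, and then $[\pi,\Y(\xi_1)]_k=\pm[\Y(\xi_1),\pi]_k=\pm\X_{\Y(\xi_1)}$. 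Collecting the signs, I anticipate the clean identity
\[
[\xi_1,[\pi,\Y]_k]_k = \pm\bigl([\Y,\X_{\xi_1}] - \X_{\Y(\xi_1)}\bigr),
\]
with a single overall sign and, crucially, the correct relative sign between the two terms.

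Granting this identity the equivalence is immediate. Writing $Z_{\xi_1}:=[\xi_1,[\pi,\Y]_k]_k$, the $\ell=1$ case of Lemma \ref{Triviality of a multivector field} says $Z_{\xi_1}=0$ iff $Z_{\xi_1}$ annihilates every function, so $[\pi,\Y]_k=0$ is equivalent to $Z_{\xi_1}=0$ for all $\xi_1$: if each $Z_{\xi_1}$ vanishes then the double bracket vanishes and the two-step Lemma forces $[\pi,\Y]_k=0$, while conversely $[\pi,\Y]_k=0$ trivially makes each $Z_{\xi_1}$ vanish. On the other hand the displayed identity shows $Z_{\xi_1}=0$ for all $\xi_1$ is exactly $[\Y,\X_{\xi_1}]=\X_{\Y(\xi_1)}$ for all $\xi_1$, i.e. the statement that $\Y$ is a Poisson vector field. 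Chaining these gives $(1)\Leftrightarrow(2)$.

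The main obstacle I foresee is purely bookkeeping: making the signs from the Jacobi identity and the two antisymmetry moves conspire into a single overall sign with the \emph{correct relative sign} between $[\Y,\X_{\xi_1}]$ and $\X_{\Y(\xi_1)}$. If that relative sign came out wrong the two conditions would fail to match, so this is the one place demanding genuine care; everything else is formal manipulation of the derived-bracket relations and an application of Lemma \ref{Triviality of a multivector field}. Note that the argument does not require $[\pi,\pi]_k=0$, since it only uses the defining relations for $\X_\xi$ and the Jacobi identity of the Schouten bracket.
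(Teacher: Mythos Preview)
Your proposal is correct and follows essentially the same route as the paper: both compute $[\xi,[\pi,\Y]_k]_k$ (the paper uses $[\Y,\pi]_k$ instead, but notes this differs only by a sign) via the graded Jacobi identity and the derived-bracket relations, obtaining an overall sign times $[\Y,\X_\xi]-\X_{\Y(\xi)}$, and then conclude by Lemma~\ref{Triviality of a multivector field}. The paper carries the sign computation through explicitly, arriving at $(-1)^{(|\xi|+k-1)(j+k)}$, which confirms your anticipated ``single overall sign with the correct relative sign''; otherwise the arguments are the same.
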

	
	\begin{proof}
		For any homogeneous function $\xi\in\cin(\M)$ we compute
		\begin{align*}
		[\xi,[\Y,\pi]_k]_k = &\ [[\xi,\Y]_k,\pi]_k + (-1)^{(|\xi|+k-1)(j+k)}[\Y,[\xi,\pi]_k]_k \\
		= &\ -(-1)^{(|\xi|+k-1)(j+k)} [\Y(\xi),\pi]_k + (-1)^{(|\xi|+k-1)(j+k)}[\Y,\X_\xi] \\
		= &\ -(-1)^{(|\xi|+k-1)(j+k)} \X_{\Y(\xi)} + (-1)^{(|\xi|+k-1)(j+k)}[\Y,\X_\xi] \\
		= &\ (-1)^{(|\xi|+k-1)(j+k)}\left( [\Y,\X_\xi] - \X_{\Y(\xi)} \right).
		\end{align*}
	Since $[\pi,\Y]_k=0$ if and only if $[\Y,\pi]_k=0$ the result follows.
	\end{proof}
	
	Suppose now that $(\M,\Q)$ carries a Poisson bivector field $\pi\in\mathfrak{A}_k^{2,-k}(\M)$. The analysis above implies that $\mathfrak{A}^{\bullet,\bullet}(\M)$ is endowed with two canonical differential operators $\diff_\pi$ and $\ldr{\Q}$ of bidegree $(1,0)$ and $(0,1)$, respectively. Assuming that $\Q$ is a Poisson vector field with respect to the given Poisson bracket, one computes
	\[
	\diff_{\pi}\circ\ldr{\Q}(\X) = [\pi,[\Q,\X]_k]_k = [[\pi,\Q]_k,\X]_k + (-1)^{1^2}[\Q,[\pi,\X]_k]_k = - \ldr{\Q}\circ\diff_{\pi}(\X),
	\]
	for all $\X\in\mathfrak{A}^{\bullet,\bullet}(\M)$. It follows that $\mathfrak{A}_{k}^{\bullet,\bullet}(\M)$ together with the ``twisted'' differential $(-1)^{k-1}\diff_{\pi}$ and $\ldr{\Q}$ forms a double complex with anticommuting differentials which we call the \textbf{Poisson-Weil double complex of $\M$}\index{Poisson-Weil double complex}. Hence, given a graded manifold $\M$ with a homological vector field $\Q$ and a compatible Poisson bracket $\{\cdot\,,\cdot\}_k$, one obtains the following cohomologies:
	
	\begin{enumerate}
		\item The \textbf{Poisson} or \textbf{Lichnerowicz complex}\index{Poisson complex}\index{Lichnerowicz complex} consisting of the columns of $\mathfrak{A}_{k}^{\bullet,\bullet}(\M)$ with differential $\diff_{\pi}$ and the induced Poisson cohomology $H^{\bullet}_P(\M)$ of $\M$.
		\item The \textbf{Lie complex of (pseudo)multivector fields}\index{Lie complex of (pseudo)multivector fields} consisting of the rows of $\mathfrak{A}_{k}^{\bullet,\bullet}(\M)$ with differential $\ldr{\Q}$ and the induced cohomology $H^\bullet_{\mathfrak{A}_{k},L}(\M)$ of $\M$.
		\item The \textbf{Poisson-Weil algebra}\index{Poisson-Weil algebra} consisting of the differential graded algebra given by the total complex
		\[
		\left(\mathfrak{A}_k(\M):=\bigoplus_{i\in\mathbb{Z}}\bigoplus_{p+q=i}\mathfrak{A}_{k}^{p,q}(\M),\delta:=(-1)^{k-1}\diff_{\pi} + \ldr{\Q}\right)
		\]
		and the induced Poisson-Weil cohomology $H^\bullet_{PW}(\M)$ of $\M$. Moreover, $\delta$ is a degree $k-1$ derivation of the Schouten bracket:
		\[
		\delta[\X,\Y]_k = [\delta(\X),\Y]_k + (-1)^{i+j+k-1}[\X,\delta(\Y)]_k
		\]
		for all $\X\in\mathfrak{A}_k^{i,j}(\M),\Y\in\mathfrak{A}_k(\M)$, and therefore we obtain the DGLA 
		\[
		\left( \widehat{\mathfrak{A}}_k(\M):=\mathfrak{A}_k(\M)[1-k],\delta,[\cdot\,,\cdot]_k \right).
		\]
	\end{enumerate}
	
	\begin{remark}
		Some special cocycles of the Poisson-Weil cohomology are the following:
		\begin{enumerate}
			\item The cocycles of bidegree $(0,i)$ consist of Casimir functions of degree $i$ on $\M$ which are closed but not exact in the Lie $\Q$-manifold cohomology.
			\item The cocycles of bidegree $(1,i)$ consist of degree $i+k$ vector fields on $\M$ which are Poisson but not symplectic and are invariant under $\Q$.
			
		\end{enumerate}
	\end{remark}
	
	\begin{remark}
		As we will see in Section \ref{Section: PQ-manifolds: Weil vs Poisson-Weil algebras}, the three cohomologies defined above are naturally related to the three cohomologies from Remark \ref{Cohomologies from differential forms} via a map of bigraded algebras $\pi^\sharp:\Omega^{\bullet,\bullet}(\M)\to\mathfrak{A}_k^{\bullet,\bullet}(\M)$ induced from the $\mathcal{PQ}$-manifold structure on $\M$. This map is also the reason for the choice of the sign $(-1)^{k-1}$ in the horizontal differential of $\mathfrak{A}_k(\M)$.
	\end{remark}
	
	\section{Homotopy Poisson structures and their deformations}
	
	As DGLA's have connection to deformation theory, it is worth mentioning the deformations that are related to the DGLA obtained by the Poisson-Weil algebra defined in the previous section. For this we need the notion of homotopy Poisson structures, which extend simultaneously the notions of $\Q$-manifolds, $\mathcal{P}$-manifolds and $\mathcal{PQ}$-manifolds. These can be found in \cite{Mehta11}, in \cite{CaFe07,Cattaneo08,Schaetz09} under the name \textit{$\mathcal{P}_\infty$-manifolds}, and in \cite{Bruce10,Voronov05,Voronov05a} under the name \textit{higher Poisson manifolds}.
	
	A \textbf{homotopy Poisson structure of degree $k$}\index{homotopy Poisson structure} on a graded manifold $\M$ is, by definition, an element $\Theta\in\mathfrak{A}_k^{2-k}(\M)$ such that $[\Theta,\Theta]_k = 0$. The name ``homotopy" Poisson is explained by the following remarks: Considering bidegrees, one can write
	\[
	\mathfrak{A}_k^{2-k}(\M) = \cin(\M)^{2-k} \oplus
	\mathfrak{A}_k^{1,1-k}(\M)\oplus
	\mathfrak{A}_k^{2,-k}(\M)\oplus
	\mathfrak{A}_k^{3,-1-k}(\M)\oplus\ldots
	\]
	and in particular
	\begin{enumerate}
		\item if $\Theta=\Q\in\mathfrak{A}_k^{1,1-k}(\M)=\mathfrak{X}^1(\M)$, then $\Q$ is homotopy Poisson if and only if $(\M,\Q)$ is a $\mathcal{Q}$-manifold;
		
		\item if $\Theta=\pi\in\mathfrak{A}_k^{2,-k}(\M)$, then $\pi$ is homotopy Poisson if and only if $(\M,\pi)$ is a $\mathcal{P}_k$-manifold;
		
		\item if $\Theta=\Q+\pi\in\mathfrak{A}_k^{1,1-k}(\M)\oplus\mathfrak{A}_k^{2,-k}(\M)$, then $\Q+\pi$ is homotopy Poisson if and only if
		\[
		[\Q,\Q]_k = 0,\qquad [\Q,\pi,]_k = 0\qquad \text{and}\qquad [\pi,\pi]_k = 0,
		\]
		i.e.~$\pi$ is a Poisson bivector field and $\Q$ is a Poisson homological vector field on $\M$. That is, $\M$ is a $\mathcal{P}_k\Q$-manifold;
		
		\item if $\Theta =
		\sum_{i\geq0}\Theta_i\in\bigoplus_{i\geq0}\mathfrak{A}_k^{i,2-k-i}(\M)$, then expanding $[\Theta,\Theta]_{k}=0$ and comparing degrees, we obtain the following system of equations:
		\[
		\sum_{p+q=j}[\Theta_p,\Theta_q] = 0,\qquad j=1,2,\ldots
		\]
		In particular, for $j=2,3,4$ the equation reads
		\begin{enumerate}
			\item $[\Theta_1,\Theta_1]_k + 2[\Theta_0,\Theta_2]_k = 0$,
			\item $[\Theta_1,\Theta_2]_k + [\Theta_0,\Theta_3]_k = 0$,
			\item $[\Theta_2,\Theta_2]_k + 2[\Theta_1,\Theta_3]_k + 2[\Theta_0,\Theta_4] = 0$,
		\end{enumerate}
		i.e.~the failure of $(\M,\Theta_1,\Theta_2)$ to be a $\mathcal{P}\Q$-manifold is measured by the above higher homotopy terms.
	\end{enumerate}
	
	\begin{definition}
		Let $\Theta\in\mathfrak{A}_k^{2-k}(\M)$ be a homotopy Poisson structure on $\M$.
		\begin{enumerate}
			\item An \textbf{infinitesimal deformation of $\Theta$}\index{homotopy Poisson structure!infinitesimal deformation}, or \textbf{formal deformation of $\Theta$}\index{homotopy Poisson structure!formal deformation}, is an element $\Theta'\in\mathfrak{A}_k^{2-k}(\M)$ for which the infinitesimal change of $\Theta$ up to order 2 in the direction of $\Theta'$ is a homotopy Poisson structure of degree $k$, i.e.~such that 
			\[
			[\Theta+\varepsilon\Theta',\Theta+\varepsilon\Theta']_k = \mathcal{O}(\varepsilon^2) \equiv 0 \mod\varepsilon^2,
			\]
			where $\varepsilon$ is a formal infinitesimal parameter.
			\item A \textbf{deformation of $\Theta$}\index{homotopy Poisson structure!deformation} is an element $\Theta'\in\mathfrak{A}_k^{2-k}(\M)$ such that $\Theta+\Theta'$ is a homotopy Poisson structure, i.e.
			\[
			[\Theta+\Theta',\Theta+\Theta']_k = 0 .
			\]
		\end{enumerate}
	\end{definition}
	
	Given $\Theta:=\pi+\Q$ with $\Q$ and $\pi$ a $\mathcal{P}\Q$-structure on $\M$, one computes that for every element $\Theta'\in\mathfrak{A}_k^{2-k}(\M)$
	\[
	[\Q+\pi+\varepsilon\Theta',\Q+\pi+\varepsilon\Theta'] = 2\varepsilon\Big( [\Q,\Theta']_k +[\pi,\Theta']_k \Big) + \varepsilon^2[\Theta',\Theta'] \equiv 2\varepsilon\delta(\Theta') \mod\varepsilon^2.
	\]
	In other words, the degree $2-k$ cocycles of $\mathfrak{A}_k^{\bullet}(\M)$ (or the degree $1$ cocycles of $\widehat{\mathfrak{A}}_k^{\bullet}(\M)$) are in one-to-one correspondence with infinitesimal deformations of $\Theta=\pi+\Q$ viewed as a homotopy Poisson structure on $\M$. In particular, the cocycles of the form $\Lambda+\X\in\mathfrak{A}_{k}^{2,-k}(\M)\oplus\mathfrak{A}_{k}^{1,1-k}(\M)$ are in one-to-one correspondence with infinitesimal deformations of $\pi+\Q$ as a $\mathcal{PQ}$-manifold structure on $\M$, i.e.~$(\pi+\varepsilon\Lambda,\Q+\varepsilon\X)$ forms a compatible pair of a degree $k$ Poisson bivector and a homological vector field if and only if $\Lambda+\X$ is a cocycle in $H^{2-k}_{PW}(\M)$. Counting degrees, one sees that for $\Theta' = \Lambda + \X\in\mathfrak{A}_{k}^{2,-k}(\M)\oplus\mathfrak{A}_{k}^{1,1-k}(\M)$:
	\begin{equation}\label{Infinitesimal deformations of homotopy Poisson structures}
		\delta(\Theta') = 0
		\qquad \Longleftrightarrow \qquad
		\begin{cases}
			\ldr{\Q}(\X) = 0 \\[2pt]
			\diff_{\pi}(\Lambda) = 0 \\[2pt]
			\diff_{\pi}(\X) + \ldr{\Q}(\Lambda) = 0
		\end{cases}
	\end{equation}
	
	Similarly, the \textbf{Maurer-Cartan elements}\index{Maurer-Cartan element} of $\widehat{\mathfrak{A}}_k^{\bullet}(\M)$, i.e.~$\Theta'\in\widehat{\mathfrak{A}}_k^{1}(\M)=\mathfrak{A}_k^{2-k}(\M)$ such that
	\[
	\delta(\Theta') + \frac{1}{2}[\Theta',\Theta']_k = 0,
	\]
	are in one-to-one correspondence with deformations of $\Theta=\pi+\Q$ as a homotopy Poisson structure, and the Maurer-Cartan elements of the subspace $\mathfrak{A}_{k}^{2,-k}(\M)\oplus\mathfrak{A}_{k}^{1,1-k}(\M)$ are in correspondence with deformations of $\Q+\pi$ viewed as a $\mathcal{PQ}$-manifold structure. By considering terms of the same degrees, one obtains the following equivalence
	\begin{equation}\label{Deformations of homotopy Poisson structures}
		\delta(\Theta') + \frac{1}{2}[\Theta',\Theta'] = 0
		\qquad \Longleftrightarrow \qquad
		\begin{cases}
			\ldr{\Q}(\X) + \frac{1}{2}[\X,\X]_k = 0 \\[2pt]
			\diff_{\pi}(\Lambda) + \frac{1}{2}[\Lambda,\Lambda]_k = 0 \\[2pt]
			\diff_{\pi}(\X) + \ldr{\Q}(\Lambda) + [\X,\Lambda]_k = 0
		\end{cases}
	\end{equation}
	where $\Theta' = \Lambda + \X\in\mathfrak{A}_{k}^{2,-k}(\M)\oplus\mathfrak{A}_{k}^{1,1-k}(\M)$.
	
	\section{Homotopy Lie bialgebroids}\label{Section: Homotopy Lie bialgebroids}
	
	In this section, we analyse some of the constructions of this chapter in the case of $[1]$-manifolds of the form $A[1]$, where $A\to M$ is a vector bundle.
	
	Recall that a \textbf{Lie bialgebroid}\index{Lie bialgebroid} is given by a pair of two Lie algebroids in ``duality" over $M$ $(A,A^*)$, such that for all $a,b\in\Gamma(A)$ 
	\[
	\diff_{A^*}[a,b] = [\diff_{A^*}a,b] + [a,\diff_{A^*}b],
	\]
	where $[\cdot\,,\cdot]\colon\Gamma(A)\times\Gamma(A)\to\Gamma(A)$ is the Lie bracket on the space of sections of $A$ and $\diff_{A^*}\colon\Omega^\bullet(A^*)=\Gamma(\wedge^\bullet A)\to\Omega^{\bullet+1}(A^*)$ is the Lie algebroid differential of $A^*$. The definition of Lie bialgebroids is symmetric, in the sense that $(A,A^*)$ is a Lie bialgebroid if and only if $(A^*,A)$ is a Lie bialgebroid \cite{MaXu94,Kosmann95,Roytenberg99}. Moreover, it is well-know that Lie bialgebroids structures on $(A,A^*)$ are in one-to-one correspondence with degree $-1$ Poisson Lie $1$-algebroid structures on the $[1]$-manifold $A[1]$ \cite{MaXu00}, i.e.~with a pair of a homological vector field compatible and a degree $-1$ Poisson bracket on $A[1]$. We now briefly recall this correspondence.
	
	Suppose that $\diff_{A}$ is a homological vector field on the $[1]$-manifold $A[1]$ giving rise to a Lie algebroid structure on $A\to M$. A degree $-1$ Poisson bracket $\{\cdot\,,\cdot\}$ on the function space $\cin(A[1]) = \Omega(A)$ is characterised by two maps $\rho_*:\Gamma(A^*)\to\mathfrak{X}(M)$ and $[\cdot\,,\cdot]_*\colon\Gamma(A^*)\times\Gamma(A^*)\to\Gamma(A^*)$ defined via
	\[
	\{\alpha,f\} = \rho_*(\alpha)f \qquad \text{and} \qquad \{\alpha,\beta\} = [\alpha,\beta]_*
	\]
	for all $f\in C^\infty(M)$ and all $\alpha,\beta\in\Gamma(A^*)$. The antisymmetry, Leibniz and Jacobi identities of $\{\cdot\,,\cdot\}$ are equivalent to $(A^*,\rho_*,[\cdot\,,\cdot]_*)$ being a Lie algebroid.
	
	For the Lie bialgebroid condition one proceeds as follows: Let $\pi\in\mathfrak{A}_{-1}^{2,1}(A[1])$ be the Poisson bivector field corresponding to $\{\cdot\,,\cdot\}$. Then $(A[1],\diff_A,\{\cdot\,,\cdot\})$ is a Poisson Lie $1$-algebroid if and only if $\diff_A$ is a Poisson vector field, or equivalently, if and only if $[\pi,\diff_A]_{-1} = 0$, where $[\cdot\,,\cdot]_{-1}$ is the bidegree $(-1,-1)$ Schouten bracket on $\mathfrak{A}_{-1}(A[1])$. Using the Jacobi identity of $[\cdot\,,\cdot]_{-1}$, we compute
	\[
	[[\alpha,[\pi,\diff_A]_{-1}]_{-1},\beta]_{-1} = [\diff_A\alpha,\beta]_* - \diff_A[\alpha,\beta]_* + [\alpha,\diff_A\beta]_*.
	\]
	Hence, $\diff_A$ is a Poisson vector field if and only if $(A,A^*)$ is a Lie bialgebroid.
	
	From the above, it follows that $(\widehat{\mathfrak{A}}_{-1}(A[1]),\delta)$ is the DGLA that governs the (infinitesimal) deformations of a Lie bialgebroid $(A,A^*)$, which in particular correspond to (cocycles) Maurer-Cartan elements in $\mathfrak{A}_{-1}^{2,1}(A[1]) \oplus \mathfrak{A}_{-1}^{1,2}(A[1])$. We will express all the operators locally in classical differential geometric language in terms of the splitting, but before doing that, we will use the homotopy Poisson structures on graded manifolds that were explained before to relax the definition of a Lie bialgebroid structure on $(A,A^*)$. A detailed study of the following structures can be found e.g.~in \cite{Roytenberg99,Ko-Sc05} under the names of ``proto-bialgebroid'', ``quasi-Lie bialgebroid'' and ``Lie quasi-bialgebroid''.
	
	Suppose that $A\to M$ is a vector bundle with a homotopy Poisson structure $\Theta$ of degree $-1$ on the graded manifold $A[1]$. Then we distinguish the following three cases:
	 
	\begin{itemize}
		\item \textbf{Lie bialgebroid:} This case is for $\Theta = \pi + \diff_{A}\in\mathfrak{A}_{-1}^{2,1}(A[1])\oplus\mathfrak{A}_{-1}^{1,2}(A[1])$ and was discussed above.
		
		\item \textbf{Quasi-Lie bialgebroid:}\index{Quasi-Lie bialgebroid} If $\Theta = \pi + \diff_{A} + \omega
		\in\mathfrak{A}_{-1}^{2,1}(A[1])\oplus\mathfrak{A}_{-1}^{1,2}(A[1])
		\oplus\mathfrak{A}_{-1}^{0,3}(A[1])$, it follows that $\pi$ is a degree $-1$ Poisson structure on $A[1]$, and the maps $[[\alpha,\pi]_{-1},f]_{-1}=\rho_*(\alpha)f$ and $[[\alpha,\pi]_{-1},\beta]=[\alpha,\beta]_*$ turn the triple $(A^*,\rho_*,[\cdot\,,\cdot]_*)$ into a Lie algebroid over $M$. The degree 1 operator $\diff_{A}:\Gamma(\wedge^\bullet A)\to\Gamma(\wedge^{\bullet+1}A)$ is a  quasi-algebroid structure\footnote{A quasi-algebroid is, by definition, a vector bundle $Q\to M$ together with an anchor $\rho:Q\to TM$ and a skew-symmetric bracket $[\cdot\,,\cdot]$ on $\Gamma(Q)$ that satisfies $[a,fb]=f[a,b] + (\rho(a)f)b$, for all $f\in C^\infty(M),a,b\in\Gamma(Q)$.} on $A$ whose failure to become a Lie algebroid differential is controlled by the Hamiltonian vector field $\X_{\omega}=[\omega,\pi]_{-1}$ of the $\diff_{A}$-closed function $\omega$ on $A[1]$ of degree $3$: $[\diff_{A},\diff_{A}]_{-1} + 2 \X_\omega = 0$. Moreover, the operator $\diff_{A}$ is a derivation of the Lie bracket on sections of $A^*$. In other words, the failure of $(A,A^*)$ to be a Lie bialgebroid comes from the failure of $\diff_{A}$ to be a Lie algebroid differential on $A$. It is a Lie bialgebroid if and only if $\omega$ is a Casimir function for the Poisson structure on $A[1]$, or equivalently, if and only if $\ad_{[\cdot\,,\cdot]_*}(\omega):=[\omega,\cdot]_*=0$.
		
		\item \textbf{Lie quasi-bialgebroid:}\index{Lie quasi-bialgebroid} If $\Theta = \Lambda + \pi + \diff_{A}
		\in\mathfrak{A}_{-1}^{3,0}(A[1])\oplus\mathfrak{A}_{-1}^{2,1}(A[1])\oplus\mathfrak{A}_{-1}^{1,2}(A[1])$, it follows that $\diff_{A}$ is a Lie algebroid differential on $A$ which is also a derivation of the quasi-algebroid structure on $A^*$ induced by $\pi$. Moreover, $\Lambda$ commutes with $\pi$ and itself, and the failure of $\pi$ to be a Lie algebroid structure on $A^*$ is measured by the homotopy term $[\diff_{A},\Lambda]_{-1}$: $[\pi,\pi]_{-1} + 2[\diff_{A},\Lambda]_{} = 0$.
	\end{itemize}
	
	Recall that given a $TM$-connection $\nabla$ on the vector bundle $A$, one can describe the module of vector fields of the graded manifold $A[1]$ via the $\Omega(A)$-module isomorphism
	\[
	\ad_\nabla(A):=\Omega(A)\otimes\Gamma(TM[0]\oplus A[1]) \to \mathfrak{X}(A[1])
	\]
	which maps the generators $X\in\mathfrak{X}(M)$ and $a\in\Gamma(A)$ to $\nabla_X^*$ and $\widehat{a}$, respectively. Locally, given the two dual frames $\{a_j\}$ and $\{\alpha^j\}$ of $A$ and $A^*$, we have the equality $\widehat{a}_j=\frac{\partial}{\partial \alpha^j}$. That is, we have the isomorphism
	\[
	\mathfrak{X}(A[1])=\bigoplus_{i\in\mathbb{Z}}\mathfrak{X}^i(A[1]) \cong \bigoplus_{i\in\mathbb{Z}}\left( \Omega^i(A,TM) \oplus \Omega^{i+1}(A,A)  \right) = \ad_\nabla(A)
	\]
	and thus 
	\[
	\mathfrak{X}(A[1])[-1]=\bigoplus_{i\in\mathbb{Z}}\mathfrak{X}^{i-1}(A[1]) \cong \bigoplus_{i\in\mathbb{Z}}\left( \Omega^{i-1}(A,TM) \oplus \Omega^{i}(A,A) \right) = \ad_\nabla(A)[-1].
	\]
	The DGLA of deformations of the Lie bialgebroid is given by the (graded) symmetric algebra $(\underline{S}(\ad_\nabla(A)[-1]),\delta)$ considered with the total grading.
	
	Next, we express the differential $\delta=\diff_{\pi} + \ldr{\diff_{A}}$ in this setting. As we will see in Chapter \ref{Chapter: Representations up to homotopy}, the operator $\ldr{\diff_{A}}$ is the differential of the adjoint module and thus becomes the differential of the adjoint representation up to homotopy from \cite{ArCr12} 
	\[
	\D_{\ad_\nabla(A)} = \rho \oplus \nabla^{\text{bas}} \oplus R_\nabla^{\text{bas}}.
	\]
	The horizontal differential $\diff_{\pi}$ acts on the generators as follows: Let $\xi_1,\xi_2\in\Omega(A)$ be homogeneous functions of $A[1]$. Then we compute
	\[
	[\pi,\xi_1]_{-1}(\xi_2) = [[\pi,\xi_1]_{-1},\xi_2]_{-1}
	= -(-1)^{|\xi_1|}[\X_{\xi_1},\xi_2]_{-1} = -(-1)^{|\xi_1|}\{\xi_1,\xi_2\}.
	\]
	Applying this to functions of $A[1]$ of the form $f,g\in C^\infty(M)$ and $\alpha,\beta\in\Omega^1(A)$, we obtain
	\[
	\diff_{\pi}(f)(g) = 0,\qquad \diff_{\pi}(f)(\beta) = \rho_*(\beta)f,\qquad \diff_{\pi}(\alpha)(g) = \rho_*(\alpha)g, \qquad \diff_{\pi}(\alpha)(\beta) = [\alpha,\beta]_*.
	\]
	Locally, suppose that we have the equations
	\[
	\rho_*(\alpha^j) = (\rho_*)^m_j\frac{\partial }{\partial x^m}
	\qquad \text{and} \qquad
	[\alpha^j,\alpha^s]_* = c^{js}_t \alpha^t
	\]
	where $\{x^i\}$ are coordinates in a chart of $M$ over which we have the  dual frames $\{a_j\}$ and $\{\alpha^j\}$ of $A$ and $A^*$, respectively. From the formulas above, we obtain the vector fields
	
	\[
	\diff_{\pi}(f) = (\rho_*)^m_j\frac{\partial f}{\partial x^m}\frac{\partial}{\partial \alpha^j}
	\qquad \text{and}\qquad
	\diff_{\pi}(\alpha) = \rho_*(\alpha)x^i\frac{\partial }{\partial x^i} + [\alpha,\alpha^s]_*\frac{\partial }{\partial \alpha^s}.
	\]
	On the coordinate functions, the vector fields become
	\[
	\diff_{\pi}(x^n) = (\rho_*)_j^n\frac{\partial}{\partial \alpha^j}
	\qquad \text{and} \qquad
	\diff_{\pi}(\alpha^j) = (\rho_*)_j^i\frac{\partial}{\partial x^i} + c_t^{js}\alpha^t\frac{\partial}{\partial \alpha^s}.
	\]
	
	An easy computation shows that a general vector field of the form
	\[
	\Lambda=\zeta\frac{\partial}{\partial \xi^i}\wedge\frac{\partial}{\partial \xi^j}\in\mathfrak{A}_{-1}^{2,s}(A[1])
	\]
	satisfies the equation $[[\xi_j,\Lambda]_{-1},\xi_i]_{-1} = - (-1)^{|\xi_j|s}\zeta$; that is, the coordinate functions of a $2$-vector field $\Lambda$ can be recovered by computing expressions of the form $[[\xi_i,\Lambda],\xi_j]$, where $\xi_i$ are the coordinates of the graded manifold $A[1]$. Now we use this to obtain the coordinate representations for the $2$-vector fields on $A[1]$ of the form $\diff_\pi(\nabla_X^*)$ and $\diff_\pi(\widehat{a})$, where $X\in\mathfrak{X}(M)$ and $a\in\Gamma(A)$: Using the graded anti-symmetry and the Jacobi identity of $[\cdot\,,\cdot]_{-1}$, we compute for $f,g\in C^\infty(M)$ and $\alpha,\beta\in\Omega^1(A)$
	\[
	\{0\} = \mathfrak{A}_{-1}^{0,-1}(A[1])\ni [[f,[\pi,\nabla_X^*]_{-1}]_{-1},g]_{-1} = 0 
	\]
	\[
	[[\alpha,[\pi,\nabla_X^*]_{-1}]_{-1},\beta]_{-1} = - \nabla_X^*[\alpha,\beta]_* + [\nabla_X^*\alpha,\beta]_* + [\alpha,\nabla_X^*\beta]_*
	\]
	\[
	[[f,[\pi,\nabla_X^*]_{-1}]_{-1},\alpha]_{-1} = [X,\rho_*(\alpha)]f - \rho(\nabla_X^*\alpha)f = \left((\nabla^*)_\alpha^{\text{bas}}X \right)f
	\]
	which imply that
	\begin{align*}
	\diff_{\pi}(\nabla_X^*) = & \left((\nabla^*)_{\alpha^j}^{\text{bas}}X \right)x^i \frac{\partial}{\partial \alpha^j}\wedge\frac{\partial}{\partial x^i} \\ 
	&\ + \Big( [\nabla_X^*\alpha^n,\alpha^m]_* + [\alpha^n,\nabla_X^*\alpha^m]_*
	- \nabla_X^*[\alpha^n,\alpha^m]_* \Big)\frac{\partial}{\partial \alpha^m}\wedge\frac{\partial}{\partial \alpha^n}. 
	\end{align*}
	Moreover, we have that 
	\[
	\{0\} = \mathfrak{A}_{-1}^{0,-2}(A[1])\ni[[f,[\pi,\widehat{a}]_{-1}]_{-1},g]_{-1} = 0
	\]
	\[
	\{0\} = \mathfrak{A}_{-1}^{0,-1}(A[1])\ni[[f,[\pi,\widehat{a}]_{-1}]_{-1},\alpha]_{-1} = 0
	\]
	\[
	[[\alpha,[\pi,\widehat{a}]_{-1}]_{-1},\beta]_{-1} = \rho_*(\alpha)(\beta(a)) - \rho_*(\beta)(\alpha(a)) - [\alpha,\beta]_*(a) = \diff_{A^*}a(\alpha,\beta),
	\]
	which yield
	\[
	\diff_{\pi}(\widehat{a}) = \diff_{A^*}a(\alpha^i,\alpha^j)\, \frac{\partial}{\partial \alpha^i}\wedge\frac{\partial}{\partial \alpha^j}.
	\]
	
	\begin{remark}
		Note that with the same reasoning we can compute the local expression of $\pi\in\mathfrak{A}_{-1}^{2,1}(A[1])$. In particular, from the equations $\{f,g\} = 0, \{\alpha,f\} = \rho_*(\alpha)f$ and $\{\alpha,\beta\} = - [\alpha,\beta]_*$, it follows that
		\[
		\pi = \rho_*(\alpha^j)\wedge\frac{\partial}{\partial \alpha^j} + [\alpha^s,\alpha^t]_*\frac{\partial}{\partial \alpha^s}\wedge\frac{\partial}{\partial \alpha^t} = (\rho_*)_j^i \frac{\partial}{\partial x^i}\wedge\frac{\partial}{\partial \alpha^j} + c_m^{st}\alpha^m\frac{\partial}{\partial \alpha^s}\wedge\frac{\partial}{\partial \alpha^t}.
		\]
	\end{remark}
	
	\section{Graded symplectic forms}
	
	In ordinary manifolds, it is well-known that a manifold with a symplectic form $\omega\in\Omega^2(M)$ is also Poisson with respect to the induced bracket on functions given by
	\[
	\{f,g\} := \omega(X_f,X_g) = -\langle \diff f,X_g \rangle = -X_g(f) = X_f(g),
	\] 
	for $f,g\in C^\infty(M)$. However, the converse is not true in general. In this section, we will see that this bridge between non-degenerate Poisson brackets and symplectic forms can be carried over to the world of supergeometry.
	
	Suppose that $\M$ is a $\mathbb{Z}$-manifold and $k\in\mathbb{Z}$. A $2$-form $\omega\in\Omega^{2,-k}(\M)$ is called \textbf{symplectic}\index{symplectic form} (of degree $-k$) if it is closed, i.e.~$\dr\omega = 0$, and non-degenerate, i.e.~the induced map
	\[
	\omega^\flat\colon T\M\to T^*[-k]\M, \qquad
	\X\mapsto i_\X\omega,
	\] 
	is a vector bundle isomorphism. Given a homogeneous function $\xi\in\cin(\M)$, the (unique) vector field $\X_\xi\in\mathfrak{X}^{|\xi|+k}(\M)$ which satisfies the equation
	\[
	i_{\X_\xi}\omega = -(-1)^{|\xi|} \diff\xi
	\]
	is called \textbf{Hamiltonian vector field of $\xi$}\index{Hamiltonian vector field} (with respect to $\omega$). A \textbf{symplectic vector field}\index{symplectic vector field} is a vector field $\X\in\mathfrak{X}(\M)$ such that $\ldr{\X}\omega = 0$. Due to $\dr^2 = 0$ and $\diff\omega = 0$, it follows that every Hamiltonian vector field is symplectic: For all $\xi\in\cin(\M)$
	\[
	\ldr{\X_\xi}\omega = [i_{\X_\xi},\diff]\omega = i_{\X_\xi}(\diff \omega) - (-1)^{|\xi|+k-1}\diff(i_{\X_\xi}\omega) = (-1)^{k-1}\diff^2\xi = 0.
	\]
	
	As in the case of ordinary (non-graded) symplectic manifolds, one can define a bracket $\{\cdot\,,\cdot\}_k$ of degree $k$ on $\M$ as
	\[
	\{\xi_1,\xi_2\}_k := \X_{\xi_1}(\xi_2) = i_{\X_{\xi_1}}(\diff\xi_2) = (-1)^{|\xi_2|} i_{\X_{\xi_1}}i_{\X_{\xi_2}}\omega.
	\]
	\begin{proposition}
		The bracket $\{\cdot\,,\cdot\}_k$ defined above is a degree $k$ Poisson bracket on $\M$.
	\end{proposition}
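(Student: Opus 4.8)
The plan is to verify, in turn, the three conditions defining a degree $k$ Poisson bracket, using throughout that $\{\xi_1,\cdot\}_k = \X_{\xi_1}(\cdot)$ is literally the action of the Hamiltonian vector field. Two of the conditions are essentially free. The bracket has the correct degree, since $\X_{\xi_1}$ is a derivation of degree $|\xi_1|+k$ and so $\X_{\xi_1}(\xi_2)$ has degree $|\xi_1|+|\xi_2|+k$; and the graded Leibniz rule (condition 2) is nothing but the graded derivation property of $\X_{\xi_1}$, namely $\X_{\xi_1}(\xi_2\xi_3) = \X_{\xi_1}(\xi_2)\xi_3 + (-1)^{(|\xi_1|+k)|\xi_2|}\xi_2\X_{\xi_1}(\xi_3)$, which is exactly condition 2.

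For graded antisymmetry (condition 1) I would use the symmetric formula $\{\xi_1,\xi_2\}_k = (-1)^{|\xi_2|}i_{\X_{\xi_1}}i_{\X_{\xi_2}}\omega$. The essential input is that interior products graded-commute on $\Omega(\M)$: since $i_\X$ and $i_\Y$ are derivations of $\cin(T[1]\M)$ of total degree $|\X|-1$ and $|\Y|-1$ that annihilate $\cin(\M)$ and send $\diff\zeta$ to $\X(\zeta)\in\cin(\M)$, their graded commutator vanishes on generators, hence everywhere, giving $i_\X i_\Y = (-1)^{(|\X|-1)(|\Y|-1)}i_\Y i_\X$. Taking $\X=\X_{\xi_1}$ and $\Y=\X_{\xi_2}$ of degrees $|\xi_i|+k$, re-expressing the result in terms of $\{\xi_2,\xi_1\}_k$, and simplifying the exponent (the $-2k$ contribution being even) produces precisely $\{\xi_1,\xi_2\}_k = -(-1)^{(|\xi_1|+k)(|\xi_2|+k)}\{\xi_2,\xi_1\}_k$.

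The substantive step is the graded Jacobi identity (condition 3). I would reduce it to the intertwining relation $\X_{\{\xi_1,\xi_2\}_k} = [\X_{\xi_1},\X_{\xi_2}]$: applying both sides to $\xi_3$ and using $|\X_{\xi_i}|=|\xi_i|+k$ turns the graded commutator $\X_{\xi_1}\X_{\xi_2} - (-1)^{(|\xi_1|+k)(|\xi_2|+k)}\X_{\xi_2}\X_{\xi_1}$ into exactly condition 3. To establish the intertwining relation I would compute $i_{[\X_{\xi_1},\X_{\xi_2}]}\omega$ via the graded Cartan identity $i_{[\X,\Y]}=[\ldr{\X},i_\Y]$. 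Expanding the commutator on $\omega$, the term containing $\ldr{\X_{\xi_1}}\omega$ vanishes because Hamiltonian vector fields are symplectic ($\ldr{\X_\xi}\omega=0$, just proven), leaving $\ldr{\X_{\xi_1}}i_{\X_{\xi_2}}\omega = -(-1)^{|\xi_2|}\ldr{\X_{\xi_1}}\diff\xi_2$. Now the commutation of $\ldr{\X_{\xi_1}}$ with $\dr$ carries the sign $(-1)^{|\xi_1|+k}$, so $\ldr{\X_{\xi_1}}\diff\xi_2 = (-1)^{|\xi_1|+k}\diff(\X_{\xi_1}\xi_2) = (-1)^{|\xi_1|+k}\diff\{\xi_1,\xi_2\}_k$; this is precisely the combination appearing in $i_{\X_{\{\xi_1,\xi_2\}_k}}\omega = -(-1)^{|\xi_1|+|\xi_2|+k}\diff\{\xi_1,\xi_2\}_k$. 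Hence $i_{[\X_{\xi_1},\X_{\xi_2}]}\omega = i_{\X_{\{\xi_1,\xi_2\}_k}}\omega$, and non-degeneracy of $\omega$ (invertibility of $\omega^\flat$) upgrades this to the desired equality of vector fields.

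I expect the only real obstacle to be sign discipline in the graded Cartan calculus on $T[1]\M$ -- in particular the sign $(-1)^{|\X|}$ in $\ldr{\X}\dr = (-1)^{|\X|}\dr\ldr{\X}$ and the sign in $i_{[\X,\Y]}=[\ldr{\X},i_\Y]$ -- since it is exactly the interplay of these signs with the factor $-(-1)^{|\xi|}$ in the definition of $\X_\xi$ that makes $i_{[\X_{\xi_1},\X_{\xi_2}]}\omega$ and $i_{\X_{\{\xi_1,\xi_2\}_k}}\omega$ coincide on the nose. I would control these using the bidegrees recorded earlier ($i_\X\colon(-1,|\X|)$, $\ldr{\X}\colon(0,|\X|)$, $\dr\colon(1,0)$) together with the Cartan identities recalled in the appendix; all remaining verifications are local computations on generators and are routine.
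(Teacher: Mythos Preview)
Your proposal is correct and follows essentially the same approach as the paper's proof: degree and Leibniz from the derivation property of $\X_{\xi_1}$, antisymmetry from $[i_\X,i_\Y]=0$ applied to the symmetric formula, and Jacobi via the intertwining relation $\X_{\{\xi_1,\xi_2\}_k}=[\X_{\xi_1},\X_{\xi_2}]$ proved by computing $i_{[\X_{\xi_1},\X_{\xi_2}]}\omega$ with the Cartan identity $i_{[\X,\Y]}=[\ldr{\X},i_\Y]$ and the symplecticity of Hamiltonian vector fields, then invoking non-degeneracy. The only cosmetic difference is that at the step $\ldr{\X_{\xi_1}}\diff\xi_2$ you use $[\dr,\ldr{\X}]=0$ directly, whereas the paper expands $\ldr{\X_{\xi_1}}=[i_{\X_{\xi_1}},\dr]$ and uses $\dr^2=0$; these are equivalent.
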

	\begin{proof}
		Given two homogeneous functions $\xi_1,\xi_2\in\cin(\M)$, we have that $|\{\xi_1,\xi_2\}_k| = |\X_{\xi_1}(\xi_2)| = \xi_1 + \xi_2 + k$. It remains to show that $\{\cdot\,,\cdot\}_k$ satisfies the graded anti-symmetry, Leibniz, and Jacobi identities. In order to justify these, we need the identities from Appendix \ref{Cartan calculus on Z-graded manifolds}. Then we compute
		\[
		\{\xi_1\,,\xi_2\}_k = -(-1)^{|\xi_2|}i_{\X_{\xi_1}}i_{\X_{\xi_2}}\omega
		= (-1)^{(|\xi_1|+k)(|\xi_2|+k)+|\xi_1|}i_{\X_{\xi_2}}i_{\X_{\xi_1}}\omega 
		= - (-1)^{(|\xi_1|+k)(|\xi_2|+k)}\{\xi_2,\xi_1\}_k,
		\]
		\begin{align*}
		\{\xi_1,\xi_2\xi_3\}_k = &\ i_{\X_{\xi_1}}(\diff(\xi_2\xi_3)) \\ 
		= &\ i_{\X_{\xi_1}}((\diff\xi_2)\xi_3) + (-1)^{|\xi_2|}i_{\X_{\xi_1}}(\xi_2\diff\xi_3) \\
		= &\ \{\xi_1,\xi_2\}_k\xi_3 + (-1)^{(|\xi_1|+k)|\xi_2|}\xi_2\{\xi_1,\xi_3\}_k.
		\end{align*}
		Therefore, anti-symmetry and Leibniz rule hold. For the Jacobi identity, we prove the equivalent equation $[\X_{\xi_1},\X_{\xi_2}] = \X_{\{\xi_1,\xi_2\}_k}$:
		\begin{align*}
		i_{[\X_{\xi_1},\X_{\xi_2}]}\omega = &\ [\ldr{\X_{\xi_1}},i_{\X_{\xi_2}}]\omega \\
		= &\ \ldr{\X_{\xi_1}}(i_{\X_{\xi_2}}\omega) - (-1)^{(|\xi_1|+k)(|\xi_2|+k-1)}i_{\X_{\xi_2}}(\ldr{\X_{\xi_1}}\omega) \\
		= &\ -(-1)^{|\xi_2|}\ldr{\X_{\xi_1}}(\diff \xi_2) \\
		= &\ (-1)^{|\xi_2|+|\xi_1|+k-1}\diff i_{\X_{\xi_1}}(\diff \xi_2) \\
		= &\ -(-1)^{|\xi_1|+|\xi_2|+k}\diff\{\xi_1,\xi_2\}_k \\
		= &\ i_{\X_{\{\xi_1,\xi_2\}_k}}\omega.
		\end{align*}
		The result follows due to non-degeneracy of $\omega$.
	\end{proof}
	
	Clearly, the Hamiltonian vector fields with respect to $\{\cdot\,,\cdot\}_k$ and $\omega$ coincide. As the proposition below shows, the same is true also for symplectic and Poisson vector fields. For its proof, we will need the following lemma.
	
	\begin{lemma}\label{Lemma about the vanishing of a form}
		Given a $p$-form $\eta\in\Omega^{p,\bullet}(\M)$, the following are equivalent:
		\begin{enumerate}
			\item $\eta = 0$,
			
			\item $i_{\X_1}\ldots i_{\X_p}\eta = 0$ for all $\X_1,\ldots,\X_p\in\mathfrak{X}(\M)$.
		\end{enumerate}
	\end{lemma}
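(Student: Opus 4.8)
The forward implication is immediate: if $\eta=0$ then every iterated contraction vanishes, since each $i_{\X}$ is a (graded) derivation of $\cin(T[1]\M)$. The content is the converse, which I would establish by a local computation in the spirit of Lemma \ref{Triviality of a multivector field}, contracting with coordinate vector fields so as to isolate the component functions of $\eta$.

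First I would reduce to the local situation. Since $\eta$ vanishes if and only if its restriction to every coordinate domain $U\subset M$ vanishes (the assignment $U\mapsto\cin_U(T[1]\M)$ is a sheaf of graded algebras), and since the restriction of $\eta$ to $U$ is determined by its pairings against vector fields supported over $U$, it suffices to prove the statement over a fixed chart. Moreover, because the map $\X\mapsto i_\X$ is $\cin(\M)$-linear in the graded sense and the sheaf $\underline{\Der}_U(\cin(\M))$ is locally freely generated by the coordinate fields $\partial_{x^k}$ and $\partial_{\xi_i^j}$, hypothesis (2) is equivalent to the vanishing of all contractions by $p$-tuples of such coordinate fields.

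Next I would carry out the coordinate computation. Writing $\{\xi^j\}$ for the full system of local coordinates on $\M$ and $\{\xi^j,\dot\xi^j\}$ for the induced coordinates on $T[1]\M$, a pseudodifferential form of polynomial degree $p$ is a finite sum
\[
\eta=\sum \eta_{j_1\ldots j_p}\,\dot\xi^{j_1}\cdots\dot\xi^{j_p},
\]
with $\eta_{j_1\ldots j_p}\in\cin(\M)$. By the local formula $i_{\partial_{\xi^j}}=\frac{\partial}{\partial\dot\xi^j}$ recorded above, contracting with the coordinate fields $\partial_{\xi^{j_1}},\ldots,\partial_{\xi^{j_p}}$ amounts to applying the derivations $\frac{\partial}{\partial\dot\xi^{j_1}},\ldots,\frac{\partial}{\partial\dot\xi^{j_p}}$, which recovers the component $\eta_{j_1\ldots j_p}$ up to an invertible scalar and a sign. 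Thus the vanishing of all such contractions forces every component $\eta_{j_1\ldots j_p}$ to vanish, whence $\eta=0$ on the chart; gluing over a cover of $M$ then gives $\eta=0$ globally.

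The only delicate point, and the step I expect to require the most care, is the bookkeeping of the Koszul signs and symmetry factors produced when the derivations $\frac{\partial}{\partial\dot\xi^{j_k}}$ are commuted past one another and past the monomials $\dot\xi^{j_1}\cdots\dot\xi^{j_p}$: the generators $\dot\xi^j$ are graded-commutative, so the precise factor relating $i_{\partial_{\xi^{j_1}}}\cdots i_{\partial_{\xi^{j_p}}}\eta$ to $\eta_{j_1\ldots j_p}$ depends on the parities of the degrees $|\xi^{j_k}|$. Since this factor is always a nonzero scalar, it does not affect the conclusion, but I would organise the argument (mirroring Lemma \ref{Triviality of a multivector field}) so that exactly one component survives each full contraction, thereby sidestepping any need to track the factors explicitly.
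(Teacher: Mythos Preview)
Your proposal is correct and follows essentially the same route as the paper: reduce to a local chart, write $\eta$ as a sum $\sum\zeta_{j_1\ldots j_p}\,\diff\xi^{j_1}\cdots\diff\xi^{j_p}$, and contract with the coordinate vector fields $\partial_{\xi^{j_1}},\ldots,\partial_{\xi^{j_p}}$ to recover each coefficient $\zeta_{j_1\ldots j_p}$ up to a nonzero scalar. Your discussion of the symmetry/sign factors is in fact more careful than the paper's, which simply records the result as $\pm\zeta_{j_1\ldots j_p}$ without commenting on the possible combinatorial factors arising from repeated even-degree generators.
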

	\begin{proof}
		One direction is obvious. For the second, consider local coordinates $\{\xi^j\}$ of $\M$ and write
		\[
		\eta = \sum_{j_1\ldots j_p}\zeta_{j_1\ldots j_p} \diff\xi^{j_1}\wedge\ldots\wedge \diff\xi^{j_p}
		\]
		for some local functions $\zeta_{j_1\ldots j_p}$ of $\M$. Using repeatedly the Leibniz rule for the contraction with respect to the coordinate vector fields $i_{\frac{\partial}{\partial\xi^{j_1}}},\ldots,i_{\frac{\partial}{\partial\xi^{j_p}}}$ yields
		\[
		0=i_{\frac{\partial}{\partial\xi^{j_1}}}\ldots i_{\frac{\partial}{\partial\xi^{j_p}}}\eta = i_{\frac{\partial}{\partial\xi^{j_1}}}\ldots i_{\frac{\partial}{\partial\xi^{j_p}}} \left(\sum_{j_1\ldots j_p}\zeta_{j_1\ldots j_p} \diff\xi^{j_1}\wedge\ldots\wedge \diff\xi^{j_p}\right) = \pm \zeta_{j_1\ldots j_p}.\qedhere
		\]
	\end{proof}
	
	\begin{proposition}
		A homogeneous vector field $\X\in\mathfrak{X}(\M)$ is symplectic for $\omega$ if and only if it is Poisson for $\{\cdot\,,\cdot\}_k$.
	\end{proposition}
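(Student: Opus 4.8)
The plan is to reduce both conditions to a single family of identities indexed by $\xi\in\cin(\M)$ and then compare them through $\omega$. Recall from the section on graded Poisson structures that $\X$ is Poisson for $\{\cdot\,,\cdot\}_k$ precisely when $\X_{\X(\xi)} = [\X,\X_\xi]$ for all $\xi\in\cin(\M)$, and recall from the proof just given that the Hamiltonian vector fields for $\omega$ and for the induced bracket $\{\cdot\,,\cdot\}_k$ coincide. Thus both sides of the desired equivalence can be read off from how $\X$ interacts with the Hamiltonian vector fields $\X_\xi$.

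First I would establish the key identity
\[
i_{[\X,\X_\xi]-\X_{\X(\xi)}}\,\omega = -(-1)^{|\X|(|\xi|+k-1)}\, i_{\X_\xi}\bigl(\ldr{\X}\omega\bigr)
\]
for every homogeneous $\xi\in\cin(\M)$. To obtain it, I would start from the graded Cartan identity $i_{[\X,\X_\xi]} = [\ldr{\X}, i_{\X_\xi}]$ (Appendix \ref{Cartan calculus on Z-graded manifolds}), apply it to $\omega$, and expand the graded commutator as $\ldr{\X}(i_{\X_\xi}\omega) - (-1)^{|\X|(|\xi|+k-1)} i_{\X_\xi}(\ldr{\X}\omega)$. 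Using the defining equation $i_{\X_\xi}\omega = -(-1)^{|\xi|}\diff\xi$ together with $[\ldr{\X},\diff]=0$, the first term becomes $\ldr{\X}(i_{\X_\xi}\omega) = -(-1)^{|\xi|+|\X|}\diff(\X(\xi))$, which is then rewritten, via the defining equation applied to $\X(\xi)$ (of degree $|\X|+|\xi|$), as $i_{\X_{\X(\xi)}}\omega$. Careful bookkeeping of the signs coming from the bidegrees of $i_{\X_\xi}$ and $\ldr{\X}$ is the only delicate point of this step.

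With the identity in hand, the forward implication is immediate: if $\ldr{\X}\omega=0$ then the right-hand side vanishes for all $\xi$, and non-degeneracy of $\omega$ forces $[\X,\X_\xi]=\X_{\X(\xi)}$ for all $\xi$, i.e. $\X$ is Poisson. For the converse, suppose $\X$ is Poisson, so the left-hand side vanishes for all $\xi$; then $i_{\X_\xi}(\ldr{\X}\omega)=0$ for every $\xi\in\cin(\M)$. Here I would use that, $\M$ being symplectic, the Hamiltonian vector fields $\{\X_\xi : \xi\in\cin(\M)\}$ generate $\mathfrak{X}(\M)$ as a $\cin(\M)$-module --- locally any $1$-form is a $\cin(\M)$-combination of the $\diff\xi^j$, so any vector field is a combination of the $\X_{\xi^j}$ --- and hence, by $\cin(\M)$-linearity of contraction, $i_\Y(\ldr{\X}\omega)=0$ for all $\Y\in\mathfrak{X}(\M)$. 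Contracting once more and applying Lemma \ref{Lemma about the vanishing of a form} to the $2$-form $\ldr{\X}\omega$ yields $\ldr{\X}\omega=0$, i.e. $\X$ is symplectic.

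The main obstacle will be twofold: getting every sign in the Cartan-calculus identity correct under the bidegree conventions of this chapter, and justifying the passage in the converse direction from ``$i_{\X_\xi}(\ldr{\X}\omega)=0$ for all Hamiltonian $\X_\xi$'' to ``$i_\Y(\ldr{\X}\omega)=0$ for all $\Y$'', which is exactly where the hypothesis that $\M$ is symplectic (equivalently, that $\omega^\flat$ is an isomorphism and exact $1$-forms generate $\Omega^1(\M)$) enters. Everything else is a routine consequence of non-degeneracy and Lemma \ref{Lemma about the vanishing of a form}.
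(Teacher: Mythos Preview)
Your proposal is correct and follows essentially the same argument as the paper: both derive the identity $i_{[\X,\X_\xi]-\X_{\X(\xi)}}\omega = -(-1)^{|\X|(|\xi|+k-1)}i_{\X_\xi}(\ldr{\X}\omega)$ from the Cartan relation $i_{[\X,\X_\xi]}=[\ldr{\X},i_{\X_\xi}]$, then use non-degeneracy for the forward direction and the fact that Hamiltonian vector fields span $\mathfrak{X}(\M)$ together with Lemma~\ref{Lemma about the vanishing of a form} for the converse. Your write-up is a bit more explicit about why the Hamiltonian vector fields generate $\mathfrak{X}(\M)$, but otherwise the two proofs coincide.
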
 
	\begin{proof}
		Consider any homogeneous $\X\in\mathfrak{X}(\M)$ and any homogeneous function $\xi\in\cin(\M)$. Then we compute
		\begin{align*}
		i_{[\X,\X_\xi]}\omega = &\ [\ldr{\X},i_{\X_\xi}]\omega \\
		= &\ \ldr{\X}(i_{\X_\xi}\omega) - (-1)^{|\X|(|\xi|+k-1)}i_{\X_\xi}(\ldr{\X}\omega) \\
		= &\ -(-1)^{|\xi|}\ldr{\X}(\diff\xi) - (-1)^{|\X|(|\xi|+k-1)}i_{\X_\xi}(\ldr{\X}\omega) \\
		= &\ (-1)^{|\xi|+|\X|-1}\diff\X(\xi) - (-1)^{|\X|(|\xi|+k-1)}i_{\X_\xi}(\ldr{\X}\omega) \\
		= &\ i_{\X_{\X(\xi)}}\omega - (-1)^{|\X|(|\xi|+k-1)}i_{\X_\xi}(\ldr{\X}\omega)
		\end{align*}
		
		Hence, if $\X$ is symplectic, then $[\X,\X_\xi] = \X_{\X(\xi)}$ for all $\xi\in\cin(\M)$, which is equivalent to $\X$ being Poisson. Conversely, suppose $\X$ is Poisson. Then it follows from the computation above that $i_{X_\xi}(\ldr{\X}\omega) = 0$ for all $\xi\in\cin(\M)$. Due to non-degeneracy of $\omega$, the Hamiltonian vector fields span all of $\mathfrak{X}(\M)$ and thus $i_\Y(\ldr{\X}\omega)$ for all $\Y\in\mathfrak{X}(\M)$. An application of Lemma \ref{Lemma about the vanishing of a form} finishes the proof.
	\end{proof}
	
	\begin{example}[Cotangent bundle]\label{Symplectic structure of cotangent bundles}
		Similarly to the non-graded case, all shifted cotangent bundles $T^*[1-k]\M$ carry a canonical symplectic form $\omega_{\text{can}}\in\Omega^{k-1}(T^*[1-k]\M)$. Its corresponding Poisson bivector is given locally by
		\[
		\pi=\sum_i \frac{\partial}{\partial e^i}\wedge\frac{\partial}{\partial \xi^i},
		\] 
		where $\{\xi^j\}$ are local coordinates on $\M$ and $\{e^j\}$ are the linear coordinates of $T^*[1-k]\M$ corresponding to the local frame $\{\frac{\partial}{\partial \xi^j}\}$ of $T[k-1]\M$. That is, $\{\xi^j,e^j\}$ are the canonical \textbf{Darboux coordinates} for $\omega_{\text{can}}$. 
	\end{example}
	
	\section[Poisson manifolds and Courant algebroids]{Poisson manifolds and Courant algebroids in the graded setting}\label{Section: Poisson manifolds and Courant algebroids in the graded setting}
	
	In this section, we recall the correspondence of ordinary Poisson manifolds and Courant algebroids with symplectic Lie $1$- and symplectic Lie $2$-algebroids, respectively. The results were first described in \cite{Roytenberg02,Severa05} and we refer the reader to these works for more details. For the Courant algebroid case see also \cite{CuMe21}. The following is based on \cite{Roytenberg02} and \cite{CuMe21}.
	
	Note that on a symplectic $\n$-manifold $(\M,\omega)$ with $\omega\in\Omega^{2,n}(\M)$, every symplectic vector field $\X\in\mathfrak{X}^{i}(\M)$ with $i+n\neq0$ is Hamiltonian and
	\[
	i_{\X}\omega = \diff\left( \frac{1}{i+n}i_{\X_E}i_{\X}\omega \right),
	\]
	where $\X_{E}$ is the Euler vector field on $\M$ (Roytenberg \cite{Roytenberg02}). In particular, every vector field of degree $1$ is Hamiltonian, and thus any vector field $\Q=\X_{\Theta}=\{\Theta,\cdot\}\in\mathfrak{X}^1(\M)$ is homological if and only if the function $\Theta\in\cin(\M)^{n+1}$ satisfies the \textbf{(classical) Master equation}\index{Master equation}
	\[
	\{\Theta,\Theta\} = 0.
	\]
	
	Suppose $A[1]$ is a symplectic $[1]$-manifold over $M$ with corresponding Poisson bracket $\{\cdot\,,\cdot\}$ of degree $-1$. Due to the Leibniz rule of the Poisson bracket, we obtain a map $A^*\to TM,\alpha\to\{\alpha,\cdot\}$, which is an isomorphism due to the non-degeneracy of $\{\cdot\,,\cdot\}$. Finally, due to Jacobi identity, the Poisson bracket on $\cin(A[1])^1=\Gamma(A^*)$ corresponds under this isomorphism to the
	commutator of vector fields on the base manifold $M$. This implies that symplectic $[1]$-manifolds over $M$ are always of the form $T^*[1]M$. A homological vector field $\Q$ on $T^*[1]M$ is of the form $\Q = \{\Theta,\cdot\}$, where $\Theta$ is a degree $2$ function on $T^*[1]M$. Since $\cin(T^*[1]M) = \Gamma(\wedge TM)$, it follows that $\cin(T^*[1]M)^2 = \Gamma(\wedge^2 TM)$ and thus the function $\Theta$ is a Poisson tensor $\pi$ on $M$. Therefore, we have proved the following result.
	
	\begin{theorem}[\cite{Roytenberg02}]
		Symplectic Lie $1$-algebroids with induced Poisson bracket of degree $-1$ are in one-to-one correspondence with ordinary Poisson manifolds.
	\end{theorem}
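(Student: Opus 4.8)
The plan is to prove the correspondence in two stages: first to identify the underlying symplectic $[1]$-manifold of an arbitrary symplectic Lie $1$-algebroid with a shifted cotangent bundle $T^*[1]M$, and then to match the compatible homological vector fields on it with Poisson bivector fields on $M$. Throughout I would lean on the two facts recorded earlier for symplectic $\n$-manifolds: that on a symplectic $[1]$-manifold (so $n=1$, $1+n=2\neq0$) every symplectic vector field of degree $1$ is Hamiltonian, and that $\Q=\X_\Theta=\{\Theta,\cdot\}$ is homological exactly when the generating function $\Theta\in\cin(\M)^{2}$ satisfies the Master equation $\{\Theta,\Theta\}=0$.

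For the first stage, I would start from a symplectic $[1]$-manifold $\M=A[1]$ with its induced degree $-1$ Poisson bracket $\{\cdot\,,\cdot\}$. Applying the Leibniz rule to $\{\alpha,\cdot\}$ for $\alpha\in\Gamma(A^*)=\cin(\M)^1$ produces a bundle map $\rho_*\colon A^*\to TM$ sending $\alpha$ to the derivation $\{\alpha,\cdot\}|_{C^\infty(M)}$, and non-degeneracy of $\{\cdot\,,\cdot\}$ forces $\rho_*$ to be an isomorphism. A degree count gives $\{\alpha,\beta\}\in\cin(\M)^1=\Gamma(A^*)$ for $\alpha,\beta\in\Gamma(A^*)$, so the bracket restricts to a bracket $[\cdot\,,\cdot]_*$ on $\Gamma(A^*)$; the graded Jacobi identity of $\{\cdot\,,\cdot\}$ then makes $(A^*,\rho_*,[\cdot\,,\cdot]_*)$ an ordinary Lie algebroid. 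Since a Lie algebroid with invertible anchor is canonically isomorphic to the tangent algebroid $TM$, I obtain $A^*\cong TM$, hence $A\cong T^*M$ and $\M\cong T^*[1]M$.

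For the second stage, the homological vector field $\Q$ of a symplectic Lie $1$-algebroid is by definition a Poisson, hence symplectic, vector field; as it has degree $1$ it is Hamiltonian, $\Q=\{\Theta,\cdot\}$ for a unique $\Theta\in\cin(T^*[1]M)^2$. Using $\cin(T^*[1]M)=\Gamma(\wedge TM)$, the degree $2$ component is $\Gamma(\wedge^2 TM)=\mathfrak{X}^2(M)$, so $\Theta=\pi$ is a bivector field on $M$. It then remains to note that the canonical degree $-1$ bracket on $\cin(T^*[1]M)$ is precisely the Schouten bracket of multivector fields, so the Master equation $\{\pi,\pi\}=0$ coincides with $[\pi,\pi]=0$, the defining condition for $\pi$ to be a Poisson tensor. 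Running the construction backwards assigns to a Poisson manifold $(M,\pi)$ the triple $(T^*[1]M,\omega_{\text{can}},\{\pi,\cdot\})$ with $\omega_{\text{can}}$ the canonical form of Example~\ref{Symplectic structure of cotangent bundles}, and the two assignments are mutually inverse, which yields the claimed bijection.

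The main obstacle I anticipate lies not in the degree bookkeeping but in verifying that the identification $A^*\cong TM$ transports the given symplectic form to the canonical $\omega_{\text{can}}$ on $T^*[1]M$, rather than merely matching the two graded manifolds; equivalently, that the whole $\mathcal P$-structure, and not just its Hamiltonian map, is the standard one. I would dispatch this by checking that the degree $-1$ bracket of $\M$ is completely determined on the generating functions $C^\infty(M)$ and $\Gamma(A^*)$ by $\rho_*$ and $[\cdot\,,\cdot]_*$ alone, so that transporting these data along $\rho_*^{-1}$ reconstructs the canonical bracket of $T^*[1]M$; non-degeneracy then gives the matching of symplectic forms, since the symplectic form compatible with a given non-degenerate bracket is unique.
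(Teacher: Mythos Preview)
Your proposal is correct and follows essentially the same route as the paper: identify the underlying symplectic $[1]$-manifold with $T^*[1]M$ via the anchor $\rho_*\colon A^*\to TM$ built from the Leibniz rule and shown to be an isomorphism by non-degeneracy, then use that every degree $1$ symplectic vector field is Hamiltonian to write $\Q=\{\Theta,\cdot\}$ with $\Theta\in\cin(T^*[1]M)^2=\mathfrak{X}^2(M)$ and read the Master equation as $[\pi,\pi]=0$. Your added paragraph on transporting the symplectic form to $\omega_{\text{can}}$ is a point the paper leaves implicit, so you are in fact slightly more careful here.
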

	
	Passing to the degree $n=2$ case, we obtain the graded geometric description of Courant algebroids. The correspondence can be seen as follows: Let $(\M,\Q,\omega)$ be a symplectic Lie $2$-algebroid with corresponding degree $-2$ Poisson bracket $\{\cdot\,,\cdot\}$ on $\cin(\M)$, and choose a splitting for the underlying $[2]$-manifold $\M$ so that $\cin(M)^0=C^\infty(M)$ and $\cin(\M)^1\cong\Gamma(E)$ for a vector bundle $E\to M$. The restriction of the Poisson bracket on $\Gamma(E)\times\Gamma(E)$ defines the pairing $\langle\cdot\,,\cdot\rangle$ on $E$. Suppose now that $\Theta\in\cin(M)^3$ is such that $\Q=\X_{\Theta}$ and it satisfies the classical Master equation: $\{\Theta,\Theta\} = 0$. Then we define
	\[
	\rho(e)f = \{\{e,\Theta\},f\} = \{\{\Theta,f\},e\}
	\qquad \text{and} \qquad
	\llbracket e_1,e_2 \rrbracket = \{\{e_1,\Theta\},e_2\},
	\]
	for $f\in C^\infty(M),e,e_1,e_2\in\Gamma(E)$, and hence we have additionally a bundle map $\rho:E\to TM$ and a bracket $\llbracket\cdot\,,\cdot\rrbracket$ on $\Gamma(E)$. The quadruple $(E,\rho,\llbracket\cdot\,,\cdot\rrbracket,\langle\cdot\,,\cdot\rangle)$ has the structure of a Courant algebroid over the base manifold $M$.
	
	The converse can be found in \cite{Roytenberg02} using local coordinates. Here, we adopt the method from \cite{CuMe21} and describe it in an invariant way. Suppose $(E,\rho,\llbracket\cdot\,,\cdot\rrbracket,\langle\cdot\,,\cdot\rangle)$ is a Courant algebroid over the smooth manifold $M$. We define the $[2]$-manifold $\M$ whose sheaf of functions is given by
	\[
	\cin(\M)^0 = C^\infty(M),
	\qquad
	\cin(\M)^1 = \Gamma(E),
	\qquad
	\cin(\M)^2 = \Gamma(\Der_{\langle\cdot\,,\cdot\rangle}(E))  ,
	\]
	where $\Der_{\langle\cdot\,,\cdot\rangle}(E)$ denotes the derivations of the vector bundle $E\to M$ preserving the pairing $\langle\cdot\,,\cdot\rangle$ on $E$: $D\in\Der_{\langle\cdot\,,\cdot\rangle}(E)$ if and only if $D$ is a derivation of $E$ over the vector field $X_D\in\mathfrak{X}(M)$ such that
	\[
	X_D\langle e_1,e_2 \rangle = \langle De_1,e_2 \rangle + \langle e_1,De_2 \rangle
	\]
	for all $e_1,e_2\in\Gamma(E)$. We now recall the \textbf{Keller-Waldmann algebra}\index{Courant algebroid!Keller-Waldmann algebra} \cite{KeWa15} $(C^\bullet(E),\bullet)$ of the Courant algebroid $E\to M$. A \textbf{$k$-cochain}\index{Courant algebroid!$k$-cochain} on $E$, $k\geq1$, is a map
	\[
	\omega\colon\underset{k\text{-times}}{\underbrace{\Gamma(E)\times\ldots\Gamma(E)}}\to C^\infty(M)
	\] 
	that is $C^\infty(M)$-linear in the last entry and such that for $k\geq2$ there exists a map
	\[
	\sigma_\omega\colon\underset{(k-2)\text{-times}}{\underbrace{\Gamma(E)\times\ldots\Gamma(E)}}\to \mathfrak{X}(M),
	\]
	called the \textbf{symbol}\index{Courant algebroid!symbol of a $k$-cochain}, with
	\[
	\sigma_\omega(e_1,\ldots,\widehat{e}_i,\widehat{e}_{i+1},\ldots,e_k)\langle e_i,e_{i+1} \rangle =
	\omega(e_1,\ldots,e_i,e_{i+1},\ldots,e_k) + \omega(e_1,\ldots,e_{i+1},e_i,\ldots,e_k)
	\]
	for all $e_j\in\Gamma(E)$ and $1\leq i\leq k-1$. The space of $k$-cochains is denoted $C^\bullet(E)$ and by definition $C^0(E):=C^\infty(M)$. The space $C^\bullet(E)$ has the structure of a graded commutative algebra with product $\bullet$ defined by
	\[
	(\omega\bullet\eta)(e_1,\ldots,e_{k+m}) := 
	\sum_{{\sigma}\in\text{Sh}_{k,m}} \text{sgn}(\sigma)\,
	\omega(e_{\sigma(1)},\ldots,e_{\sigma(k)}) \,
	\eta(e_{\sigma(k+1)},\ldots,e_{\sigma(m)})
	\]
	for $\omega\in C^k(E),\eta\in C^m(E)$ and $e_j\in\Gamma(E)$. Note that in degrees $0,1$ and $2$, $C^\bullet(E)$ is described as follows:
	\begin{itemize}
		\item $C^0(E)=C^\infty(M)=\cin(\M)^0$;
		
		\item $C^1(E) = \Gamma(E^*) \simeq \Gamma(E) = \cin(\M)^1$ via the non-degenerate pairing $\langle\cdot\,,\cdot\rangle$ on $E$;
		
		\item Given $\omega\in C^2(E)$, let $\widehat{\omega}\colon\Gamma(E)\to \Gamma(E)$ be defined by $\langle\widehat{\omega}(e_1),e_2\rangle = \omega(e_1,e_2)$, for $e_1,e_2\in\Gamma(E)$. Then $\widehat{\omega}\in\Der_{\langle\cdot\,,\cdot\rangle}(E)$ over the vector field $X_{\widehat{\omega}}=\sigma_\omega\in\mathfrak{X}(M)$. That is, $C^2(E)\cong \cin(\M)^2$.
	\end{itemize}
	
	Using now the pairing $\langle\cdot\,,\cdot\rangle$ of $E$, one can define a symplectic structure on the graded manifold $\M$ whose corresponding degree $-2$ Poisson bracket $\{\cdot\,,\cdot\}$ is given in low degrees by
	\[
	\{e_1,e_2\} = \langle e_1,e_2\rangle,
	\quad
	\{D,f\} = X_D(f),
	\quad
	\{D,e\} = De,
	\quad
	\{D_1,D_2\} = D_1D_2 - D_2D_1,
	\]
	for $f\in C^\infty(M),e,e_1,e_2\in\Gamma(E),D,D_1,D_2\in\Der_{\langle\cdot\,,\cdot\rangle}(E)$. In particular, since only the pairing of $E$ was used, we have proved the following result.
	
	\begin{proposition}[\cite{Roytenberg02}]
		The above construction establishes a one-to-one correspondence between symplectic $[2]$-manifolds over $M$ with induced Poisson bracket of degree $-2$ and vector bundles $E\to M$ equipped with a non-degenerate pairing $\langle\cdot\,,\cdot\rangle$.
	\end{proposition}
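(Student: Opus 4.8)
The plan is to exhibit two mutually inverse assignments --- the forward map $(E,\langle\cdot\,,\cdot\rangle)\mapsto(\M,\{\cdot\,,\cdot\})$ furnished by the construction above, and a backward map extracting a pair from a symplectic $[2]$-manifold --- and then to verify that the two compositions are identities. First I would check that the forward construction really yields an honest $[2]$-manifold. The prescription $\cin(\M)^0=C^\infty(M)$, $\cin(\M)^1=\Gamma(E)$, $\cin(\M)^2=\Gamma(\Der_{\langle\cdot\,,\cdot\rangle}(E))$ assembles into a sheaf of graded commutative algebras (in low degrees it is exactly the Keller--Waldmann algebra $C^\bullet(E)$ identified above). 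To recognise it as a structure sheaf, I would invoke the symbol sequence
\[
0\longrightarrow \mathfrak{so}(E)\longrightarrow \Der_{\langle\cdot\,,\cdot\rangle}(E)\overset{D\mapsto X_D}{\longrightarrow} TM\longrightarrow 0
\]
together with the identification $\mathfrak{so}(E)\cong\wedge^2E$ induced by the non-degenerate pairing. Since the degree-$1$ generators anticommute, $\cin(\M)^1\cdot\cin(\M)^1=\Gamma(\wedge^2E)$, so this is precisely the short exact sequence $0\to\Gamma(\wedge^2E)\to\cin(\M)^2\to\Gamma(TM)\to0$ characterising $\cin(\M)^2$ for a $[2]$-manifold; thus $\M$ splits non-canonically as $E[1]\oplus T^*M[2]$, recovering Roytenberg's form.

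Next I would verify that the four bracket relations extend, by the graded Leibniz rule, to a well-defined degree $-2$ Poisson bracket. Graded antisymmetry on $\cin(\M)^1$ is the symmetry of $\langle\cdot\,,\cdot\rangle$ (the sign $(-1)^{(1-2)(1-2)}=1$ forces $\{e_1,e_2\}=\{e_2,e_1\}$), the commutator relation on $\cin(\M)^2$ encodes the top-degree Jacobi identity, and the remaining relations are the natural actions; the Jacobi identity for the full bracket is then checked on generators, where it reduces to the Jacobi identity for the commutator of derivations and the defining property of $\Der_{\langle\cdot\,,\cdot\rangle}(E)$. Finally, non-degeneracy of $\langle\cdot\,,\cdot\rangle$ in degree $1$, together with the pairing of $\cin(\M)^2$ against $\cin(\M)^0$ through the symbol, shows that the Hamiltonian vector fields span $\mathfrak{X}(\M)$, so the bracket is symplectic.

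For the backward map, given a symplectic $[2]$-manifold $\M$ with degree $-2$ bracket, I would choose a splitting with $\cin(\M)^1\cong\Gamma(E)$ and set $\langle e_1,e_2\rangle:=\{e_1,e_2\}\in\cin(\M)^0=C^\infty(M)$. The same sign computation shows this is symmetric, and non-degeneracy of the symplectic bracket forces it to be a non-degenerate pairing on $E$. One of the two compositions is then tautological: reconstructing $\M$ from $(E,\langle\cdot\,,\cdot\rangle)$ and restricting the resulting bracket returns $\langle\cdot\,,\cdot\rangle$, since $\{e_1,e_2\}=\langle e_1,e_2\rangle$ by definition.

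The step I expect to be the main obstacle is the other composition: showing that the symplectic structure leaves no freedom in degree $2$, i.e.\ that $\cin(\M)^2\cong\Gamma(\Der_{\langle\cdot\,,\cdot\rangle}(E))$ canonically, so that a symplectic $[2]$-manifold is determined by its degree-$1$ part and its pairing. Here I would argue that for $D\in\cin(\M)^2$ the Hamiltonian vector field $\X_D=\{D,\cdot\}$ has degree $0$, hence restricts to a vector field $X_D\in\mathfrak{X}(M)$ and to a derivation of $\Gamma(E)$ covering it. Because every Hamiltonian vector field of a Poisson bracket is itself Poisson, the identity $\X_D\{e_1,e_2\}=\{\X_De_1,e_2\}+\{e_1,\X_De_2\}$ becomes $X_D\langle e_1,e_2\rangle=\langle De_1,e_2\rangle+\langle e_1,De_2\rangle$, which is exactly the condition $D\in\Der_{\langle\cdot\,,\cdot\rangle}(E)$. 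Non-degeneracy of the bracket makes $D\mapsto\X_D$ injective, and the symplectic spanning property makes it surjective onto the degree-$0$ fields covering $TM$; comparing the two short exact sequences then identifies $\cin(\M)^2$ with $\Gamma(\Der_{\langle\cdot\,,\cdot\rangle}(E))$. Invoking the uniqueness of splittings up to isomorphism (Example~\ref{Change of splitting for [2]-manifolds}) upgrades this to an isomorphism of symplectic $[2]$-manifolds, establishing that the two assignments are inverse to one another.
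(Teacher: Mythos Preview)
Your proposal is correct and follows the same two-way construction the paper uses: forward via the explicit description $\cin(\M)^0=C^\infty(M)$, $\cin(\M)^1=\Gamma(E)$, $\cin(\M)^2=\Gamma(\Der_{\langle\cdot,\cdot\rangle}(E))$ with the four bracket relations, and backward by restricting the degree $-2$ bracket to $\cin(\M)^1$. The paper's own treatment is extremely terse --- it presents the forward construction and simply asserts ``since only the pairing of $E$ was used, we have proved the following result'' --- so your verification that the two assignments are mutually inverse genuinely goes beyond what is written there.

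Your key addition, the canonical identification of $\cin(\M)^2$ with $\Gamma(\Der_{\langle\cdot,\cdot\rangle}(E))$ via $D\mapsto\X_D|_{\cin(\M)^{\le 1}}$, is the right idea and the argument is sound. Two small points: first, your surjectivity claim (``the symplectic spanning property makes it surjective onto the degree-$0$ fields covering $TM$'') is the one imprecise step --- the cleanest way to close it is to observe that the map fits into a morphism of short exact sequences over $\wedge^2E\hookrightarrow(\cdot)\twoheadrightarrow TM$ and is an isomorphism on both ends, hence an isomorphism by the five lemma (or simply note both sides are locally free of the same rank and the map is locally an isomorphism by Darboux). Second, the appeal to Example~\ref{Change of splitting for [2]-manifolds} at the end is unnecessary: your Hamiltonian argument already gives a \emph{canonical} identification of $\cin(\M)^2$, so no splitting is chosen and no splitting-independence needs to be invoked.
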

	
	Note that the description of $C^0(E),C^1(E)$ and $C^2(E)$ implies that  $(C^\bullet(E),\bullet)$ and $(\cin(\M),\cdot)$ coincide in low degrees. In fact, they are isomorphic as graded commutative algebras via the map $\Upsilon\colon\cin(\M)^k\to C^k(E)$ \cite[Thm.~2.5]{CuMe21} defined by
	\[
	\Upsilon(\xi)(e_1,\ldots,e_k) := \{ e_k,\{ e_{k-1},\{\ldots\{ e_1,\xi \}\ldots \} \} \}
	\]
	for $e_1,\ldots,e_k\in \Gamma(E)$ and $\xi\in\cin(\M)^k$. Now we can use this isomorphism to define a function $\Theta\in\cin(\M)$ of degree $3$ satisfying $\{\Theta,\Theta\}=0$; namely, $\Theta:=\Upsilon^{-1}(T)$, where $T\in C^3(E)$ is given by $T(e_1,e_2,e_3) := \langle\llbracket e_1,e_2 \rrbracket, e_3  \rangle$, for $e_1,e_2,e_3\in\Gamma(E)$. The symplectic homological vector field on the symplectic $[2]$-manifold $(\M,\{\cdot\,,\cdot\})$ is given by $\Q:=\X_{\Theta}$. Therefore, we obtain the following well-known result.
	
	\begin{theorem}[\cite{Roytenberg02}]
		The above construction gives a one-to-one correspondence between symplectic Lie $2$-algebroids with induced Poisson bracket of degree $-2$ and Courant algebroids. 
	\end{theorem}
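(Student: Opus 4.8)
The plan is to construct the bijection in two independent stages and then compose them. The preceding Proposition already identifies symplectic $[2]$-manifolds $(\M,\omega)$ over $M$ carrying a degree $-2$ Poisson bracket with vector bundles $E\to M$ equipped with a non-degenerate pairing $\langle\cdot\,,\cdot\rangle$, the dictionary being $\cin(\M)^1\cong\Gamma(E)$ and $\{e_1,e_2\}=\langle e_1,e_2\rangle$. Thus it suffices to fix such an $\M$ (equivalently such a pairing) and show that the homological vector fields $\Q$ on $\M$ are in bijection with the Courant data $(\rho,\llbracket\cdot\,,\cdot\rrbracket)$ turning $(E,\langle\cdot\,,\cdot\rangle)$ into a Courant algebroid. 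Composing the two bijections yields the theorem.

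First I would reduce the $\Q$-structure to a purely algebraic object. By the observation recalled at the start of this section, on a symplectic $[2]$-manifold every degree $1$ vector field is Hamiltonian, so $\Q=\X_\Theta=\{\Theta,\cdot\}$ for a unique $\Theta\in\cin(\M)^3$, and $\Q^2=\tfrac12[\Q,\Q]=0$ holds if and only if $\Theta$ satisfies the Master equation $\{\Theta,\Theta\}=0$. Through the graded-algebra isomorphism $\Upsilon\colon\cin(\M)^\bullet\to C^\bullet(E)$ such a $\Theta$ corresponds to a $3$-cochain $T=\Upsilon(\Theta)\in C^3(E)$, and I would define the candidate anchor and bracket by the derived-bracket formulas $\rho(e)f=\{\{e,\Theta\},f\}$ and $\llbracket e_1,e_2\rrbracket=\{\{e_1,\Theta\},e_2\}$, so that $T(e_1,e_2,e_3)=\langle\llbracket e_1,e_2\rrbracket,e_3\rangle$. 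A key simplification is that the axioms relating $\llbracket\cdot\,,\cdot\rrbracket$, $\rho$ and $\langle\cdot\,,\cdot\rangle$ to one another -- namely axioms 4 and 5 of the Courant definition -- are exactly the two symbol conditions (at the first and second slots) that make $T$ a legitimate element of the Keller--Waldmann complex $C^3(E)$; since $\Upsilon$ lands isomorphically in $C^3(E)$, these hold automatically, being consequences only of the graded symmetry and Jacobi identity of $\{\cdot\,,\cdot\}_{-2}$ and independent of the Master equation.

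The heart of the matter is therefore to prove that $\{\Theta,\Theta\}=0$ is equivalent to the single remaining axiom 3, the Jacobi identity of the Dorfman bracket. Since $\{\Theta,\Theta\}$ is a degree $4$ function and $\Upsilon$ is injective, it vanishes if and only if $\Upsilon(\{\Theta,\Theta\})$ vanishes as a $4$-cochain, i.e.~if and only if $\{e_4,\{e_3,\{e_2,\{e_1,\{\Theta,\Theta\}\}\}\}\}=0$ for all $e_i\in\Gamma(E)$. I would expand this iterated bracket using only the graded Jacobi identity and Leibniz rule of $\{\cdot\,,\cdot\}_{-2}$ together with the derived-bracket formulas above; the terms reorganise into precisely the Jacobiator $\llbracket e_1,\llbracket e_2,e_3\rrbracket\rrbracket-\llbracket\llbracket e_1,e_2\rrbracket,e_3\rrbracket-\llbracket e_2,\llbracket e_1,e_3\rrbracket\rrbracket$ paired with $e_4$. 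The main obstacle is exactly this bookkeeping: keeping track of the signs produced by the degree $-2$ bracket and confirming that each contraction contributes to the correct term of the Jacobiator, in both directions. Once axiom 3 is established, axioms 1 and 2 (anchor-compatibility and the Leibniz rule of $\llbracket\cdot\,,\cdot\rrbracket$) follow from axioms 3, 4 and 5 by the redundancy noted after the definition of Courant algebroid, so $(E,\rho,\llbracket\cdot\,,\cdot\rrbracket,\langle\cdot\,,\cdot\rangle)$ is a genuine Courant algebroid.

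Finally I would check that the two assignments invert each other. Starting from a Courant algebroid and forming $\Theta=\Upsilon^{-1}(T)$ with $T(e_1,e_2,e_3)=\langle\llbracket e_1,e_2\rrbracket,e_3\rangle$, the previous paragraph gives $\{\Theta,\Theta\}=0$, and the derived brackets recover the original $\rho$ and $\llbracket\cdot\,,\cdot\rrbracket$; conversely, any symplectic Lie $2$-algebroid arises this way because $\Theta$ is determined uniquely by $\Q$ and by the isomorphism $\Upsilon$. Combining this bijection on homological vector fields with the Proposition's bijection on the underlying symplectic $[2]$-manifolds equipped with a pairing yields the claimed one-to-one correspondence between symplectic Lie $2$-algebroids with degree $-2$ Poisson bracket and Courant algebroids.
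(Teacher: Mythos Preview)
Your proposal is correct and follows essentially the same route as the paper, which also reduces $\Q$ to a Hamiltonian $\Theta$ satisfying the Master equation, uses the Keller--Waldmann isomorphism $\Upsilon$ to pass between $\cin(\M)^3$ and $C^3(E)$, and defines the anchor and Dorfman bracket by the derived-bracket formulas. The paper's treatment is more of a sketch (asserting the Courant axioms and $\{\Theta,\Theta\}=0$ while deferring to \cite{Roytenberg02,CuMe21}), whereas you make explicit the useful observation that axioms 4 and 5 are precisely the two symbol conditions placing $T$ in $C^3(E)$ and that the Master equation is then equivalent to the Jacobi identity alone; this is a clean way to organise the verification that the paper leaves implicit.
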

	
	\chapter{Differential graded modules}\label{Chapter: Differential graded modules}
	
	This chapter defines the notion of a differential graded module
	over a Lie $n$-algebroid (or generally a $\Q$-manifold) $(\M,\Q)$ and gives the two fundamental
	examples of modules which come canonically with $(\M,\Q)$,
	namely the adjoint and the coadjoint modules. Moreover, we investigate the influence of a Poisson structure on $(\M,Q)$ on its adjoint and coadjoint modules. Note that the case of
	differential graded modules over a Lie 1-algebroid $A\to M$ is studied
	in detail in \cite{Mehta14}.
	
	\section{The category of differential graded modules}\label{Section: The category of differential graded modules}
	
	Let $A\to M$ be a Lie 1-algebroid. A \textbf{Lie algebroid module}\index{Lie algebroid!module}
	\cite{Vaintrob97} over $A$ is defined as a 
	sheaf $\mathcal{B}$ of locally freely generated graded
	$\Omega(A)$-modules over $M$ together with a degree $1$ map $\D\colon \mathcal{B}\to\mathcal{B}$ which
	squares to zero and satisfies the Leibniz rule
	\[
	\D(\alpha\eta) = (\diff_A\alpha)\eta + (-1)^{|\alpha|}\alpha\D(\eta),
	\]
	for $\alpha\in\Omega(A)$ and $\eta\in\mathcal{B}$. By definition, the sheaf $\mathcal{B}$ is the sheaf of sections of a vector bundle $\Em\to A[1]$ in the category of graded manifolds: $\mathcal{B} = \Gamma(\Em)$. For a Lie $n$-algebroid
	$(\M,\Q)$ over $M$ (or even a general $\mathbb{Z}$-graded $\Q$-manifold), this is generalised to the following definitions.
	
	\begin{definition}
		\begin{enumerate}
			\item A \textbf{left differential graded module of $(\M,\Q)$}\index{Lie $n$-algebroid!left differential graded module} is a
			sheaf $\Gamma(\Em)$ of locally freely generated left graded $\cin(\M)$-modules
			over $M$ (corresponding to a vector bundle $\Em\to\M$), together with a map $\D\colon\Gamma(\Em)\to\Gamma(\Em)$ of degree $1$, such
			that $\D^2=0$ and
			\[
			\D(\xi\eta) = \Q(\xi)\eta + (-1)^{|\xi|}\xi\D(\eta)
			\]
			for all $\xi\in\cin(\M)$ and $\eta\in\Gamma(\Em)$.
			\item A \textbf{right differential graded module of $(\M,\Q)$}\index{Lie $n$-algebroid!right differential graded module} is
			a sheaf $\Gamma(\Em)$ of right graded modules as above together with a map $\D\colon\Gamma(\Em)\to\Gamma(\Em)$ of
			degree $1$, such that $\D^2=0$ and
			\[
			\D(\eta\xi) = \D(\eta)\xi + (-1)^{|\eta|}\eta\Q(\xi)
			\]
			for all $\xi\in\cin(\M)$ and $\eta\in\Gamma(\Em)$.
			\item A \textbf{differential graded bimodule of $(\M,\Q)$}\index{Lie $n$-algebroid!differential graded bimodule} is a
			sheaf $\Gamma(\Em)$ as above together with left and right differential
			graded module structures such that the gradings and the
			differentials coincide, and the two module structures commute:
			$(\xi_1\eta)\xi_2 = \xi_1(\eta\xi_2)$ for all
			$\xi_1,\xi_2\in\cin(\M)$ and $\eta\in\Gamma(\Em)$.
		\end{enumerate}
	\end{definition}
	
	For short we write \textbf{left DG $(\M,\Q)$-module}\index{Lie $n$-algebroid!left DG-module} and \textbf{right DG
		$(\M,\Q)$-module}\index{Lie $n$-algebroid!right DG-module}, or simply \textbf{left DG $\M$-module}\index{Lie $n$-algebroid!left DG-module} and
	\textbf{right DG $\M$-module}\index{Lie $n$-algebroid!right DG-module}. The cohomology\index{Lie $n$-algebroid!cohomology of a DG-module} of the induced complexes
	is denoted by $H_L^\bullet(\M,\Q;\Em)$ and $H_R^\bullet(\M,\Q;\Em)$,
	respectively, or simply by $H_L^\bullet(\M,\Em)$ and
	$H_R^\bullet(\M,\Em)$. If there is no danger of confusion, the
	prefixes ``left" and ``right", as well as the subscripts ``$L$" and
	``$R$", will be omitted.
	
	\begin{definition}
		Let $(\Em_1,\D_1)$ and $(\Em_2,\D_2)$ be two left (right) differential graded
		modules over the Lie $n$-algebroids $(\M,\Q_{\M})$ and
		$(\N,\Q_{\N})$, respectively. A \textbf{degree $0$
		morphism}\index{Lie $n$-algebroid!degree $0$-morphism of DG-modules}, or simply a \textbf{morphism}\index{Lie $n$-algebroid!morphism of DG-modules}, between $\Em_1$ and $\Em_2$ consists of a morphism of
		Lie $n$-algebroids $\phi\colon \N\to\M$ and a degree preserving map
		$\mu\colon \Gamma(\Em_1)\to\Gamma(\Em_2)$ which is left (right) linear:
		$\mu(\xi\eta) = \phi^\star(\xi) \mu(\eta)$, for all $\xi\in\cin(\M)$
		and $\eta\in\Gamma(\Em_1)$, and commutes with the differentials $\D_1$ and
		$\D_2$.
	\end{definition}
	
	As in the case of Lie algebroids, new examples of DG $\M$-modules are obtained by considering the usual algebraic
	constructions. In the following, we describe these constructions only
	for left DG modules but the case of right DG modules is treated
	similarly.
	
	\begin{definition}[\textbf{Dual module}\index{Lie $n$-algebroid!dual module}]
		Given a DG $\M$-module $\Em$ with differential $\D_{\Em}$,
		one defines a DG $\M$-module structure on the dual sheaf
		$\Em^*:=\underline{\Hom}(\Em,\cin)$ with differential $\D_{\Em^*}$
		defined via the property
		\[
		\Q(\langle\psi,\eta\rangle) =
		\langle\D_{\Em^*}(\psi),\eta\rangle +
		(-1)^{|\psi|}\langle\psi,\D_{\Em}(\eta)\rangle,
		\] 
		for all $\psi\in\Gamma(\Em^*)$ and $\eta\in\Gamma(\Em)$, where
		$\langle\cdot\,,\cdot\rangle$ is the pairing of $\Em^*$ and
		$\Em$ \cite{Mehta06}.
	\end{definition}
	
	\begin{definition}[\textbf{Tensor product module}\index{Lie $n$-algebroid!tensor product module}]
		For DG $\M$-modules $\Em$ and $\Fm$ with operators $\D_{\Em}$ and
		$\D_{\Fm}$, the corresponding operator $\D_{\Em\otimes \Fm}$ on
		$\Em\otimes \Fm$ is uniquely characterised by the formula
		\[
		\D_{\Em\otimes \Fm}(\eta\otimes\eta') =
		\D_{\Em}(\eta)\otimes\eta' +
		(-1)^{|\eta|}\eta\otimes\D_{\Fm}(\eta'),
		\]
		for all $\eta\in\Gamma(\Em)$ and $\eta'\in\Gamma(\Fm)$.
	\end{definition}
	
	\begin{definition}[\textbf{$\underline{\Hom}$-module}\index{Lie $n$-algebroid!module of homomorphisms}]
		For DG $\M$-modules $\Em,\Fm$ with operators $\D_{\Em}$
		and $\D_{\Fm}$, the differential $\D_{\underline{\Hom}(\Em,\Fm)}$ on
		$\underline{\Hom}(\Em,\Fm)$ is defined via
		\[
		\D_{\Fm}(\psi(\eta)) = \D_{\underline{\Hom}(\Em,\Fm)}(\psi)(\eta) + (-1)^{|\psi|}\psi(\D_{\Em}(\eta)),
		\]
		for all $\psi\in
		\underline{\Hom}(\Gamma(\Em),\Gamma(\Fm))$ and
		$\eta\in \Gamma(\Em)$.
	\end{definition}
	
	\begin{definition}[\textbf{(Anti)symmetric powers module}\index{Lie $n$-algebroid!symmetric powers module}\index{Lie $n$-algebroid!anti-symmetric powers module}]
		For a DG $\M$-module $\Em$ with operator $\D_{\Em}$, the corresponding operator
		$\D_{\underline{S} (\Em)}$ on ${\underline{S}^k(\Em)}$ is uniquely
		characterised by the formula
		\begin{align*}
		\D_{\underline{S} (\Em)}(\eta_1\eta_2\ldots\eta_k) = &\ \D_{\Em}(\eta_1)\eta_2\ldots\eta_k \\
		& + \eta_1\sum_{i=2}^k(-1)^{|\eta_1|+\ldots+|\eta_{i-1}|}\eta_2\ldots\D(\eta_i)\ldots\eta_k,
		\end{align*}
		for all $\eta_1,\ldots,\eta_k\in \Gamma(\Em)$. A similar formula
		gives also the characterisation for the
		operator $\D_{\underline{A} (\Em)}$ of the antisymmetric
		powers $\underline{A}^q(\Em)$.
	\end{definition}
	
	\begin{definition}[\textbf{Direct sum module}\index{Lie $n$-algebroid!direct sum module}]
		For DG $\M$-modules $\Em,\Fm$ with operators $\D_{\Em}$
		and $\D_{\Fm}$, the differential operator $\D_{\Em\oplus \Fm}$ on
		$\Em\oplus \Fm$ is defined as
		\[
		\D_{\Em\oplus \Fm} = \D_{\Em} \oplus \D_{\Fm}.
		\]
	\end{definition}

	\begin{definition}[\textbf{Shifted module}\index{Lie $n$-algebroid!shifted module}]
		For $k\in\mathbb{Z}$, the DG $\M$-module $\mathbb{R}[k]$ is defined
		as $\cin(\M)\otimes\Gamma(M\times\mathbb{R}[k])$ with differential
		given by $\Q$; here, $M\times \mathbb{R}[k]$ is the $[k]$-shift of
		the trivial line bundle over $M$, i.e.~$M\times \mathbb{R}$ in
		degree $-k$ and zero otherwise.  Given now a DG-module $\Em$ with
		differential $\D_{\Em}$, we define the shifted module
		$\Em[k]:=\Em\otimes\mathbb{R}[k]$. Due to the definition of the
		tensor module, its differential $\D[k]$ acts via
		\[
		\D_{\Em}[k](\eta\otimes1) = \D_{\Em}(\eta)\otimes1
		\]
		for all $\eta\in\Gamma(\Em)$. Abbreviating the element
		$\eta\otimes1$ simply as $\eta$, the shifted differential
		$\D_{\Em}[k]$ coincides\footnote{Again one could choose to
			tensor with $\mathbb{R}[k]$ from the left. Then on elements
			of the form $1\otimes\eta$, the resulting differential would
			act as $\D_{\Em}[k]=(-1)^k\D_{\Em}$.} with $\D_{\Em}$.
	\end{definition}
	
	\begin{definition}
		Let $(\M,\Q_{\M})$ and $(\N,\Q_{\N})$ be $\Q$-manifolds, and
		suppose that $\Em_1$ and $\Em_2$ are left (right) DG-modules over $\M$ and $\N$,
		respectively. A \textbf{degree $k$-morphism}\index{Lie $n$-algebroid!degree $k$-morphism of DG-modules} or simply $k$-morphism\index{Lie $n$-algebroid!$k$-morphism of DG-modules}, for $k\in\mathbb{Z}$, from
		$\Em_1$ to $\Em_2$ is defined as a left (right) degree $0$ morphism
		$\mu:\Em_1\to\Em_2[k]$; that is, a map sending elements of degree
		$i$ in $\Gamma(\Em_1)$ to elements of degree $i+k$ in $\Gamma(\Em_2)$, such that it
		is linear over a morphism of $\Q$-manifolds $\phi:\N\to\M$ and
		commutes with the differentials. A \textbf{$k$-isomorphism}\index{Lie $n$-algebroid!$k$-ismorphism of DG-modules} is a $k$-morphism
		with an inverse.
	\end{definition}
	
	\begin{remark}
		\begin{enumerate}
			\item The inverse of a $k$-isomorphism is necessarily a $-k$-isomorphism.
			\item For all $k\in\mathbb{Z}$ and all DG $\M$-modules
			$\Em$, there is an obvious $k$-isomorphism
			$\Em\to \Em[k]$ over the identity on $\M$.
		\end{enumerate}
	\end{remark}
	
	Considering the special case of $\M = \N$ in the definition above
	yields $k$-morphisms between DG $\M$-modules over the same $\Q$-manifold. The resulting graded categories\index{Lie $n$-algebroid!category of DG-modules} of left and right DG
	$\M$-modules are denoted by $\underline{\mathbb{M}\text{od}}_L(\M,\Q)$
	and $\underline{\mathbb{M}\text{od}}_R(\M,\Q)$, or simply by
	$\underline{\mathbb{M}\text{od}}_L(\M)$ and
	$\underline{\mathbb{M}\text{od}}_R(\M)$. The isomorphism classes of
	these categories are denoted by $\underline{\text{Mod}}_L(\M,\Q)$ and
	$\underline{\text{Mod}}_R(\M,\Q)$, or simply by
	$\underline{\text{Mod}}_L(\M)$ and
	$\underline{\text{Mod}}_R(\M)$. Again if there is no danger of
	confusion, the subscripts ``$L$" and ``$R$" will be omitted.
	
	\section{Adjoint and coadjoint modules}\label{Section: Adjoint and coadjoint modules}
	
	Recall that every $\Q$-manifold $\M$ comes together  with its tangent bundle $T\M$ whose sheaf of sections is the space of vector fields $\mathfrak{X}(\M)$ over $\M$. Its left $\cin(\M)$-module structure is defined by
	the property $(\xi_1\mathcal{X})(\xi_2) = \xi_1\mathcal{X}(\xi_2)$ for
	all $\xi_1,\xi_2\in\cin(\M)$ and
	$\mathcal{X}\in\mathfrak{X}(\M)$. In addition to the left
	module structure, the space of vector fields is also endowed with a
	right $\cin(\M)$-module structure. The right multiplication with
	functions in $\cin(\M)$ is obtained by viewing the elements of
	$\mathfrak{X}(\M)$ of degree $i$ as functions of bidegree
	$(1,i)$ of the graded manifold $T^*[1]\M$. The two module structures on
	$\mathfrak{X}(\M)$ are related by
	$\mathcal{X}\xi = (-1)^{|\xi|(|\mathcal{X}|+1)}\xi\mathcal{X}$, for
	all homogeneous $\xi\in\cin(\M)$ and
	$\mathcal{X}\in\mathfrak{X}(\M)$.
	
	Suppose now that $\M$ is endowed with a homological vector field $\Q$,
	i.e.~$(\M,\Q)$ is a $\Q$-manifold. Then the Lie derivative on the
	space of vector fields $\ldr{\Q}:=[\Q,\cdot]$ is a degree 1 operator
	which squares to zero and has both the left and right Leibniz
	identities with respect to the left and right module structures
	explained above:
	\[
	\ldr{\Q}(\xi\X) = \Q(\xi)\X + (-1)^{|\xi|}\xi\ldr{\Q}(\X)
	\quad \text{and} \quad
	\ldr{\Q}(\X\xi) = \ldr{\Q}(\X)\xi + (-1)^{|\X|}\X\Q(\xi)
	\]
	for homogeneous $\xi\in\cin(\M)$ and $\X\in\mathfrak{X}(\M)$. That is, the sheaf of vector fields over $(\M,\Q)$
	has a canonical DG $\M$-bimodule structure which we call the
	\textbf{adjoint module}\index{Lie $n$-algebroid!adjoint module} of $\M$:
	\[
	(\mathfrak{X}(\M),\ldr{\Q}).
	\]
	
	The dual module $\bigoplus_p\cin(T[1]\M)_{(p,1)}$ of 1-forms over $\M$
	carries the grading obtained from the horizontal grading of the Weil
	algebra -- that is, the elements of $\cin(T[1]\M)_{(p,1)}$ have degree
	$p$. Its structure operator as a left DG module is given by the Lie
	derivative $\ldr{\Q} = [i_\Q,\dr]$, and as a right DG module is given
	by $-\ldr{\Q}$.  These left and right DG $\M$-modules are called the
	\textbf{coadjoint module}\index{Lie $n$-algebroid!coadjoint module} of $(\M,\Q)$ and denoted by
	\[
	(\Omega^1(\M),\ldr{\Q})\qquad \text{and} \qquad (\Omega^1(\M),-\ldr{\Q})
	\]
	
	\section{$\mathcal{PQ}$-manifolds: coadjoint vs adjoint modules}\label{Section PQ-manifolds: coadjoint vs adjoint modules}
	
	In \cite{MaXu94}, it was shown that a Lie algebroid $A\to M$ with a linear
	Poisson structure satisfies the Lie bialgebroid compatibility
	condition if and only if the map $T^*A \to TA$ induced by the Poisson
	bivector is a Lie algebroid morphism from $T^*A = T^*A^* \to A^*$ to
	$TA \to TM$ over $\rho*:A^*\to TM$. This is now generalised to give a
	characterisation of $\mathcal{PQ}$-manifolds in the general setting.
	
	Let $\M$ be a $\mathbb{Z}$-manifold equipped with a homological vector field
	$\Q$ and a Poisson bivector field $\pi\in\mathfrak{A}_k^{2,-k}(\M)$ with corresponding degree $k$ Poisson bracket $\{\cdot\,,\cdot\}_k$. The Poisson
	bracket on $\M$ induces a map
	$\pi^\sharp\colon\Omega^1(\M)\to\mathfrak{X}(\M)[k]$ defined on the generators via the
	property
	\begin{equation}\label{eqn:sharp}
	\pi^\sharp(\diff\xi_1) (\xi_2) = \{ \xi_1,\xi_2 \}_k,
	\end{equation}
	for all $\xi_1,\xi_2\in\cin(\M)$, and extended odd linearly by the rules\footnote{The reason for this particular choice of signs is given in Appendix \ref{Appendix: Signs for the linearity of pi^sharp}.}
	\[
	\pi^\sharp(\xi_1\diff \xi_2) = (-1)^{|\xi_1|}\xi_1\pi^\sharp(\diff\xi_2)
	\qquad \text{and} \qquad
	\pi^\sharp((\diff\xi_1) \xi_2) = (-1)^{|\xi_2|}\pi^\sharp(\diff\xi_1)\,\xi_2.
	\]
	
	\begin{theorem}\label{thm_poisson}
		Let $\M$ be a graded manifold equipped with a homological vector
		field $\Q$ and a degree $k$ Poisson bracket $\{\cdot\,,\cdot\}_k$. Then
		$(\M,\Q,\{\cdot\,,\cdot\}_k)$ is a $\mathcal{PQ}$-manifold if and only
		if $\pi^\sharp\colon \Omega^1(\M)\to\mathfrak{X}(\M)$ is a degree $k$
		anti-morphism\footnote{Using convention of tensoring from the left to obtain the shift, the anti-morphism condition reads $\pi^\sharp\circ\ldr{\Q}=-\ldr{\Q}\circ\pi^\sharp=-(-1)^k\ldr{\Q}[k]\circ\pi^\sharp=-(-1)^{k^2}\ldr{\Q}[k]\circ\pi^\sharp=-(-1)^{k|\pi^\sharp|}\ldr{\Q}[k]\circ\pi^\sharp$.} of left DG $\M$-modules, or if and only if it is a morphism of right DG $\M$-modules,
		i.e.~$\pi^\sharp\circ\ldr{\Q}=-\ldr{\Q}\circ\pi^\sharp$.
	\end{theorem}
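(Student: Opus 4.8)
The plan is to verify the single relation $\pi^\sharp\circ\ldr{\Q} = -\ldr{\Q}\circ\pi^\sharp$ by reducing it to the exact generators $\diff\xi$ of $\Omega^1(\M)$ and there recognising it as the compatibility of $\Q$ with $\{\cdot\,,\cdot\}_k$. First I would record three elementary facts. By definition \eqref{eqn:sharp}, $\pi^\sharp(\diff\xi)=\X_\xi$ is the Hamiltonian vector field of $\xi$, since $\X_\xi=\{\xi,\cdot\}_k=[\xi,\pi]_k$. On the adjoint module the differential is $\ldr{\Q}(\X)=[\Q,\X]$, the graded commutator of vector fields. On the coadjoint module $\ldr{\Q}$ anticommutes with the de Rham differential (both operators are odd, as recorded for the Weil algebra: $\ldr{\Q}\circ\diff=-\diff\circ\ldr{\Q}$), so on an exact generator $\ldr{\Q}(\diff\xi)=-\diff(\Q\xi)$. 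These three facts turn everything into bracket computations.

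Next I would compute the two composites on a homogeneous exact form $\diff\xi_1$. On one side,
\[
\pi^\sharp(\ldr{\Q}(\diff\xi_1))=\pi^\sharp(-\diff(\Q\xi_1))=-\X_{\Q\xi_1},
\]
so evaluating on $\xi_2$ gives $-\{\Q\xi_1,\xi_2\}_k$. On the other side, expanding the graded commutator and using $|\X_{\xi_1}|=|\xi_1|+k$,
\[
-\ldr{\Q}(\pi^\sharp(\diff\xi_1))(\xi_2)=-[\Q,\X_{\xi_1}](\xi_2)=-\Q(\{\xi_1,\xi_2\}_k)+(-1)^{|\xi_1|+k}\{\xi_1,\Q\xi_2\}_k.
\]
Thus on exact generators the desired equation is literally $\X_{\Q\xi_1}=[\Q,\X_{\xi_1}]$, and comparing the two evaluations shows it holds for all $\xi_1,\xi_2$ if and only if
\[
\Q\{\xi_1,\xi_2\}_k=\{\Q\xi_1,\xi_2\}_k+(-1)^{|\xi_1|+k}\{\xi_1,\Q\xi_2\}_k,
\]
which is exactly the $\mathcal{PQ}$-compatibility of $\Q$ with the bracket. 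Restricting the full relation to generators gives one implication immediately; the reverse implication needs the generator identity to propagate to all of $\Omega^1(\M)$.

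For that propagation I would write an arbitrary form as $\xi_1\diff\xi_2$ and check directly, using the left-Leibniz rule for $\ldr{\Q}$ on $\cin(\M)$, the right-Leibniz rule for $\ldr{\Q}$ on $\mathfrak{X}(\M)$, and the odd-linearity rule $\pi^\sharp(\xi_1\diff\xi_2)=(-1)^{|\xi_1|}\xi_1\pi^\sharp(\diff\xi_2)$, that both $\pi^\sharp\circ\ldr{\Q}$ and $-\ldr{\Q}\circ\pi^\sharp$ produce the same terms $(-1)^{|\xi_1|+1}\Q(\xi_1)\X_{\xi_2}$ and $-\xi_1\X_{\Q\xi_2}$, the second being reconciled precisely by $\X_{\Q\xi_2}=[\Q,\X_{\xi_2}]$; equivalently, one argues abstractly that the defect $\pi^\sharp\circ\ldr{\Q}+\ldr{\Q}\circ\pi^\sharp$ is $\cin(\M)$-linear and hence determined by its values on the generating exact forms. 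The equivalence of the two stated formulations is then formal: since the right coadjoint module carries the differential $-\ldr{\Q}$, the condition that $\pi^\sharp$ be a degree $k$ morphism of right DG $\M$-modules, $\pi^\sharp\circ(-\ldr{\Q})=\ldr{\Q}\circ\pi^\sharp$, is the very same equation, with compatibility with the left (resp. right) module structures supplied by the two extension rules and the relation $\X\xi=(-1)^{|\xi|(|\X|+1)}\xi\X$. I expect the main obstacle to be not conceptual but a matter of disciplined sign bookkeeping: keeping the Koszul signs consistent across the shift $[k]$, the odd-linearity of $\pi^\sharp$, and the opposite-sign differential on the right coadjoint module, so that ``anti-morphism of left modules'' and ``morphism of right modules'' genuinely collapse to the single relation $\pi^\sharp\circ\ldr{\Q}=-\ldr{\Q}\circ\pi^\sharp$.
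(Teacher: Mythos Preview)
Your proposal is correct and follows essentially the same approach as the paper: evaluate the defect $\ldr{\Q}\circ\pi^\sharp+\pi^\sharp\circ\ldr{\Q}$ on an exact generator $\diff\xi_1$, apply it to $\xi_2$, and recognise the result as the $\mathcal{PQ}$-compatibility condition. Your treatment is in fact more thorough than the paper's very terse proof, since you explicitly verify that the defect is $\cin(\M)$-linear (so that checking on exact forms suffices) and you spell out why the left anti-morphism and right morphism formulations coincide.
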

	\begin{proof}
		From \eqref{eqn:sharp}, 
		\[
		\Big( \ldr{\Q}(\pi^\sharp(\diff\xi_1)) +
		\pi^\sharp(\ldr{\Q}(\diff\xi_1)) \Big)\xi_2 = \Q\{\xi_1,\xi_2\}
		- (-1)^{|\xi_1|+k}\{\xi_1,\Q(\xi_2)\} - \{\Q(\xi_1),\xi_2\}.
		\]
		In other words, the compatibility of $\Q$ with
		$\{\cdot\,,\cdot\}_k$ is equivalent to
		$\ldr{\Q}\circ\pi^\sharp = - \pi^\sharp\circ\ldr{\Q}$.
	\end{proof} 
	A detailed analysis of this map in the cases of Poisson Lie $n$-algebroids equipped with a degree $-n$ bracket for $n\leq2$ is given in Section \ref{Section: Poisson Lie algebroids of low degree}. The two following corollaries can be
	realised as obstructions for a $\Q$-manifold endowed with a Poisson
	bracket to be symplectic. In particular, if the manifold is $\mathbb{N}$-graded of degree $n=2$ one obtains the
	corresponding results for Courant algebroids.
	
	\begin{corollary}\label{cor_poisson}
		Let $\M$ be an $\mathbb{Z}$-manifold equipped with a homological vector
		field $\Q$ and a degree $k$ Poisson bracket $\{\cdot\,,\cdot\}_k$. Then
		$(\M,\Q,\{\cdot\,,\cdot\}_k)$ is symplectic if and only if $\pi^\sharp$ is
		an anti-isomorphism of left DG $\M$-modules, or if and only if it is an isomorphism of right DG $\M$-modules.
	\end{corollary}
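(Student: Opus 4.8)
The plan is to read \emph{symplectic} in the sense fixed earlier in this chapter: $(\M,\Q,\{\cdot\,,\cdot\}_k)$ is symplectic precisely when it is a $\mathcal{PQ}$-manifold and the Poisson bracket is symplectic, i.e.~its Hamiltonian vector fields generate all of $\mathfrak{X}(\M)$. The compatibility half is already settled by Theorem~\ref{thm_poisson}: the pair $(\Q,\{\cdot\,,\cdot\}_k)$ is compatible if and only if $\pi^\sharp\circ\ldr{\Q}=-\ldr{\Q}\circ\pi^\sharp$, that is, if and only if $\pi^\sharp$ is an anti-morphism of left DG $\M$-modules (equivalently a morphism of right DG $\M$-modules). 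Since an anti-isomorphism of left DG $\M$-modules is by definition an anti-morphism that is in addition a $k$-isomorphism of the underlying sheaves, the corollary reduces to the following claim: granting compatibility, the bracket is symplectic if and only if $\pi^\sharp$ is bijective.

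First I would pin down the image of $\pi^\sharp$. From \eqref{eqn:sharp} one has $\pi^\sharp(\diff\xi)=\X_\xi$ for all $\xi\in\cin(\M)$, and the two odd-linear extension rules show that the $\cin(\M)$-submodule of $\mathfrak{X}(\M)$ generated by the image of $\pi^\sharp$ is exactly the submodule generated by the Hamiltonian vector fields. Hence $\pi^\sharp$ is surjective if and only if the Hamiltonian vector fields generate $\mathfrak{X}(\M)$, which is precisely the statement that the bracket is symplectic. This already gives the equivalence in both directions once bijectivity is handled.

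The crux is to upgrade surjectivity of $\pi^\sharp$ to bijectivity, and for this I would exploit that $\pi^\sharp$ is induced by a \emph{bivector} field $\pi\in\mathfrak{A}_k^{2,-k}(\M)$ and is therefore graded self-adjoint: working fibrewise over a point of $M$ in a local trivialisation, $\pi^\sharp$ becomes a degree $k$ linear map $f\colon V^*\to V$ between finite-dimensional graded vector spaces, whose dual $f^*\colon V^*\to V$ agrees with $\pm f$ by the graded symmetry of $\pi$. The standard identity $\ker(f^*)\cong\operatorname{coker}(f)^*$ then yields $\ker(f)\cong\operatorname{coker}(f)^*$, so that $f$ is surjective if and only if it is injective if and only if it is bijective. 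A locally-free/Nakayama argument promotes this fibrewise bijectivity to an isomorphism of sheaves, showing that a surjective $\pi^\sharp$ is automatically a $k$-isomorphism; the converse is trivial. Combining the three steps gives the equivalence, and the right-module formulation follows verbatim after replacing $\ldr{\Q}$ by $-\ldr{\Q}$.

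The main obstacle I anticipate is the careful sign and degree bookkeeping in the self-adjointness step: one must verify that the graded symmetry of the bivector $\pi$ really forces $f^*=\pm f$ with the shift $[k]$ correctly accounted for, so that the fibrewise identity $\ker(f)\cong\operatorname{coker}(f)^*$ holds in the graded sense. Once this is in place, the invertible sign factors $(-1)^{|\xi|}$ appearing in the odd-linear extension of $\pi^\sharp$ are harmless, and the argument collapses to the elementary fact that a self-adjoint linear map is surjective exactly when it is invertible.
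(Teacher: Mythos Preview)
Your argument is correct. In the paper the corollary is stated without proof, as an immediate consequence of Theorem~\ref{thm_poisson} together with the definition of ``symplectic'' (Hamiltonian vector fields generate $\mathfrak{X}(\M)$); the paper does not spell out the passage from surjectivity of $\pi^\sharp$ to bijectivity.

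Your self-adjointness argument is a valid way to close that gap. A slightly shorter alternative, avoiding the sign bookkeeping you flag as the main obstacle, is to observe that $\Omega^1(\M)$ and $\mathfrak{X}(\M)$ are locally free $\cin(\M)$-modules of the same finite rank (they are dual to each other), so a surjective $\cin(\M)$-linear map between them is automatically an isomorphism. Either route gives what the paper leaves implicit; your version has the advantage of making explicit why the bivector nature of $\pi$ is what forces injectivity, which is conceptually the right picture.
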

	
	\begin{corollary}
		For any $\mathcal{PQ}$-manifold $(\M,\Q,\{\cdot\,,\cdot\}_k)$ there are
		natural degree $k$ maps in cohomologies
		$\pi^\sharp\colon H^\bullet_L(\M,\Omega^1)\to
		H^{\bullet+k}_L(\M,\mathfrak{X})$ and $\pi^\sharp\colon H^\bullet_R(\M,\Omega^1)\to
		H^{\bullet+k}_R(\M,\mathfrak{X})$ which are isomorphisms if the
		bracket is symplectic.
	\end{corollary}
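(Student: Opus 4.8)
The plan is to deduce this directly from Theorem \ref{thm_poisson} and Corollary \ref{cor_poisson} by a routine homological-algebra argument, since by construction $H^\bullet_L(\M,\Omega^1)$ and $H^\bullet_L(\M,\mathfrak{X})$ (and their right analogues) are the cohomologies of the coadjoint and adjoint modules with respect to the structure operators $\ldr{\Q}$ on the left, and $-\ldr{\Q}$ (coadjoint) versus $\ldr{\Q}$ (adjoint) on the right. First I would recall from Theorem \ref{thm_poisson} that on a $\mathcal{PQ}$-manifold the sharp map satisfies the single identity
\[
\pi^\sharp\circ\ldr{\Q} = -\ldr{\Q}\circ\pi^\sharp,
\]
which simultaneously expresses that $\pi^\sharp$ is an anti-morphism of left DG $\M$-modules and a morphism of right DG $\M$-modules, the extra sign on the right being absorbed by the differential $-\ldr{\Q}$ of the right coadjoint module.

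The key point is that this sign-commutation is exactly what is needed for $\pi^\sharp$ to descend to cohomology. If $\alpha\in\Omega^1(\M)$ is a cocycle, i.e.~$\ldr{\Q}\alpha=0$, then $\ldr{\Q}(\pi^\sharp\alpha)=-\pi^\sharp(\ldr{\Q}\alpha)=0$, so $\pi^\sharp\alpha$ is again a cocycle; and if $\alpha=\ldr{\Q}\beta$ is a coboundary, then $\pi^\sharp\alpha=-\ldr{\Q}(\pi^\sharp\beta)$ is again a coboundary. Hence $\pi^\sharp$ carries cocycles to cocycles and coboundaries to coboundaries and therefore induces a well-defined map on cohomology; since $\pi^\sharp$ raises degree by $k$, the induced map lands in $H^{\bullet+k}$. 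The overall sign is immaterial at the level of cohomology, and in the right case $\pi^\sharp$ is an honest chain map once the differential $-\ldr{\Q}$ is used, so the same computation produces both the left and right induced maps.

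For the symplectic statement I would invoke Corollary \ref{cor_poisson}, which gives that $\pi^\sharp$ is then an (anti-)isomorphism of DG $\M$-modules, in particular a bijective (anti-)chain map. Pre- and post-composing the relation $\pi^\sharp\circ\ldr{\Q}=-\ldr{\Q}\circ\pi^\sharp$ with $(\pi^\sharp)^{-1}$ shows that the inverse obeys $(\pi^\sharp)^{-1}\circ\ldr{\Q}=-\ldr{\Q}\circ(\pi^\sharp)^{-1}$, so it too descends to cohomology, now with degree shift $-k$. The two induced maps compose to the identity in either order, whence $\pi^\sharp$ induces an isomorphism $H^\bullet_L(\M,\Omega^1)\xrightarrow{\ \sim\ }H^{\bullet+k}_L(\M,\mathfrak{X})$, and likewise for the right modules.

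There is no genuine obstacle here: the argument is the elementary fact that a chain map (up to an overall sign) descends to cohomology and that an isomorphism of complexes induces an isomorphism in cohomology. The only thing requiring a little care is the sign bookkeeping — checking that the single relation of Theorem \ref{thm_poisson} governs both the left (anti-morphism) and right (morphism) cases once the differentials $\ldr{\Q}$ and $-\ldr{\Q}$ of the two coadjoint structures are taken into account, and that in the symplectic case the inverse of the (anti-)isomorphism is again an (anti-)chain map with the correct degree shift $-k$.
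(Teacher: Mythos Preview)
Your proposal is correct and is precisely the routine homological-algebra argument the paper leaves implicit: the corollary is stated without proof, as an immediate consequence of Theorem~\ref{thm_poisson} and Corollary~\ref{cor_poisson}, and your cocycle/coboundary check together with the sign bookkeeping for the left versus right module structures is exactly what is needed to fill in the details.
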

	
	\section{$\mathcal{PQ}$-manifolds: Weil vs Poisson-Weil algebras}\label{Section: PQ-manifolds: Weil vs Poisson-Weil algebras}
	
	 In this section, we will see that map $\pi^\sharp$ constructed above can be extended to a natural map between the bigraded algebras $\Omega^{\bullet,\bullet}(\M)$ and $\mathfrak{A}_{k}^{\bullet,\bullet}(\M)$ which anti-commutes with the horizontal and vertical differentials and consequently with the total differentials. This extension is obtained as follows:
	\begin{itemize}
		\item On the space $\cin(\M)$ it is given, up to a sign, by the identity map
		\[
		\cin(\M)^i\to\cin(\M)^i,\qquad \xi\mapsto (-1)^{i}\xi.
		\]
		\item On $q$-forms of the form $\xi\diff\xi_1\wedge\ldots\wedge\diff\xi_q$, where $\xi,\xi_1,\ldots,\xi_q\in\cin(\M)$ with $\xi$ homogeneous, it is given by
		\[
		\pi^\sharp(\xi\diff\xi_1\wedge\ldots\wedge\diff\xi_q) = \pi^\sharp(\xi)\pi^\sharp(\diff\xi_1)\wedge\ldots\wedge\pi^\sharp(\diff\xi_q) = (-1)^{|\xi|}\xi\X_{\xi_1}\wedge\ldots\wedge\X_{\xi_q}.
		\]
	\end{itemize}
	
	Consider now two homogeneous functions $\xi_1,\xi_2\in\cin(\M)$. For the horizontal differential $\ldr{\Q}$, the equality $\ldr{\Q}\circ \pi^\sharp=-\pi^\sharp\circ\ldr{\Q}$ is satisfied on functions because 
	\[
	\ldr{\Q}(\pi^\sharp(\xi)) = \ldr{\Q}((-1)^{|\xi|}\xi) = -(-1)^{|\xi|+1}\ldr{\Q}(\xi) = -\pi^\sharp(\ldr{\Q}(\xi))
	\]
	and on $1$-forms it is the  (anti-)morphism between the coadjoint and the adjoint modules which was explained before.
	
	For the vertical differentials $\diff$ and $(-1)^{k-1}\diff_{\pi}$, we compute on functions
	\[
	(-1)^{k-1}\diff_{\pi}(\pi^\sharp(\xi)) = (-1)^{k-1+|\xi|}[\pi,\xi] = -[\xi,\pi] = - \X_{\xi} = - \pi^\sharp(\diff\xi),
	\]
	and since all Hamiltonian vector fields are Poisson
	\[
	(-1)^{k-1}\diff_{\pi}(\pi^\sharp(\xi\diff\xi_1\wedge\ldots\wedge\diff\xi_q)) = - \X_{\xi}\wedge\X_{\xi_1}\wedge\ldots\wedge\X_{\xi_q} = - \sharp(\diff(\xi\diff\xi_1\wedge\ldots\wedge\diff\xi_q)).
	\]
	Therefore, we have proved the following result about the map $\pi^\sharp$.
	\begin{theorem}\label{thm_Weil_Poisson-Weil}
		Let $(\M,\Q,\{\cdot\,,\cdot\}_k)$ be a $\mathcal{PQ}$-manifold and consider the bidegree $(0,0)$ map  $\pi^\sharp\colon\Omega^{\bullet,\bullet}(\M)\to\mathfrak{A}_{k}^{\bullet,\bullet}(\M)$ of bigraded algebras constructed above.
		\begin{enumerate}
			\item The map $\pi^\sharp$ is an anti-morphism of complexes and DGA's for the columns of $\Omega^{\bullet,\bullet}(\M)$ and $\mathfrak{A}_k^{\bullet,\bullet}(\M)$.
			
			\item The map $\pi^\sharp$ is an anti-morphism of complexes and DGA's for the rows of $\Omega^{\bullet,\bullet}(\M)$ and $\mathfrak{A}_k^{\bullet,\bullet}(\M)$.
			
			\item The map $\pi^\sharp$ is an anti-morphism of complexes and DGA's between (the total complexes) $\Omega(\M)$ and $\mathfrak{A}_k(\M)$.
		\end{enumerate}
	Consequently, the map $\pi^\sharp$ induces a natural map on the level of the vertical and horizontal cohomologies
	\[
	\pi^\sharp\colon H^\bullet_{dR}(\M)\to H^\bullet_P(\M)
	\qquad \text{and}\qquad
	\pi^\sharp\colon H^\bullet_{\Omega,L}(\M)\to H^\bullet_{\mathfrak{A}_k,L}(\M).
	\]
	and also on the level of total cohomologies
	\[
	\pi^\sharp\colon H_W^{\bullet}(\M)\to H^{\bullet}_{PW}(\M),
	\]
	Moreover, it is an anti-isomorphism of differential bigraded algebras if and only if the Poisson structure is symplectic, and in this case all the induced maps in the cohomologies are also isomorphisms.
	\end{theorem}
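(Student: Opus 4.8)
The plan is to reduce all three claims to the two features already secured in the paragraphs preceding the statement: that $\pi^\sharp$ acts on the generators of the two bigraded algebras by $\cin(\M)^i\ni\xi\mapsto(-1)^i\xi$ and $\diff\xi\mapsto\X_\xi=\pi^\sharp(\diff\xi)$, and that on these generators it anti-commutes with each differential separately. First I would record that $\pi^\sharp$ is a well-defined homomorphism of bigraded algebras of bidegree $(0,0)$. Well-definedness is not a consequence of the explicit formula but of the fact that $\diff\xi\mapsto\X_\xi$ is precisely the $\cin(\M)$-module map $\Omega^1(\M)\to\mathfrak{X}(\M)$ of Theorem \ref{thm_poisson}, extended by the odd-linearity rules following \eqref{eqn:sharp}; thus the assignment respects the relation $\diff(\xi_1\xi_2)=(\diff\xi_1)\xi_2+(-1)^{|\xi_1|}\xi_1\diff\xi_2$. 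Since $\X_\xi\in\mathfrak{A}_k^{1,|\xi|}$ while $\diff\xi\in\Omega^{1,|\xi|}(\M)$, the map preserves bidegree, hence total degree and parity. A short check on products of generators (a function times a function, a function times a $\diff\xi$, two $\diff$'s) shows that the sign $(-1)^{|\xi|}$ on functions is exactly what makes $\pi^\sharp$ multiplicative, so it is a genuine homomorphism of graded-commutative algebras, the ``anti'' referring only to the differentials.

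The core step is an extension lemma. All four operators in play -- $\dr$ and $(-1)^{k-1}\diff_\pi$ on the columns, and $\ldr{\Q}$ on the rows of both algebras -- are odd derivations of the respective wedge products with the standard Koszul sign: on the de Rham side this is built in, while for $\mathfrak{A}_k^{\bullet,\bullet}(\M)$ one computes from the Leibniz rule of the Schouten bracket that $[\pi,\cdot]_k$ and $[\Q,\cdot]_k$ carry the signs $(-1)^{(|\pi|+k-1)|\cdot|}=(-1)^{|\cdot|}$ and $(-1)^{(|\Q|+k-1)|\cdot|}=(-1)^{|\cdot|}$ in the total grading, since $|\pi|=|\Q|=2-k$. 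Given a degree-preserving algebra homomorphism $\phi$ and two such odd derivations $D,D'$ with $\phi\circ D=-D'\circ\phi$ on a generating set, the relation propagates to all products: expanding $\phi(D(\alpha\beta))$ by multiplicativity and the Leibniz rule and using that $\phi$ preserves parity yields $-D'(\phi(\alpha\beta))$. Applying this to the column generator identities established in the preamble gives part (1), and to the row generator identities -- the function-level computation together with Theorem \ref{thm_poisson} on $1$-forms -- gives part (2).

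Part (3) then follows formally: the total differentials are $\ldr{\Q}+\dr$ and $\ldr{\Q}+(-1)^{k-1}\diff_\pi$, and since $\pi^\sharp$ anti-commutes with each summand with the same overall sign, it anti-commutes with the sum, so $\pi^\sharp$ is an anti-morphism of the total complexes $\Omega(\M)$ and $\mathfrak{A}_k(\M)$. A chain (anti-)morphism induces maps on cohomology, producing the three advertised maps $\pi^\sharp$ on de Rham/Poisson, Lie, and Weil/Poisson--Weil cohomologies.

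For the final equivalence I would argue by functoriality in the generators. Each bigraded algebra is the free graded-commutative $\cin(\M)$-algebra on its horizontal-degree-one part -- $\Omega^1(\M)$ on the source, $\mathfrak{X}(\M)$ on the target -- with the horizontal degree counting word length in these generators, and $\pi^\sharp$ is the homomorphism induced by the identity-up-to-sign on $\cin(\M)$ (always bijective) together with the module map $\pi^\sharp\colon\Omega^1(\M)\to\mathfrak{X}(\M)$. Hence $\pi^\sharp$ is bijective in every bidegree if and only if its restriction to $\Omega^1(\M)$ is bijective, which by Corollary \ref{cor_poisson} happens exactly when the Poisson structure is symplectic; in that case the chain anti-isomorphism descends to isomorphisms on all of the cohomologies above. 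The main obstacle I anticipate is not conceptual but bookkeeping: verifying uniformly that the chosen signs (the $(-1)^i$ on functions and the $(-1)^{k-1}$ twist of $\diff_\pi$) make the single map $\pi^\sharp$ simultaneously multiplicative and an anti-morphism for both the row and the column differentials, so that one map serves all three parts at once -- getting one of these signs wrong would break either multiplicativity or one of the two intertwining relations.
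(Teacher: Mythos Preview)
Your proposal is correct and follows essentially the same route as the paper: both arguments verify the anti-commutation $\pi^\sharp\circ D=-D'\circ\pi^\sharp$ on the generators $\xi$ and $\diff\xi$ (using Theorem~\ref{thm_poisson} for the row direction and the fact that Hamiltonian vector fields are Poisson for the column direction) and then extend multiplicatively, the only difference being that you isolate the extension step as an explicit lemma while the paper performs the same check directly on a general $q$-form $\xi\diff\xi_1\wedge\ldots\wedge\diff\xi_q$. Your free-generation argument for the symplectic equivalence is a welcome addition, since the paper's preamble leaves that implication implicit and simply invokes Corollary~\ref{cor_poisson}.
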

	
	\chapter{Representations up to homotopy}\label{Chapter: Representations up to homotopy}
	
	This chapter is devoted to the study of representations up to homotopy for general Lie $n$-algebroids with an extra focus on the case $n=2$ where we explain thoroughly the adjoint representation. In addition, we analyse the Weil algebra of a split Lie $n$-algebroid and give the precise map between the coadjoint and adjoint representations of a split Poisson Lie $n$-algebroid for $n=0,1,2$.
	
	\section{The category of representations up to homotopy}\label{Section: The category of representations up to homotopy}
	
	Recall that a \textbf{representation up to homotopy of a Lie algebroid}\index{Lie algebroid!representation up to homotopy} $A$ \cite{ArCr12,Mehta14} is
	given by an $A$-module of the form
	$\Omega(A,\E)=\Omega(A)\otimes\Gamma(\E)$ for a graded vector bundle
	$\E$ over $M$. In the same manner, a \textbf{left representation up to homotopy of
		a Lie $n$-algebroid $(\M,\Q)$}\index{Lie $n$-algebroid!left representation up to homotopy} is defined as a left DG $\M$-module of the
	form $\cin(\M)\otimes\Gamma(\E)$ for a graded vector bundle $\E\to M$. \textbf{Right representations up to homotopy}\index{Lie $n$-algebroid!right representation up to homotopy} are defined similarly as right DG $\M$-modules of the form $\Gamma(\E)\otimes\cin(\M)$. In what follows, we will focus only on left representations and the prefix ``left" will be omitted for simplicity.
	
	Following the notation from \cite{ArCr12}, we denote the category of
	representations up to homotopy of $(\M,\Q)$ by $\underline{\mathbb{R}\text{ep}}^\infty(\M,Q)$,
	or simply by $\underline{\mathbb{R}\text{ep}}^\infty(\M)$. The isomorphism classes of representations up to
	homotopy of this category are denoted by $\underline{\text{Rep}}^\infty(\M,\Q)$,
	or by $\underline{\text{Rep}}^\infty(\M)$. A representation of the form
	$\E=E_0\oplus\ldots\oplus E_{k-1}$ is called \textbf{$k$-term
		representation}\index{Lie $n$-algebroid! $k$-term representation}, or simply \textbf{$k$-representation}\index{Lie $n$-algebroid!$k$-representation}.
	
	\begin{remark}
		Since by Remark \ref{Remark: Splitting of vector bundles} every vector bundle over an $\mathbb{N}$-manifold comes from an ordinary graded vector bundle over the base, it follows that the notions of DG-modules and representations up to homotopy are equivalent. However, the equivalence is not canonical.
	\end{remark}
	
	\begin{example}[$\Q$-closed functions]
		Let $(\M,\Q)$ be a Lie $n$-algebroid and suppose $\xi\in \cin(\M)^k$
		such that
		$\Q(\xi) = 0$. Then one can construct a representation up to
		homotopy $\cin(\M)\otimes\Gamma(\E_\xi)$ of $\M$ on the graded vector bundle
		$\E_\xi=M\times(\mathbb{R}[0]\oplus\mathbb{R}[1-k])\to M$
		(i.e.~$\mathbb{R}$ in degrees 0 and $k-1$, and zero otherwise). Its
		differential $\D_\xi$ is given in components by the map
		\[
		\D_\xi = \sum_i \D_\xi^i,
		\]
		where
		\[
		\D_\xi^i\colon \cin(\M)^i\oplus \cin(\M)^{i-k+1}\to \cin(\M)^{i+1}\oplus \cin(\M)^{i-k+2}
		\]
		is defined by the formula
		\[
		\D_\xi^i(\zeta_1,\zeta_2)=(\Q(\zeta_1) + (-1)^{i-k+1}\zeta_2\xi,\Q(\zeta_2)).
		\]
		If there is an element $\xi'\in \cin(\M)^k$ which is
		$\Q$-cohomologous to $\xi$, i.e.~$\xi-\xi'=\Q(\xi'')$ for some
		$\xi''\in \cin(\M)^{k-1}$, then the representations
		$\E_\xi$ and $\E_{\xi'}$ are isomorphic via the isomorphism
		$\mu\colon \E_\xi\to \E_{\xi'}$ defined in components by
		\[
		\mu^i\colon \cin(\M)^i\oplus \cin(\M)^{i-k+1}\to \cin(\M)^{i}\oplus \cin(\M)^{i-k+1}
		\]
		given by the formula
		\[
		\mu^i(\zeta_1,\zeta_2)=(\zeta_1+\zeta_2\xi'',\zeta_2).
		\]
		Hence, one obtains a well-defined map
		$H^\bullet(\M)\to\underline{\text{Rep}}^\infty(\M)$. In particular, if $\M$ is a Lie
		algebroid, the above construction recovers Example 3.5 in
		\cite{ArCr12}.
	\end{example}
	
	\section{The case of (split) Lie $2$-algebroids}\label{Section: The case of (split) Lie 2-algebroids}
	
	Fix now a split Lie $2$-algebroid $\M$, and recall that from the
	analysis of Section \ref{Section: Q-structures and Lie n-algebroids}, $\M$ is given by the sum $Q[1]\oplus B^*[2]$ together with the anchor $\rho_Q:Q\to TM$, the skew-symmetric dull bracket $[\cdot\,,\cdot]:\Gamma(Q)\times\Gamma(Q)\to\Gamma(Q)$ on the vector bundle $Q\to M$, the vector bundle map $\ell:B^*\to Q$, the $Q$-connection $\nabla$ on the vector bundle $B\to M$, and the vector valued $3$-form $\omega\in\Omega^3(Q,B^*)$. In particular, we have obtain the complex 
	which forms the complex
	\[
	B^*\overset{\ell}{\longrightarrow} Q\overset{\rho_Q}{\longrightarrow} TM.
	\]
	In order to make the notation lighter, we will write $Q[1]\oplus B^*[2]$ for the split Lie $2$-algebroid $(Q[1]\oplus B^*[2],\rho_Q,\ell,[\cdot\,,\cdot],\nabla,\omega)$. 
	
	Unravelling the data of the definition of representations up to
	homotopy for the special case where $E$ is concentrated only in degree
	0 yields the following characterisation.
	
	\begin{proposition}\label{Representations_of_Lie_2-algebroids}
		A representation\index{split Lie $2$-algebroid!representation} of the Lie $2$-algebroid $Q[1]\oplus B^*[2]$
		consists of a (non-graded) vector bundle $E$ over $M$, together with
		a $Q$-connection $\nabla$ on $E$ such that\footnote{Note that all
			the objects that appear in the following equations act via the
			generalised wedge products that were discussed before. For
			example, $\partial(\diff_\nabla e)$ and $\omega_2(\omega_2(e))$
			mean $\partial\wedge\diff_\nabla e$ and
			$\omega_2\wedge\omega_2(e)$, respectively. This is explained in
			detail in the Appendix of \cite{ArCr12}.}  :
		\begin{enumerate}[(i)]
			\item $\nabla$ is flat, i.e.~$R_\nabla = 0$ on $\Gamma(E)$,
			\item $\nabla_{\ell(\beta)}e = 0$ for all $\beta\in\Gamma(B^*)$ and $e\in\Gamma(E)$, where $\partial_B:=\ell^*$. That is, $\partial_B\circ \diff_\nabla = 0$ on $\Gamma(E)$.
		\end{enumerate}
	\end{proposition}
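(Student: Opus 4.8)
The plan is to unpack what it means for the DG $\M$-module $\cin(\M)\otimes\Gamma(E)$ to be a genuine module when $E$ is concentrated in degree $0$, and then translate the two conditions $\D^2=0$ and the Leibniz rule into the stated flatness and compatibility equations. The key observation is that, for a split Lie $2$-algebroid $\M=Q[1]\oplus B^*[2]$, a degree $1$ differential $\D$ on $\cin(\M)\otimes\Gamma(E)$ satisfying the Leibniz rule $\D(\xi\eta)=\Q(\xi)\eta+(-1)^{|\xi|}\xi\D(\eta)$ is completely determined by its action on $\Gamma(E)=\cin(\M)^0\otimes\Gamma(E)$, since everything else is generated over $\cin(\M)$. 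So the first step is to write $\D|_{\Gamma(E)}$ in terms of its homogeneous components with respect to the bigrading coming from $Q^*$ and $B^*$.

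First I would observe that, because $E$ sits in degree $0$ and $\D$ raises degree by $1$, the image $\D(e)$ for $e\in\Gamma(E)$ lands in degree $1$, i.e.~in $\bigl(\cin(\M)^1\otimes\Gamma(E)\bigr)$. Recalling that $\cin(\M)^1\cong\Gamma(Q^*)$ (the sections of $B^*$ contribute only in degree $2$), this forces $\D(e)=\diff_\nabla e$ for a uniquely determined $Q$-connection $\nabla$ on $E$, exactly as in the Lie algebroid case of \cite{ArCr12}. This identifies the only piece of data in a degree-$0$ representation, namely the $Q$-connection $\nabla$, and reduces the problem to extracting the constraints imposed by $\D^2=0$.

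Next I would compute $\D^2$ on $\Gamma(E)$ using the explicit form of $\Q$ on a split Lie $2$-algebroid, which by Definition \ref{geom split 2-alg} acts on $\Gamma(Q^*)$ via $\diff_Q$ together with $\partial_B=\ell^*$, sending a $1$-form into $\Omega^2(Q)\oplus\Gamma(B)$. Applying $\D$ twice and using the generalised wedge-product conventions, $\D^2(e)$ splits into a component in $\Omega^2(Q)\otimes\Gamma(E)$ and a component in $\Gamma(B)\otimes\Gamma(E)$ (equivalently in bidegree given by one factor of $B^*$). The $\Omega^2(Q)$-component is precisely the curvature $R_\nabla$ of the $Q$-connection, yielding condition (i) $R_\nabla=0$; the $B$-component, coming from the $\partial_B=\ell^*$ part of $\Q$, yields $\partial_B\circ\diff_\nabla=0$, i.e.~$\nabla_{\ell(\beta)}e=0$ for all $\beta\in\Gamma(B^*)$, which is condition (ii). The two components of $\D^2$ are independent (they live in different degree-$2$ summands of $\cin(\M)$), so $\D^2=0$ is equivalent to the conjunction of (i) and (ii).

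The main obstacle, and the step deserving the most care, is the bookkeeping in computing $\D^2$: one must track the signs from the graded Leibniz rule and from the twisted evaluation in the $\Omega(\M)$-module structure, and correctly separate the output by which structure object of $\Q$ (the bracket/anchor part $\diff_Q$ versus the $\ell^*$ part) produced it. In particular, verifying that the $B^*$-valued term really collapses to $\nabla_{\ell(\beta)}e$ requires dualising $\partial_B=\ell^*$ against $\beta\in\Gamma(B^*)$ and checking it against the definition of $\ell$. Conversely, I would then check that given any $Q$-connection $\nabla$ satisfying (i) and (ii), setting $\D=\diff_\nabla$ on $\Gamma(E)$ and extending by the Leibniz rule does produce a well-defined square-zero differential, so that the correspondence is genuine and not merely necessary. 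The sign conventions are the only genuinely delicate point; the structural content is a direct unravelling of $\D^2=0$.
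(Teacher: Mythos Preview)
Your proposal is correct and follows essentially the same approach as the paper's proof: both observe that $\D$ is determined by its restriction to $\Gamma(E)$ via the Leibniz rule, that degree considerations force $\D(e)\in\Omega^1(Q,E)$ and hence $\D=\diff_\nabla$ for some $Q$-connection $\nabla$, and then split $\D^2(e)=\diff_\nabla^2 e+\partial_B(\diff_\nabla e)\in\Omega^2(Q,E)\oplus\Gamma(B\otimes E)$ to obtain conditions (i) and (ii). Your additional remark about checking the converse direction is a reasonable completeness point, though the paper leaves it implicit.
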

	\begin{proof}
		Let $(E,\D)$ be a representation of the Lie $2$-algebroid. Due to
		the Leibniz rule, $\D$ is completely characterised by what it does
		on $\Gamma(E)$. By definition, it sends $\Gamma(E)$ into
		$\Omega^1(Q,E)$. Using the Leibniz rule once more together with the
		definition of the homological vector field $\Q$ on $\Omega^1(Q)$,
		for all $f\in C^\infty(M)$ and all $e\in\Gamma(E)$ yields
		\[
		\D(fe) = (\rho_Q^*\diff f)\otimes e + f\D(e),
		\]
		which implies that $\D = \diff_\nabla$ for a
		$Q$-connection $\nabla$ on $\Gamma(E)$. Moreover, by definition of $\D$
		one must have $\D^2(e) = 0$ for all $e\in\Gamma(E)$. On the
		other hand, a straightforward computation yields
		\[
		\D^2(e) = \D(\diff_\nabla e) = \diff_\nabla^2 e +
		\partial_B(\diff_\nabla
		e)\in\Omega^2(Q,E)\oplus\Gamma(B\otimes E).\qedhere
		\]
	\end{proof}
	
	\begin{example}[Trivial line bundle]\label{Trivial line bundle representation example}
		The trivial line bundle $\R[0]$ over $M$ with $Q$-connection defined by
		\[
		\diff_\nabla f = \diff_Q f =\rho_Q^* \diff f
		\]
		is a representation of the Lie $2$-algebroid $Q[1]\oplus
		B^*[2]$. The operator $\D$ is given by the homological vector
		field $\Q$ and thus the cohomology induced by the
		representation is the Lie $2$-algebroid cohomology:
		$H^\bullet(\M,\R) = H^\bullet(\M)$. The shifted version of this example was used before to define general shifts of DG $\M$-modules.
	\end{example}
	
	\begin{example}\label{Trivial representation of rank k example}
		More generally, for all $k>0$, the trivial vector bundle $\R^k$ of
		rank $k$ over $M$ with $Q$-connection defined component-wise as in
		the example above becomes a representation with cohomology
		$H^\bullet(\M,\R^k)=H^\bullet(\M)\oplus\ldots\oplus H^\bullet(\M)$
		($k$-times).
	\end{example}
	
	\begin{remark}
		Given a split Lie $n$-algebroid $A_1[1]\oplus\ldots\oplus A_n[n]$
		over a smooth manifold $M$,  with $n\geq 2$, the vector bundle
		$A_1\to M$ carries a skew-symmetric dull algebroid structure induced by the
		2-bracket and the anchor $\rho\colon A_1\to TM$ given by
		$\Q(f)=\rho^*\diff f$, for $f\in C^\infty(M)$. Hence, Proposition
		\ref{Representations_of_Lie_2-algebroids}, Example \ref{Trivial line
			bundle representation example} and Example \ref{Trivial
			representation of rank k example} can be carried over verbatim to
		the general case.
	\end{remark}
	
	A more interesting case is for representations of the form $\E=E_0\oplus E_1\oplus E_2$
	concentrated in 3 degrees, say in degrees $-2,-1$ and $0$. An explicit description
	of those representations is given below. The reader should note the
	similarity of the following proposition with the description of 2-term
	representations of Lie algebroids from \cite{ArCr12}.
	
	\begin{proposition}\label{3-term_representations}
		A 3-term representation up to homotopy\index{split Lie $2$-algebroid!3-term representation}
		$(\E = E_0[2]\oplus E_1[1]\oplus E_2[0],\D)$ of $Q[1]\oplus B^*[2]$ is
		equivalent to the following data:
		\begin{enumerate}[(i)]
			\item A degree 1 map $\partial\colon \E\to \E$ such that $\partial^2 = 0$,
			\item a $Q$-connection $\nabla$ on the complex $\partial\colon E_\bullet\to E_{\bullet + 1}$,
			\item an element $\omega_i\in\Omega^i(Q,\underline{\End}^{1-i}(\E))$ for $i=2,3$,
			\item an element
			$\phi_j\in\Gamma(B)\otimes\Omega^j(Q,\underline{\End}^{-j-1}(\E))$
			for $j=0,1$
		\end{enumerate}
		such that\footnote{In the following
			equations, the map
			$\partial_B\colon\Omega^1(Q)\to\Gamma(B)$ extends to
			$\partial_B\colon \Omega^k(Q)\to\Omega^{k-1}(Q,B)$ by the
			rule
			$\partial_B(\tau_1\wedge\ldots\wedge\tau_k) =
			\sum_{i=1}^k
			(-1)^{i+1}\tau_1\wedge\ldots\wedge\hat{\tau}_i\wedge\ldots\wedge\tau_k\otimes\partial_B\tau_i$,
			for $\tau_i\in\Omega^1(Q)$.}
		\begin{enumerate}
			\item $\partial\circ\omega_2 + \diff_\nabla^2 + \omega_2\circ\partial = 0$,
			\item
			$\partial\circ\phi_0 + \partial_B\circ \diff_\nabla
			+ \phi_0\circ\partial = 0$,
			\item
			$\partial\circ\omega_3 + \diff_\nabla\circ\omega_2 +
			\omega_2\circ \diff_\nabla + \omega_3\circ\partial =
			\langle \omega,\phi_0 \rangle$,
			\item
			$\diff_{\overline{\nabla}}\phi_0 +
			\partial\circ\phi_1 + \partial_B\circ\omega_2 +
			\phi_1\circ\partial = 0$,
			\item
			$\diff_\nabla\circ\omega_3 + \omega_2\circ\omega_2 +
			\omega_3\circ \diff_\nabla = \langle \omega,\phi_1
			\rangle$,
			\item
			$\diff_{\overline{\nabla}}\phi_1 +
			\omega_2\circ\phi_0 + \partial_B\circ\omega_3 +
			\phi_0\circ\omega_2 = 0$,
			\item $\phi_0\circ\phi_0 + \partial_B\circ\phi_1 = 0$,
		\end{enumerate}
		where $\overline{\nabla}$ is the $Q$-connection on
		$B\otimes\underline{\End}^{-j-1}(\E)$ induced by
		$\nabla$ on $B$ and $\nabla^{\underline{\End}}$ on
		$\underline{\End}(\E)$.
	\end{proposition}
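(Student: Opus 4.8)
The plan is to read off the structure operator $\D$ of a $3$-term representation directly from its defining Leibniz rule over $\Q$. Since $\D(\xi\eta)=\Q(\xi)\eta+(-1)^{|\xi|}\xi\D(\eta)$, the operator is completely determined by its restriction to the generating sheaf $\Gamma(\E)=\Gamma(E_0)\oplus\Gamma(E_1)\oplus\Gamma(E_2)$, exactly as in the single-bundle situation treated in Proposition \ref{Representations_of_Lie_2-algebroids}. First I would decompose $\D\colon\Gamma(\E)\to\cin(\M)\otimes\Gamma(\E)$ into homogeneous components according to the bidegree of their coefficients, that is, according to the number $p$ of factors from $\cin(\M)^1=\Gamma(Q^*)$ and the number $q$ of factors from $\Gamma(B)\subset\cin(\M)^2$ that they produce, together with the internal shift $s$ of $\underline{\End}(\E)$. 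Each such component is $C^\infty(M)$-multilinear in these arguments and hence, by Lemma \ref{wedge_product-operators_Correspondence_Lemma}, is a genuine bundle map, i.e.\ an element of $\Omega^p(Q,\underline{\End}^{s}(\E))$ (tensored with $\Gamma(B)$ when $q=1$).

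Because $\D$ has degree $1$, every component satisfies $p+2q+s=1$, and since $\E$ is concentrated in three consecutive degrees the endomorphism shift is constrained to $s\in\{-2,-1,0,1,2\}$. Solving $p+2q+s=1$ with $p,q\geq 0$ under this constraint leaves exactly six possibilities: for $q=0$ the pairs $(p,s)=(0,1),(1,0),(2,-1),(3,-2)$, and for $q=1$ the pairs $(p,s)=(0,-1),(1,-2)$; the cases $q\geq 2$ and $p\geq 4$ are ruled out because they would force $s\leq -3$. These six components are precisely $\partial$ (the $(0,0,1)$ part), the connection term $\diff_\nabla$ (the $(1,0,0)$ part), $\omega_2,\omega_3$ (the $(2,0,-1)$ and $(3,0,-2)$ parts), and $\phi_0,\phi_1$ (the $(0,1,-1)$ and $(1,1,-2)$ parts), matching the data (i)--(iv). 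This finiteness, forced by the three-term length of $\E$, is the structural reason the lists of data and of equations terminate.

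Next I would use that $\D^2$ is automatically $\cin(\M)$-linear: expanding $\D^2(\xi\eta)$ and invoking $\Q^2=0$ makes the cross terms cancel, so $\D^2$ is tensorial and $\D^2=0$ is equivalent to $\D^2(e)=0$ for all $e\in\Gamma(\E)$. I would then expand $\D^2(e)$ and sort the result by the same bidegree $(p,q)$, keeping in mind that the Leibniz rule forces $\D$ to differentiate form coefficients through $\Q$, so that the components $\diff_Q$, $\partial_B=\ell^*$ and the $3$-form $\omega$ of $\Q$ all re-enter the computation. The pieces of total weight $p+2q=0$ and $p+2q=1$ reproduce $\partial^2=0$ and $[\partial,\diff_\nabla]=0$, which are exactly conditions (i) and (ii); the remaining seven bidegrees $(2,0),(0,1),(3,0),(1,1),(4,0),(2,1),(0,2)$ yield the seven displayed equations. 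In particular the term $\partial_B\circ\diff_\nabla$ in equation (2) and the pairings $\langle\omega,\phi_0\rangle$, $\langle\omega,\phi_1\rangle$ on the right-hand sides of equations (3) and (5) arise precisely from these $\partial_B$- and $\omega$-contributions of $\Q$.

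Finally, for the converse I would define $\D$ on generators as the sum $\partial+\diff_\nabla+\omega_2+\omega_3+\phi_0+\phi_1$ and extend it to all of $\cin(\M)\otimes\Gamma(\E)$ by the $\Q$-Leibniz rule; the seven equations together with (i) and (ii) are then exactly what is needed for $\D^2=0$, so the two directions are governed by the same computation. I expect the main obstacle to be computational rather than conceptual: tracking the Koszul signs produced by the generalised wedge products $\wedge_h$, and correctly isolating the contribution of the $3$-bracket $\omega$ (which is hidden inside $\Q$ acting on $\Gamma(Q^*)$) so that it lands as $\langle\omega,\phi_j\rangle$ with the correct sign. Getting the vanishing statements for $s\leq -3$ and the bidegree bookkeeping exactly right is what certifies that no further data and no further equations can appear.
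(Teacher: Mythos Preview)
Your approach is correct and is essentially the paper's own proof: restrict $\D$ to $\Gamma(\E)$ via the Leibniz rule, decompose into homogeneous pieces using Lemma \ref{wedge_product-operators_Correspondence_Lemma}, and read off the seven equations from the vanishing of $\D^2$ sorted by bidegree. One small correction to your bookkeeping: the $3$-form $\omega$ enters through $\Q$ acting on $\Gamma(B)$ (recall $\Q(b)=\diff_\nabla b-\langle\omega,b\rangle$), not on $\Gamma(Q^*)$; this is exactly how the pairings $\langle\omega,\phi_j\rangle$ appear when $\D$ hits the $\Gamma(B)$-coefficient of $\phi_j(e)$.
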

	\begin{remark}
		\begin{enumerate}
			\item  If both of the bundles $E_1$ and $E_2$ are zero, the
			equations agree with those of a 1-term representation.
			\item  The equations in
			the statement can be summarised as follows:
			\[
			[\partial,\phi_0] + \partial_B\circ \diff_\nabla = 0,\qquad
			\phi_0\circ\phi_0 + \partial_B\circ\phi_1 = 0,
			\] and for all $i$:
			\[
			[\partial,\omega_i] + [\diff_\nabla,\omega_{i-1}]
			+\omega_2\circ\omega_{i-2} +
			\omega_3\circ\omega_{i-3} + \ldots
			+\omega_{i-2}\circ\omega_2 = \langle
			\omega,\phi_{i-3} \rangle,
			\]
			\[
			\partial_B\circ\omega_{i+2} + [\partial,\phi_{i+1}]
			+ \diff_{\overline{\nabla}}\phi_i +
			\sum_{j\geq2}[\omega_j,\phi_{i-j+1}] = 0.
			\]
		\end{enumerate}
	\end{remark}
	\begin{proof}
		It is enough to check how $\D$ acts on $\Gamma(\E)$. Since $\D$ is of degree
		1, it maps each $\Gamma(E_i)$ into the direct sum
		\[
		\Gamma(E_{i+1}) \oplus
		\left(\cin(\M)^1\otimes\Gamma(E_i)\right) \oplus
		\left(\cin(\M)^2\otimes\Gamma(E_{i-1})\right) \oplus
		\left(\cin(\M)^3\otimes\Gamma(E_{i-2})\right).
		\]
		Considering the components of $\D$, this translates to the
		following three equations: 
		\[
		\D(e) = \partial(e) + d(e)\in\Gamma(E_1)\oplus\Omega^1(Q,E_0)
		\]
		for $e\in\Gamma(E_0)$,
		\[
		\D(e) = \partial(e) + d(e) + \omega_2(e) +
		\phi_0(e)\in\Gamma(E_2)\oplus\Omega^1(Q,E_1)\oplus\Omega^2(Q,E_0)\oplus\left(\Gamma(B)\otimes\Gamma(E_0)\right)
		\]
		for $e\in\Gamma(E_1)$, and
		
		\begin{align*}
		\D(e) = &\ d(e) + \omega_2(e) + \phi_0(e) + \omega_3(e) +\phi_1(e)\\
		& \in\Omega^1(Q,E_2)\oplus\Omega^2(Q,E_1)\oplus\left(\Gamma(B)\otimes\Gamma(E_1)\right)\\
		& \oplus\Omega^3(Q,E_0)\oplus\left(\Gamma(B)\otimes\Omega^1(Q,E_0)\right)
		\end{align*}
		for $e\in\Gamma(E_2)$. Due to Lemma
		\ref{wedge_product-operators_Correspondence_Lemma} and the
		Leibniz rule for $\D$, 
		$\partial\in\underline{\End}^1(\E)$, $d=\diff_\nabla$ where
		$\nabla$ are $Q$-connections on the vector bundles $E_i$ for
		$i = 0,1,2$,
		$\omega_i\in\Omega^i(Q,\underline{\End}^{1-i}(\E))$ for
		$i = 2,3$, and
		$\phi_i\in\Gamma(B)\otimes\Omega^i(Q,\underline{\End}^{-i-1}(\E))$
		for $i = 0,1$.
		
		A straightforward computation and a degree count in the expansion of
		the equation $\D^2=0$ shows that $(\E,\partial)$ is a complex,
		$\nabla$ commutes with $\partial$, and the equations in the statement hold. Indeed, suppose $e\in\Gamma(E_0)$. Then we have
		\[
		\D^2(e) = \D(\partial(e) + \diff_{\nabla}e) = \partial^2(e) + \diff_{\nabla}(\partial(e)) + \omega_2(\partial(e)) + \phi_0(\partial(e)) + \D(\diff_{\nabla}e).
		\]
		In order to compute $\D(\diff_{\nabla}e)$, we write $\diff_{\nabla}e\in\Omega^1(Q,E_0)$ as a (finite) sum
		\[
		\diff_{\nabla}e = \sum_i \tau_i\otimes e_i',
		\]
		with $\tau_i\in\Gamma(Q^*)$ and $e_i'\in\Gamma(E_0)$. Due to the linearity and the Leibniz rule of $\D$, we have
		\begin{align*}
			\D(\diff_{\nabla}(e)) = & \sum_i \D(\tau_i\otimes e_i') \\
			= & \sum_i \Q(\tau_i)\otimes e_i' - \tau_i \otimes \D(e_i') \\
			= & \sum_i (\diff_Q\tau_i + \partial_B\tau_i)\otimes e_i' - \tau_i\otimes(\partial(e_i') + \diff_{\nabla}e_i') \\
			= & \sum_i \diff_Q\tau_i \otimes e_i' - \tau_i\otimes \diff_{\nabla}e_i' + \partial_B\tau_i \otimes e_i' - \tau_i\otimes \partial(e_i') \\
			= & \sum_i \diff_{\nabla}(\tau_i\otimes e_i') + \partial_B(\tau_i \otimes e_i') + \partial(\tau_i \otimes e_i') \\
			= &\ \diff_{\nabla}\left(\sum_i \tau_i\otimes e_i'\right) + \partial_B\left(\sum_i \tau_i\otimes e_i'\right) + \partial\left(\sum_i \tau_i\otimes e_i'\right) \\
			= &\ \diff_{\nabla}^2 e + \partial_B(\diff_{\nabla} e) + \partial(\diff_{\nabla} e).
		\end{align*}
	Coupling bidegrees for $\D^2(e)$, we obtain
	\begin{align*}
		\D^2(e) = &\ \partial^2(e) + \Big(\diff_{\nabla}(\partial(e))+\partial(\diff_{\nabla}e)\Big) +
		\Big(\omega_2(\partial(e)) + \diff_{\nabla}^2 e\Big) +
		\Big(\phi_0(\partial(e)) + \partial_B(\diff_{\nabla} e)\Big) \\
		 & \in \Gamma(E_2) \oplus \Omega^1(Q,E_1) \oplus \Omega^2(Q,E_0) \oplus \left(\Gamma(B)\otimes\Gamma(E_0)\right).
	\end{align*}
	One proceeds similarly for the cases $e\in\Gamma(E_1)$ and $e\in\Gamma(E_2)$.
    \end{proof}
	
	\section{Adjoint representation of a split Lie $2$-algebroid}\label{Section: Adjoint representation of a split Lie $2$-algebroid}
	
	This section shows that any split Lie $2$-algebroid
	$Q[1]\oplus B^*[2]$ admits an adjoint representation which is a 3-term representation up to homotopy. It is a
	generalisation of the adjoint representation of a (split) Lie
	$1$-algebroid studied in \cite{ArCr12}.
	
	\begin{proposition}\label{Adjoint_representation_of_Lie_2-algebroid}
		Any split Lie $2$-algebroid $Q[1]\oplus B^*[2]$ admits a 3-term
		representation up to homotopy as follows: Choose arbitrary
		$TM$-connections on $Q$ and $B^*$ and denote them both by $\nabla$. Then
		the structure objects are\footnote{Some signs are chosen
			so that the map given in
			\ref{adjoint_module_adjoint_representation_isomorphism} is an
			isomorphism for the differential of the adjoint module defined
			earlier.}
		\begin{enumerate}[(i)]
			\item the \textbf{adjoint complex}\index{split Lie $2$-algebroid!adjoint complex} $B^*[2]\to Q[1]\to TM[0]$
			with maps $-\ell$ and $\rho_Q$,
			\item the two $Q$-connections $\nabla^{\text{bas}}$ on $Q$ and
			$TM$, and the $Q$-connection $\nabla^*$ on $B^*$ given by
			the split Lie 2-algebroid,
			\item the element
			$\omega_2\in\Omega^2(Q,\Hom(Q,B^*)\oplus\Hom(TM,Q))$ defined
			by
			\[
			\omega_2(q_1,q_2)q_3 =
			-\omega(q_1,q_2,q_3)\in\Gamma(B^*)\ \text{and}\
			\omega_2(q_1,q_2)X =
			-R_\nabla^\text{bas}(q_1,q_2)X\in\Gamma(Q)
			\]
			for $q_1,q_2,q_3\in\Gamma(Q)$ and $X\in\mathfrak{X}(M)$,
			\item the element $\omega_3\in\Omega^3(Q,\Hom(TM,B^*))$
			defined by
			\[
			\omega_3(q_1,q_2,q_3)X = - (\nabla_X\omega)(q_1,q_2,q_3)\in\Gamma(B^*)
			\]
			for $q_1,q_2,q_3\in\Gamma(Q)$ and $X\in\mathfrak{X}(M)$,
			\item the element $\phi_0\in\Gamma(B)\otimes(\Hom(Q,B^*)\oplus\Hom(TM,Q))$ defined by
			\[
			\phi_0(\beta)X = \ell(\nabla_X\beta) - \nabla_X(\ell(\beta))\in\Gamma(Q)\ \text{and}\ 
			\phi_0(\beta)q = \nabla_{\rho(q)}\beta - \nabla^*_q\beta\in\Gamma(B^*)
			\]
			for $\beta\in\Gamma(B^*),q\in\Gamma(Q),X\in\mathfrak{X}(M)$,
			\item the element $\phi_1\in\Gamma(B)\otimes\Omega^1(Q,\Hom(TM,B^*))$ defined by
			\[
			\phi_1(\beta,q)X =  \nabla_X\nabla^*_q \beta - \nabla^*_q\nabla_X
			\beta-\nabla^*_{\nabla_X q} \beta 
			+ \nabla_{\nabla^{\rm bas}_  qX} \beta\in\Gamma(B^*)
			\]
			for $\beta\in\Gamma(B^*),q\in\Gamma(Q),X\in\mathfrak{X}(M)$.
		\end{enumerate}
	\end{proposition}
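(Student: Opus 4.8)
The plan is to avoid checking the seven structure equations of Proposition \ref{3-term_representations} individually, and instead to deduce that the listed data assembles into a representation up to homotopy from the fact, established in Section \ref{Section: Adjoint and coadjoint modules}, that the pair $(\mathfrak{X}(\M),\ldr{\Q})$ is a canonical DG $\M$-module. Since $\M=Q[1]\oplus B^*[2]$ is $\mathbb{N}$-graded, its tangent sheaf is of representation type: a choice of $TM$-connections $\nabla$ on $Q$ and on $B^*$ determines a splitting of $T\M\to\M$ and hence, as in the tangent bundle example, a $\cin(\M)$-linear isomorphism
\[
\mathfrak{X}(\M)\cong \cin(\M)\otimes\Gamma\big(TM[0]\oplus Q[1]\oplus B^*[2]\big).
\]
Transporting $\ldr{\Q}$ across this isomorphism produces a degree $1$ operator $\D$ on the adjoint complex which inherits $\D^2=0$ and the Leibniz rule from $\ldr{\Q}$. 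Thus $(TM[0]\oplus Q[1]\oplus B^*[2],\D)$ is automatically a $3$-term representation up to homotopy, and the entire content of the proposition is the explicit identification of the structure objects of $\D$ with the formulas in (i)--(vi).

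Next I would extract those structure objects. By Lemma \ref{wedge_product-operators_Correspondence_Lemma}, each is recovered by evaluating $\D=[\Q,\cdot]$ on the generating vector fields -- the degree $0$ horizontal lifts $\nabla_X=\nabla_X^Q\oplus\nabla_X^{B^*}$ and the core fields $\hat q$ for $q\in\Gamma(Q)$ and $\hat\beta$ for $\beta\in\Gamma(B^*)$ -- and then decomposing the output by bidegree after pairing against the generating functions $f\in C^\infty(M)$, $\alpha\in\Gamma(Q^*)$ and $b\in\Gamma(B)$. Using the dictionary between $\Q$ and the structure maps of the split Lie $2$-algebroid from Section \ref{Section: Q-structures and Lie n-algebroids} (namely $\Q(f)=\rho_Q^*\diff f$, $\Q(\tau)=\diff_Q\tau+\partial_B\tau$ and $\Q(b)=\diff_\nabla b-\langle\omega,b\rangle$), the low-degree components of $[\Q,\hat q]$ and $[\Q,\hat\beta]$ reproduce the maps $\rho_Q$ and $-\ell$ of the adjoint complex, while the degree $0$ components of $[\Q,\nabla_X]$, $[\Q,\hat q]$ and $[\Q,\hat\beta]$ assemble into the basic connections $\nabla^{\mathrm{bas},TM}$, $\nabla^{\mathrm{bas},Q}$ and the dual connection $\nabla^*$. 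The remaining higher-form parts then yield $\omega_2,\omega_3,\phi_0,\phi_1$.

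The formulas acquire their shape because commuting $\Q$ past the connection piece of $\nabla_X$ produces precisely the curvature defects: the $\Hom(TM,Q)$-part of $\omega_2$ is the basic curvature $R_\nabla^{\mathrm{bas}}$, the $\Hom(Q,B^*)$-part is $-\omega$, and differentiating $\omega$ along $\nabla$ gives $\omega_3$, with the closedness $\diff_{\nabla^*}\omega=0$ of Definition \ref{geom split 2-alg}(v) guaranteeing consistency; the term $\phi_0$ records the non-parallelism of the structure maps through $\ell(\nabla_X\beta)-\nabla_X(\ell(\beta))$ and $\nabla_{\rho(q)}\beta-\nabla_q^*\beta$. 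I expect the main obstacle to be the computation of $\phi_1$: it is the second-order expression $\nabla_X\nabla_q^*\beta-\nabla_q^*\nabla_X\beta-\nabla_{\nabla_X q}^*\beta+\nabla_{\nabla_q^{\mathrm{bas}}X}^*\beta$, a basic-curvature-type tensor for the pair $(\nabla,\nabla^*)$ along $B^*$, which emerges only after carefully tracking the mixed terms produced by $[\Q,\nabla_X]$ acting on $\Gamma(B^*)$, including all signs dictated by the sign conventions fixed in the footnote to the statement. A direct alternative -- verifying equations (1)--(7) of Proposition \ref{3-term_representations} outright -- is possible, but there the same difficulty resurfaces in the two ``higher'' identities, whose verification needs the full force of $\diff_{\nabla^*}\omega=0$ together with the basic-connection identities \eqref{eq_bas_Q_2} and \eqref{eq_bas_Q_3}.
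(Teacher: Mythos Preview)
Your proposal is correct and matches the paper's primary approach exactly: the paper constructs the isomorphism $\mu_\nabla\colon \cin(\M)\otimes\Gamma(TM[0]\oplus Q[1]\oplus B^*[2])\to\mathfrak{X}(\M)$ via the generators $\hat\beta,\hat q,\nabla_X^{B^*}\oplus\nabla_X^Q$, transports $\ldr{\Q}$ across it, and reads off the structure objects from $\ldr{\Q}(\hat\beta)$, $\ldr{\Q}(\hat q)$, $\ldr{\Q}(\nabla_X)$, with the direct verification of the seven equations of Proposition~\ref{3-term_representations} relegated to an appendix as the alternative route you also mention. One small slip: in your description of $\phi_1$ the last term should be $\nabla_{\nabla_q^{\mathrm{bas}}X}\beta$ (the $TM$-connection on $B^*$), not $\nabla^*_{\nabla_q^{\mathrm{bas}}X}\beta$.
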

	
	The proof can be done in two ways. First, one could check explicitly
	that all the conditions of a 3-representation of $Q[1]\oplus B^*[2]$
	are satisfied. This is an easy but long computation and it can be found in Appendix \ref{Appendix: Adjoint representation of a split Lie $2$-algebroid}. Instead, the following
	section shows that given a splitting and $TM$-connections on the
	vector bundles $Q$ and $B^*$, there exists an isomorphism of sheaves
	of $\cin(\M)$-modules between the adjoint module $\mathfrak{X}(\M)$
	and $\cin(\M)\otimes\Gamma(TM[0]\oplus Q[1]\oplus B^*[2])$, such that the
	objects defined above correspond to the differential
	$\ldr{\Q}$. Another advantage of this approach is that it gives a
	precise recipe for the definition and the explicit formulas for the
	components of the adjoint representation of a Lie $n$-algebroid for
	general $n$.
	
	\begin{remark}[Adjoint representation of a Courant algebroid]\index{Courant algebroid!adjoint representation}\label{Adjoint representation of Courant algebroid}
		The adjoint representation of a Courant algebroid $E\to M$ can be
		deduced from the formulas above and from Example
		\ref{Split_symplectic_Lie_2-algebroid_example}. Choose a linear
		connection $\nabla\colon\mx(M)\times\Gamma(E)\to\Gamma(E)$ that
		preserves the metric underlying the Courant algebroid structure on
		$E$. As in Example \ref{Split_symplectic_Lie_2-algebroid_example},
		define the basic connection
		$\nabla^{\rm bas}\colon \Gamma(E)\times\mx(M)\to\mx(M)$,
		$\nabla^{\rm bas}_eX=[\rho(e),X]+\rho(\nabla_Xe)$. Recall that
		$\nabla$ defines as well the dull bracket $[\cdot\,,\cdot]$ on
		sections of $E$:
		\[ [e_1,e_2]=\lb e_1, e_2\rb -\rho^*\langle \nabla_\cdot e_1,e_2\rangle.
		\]
		The dull bracket and the $TM$-connection on $E$ defines the basic
		$E$-connection
		$\nabla^{\rm bas}\colon \Gamma(E)\times\Gamma(E)\to\Gamma(E)$,
		$\nabla^{\rm bas}_{e_1}e_2= [e_1,e_2]+\nabla_{\rho(e_2)}e_1=\lb e_1, e_2\rb -\rho^*\langle
		\nabla_\cdot e_1,e_2\rangle+\nabla_{\rho(e_2)}e_1$. Choose in
		addition a $TM$-connection $\nabla$ on $TM$. The complex is $T^*M[2]\to E[1]\to TM[0]$ with maps $-\rho^* $ and
		$\rho$. The $E$-connections on $T^*M$, $E$ and $TM$ are
		${\nabla^{\rm bas}}^*$, $\nabla^{\rm bas}$ and $\nabla^{\rm bas}$
		defined as above, respectively. The form
		$\omega_2\in \Omega^2(E, \Hom(E,T^*M))$ is  given by
		\[\langle\omega_2(e,e'),X\rangle=\nabla_X\llbracket e,e' \rrbracket -
		\llbracket \nabla_Xe,e' \rrbracket -\llbracket e,\nabla_Xe'
		\rrbracket -\nabla_{\nabla_{e'}^{\text{bas}} X} e +
		\nabla_{\nabla_{e}^{\text{bas}} X} e' + P^{-1}\langle
		\nabla_{\nabla_{.}^{\text{bas}}X}e,e' \rangle\]
		for $e,e'\in\Gamma(E)$ and $X\in\mx(M)$, while the second summand
		$\omega_2\in \Omega^2(E, \Hom(TM,E))$ is given by
		\[\omega_2(e, e')X=\nabla_X[e,e'] -
		[e,\nabla_Xe'] -[\nabla_Xe,e'] -
		\nabla_{\nabla_{e'}^\text{bas}X}e +
		\nabla_{\nabla_{e}^\text{bas}X}e'.
		\]
		This terms can be compared with the components of the
		representation up to homotopy of a Lie algebroid $A\to M$,
		after the choice of a $TM$-connection on $A$, see
		\cite{GrMe10}. The remaining terms are given by (iv), (v)
		and (vi) in Proposition
		\ref{Adjoint_representation_of_Lie_2-algebroid}, and as they
		do not seem more instructive than the general form
		in the proposition, they are not computed in more detail
		here.
	\end{remark}
	
	\section{Adjoint module vs adjoint representation}\label{adjoint_module_adjoint_representation_isomorphism}
	
	Recall that for a split $[n]$-manifold $\M=\bigoplus E_i[i]$, the
	space of vector fields over $\M$ is generated as a $\cin(\M)$-module
	by two special kinds of vector fields. Namely, the degree $-i$ vector
	fields $\hat{e}$ for $e\in\Gamma(E_i)$, and the family of vector
	fields $\nabla^1_X \oplus \ldots \oplus \nabla^n_X$ for
	$X\in\mathfrak{X}(M)$ and a choice of $TM$-connections $\nabla^i$ on
	the vector bundles $E_i$.
	
	Consider now a Lie 2-algebroid $(\M,\Q)$ together with a splitting
	$\M\simeq Q[1]\oplus B^*[2]$ and a choice of $TM$-connections
	$\nabla^{B^*}$ and $\nabla^Q$ on $B^*$ and $Q$, respectively. These
	choices give as follows the adjoint representation $\ad_\nabla$, whose
	complex is given by $TM[0]\oplus Q[1]\oplus B^*[2]$. Define a map
	$\mu_\nabla\colon \cin(\M)\otimes\Gamma(TM[0]\oplus Q[1]\oplus B^*[2])\to \mathfrak{X}(\M)$ on the generators by
	\[
	\Gamma(B^*)\ni \beta  \mapsto \hat{\beta},\qquad
	\Gamma(Q)\ni q \mapsto \hat{q},\qquad
	\mathfrak{X}(M)\ni X \mapsto \nabla^{B^*}_X \oplus \nabla^Q_X
	\]
	and extend $\cin(\M)$-linearly to the whole space to obtain a degree-preserving
	isomorphism of sheaves of $\cin(\M)$-modules. A straightforward
	computation shows that
	\[
	\ldr{\Q}(\hat{\beta}) = \mu_\nabla\left(-\ell(\beta) + \diff_{\nabla^*}\beta\right),
	\]
	\[
	\ldr{\Q}(\hat{q}) = \mu_\nabla\left(\rho_Q(q) +
	\diff_{\nabla^{\text{bas}}}q +\omega_2(\cdot\,,\cdot)q +
	\phi_0(\cdot)q\right),
	\]
	\[
	\ldr{\Q}(\nabla_X^{B^*} \oplus \nabla_X^Q) = \mu_\nabla\left(
	\diff_{\nabla^{\text{bas}}} X +\phi_0(\cdot)X
	+\omega_2(\cdot\,,\cdot)X + \omega_3(\cdot\,,\cdot\,,\cdot)X +
	\phi_1(\cdot\,,\cdot)X \right)
	\]
	and therefore, the objects in the statement of Proposition
	\ref{Adjoint_representation_of_Lie_2-algebroid} define the differential $\D_{\ad_\nabla}:=\mu_\nabla^{-1}\circ\ldr{\Q}\circ\mu_\nabla$ of a
	3-representation of
	$Q[1]\oplus B^*[2]$, called the \textbf{adjoint representation of the Lie $2$-algebroid}\index{split Lie $2$-algebroid!adjoint representation}\index{adjoint representation} and
	denoted as $(\ad_\nabla,\D_{\ad_\nabla})$.  The adjoint representation
	is hence, up to isomorphism, independent of the choice of splitting
	and connections (see the following section for the precise transformations), and gives so a
	well-defined class $\ad\in\underline{\text{Rep}}^\infty_L(\M)$.
	
	Due to the result above, one can also define the \textbf{coadjoint
		representation of a Lie 2-algebroid}\index{split Lie $2$-algebroid!coadjoint representation}\index{coadjoint representation} $(\M,\Q)$ as the isomorphism
	class $\ad^*\in\underline{\text{Rep}}^\infty_L(\M)$. To find an explicit representative of $\ad^*$,
	suppose that $Q[1]\oplus B^*[2]$ is a splitting of $\M$, and consider
	its adjoint representation $\ad_\nabla$ as above for some choice of
	$TM$-connections $\nabla$ on $B^*$ and $Q$. Recall that given a
	representation up to homotopy $(\E,\D)$ of a general Lie $n$-algebroid $(\M,\Q)$, its dual $\E^*$
	becomes a representation up to homotopy with operator $\D^*$
	characterised by the formula
	\[
	\Q(\xi\wedge\xi') = \D^*(\xi)\wedge\xi' + (-1)^{|\xi|}\xi\wedge\D(\xi'),
	\]
	for all $\xi\in \cin(\M)\otimes\Gamma(\E^*)$ and
	$\xi'\in \cin(\M)\otimes\Gamma(\E)$.  Here,
	$\wedge=\wedge_{\langle\cdot\,,\cdot\rangle}$, where
	$\langle\cdot\,,\cdot\rangle$ is the pairing of $\E$ with $\E^*$. Unravelling the definition of the dual for the left
	representation $\ad_\nabla$, one finds that the structure differential of $\ad_\nabla^*=\cin(\M)\otimes\Gamma(B[-2]\oplus Q^*[-1] \oplus TM[0])$ is
	given by the following objects:
	\begin{enumerate}
		\item the \textbf{coadjoint complex}\index{split Lie $2$-algebroid!coadjoint complex} $T^*M\to Q^* \to B$ obtained by
		$-\rho_Q^*$ and $-\ell^*$,
		\item the $Q$-connections $\nabla$ on $B$ and $\nabla^{\text{bas},*}$ on $Q^*$ and $T^*M$,
		\item the elements
		\begin{center}
			\begin{tabular}{l l}
				$\omega_2^*(q_1,q_2)\tau=\tau\circ\omega_2(q_1,q_2)$, & $\omega_2^*(q_1,q_2)b=-b\circ\omega_2(q_1,q_2)$, \\
				$\phi_0^*(\beta)\tau=\tau\circ\phi_0(\beta)$, & $\phi_0^*(\beta)b=-b\circ\phi_0(\beta)$, \\
				$\omega_3^*(q_1,q_2,q_3)b=-b\circ\omega_3(q_1,q_2,q_3)$, & $\phi_1^*(\beta,q)b=-b\circ\phi_1(\beta,q)$,
			\end{tabular}
		\end{center}
		for all $q,q_1,q_2,q_3\in\Gamma(Q),\tau\in\Gamma(Q^*),b\in\Gamma(B)$ and $\beta\in\Gamma(B^*)$.
	\end{enumerate}
	
	\begin{remark}\label{Iso_coad_mod_coad_rep}
		The coadjoint representation can also be obtained from the coadjoint
		module $\Omega^1(\M)$ by the right $\cin(\M)$-module isomorphism
		$\mu^\star_\nabla\colon\Omega^1(\M)\to\Gamma(B[-2] \oplus
		Q^*[-1] \oplus T^*M[0])\otimes\cin(\M)$ which is dual to
		$\mu_\nabla\colon \cin(\M)\otimes\Gamma(TM[0]\oplus Q[1]\oplus B^*[2])\to
		\mathfrak{X}(\M)$ above. Explicitly, it is defined as the pull-back map
		$\mu^\star_\nabla(\omega)=\omega\circ\mu_\nabla$ for all $\omega\in\Omega^1(\M)$,
		whose inverse is given on the generators by 
		$\Gamma(B[-2] \oplus Q^*[-1] \oplus T^*M[0])\otimes\cin(\M)\to
		\Omega^1(\M)$,
		\[ 
		\Gamma(B)\ni b\mapsto \dr b-\diff_{\nabla^*} b, \quad \Gamma(Q^*)\ni\tau\mapsto  \dr\tau - \diff_{\nabla^*}\tau,
		\quad \text{and}\quad \Omega^1(M)\ni\theta\mapsto\theta.
		\]
	\end{remark}
	
	\section{Coordinate transformation of the adjoint representation}\label{Section: Coordinate transformation of the adjoint representation}
	
	The adjoint representation up to homotopy of a Lie 2-algebroid was constructed after a choice of splitting and $TM$-connections. This
	section explains how the adjoint representation transforms under
	different choices.
	
	First, a morphism of 3-representations of a split Lie 2-algebroid can
	be described as follows.
	
	\begin{proposition}\label{morphism_of_3-term_representations}
		Let $(\E,\D_{\E})$ and $(\underline{F},\D_{\underline{F}})$ be
		3-term representations up to homotopy of the split Lie 2-algebroid
		$Q[1]\oplus B^*[2]$. A morphism $\mu\colon \E\to \underline{F}$ is
		equivalent to the following data:
		\begin{enumerate}[(i)]
			\item For each $i=0,1,2$, an element
			$\mu_i\in\Omega^{i}(Q,\underline{\Hom}^{-i}(\E,\underline{F}))$.
			\item An element
			$\mu_0^b\in\Gamma(B\otimes\underline{\Hom}^{-2}(\E,\underline{F}))$.
		\end{enumerate}
		The above objects are subject to the relations
		\begin{enumerate}
			\item
			$[\partial,\mu_i] + [\diff_\nabla,\mu_{i-1}] +
			{\displaystyle\sum_{j+k=i,i\geq2}[\omega_j,\mu_k]} = \langle
			\omega,\mu^b_{i-3} \rangle$,
			\item $[\partial,\mu_0^b] + [\phi_0,\mu_0] + \partial_B\circ\mu_1 = 0$,
			\item
			$\diff_{\overline{\nabla}}\mu_0^b + [\phi_0,\mu_1] +
			[\phi_1,\mu_0] + \partial_B\circ\mu_2 = 0$.
		\end{enumerate}
	\end{proposition}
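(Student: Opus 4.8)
The plan is to reduce the morphism condition to a single cocycle equation in the $\underline{\Hom}$-module $\underline{\Hom}(\E,\F)$ and then to read off its components by bidegree, in exactly the same way that the structure operators of a $3$-representation were extracted in the proof of Proposition \ref{3-term_representations}. By definition a morphism $\mu\colon\E\to\F$ over the same Lie $2$-algebroid is a degree $0$, $\cin(\M)$-linear map $\cin(\M)\otimes\Gamma(\E)\to\cin(\M)\otimes\Gamma(\F)$ commuting with the differentials. First I would invoke Lemma \ref{wedge_product-operators_Correspondence_Lemma} applied to $\underline{\Hom}(\E,\F)$: such a $\mu$ is the same datum as a degree $0$ element $\widetilde\mu\in\bigl(\cin(\M)\otimes\Gamma(\underline{\Hom}(\E,\F))\bigr)^0$ acting by left multiplication, $\widehat{\widetilde\mu}=\mu$. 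Decomposing $\widetilde\mu$ by the bigrading of $\cin(\M)$ — that is, by $Q^*$-form degree and by the number of $B$-factors arising from $\cin(\M)^2=\Omega^2(Q)\oplus\Gamma(B)$, $\cin(\M)^3=\Omega^3(Q)\oplus(\Gamma(Q^*)\otimes\Gamma(B))$, and so on — and using that $\underline{\Hom}^{-i}(\E,\F)=0$ for $i\geq3$ because $\E,\F$ are concentrated in three consecutive degrees, the only surviving pieces are the purely form-valued $\mu_i\in\Omega^i(Q,\underline{\Hom}^{-i}(\E,\F))$ for $i=0,1,2$ together with the single $B$-valued piece $\mu_0^b\in\Gamma(B\otimes\underline{\Hom}^{-2}(\E,\F))$. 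A would-be $B$-valued form piece $\mu_1^b$ would land in $\underline{\Hom}^{-3}=0$, and any piece with two $B$-factors lands in $\underline{\Hom}^{-4}=0$; hence $\mu\mapsto(\mu_0,\mu_1,\mu_2,\mu_0^b)$ is a bijection onto all such tuples, giving the data (i)--(ii).

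Next I would rewrite the morphism condition. Since $|\mu|=0$, the equation $\D_\F\circ\mu=\mu\circ\D_\E$ is precisely the statement that $\widetilde\mu$ is a cocycle for the differential of the $\underline{\Hom}$-module, $\D_{\underline{\Hom}(\E,\F)}(\widetilde\mu)=0$. Substituting the component expansions $\D_\E=\partial+\diff_\nabla+\omega_2+\omega_3+\phi_0+\phi_1$ and likewise for $\D_\F$ from Proposition \ref{3-term_representations}, and recalling that all these operators act through the generalised wedge products described earlier (so that each term of $\D_\F\circ\mu-\mu\circ\D_\E$ is a graded commutator $[\,\cdot\,,\mu_\bullet]$, a covariant-derivative term $\diff_{\overline\nabla}\mu_\bullet$, or a $\partial_B$-contraction), I would expand this single equation and separate it into homogeneous pieces according to the same bigrading. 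The zero-$B$ sector of form degree $i$ collects $[\partial,\mu_i]$, $[\diff_\nabla,\mu_{i-1}]$, the compositions $\sum_{j+k=i,\,j\geq2}[\omega_j,\mu_k]$, and the term $\langle\omega,\mu_{i-3}^b\rangle$ produced when $\omega\in\Omega^3(Q,B^*)$ meets the $B$-factor of $\mu_{i-3}^b$; this yields relation (1) for every $i$, non-trivially only for $i=0,1,2,3$ since the higher commutators vanish by $\underline{\Hom}^{-i}=0$ for $i\geq3$. The one-$B$ sector of form degree $0$ yields relation (2), and the one-$B$ sector of form degree $1$ yields relation (3); the two-$B$ and form-degree-$\geq3$ sectors are automatically trivial for the same dimension reasons, so the system closes exactly on (1)--(3).

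The routine but delicate part — and the main obstacle — is the sign and degree bookkeeping: one must track the Koszul signs attached to the wedge products $\wedge_h$ when commuting each $\mu_\bullet$ past $\partial$, $\diff_\nabla$, $\omega_j$ and $\phi_j$, and verify that the $B$-valued contributions organise precisely into the $\partial_B$- and $\diff_{\overline\nabla}$-terms and the pairing $\langle\omega,\cdot\rangle$ as written, with no residual terms left over. I expect this to mirror the (long) computation behind Proposition \ref{3-term_representations}, and it is worth carrying out $\D^2_{\underline{\Hom}}$-style on one representative section of each $\Gamma(E_i)$ to pin down the signs. Once the expansion is done degree by degree, matching both sides component-wise gives exactly relations (1)--(3), and conversely these relations reassemble into $\D_{\underline{\Hom}(\E,\F)}(\widetilde\mu)=0$; combined with the bijection furnished by Lemma \ref{wedge_product-operators_Correspondence_Lemma}, this establishes the claimed equivalence.
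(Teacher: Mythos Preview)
Your proposal is correct and follows essentially the same approach as the paper: decompose $\mu$ into its bigraded components via Lemma \ref{wedge_product-operators_Correspondence_Lemma} (the paper says ``by the same arguments'' referring to the proof of Proposition \ref{3-term_representations}), then expand $\D_{\underline{F}}\circ\mu=\mu\circ\D_{\E}$ and match bidegrees. The only cosmetic difference is that you package the morphism condition as a cocycle equation $\D_{\underline{\Hom}(\E,\F)}(\widetilde\mu)=0$, whereas the paper works directly with the equation $\mu\circ\D_{\E}=\D_{\underline{F}}\circ\mu$ evaluated on sections of each $\Gamma(E_i)$ (carrying out the $E_0$ case explicitly); these are the same computation.
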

	\begin{proof}
		As before it  suffices to check how $\mu$ acts on $\Gamma(\E)$, by the
		same arguments. Then it must be of the type
		\[
		\mu = \mu_0 + \mu_1 + \mu_2 + \mu_0^b,
		\]
		where
		$\mu_i\in\Omega^{i}(Q,\underline{\Hom}^{-i}(\E,\underline{F}))$
		and
		$\mu^b\in\Gamma(B)\otimes\Gamma(\underline{\Hom}^{-2}(\E,\underline{F}))$. The three equations in the statement come from
		the expansion of
		$\mu\circ\D_{\E} = \D_{\underline{F}}\circ\mu$ when $\mu$ is
		written in terms of the components defined above. We check here the case $\mu\circ \D_{\E}(e) = \D_{\underline{F}}\circ\mu(e)$ for $e\in\Gamma(E_0)$ and omit the rest. One the one hand, we have that
		\[
		\mu(\D_{\E}(e)) = \mu(\partial(e)) + \mu(\diff_{\nabla}e) = \mu_0(\partial(e)) + \mu_1(\partial(e)) + \mu(\diff_{\nabla}e),
		\]
		while on the other hand
		\[
		\D_{\F}(\mu(e)) = \D_{\F}(\mu_0(e)) = \partial(\mu_0(e)) + \diff_{\nabla}(\mu_0(e)).
		\]
		Write $\diff_{\nabla}e\in\Omega^1(Q,E_0)$ as a (finite) sum of the form
		\[
		\diff_{\nabla}e = \sum_i \tau_i\otimes e'_i,
		\]
		with $\tau_i\in\Gamma(Q^*)$ and $e'_i\in\Gamma(E_0)$. Then we have
		\[
		\mu(\diff_{\nabla}e) = \mu\left( \sum_i \tau_i\otimes e'_i \right) 
		= \sum_i \tau_i\otimes \mu(e'_i) 
		= \sum_i \tau_i\otimes \mu_0(e'_i)
		= \sum_i \mu_0\wedge(\tau_i\otimes e'_i)
		= \mu_0(\diff_{\nabla}e).
		\]
		Equalising $\mu(\D_{\E}(e))$ with $\D_{\F}(\mu(e))$ and comparing degrees, we obtain
		\[
		\mu_0(\partial(e)) = \partial(\mu_0(e))
		\qquad \text{and} \qquad
		\diff_{\nabla}(\mu_0(e)) -\mu_0(\diff_{\nabla}e) = \mu_1(\partial(e)). \qedhere
		\]
	\end{proof}
	
	The transformation of $\ad \in \underline{\text{Rep}}^\infty(\M)$ for a fixed splitting
	$Q[1]\oplus B^*[2]$ of $\M$ and different choices of $TM$-connections
	is given by their difference. More precisely, let $\nabla$ and
	$\nabla'$ be the two $TM$-connections. Then
	the map
	$\mu=\mu_{\nabla'}^{-1}\circ \mu_\nabla\colon\ad_\nabla\to \ad_{\nabla'}$ is defined by
	$\mu = \mu_0 + \mu_1 + \mu_0^b$, where
	\begin{align*}
	\mu_0 = &\ \id \\
	\mu_1(q)X = &\ \nabla'_X q - \nabla_X q \\
	\mu_0^b(\beta)X = &\ \nabla'_X \beta - \nabla_X \beta,
	\end{align*}
	for $X\in\mathfrak{X}(M)$, $q\in\Gamma(Q)$ and $\beta\in\Gamma(B^*)$.
	The equations in Proposition \ref{morphism_of_3-term_representations}
	are automatically satisfied since by construction
	\[
	\D_{\ad_{\nabla'}}\circ\mu=\D_{\ad_{\nabla'}}\circ\mu_{\nabla'}^{-1}\circ\mu_\nabla=\mu_{\nabla'}^{-1}\circ\ldr{\Q}\circ\mu_\nabla=\mu_{\nabla'}^{-1}\circ\mu_\nabla\circ
	\D_{\ad_{\nabla}}=\mu\circ \D_{\ad_{\nabla}}.
	\]
	This yields the following result.
	
	\begin{proposition}\label{Isomorphism with change of connections}
		Given two pairs of $TM$-connections on the bundles $B^*$ and $Q$,
		the difference of the two $TM$-connections induces the isomorphism $\mu\colon\ad_\nabla\to \ad_{\nabla'}$ between the
		corresponding adjoint representations. Explicitly, it is given by
		$\mu=\id \oplus \Big( \nabla'-\nabla \Big)$.
	\end{proposition}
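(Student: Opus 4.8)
The plan is to reuse the two module isomorphisms $\mu_\nabla$ and $\mu_{\nabla'}$ built in Section \ref{adjoint_module_adjoint_representation_isomorphism}, one for each chosen pair of $TM$-connections on $Q$ and $B^*$. Each is a degree-preserving isomorphism of sheaves of $\cin(\M)$-modules between $\cin(\M)\otimes\Gamma(TM[0]\oplus Q[1]\oplus B^*[2])$ and $\mathfrak{X}(\M)$, and the corresponding adjoint differential is \emph{defined} by $\D_{\ad_\nabla}=\mu_\nabla^{-1}\circ\ldr{\Q}\circ\mu_\nabla$, and likewise with $\nabla'$. First I would set $\mu:=\mu_{\nabla'}^{-1}\circ\mu_\nabla$, which is then automatically a degree $0$ isomorphism of the underlying graded $\cin(\M)$-modules of $\ad_\nabla$ and $\ad_{\nabla'}$.

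That $\mu$ intertwines $\D_{\ad_\nabla}$ and $\D_{\ad_{\nabla'}}$ is immediate, since both $\D_{\ad_{\nabla'}}\circ\mu$ and $\mu\circ\D_{\ad_\nabla}$ collapse to $\mu_{\nabla'}^{-1}\circ\ldr{\Q}\circ\mu_\nabla$; hence $\mu$ is an isomorphism of $3$-representations. The substantive step is to read off its components in the sense of Proposition \ref{morphism_of_3-term_representations} by evaluating $\mu$ on the three types of generators. On $\beta\in\Gamma(B^*)$ and on $q\in\Gamma(Q)$ both $\mu_\nabla$ and $\mu_{\nabla'}$ return the same vector fields $\hat\beta$ and $\hat q$, so $\mu$ is the identity there; this forces $\mu_0=\id$ and rules out any $\mu_2$ term. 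On $X\in\mathfrak{X}(M)$ the difference $\mu_\nabla(X)-\mu_{\nabla'}(X)=(\nabla^{B^*}_X-{\nabla'}^{B^*}_X)\oplus(\nabla^Q_X-{\nabla'}^Q_X)$ is tensorial, i.e.\ a degree $0$ core-linear vector field, and applying $\mu_{\nabla'}^{-1}$ to it yields the remaining components $\mu_1$ and $\mu_0^b$.

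The main obstacle I expect lies precisely in this last identification, namely the degree bookkeeping and signs: one must check that the $Q$-valued part ${\nabla'}^{Q}-\nabla^{Q}\in\Gamma(Q^*\otimes Q)$ of the connection difference lands in the $TM\to Q$ slot of $\mu_1\in\Omega^1(Q,\underline{\Hom}^{-1}(\ad,\ad))$, giving $\mu_1(q)X=\nabla'_Xq-\nabla_Xq$, while the $B^*$-valued part ${\nabla'}^{B^*}-\nabla^{B^*}$ lands in the $TM\to B^*$ slot of $\mu_0^b\in\Gamma(B\otimes\underline{\Hom}^{-2}(\ad,\ad))$, giving $\mu_0^b(\beta)X=\nabla'_X\beta-\nabla_X\beta$, with all other slots vanishing. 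Once these formulas are obtained, the relations (1)--(3) of Proposition \ref{morphism_of_3-term_representations} need not be verified by hand: they hold automatically because $\mu$ was manufactured as the intertwiner $\mu_{\nabla'}^{-1}\circ\mu_\nabla$. This gives the stated isomorphism $\mu=\id\oplus(\nabla'-\nabla)$.
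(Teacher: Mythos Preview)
Your proposal is correct and follows exactly the paper's own argument: the paper defines $\mu=\mu_{\nabla'}^{-1}\circ\mu_\nabla$, observes that the intertwining relation $\D_{\ad_{\nabla'}}\circ\mu=\mu\circ\D_{\ad_\nabla}$ is automatic from the definitions of $\D_{\ad_\nabla}$ and $\D_{\ad_{\nabla'}}$, and then simply records the components $\mu_0=\id$, $\mu_1(q)X=\nabla'_Xq-\nabla_Xq$, $\mu_0^b(\beta)X=\nabla'_X\beta-\nabla_X\beta$ (with $\mu_2=0$). Your evaluation on generators and your remark that the equations of Proposition~\ref{morphism_of_3-term_representations} need not be checked by hand are precisely what the paper does.
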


	The next step is to show how the adjoint representation transforms
	after a change of splitting of the Lie 2-algebroid. Fix a Lie
	2-algebroid $(\M,Q)$ over the smooth manifold $M$ and choose a
	splitting $Q[1]\oplus B^*[2]$, with structure objects
	$(\ell,\rho,[\cdot\,,\cdot]_1,\nabla^1,\omega^1)$ as before. Recall
	that a change of splitting does not change the vector bundles $B^*$
	and $Q$, and it is equivalent to a section
	$\sigma\in\Omega^2(Q,B^*)$. The induced isomorphism of [2]-manifolds
	over the identity on $M$ is given by:
	$\mathcal{F}_\sigma^\star(\tau) = \tau$ for all $\tau\in\Gamma(Q^*)$
	and
	$\mathcal{F}^\star_\sigma(b) = b + \sigma^\star
	b\in\Gamma(B)\oplus\Omega^2(Q)$ for all $b\in\Gamma(B)$. If
	$(\ell,\rho,[\cdot\,,\cdot]_2,\nabla^2,\omega^2)$ is the structure
	objects of the second splitting, then the compatibility of $\sigma$
	with the homological vector fields reads the following:
	\begin{itemize}
		\item The skew-symmetric dull brackets are related by $[q_1,q_2]_2 = [q_1,q_2]_1 - \ell(\sigma(q_1,q_2))$.
		\item The connections are related by
		$\nabla^2_q b = \nabla^1_q b + \partial_B\langle
		\sigma(q,\cdot),b \rangle$, or equivalently on the dual by
		$\nabla^{2*}_q \beta = \nabla^{1*}_q \beta -
		\sigma(q,\ell(\beta))$.
		\item The curvature terms are related by
		$\omega^2 = \omega^1 + \diff_{2,\nabla^1}\sigma$, where the
		operator
		\[
		\diff_{2,\nabla^1}\sigma\colon \Omega^\bullet(Q,B^*)\to\Omega^{\bullet+1}(Q,B^*)\]
		is defined by the usual Koszul formula using the dull bracket
		$[\cdot\,,\cdot]_2$ and the connection $\nabla^{1*}$.
	\end{itemize}
	The above equations give the following identities between the
	structure data for the adjoint representations\footnote{Note that the
		two pairs of $TM$-connections are identical} $\ad_\nabla^1$ and
	$\ad_\nabla^2$.

	\begin{lemma}\label{Identities_for_different_splitting_of_Lie_2-algebroid}
		Let $q,q_1,q_2\in\Gamma(Q),\beta\in\Gamma(B^*)$ and
		$X\in\mathfrak{X}(M)$. Then
		\begin{enumerate}[(i)]
			\item $\ell_2 = \ell_1$ and $\rho_2 = \rho_1$.
			\item $\nabla^{2,\text{bas}}_{q_1} q_2 = \nabla^{1,\text{bas}}_{q_1} q_2 - \ell(\sigma(q_1,q_2))$
			
			$\nabla^{2,\text{bas}}_{q} X = \nabla^{1,\text{bas}}_{q} X$
			
			$\nabla^{2,*}_{q} \beta = \nabla^{1,*}_{q} \beta - \sigma(q,\ell(\beta))$.
			\item $\omega_2^2(q_1,q_2)q_3 = \omega_2^1(q_1,q_2)q_3 + \diff_{2,\nabla^1}\sigma(q_1,q_2,q_3)$\\
			$\omega_2^2(q_1,q_2)X = \omega_2^1(q_1,q_2)X +
			\nabla_X(\ell(\sigma(q_1,q_2))) -
			\ell(\sigma(q_1,\nabla_X q_2)) +
			\ell(\sigma(q_2,\nabla_X q_1))$.
			\item $\omega_3^2(q_1,q_2,q_3)X = \omega_3^1(q_1,q_2,q_3)X +  (\nabla_X(\diff_{2,\nabla^1}\sigma))(q_1,q_2,q_3)$.
			\item $\phi_0^2(\beta)q = \phi_0^1(\beta)q + \sigma(q,\ell(\beta))$\\
			$\phi_0^2(\beta)X = \phi_0^1(\beta)X$.
			\item $\phi_1^2(\beta,q)X = \phi_1^1(\beta,q)X -
			\sigma(\nabla_X q_1,\ell(\beta)) -
			\sigma(q,\ell(\nabla_X \beta)) +
			\nabla_X(\sigma(q,\ell(\beta)))$.
		\end{enumerate}
	\end{lemma}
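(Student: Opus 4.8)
The plan is to derive each of the six identities in Lemma~\ref{Identities_for_different_splitting_of_Lie_2-algebroid} directly from the three transformation rules for the structure objects $(\ell,\rho,[\cdot\,,\cdot],\nabla^*,\omega)$ under the change of splitting $\sigma\in\Omega^2(Q,B^*)$, together with the explicit formulas for the adjoint representation given in Proposition~\ref{Adjoint_representation_of_Lie_2-algebroid}. Since $\ell$, $\rho$ and the two $TM$-connections $\nabla^Q,\nabla^{B^*}$ are fixed data (the vector bundles $Q$ and $B^*$ are unchanged by a change of splitting, and the $TM$-connections are chosen identically for both splittings), item~(i) is immediate, and the whole lemma reduces to tracking how $\sigma$ propagates through the defining formulas. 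The key bookkeeping input is the compatibility of $\sigma$ with the homological vector fields, recorded just above the statement: the dull bracket transforms by $[q_1,q_2]_2=[q_1,q_2]_1-\ell(\sigma(q_1,q_2))$, the $Q$-connection on $B^*$ transforms by $\nabla^{2,*}_q\beta=\nabla^{1,*}_q\beta-\sigma(q,\ell(\beta))$, and $\omega^2=\omega^1+\diff_{2,\nabla^1}\sigma$.

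First I would establish (ii). The basic connection $\nabla^{\text{bas},Q}_{q_1}q_2=[q_1,q_2]+\nabla^Q_{\rho(q_2)}q_1$ depends on the dull bracket only through its first summand, so substituting the bracket transformation gives $\nabla^{2,\text{bas}}_{q_1}q_2=\nabla^{1,\text{bas}}_{q_1}q_2-\ell(\sigma(q_1,q_2))$ at once; the basic connection on $TM$, namely $\nabla^{\text{bas},TM}_qX=[\rho(q),X]+\rho(\nabla^Q_Xq)$, involves neither the dull bracket nor $\sigma$, hence is unchanged; and the $\nabla^*$-clause is just the recorded transformation of $\nabla^{*}$. For (v) I would use the formula $\phi_0(\beta)q=\nabla_{\rho(q)}\beta-\nabla^*_q\beta$: only the $\nabla^*_q\beta$ term changes, contributing $+\sigma(q,\ell(\beta))$, while the $X$-component $\phi_0(\beta)X=\ell(\nabla_X\beta)-\nabla_X(\ell(\beta))$ has no $\sigma$-dependence.

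Next I would handle the $\omega_2$ and $\omega_3$ identities (iii) and (iv). The $Q$-valued part of $\omega_2$ is $-R^{\text{bas}}_{\nabla}$, which depends only on $\nabla^{\text{bas},Q}$ and hence on $\sigma$ through (ii); expanding the basic curvature with the shifted basic connection yields the claimed correction $\nabla_X(\ell(\sigma(q_1,q_2)))-\ell(\sigma(q_1,\nabla_Xq_2))+\ell(\sigma(q_2,\nabla_Xq_1))$. The $B^*$-valued part of $\omega_2$ is $-\omega$, so its transformation is literally $\omega^2=\omega^1+\diff_{2,\nabla^1}\sigma$ read off with the appropriate sign, giving the $\diff_{2,\nabla^1}\sigma(q_1,q_2,q_3)$ term. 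Since $\omega_3(q_1,q_2,q_3)X=-(\nabla_X\omega)(q_1,q_2,q_3)$ is built by applying $\nabla_X$ to $\omega$, differentiating the relation $\omega^2=\omega^1+\diff_{2,\nabla^1}\sigma$ produces (iv) directly.

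The main obstacle is identity~(vi), the transformation of $\phi_1$. Here $\phi_1(\beta,q)X=\nabla_X\nabla^*_q\beta-\nabla^*_q\nabla_X\beta-\nabla^*_{\nabla_Xq}\beta+\nabla_{\nabla^{\text{bas}}_qX}\beta$ involves $\nabla^*$ in three different places \emph{and} the basic connection in the last term, so I must substitute the $\sigma$-corrections into all four summands and then verify, through a moderately long but routine cancellation, that the surviving terms collapse exactly to $-\sigma(\nabla_Xq,\ell(\beta))-\sigma(q,\ell(\nabla_X\beta))+\nabla_X(\sigma(q,\ell(\beta)))$. The care needed is in the Leibniz expansion of $\nabla_X(\sigma(q,\ell(\beta)))=(\nabla_X\sigma)(q,\ell(\beta))+\sigma(\nabla_Xq,\ell(\beta))+\sigma(q,\ell(\nabla_X\beta))$ and in confirming that the $(\nabla_X\sigma)$ and $\nabla^{\text{bas}}$-generated contributions either cancel against each other or assemble into the stated form. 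Once (vi) is verified the proof is complete; I would present (i)--(v) compactly and devote the bulk of the written argument to the term-by-term accounting in (vi), deferring the most tedious sign-chasing to a remark that it parallels the analogous computation for ordinary Lie algebroids in \cite{GrMe10,ArCr12}.
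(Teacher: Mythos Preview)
Your proposal is correct and follows exactly the approach the paper intends: the paper does not give an explicit proof of this lemma, stating only that ``the above equations give the following identities,'' so the verification is left as a direct substitution of the transformation rules for $[\cdot\,,\cdot]$, $\nabla^*$, and $\omega$ into the formulae of Proposition~\ref{Adjoint_representation_of_Lie_2-algebroid}. Your plan to handle (i), (ii), (v) immediately, then (iii)--(iv) via the basic curvature and $\nabla_X\omega$, and finally (vi) by a careful term-by-term expansion of the four summands in $\phi_1$, is precisely the routine bookkeeping the paper omits.
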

	
	Consider now two Lie $n$-algebroids $\M_1$ and $\M_2$ over $M$, and
	an isomorphism
	\[
	\mathcal{F}\colon(\M_1,\Q_1)\to(\M_2,\Q_2)
	\]
	given by the maps
	$\mathcal{F}_Q\colon Q_1\to Q_2$,
	$\mathcal{F}_B\colon B_1^*\to B^*_2$, and
	$\mathcal{F}_0\colon\wedge^2Q_1\to B_2^*$.
	Recall that a 0-morphism between two representations up to homotopy
	$(\E_1,\D_1)$ and $(\E_2,\D_2)$ of $\M_1$ and $\M_2$, respectively, is
	given by a degree 0 map
	\[
	\mu\colon \cin(\M_2)\otimes\Gamma(\E_2)\to \cin(\M_1)\otimes\Gamma(\E_1),
	\]
	which is $\cin(\M_2)$-linear:
	$\mu(\xi\otimes e) = \mathcal{F}^\star\xi\otimes\mu(e)$ for all
	$\xi\in \cin(\M_2)$ and $e\in\Gamma(\E_2)$, and makes the following
	diagram commute
	\begin{center}
	\begin{tikzcd}
		\cin(\M_2)\otimes\Gamma(\E_2)\arrow[rr,"\mu"]\arrow[dd,"\D_2"'] & & \cin(\M_1)\otimes\Gamma(\E_1)\arrow[dd,"\D_1"] \\
		& & \\
		\cin(\M_2)\otimes\Gamma(\E_2)\arrow[rr,"\mu"'] & &
		\cin(\M_1)\otimes\Gamma(\E_1).
	\end{tikzcd}
    \end{center}
	The usual analysis as before implies that $\mu$ must be given by a
	morphism of complexes $\mu_0\colon (\E_2,\partial_2)\to (\E_1,\partial_1)$ and
	elements
	\[
	\mu_1\in\Omega^1(Q_1,\underline{\Hom}^{-1}(\E_2,\E_1)),
	\]
	\[
	\mu_2\in\Omega^2(Q_1,\underline{\Hom}^{-2}(\E_2,\E_1)),
	\]
	\[
	\mu_0^b\in \Gamma(B)\otimes\Gamma(\underline{\Hom}^{-2}(\E_2,\E_1)),
	\]
	which satisfy equations similar to the set of equations in Proposition \ref{morphism_of_3-term_representations}.
	
	A change of splitting of the Lie 2-algebroid transforms as follows the
	adjoint representation.  Since changes of choices of connections are
	now fully understood, choose the same connection for both splittings
	$\M_1\simeq Q[1]\oplus B^*[2]\simeq\M_2$. Suppose that $\sigma\in\Omega^2(Q,B^*)$ is the
	change of splitting and denote by $\mathcal{F}_\sigma$ the induced
	isomorphism of the split Lie 2-algebroids whose components are given
	by
	$\mathcal{F}^\star_{\sigma,Q}=\id_{Q^*}, \mathcal{F}^\star_{\sigma,B}=\id_B,
	\mathcal{F}^\star_{\sigma,0}=\sigma^\star$. The composition map $\mu^\sigma:\ad_\nabla^1\to\mathfrak{X}(\M)\to\ad_{\nabla}^2$ is given in components by
	\begin{align*}
	\mu_0^\sigma = &\ \id \\
	\mu_1^\sigma(q_1)q_2 = &\ \sigma(q_1,q_2) \\
	\mu_2^\sigma(q_1,q_2)X = &\ (\nabla_X \sigma)(q_1,q_2).
	\end{align*}
	A similar argument as before implies that $\mu^\sigma$ is a morphism between the two adjoint
	representations and therefore the following result follows.
	
	\begin{proposition}\label{Isomorphism with change of splitting}
		Given two splittings of a Lie 2-algebroid with induced change of
		splitting $\sigma\in\Omega^2(Q,B^*)$ and a pair of $TM$-connections
		on the vector bundles $B^*$ and $Q$, the corresponding adjoint representations are isomorphic via
		$\mu=\id\oplus\ \sigma\oplus\nabla_\cdot\sigma$.
	\end{proposition}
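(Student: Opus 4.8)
The plan is to deduce the statement from the identification of both adjoint representations with the single intrinsic adjoint module $(\mathfrak{X}(\M),\ldr{\Q})$, in direct analogy with the proof of Proposition \ref{Isomorphism with change of connections}. Writing $\mu_\nabla^1$ and $\mu_\nabla^2$ for the two $\cin(\M)$-module isomorphisms from Section \ref{adjoint_module_adjoint_representation_isomorphism} attached to the two splittings (with the common pair of $TM$-connections on $Q$ and $B^*$), I would define $\mu^\sigma:=(\mu_\nabla^2)^{-1}\circ\mu_\nabla^1\colon\ad_\nabla^1\to\ad_\nabla^2$. Because the change of splitting $\mathcal{F}_\sigma$ is an isomorphism of $\Q$-manifolds and $\ldr{\Q}=[\Q,\cdot]$ is intrinsic to $\M$, the two structure differentials are $\D_{\ad_\nabla^i}=(\mu_\nabla^i)^{-1}\circ\ldr{\Q}\circ\mu_\nabla^i$, whence
\[
\D_{\ad_\nabla^2}\circ\mu^\sigma=(\mu_\nabla^2)^{-1}\circ\ldr{\Q}\circ\mu_\nabla^1=\mu^\sigma\circ\D_{\ad_\nabla^1}.
\]
Thus $\mu^\sigma$ is automatically a morphism of $3$-representations, and no equation of Proposition \ref{morphism_of_3-term_representations} needs to be verified directly.

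The substance of the proof is then the explicit computation of the components of $\mu^\sigma$ on the three families of generators $\beta\in\Gamma(B^*)$, $q\in\Gamma(Q)$ and $X\in\mathfrak{X}(M)$, using the defining relations $\mathcal{F}_\sigma^\star(\tau)=\tau$ for $\tau\in\Gamma(Q^*)$ and $\mathcal{F}_\sigma^\star(b)=b+\sigma^\star b$ for $b\in\Gamma(B)$. First I would check that $\mu^\sigma(\beta)=\hat\beta$, since the degree $-2$ derivation $\hat\beta$ annihilates the purely $Q$-form correction $\sigma^\star b$; this gives $\mu_0=\id$ on the $B^*[2]$-summand and shows that there is no $\mu_0^b$-term. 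Next, evaluating $\hat q$ on the shifted degree-$2$ coordinates produces a degree-$1$ function paired through $\sigma(q,\cdot)$, so that $\mu^\sigma(q)$ equals $\hat q$ corrected by the element $\sigma(\cdot,q)\in\Omega^1(Q,B^*)$, yielding $\mu_1^\sigma(q_1)q_2=\sigma(q_1,q_2)$. Finally, the degree-$0$ lift $\nabla^{B^*}_X\oplus\nabla^Q_X$ depends on the splitting precisely through its action on the degree-$2$ coordinates, and differentiating the relation $\mathcal{F}_\sigma^\star(b)=b+\sigma^\star b$ along $\nabla_X$ extracts the term $\mu_2^\sigma(q_1,q_2)X=(\nabla_X\sigma)(q_1,q_2)\in\Omega^2(Q,B^*)$. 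Together these give $\mu^\sigma=\id\oplus\sigma\oplus\nabla_\cdot\sigma$, and since $\mu_0=\id$ is invertible the triangular map $\mu^\sigma$ is an isomorphism.

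As a consistency check I would confirm that the explicit components satisfy the morphism equations of Proposition \ref{morphism_of_3-term_representations} by feeding in the transformation formulae of Lemma \ref{Identities_for_different_splitting_of_Lie_2-algebroid}; this is redundant given the intrinsic argument above but provides a useful verification of signs. The main obstacle I anticipate is the bookkeeping in the third computation: isolating the degree-$0$ lift's dependence on the splitting and showing that the correction is exactly $\nabla_X\sigma$ (with the correct placement in the two $Q$-form slots and no spurious $\Gamma(B)$-factor) requires care with the way $\mu_\nabla$ encodes the connections and with the sign conventions in the Leibniz rule for derivations. Once this term is pinned down, the remaining components follow from the straightforward behaviour of the hat-map under the coordinate shift $\mathcal{F}_\sigma$.
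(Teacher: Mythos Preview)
Your proposal is correct and follows essentially the same route as the paper: define $\mu^\sigma$ as the composition $(\mu_\nabla^2)^{-1}\circ\mu_\nabla^1$ through the intrinsic adjoint module $(\mathfrak{X}(\M),\ldr{\Q})$, deduce that it intertwines the differentials automatically (exactly as in the proof of Proposition~\ref{Isomorphism with change of connections}), and then read off the components $\id\oplus\sigma\oplus\nabla_\cdot\sigma$ from the action on generators using $\mathcal{F}_\sigma^\star(b)=b+\sigma^\star b$. Your added consistency check against Lemma~\ref{Identities_for_different_splitting_of_Lie_2-algebroid} is not needed for the argument but is a reasonable verification of signs.
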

	
	\section{Adjoint representation of a Lie $n$-algebroid}\label{Adjoint representation of a Lie n-algebroid}
	
	The construction of the adjoint representation up to homotopy of a Lie
	$n$-algebroid $(\M,\Q)$ for general $n\in\mathbb{N}$ is similar to the case $n=2$. Specifically, choose a splitting $\M\simeq \bigoplus_{i=1}^n A_i[i]$
	and $TM$-connections $\nabla^{A_i}$ on the vector bundles $A_i\to M$ for all $i=1,\ldots,n$. Then there is an
	induced isomorphism of $\cin(\M)$-modules
	\begin{align*}
	\mu_\nabla\colon \cin(\M)\otimes\Gamma(TM[0]\oplus A_1[1]\oplus\ldots\oplus A_n[n]) & \to \mathfrak{X}(\M),
	\end{align*}
	which at the level of generators is given by
	\begin{align*}
	\Gamma(A_i)\ni a & \mapsto \hat{a} \quad \text{and} \quad \mathfrak{X}(M)\ni X \mapsto \nabla^{A_n}_X \oplus \ldots \oplus \nabla^{A_1}_X.
	\end{align*}
	The isomorphism  $\mu$ is used to transfer the differential $\ldr{\Q}$ of the adjoint module $\mathfrak{X}(\M)$
	to the differential
	$\D_{\ad_\nabla} := \mu^{-1}_\nabla\circ\ldr{\Q}\circ\mu_\nabla$ on
	$\cin(\M)\otimes\Gamma(TM[0]\oplus A_1[1]\oplus\ldots\oplus A_n[n])$. We obtain thus the \textbf{adjoint representation of a Lie $n$-algebroid}\index{split Lie $n$-algebroid!adjoint representation}\index{adjoint representation} as the equivalence class of $(\ad_\nabla,\D_{\ad_{\nabla}})$ and the \textbf{coadjoint representation}\index{split Lie $n$-algebroid!adjoint representation}\index{coadjoint representation} as the equivalence class of its dual $(\ad_{\nabla}^*,\D_{\ad_{\nabla}}^*)$. Moreover, a formula similar to the transformation of the adjoint representation of a Lie $2$-algebroid from Proposition \ref{Isomorphism with change of connections} holds also for general Lie $n$-algebroids, $n\in\mathbb{N}$, equipped with two choices of $TM$-connections on the vector bundles $A_i\to M$, $i=1,\ldots,n$. This is proved in Proposition \ref{Isomorphism of A-representations for decompositions of VB-Lie n-algebroids} and explained in Remark \ref{Transformation of adjoint representation of split Lie n-algebroid}.
	
	\section{The Weil algebra of a Lie $n$-algebroid}\label{Section: The Weil algebra of a Lie n-algebroid}
	
	In this section, we analyse in detail the Weil algebra of a Lie $n$-algebroid\index{split Lie $n$-algebroid!Weil algebra} $(\M,\Q)$ using the adjoint representation from the previous section. Suppose $U\subset M$ is a coordinate chart of $\M$ with graded coordinates $\xi_i^1,\ldots,\xi_i^{r_i}$ of degree $i$, for all $i=1,\ldots,n$. Recall from Section \ref{Section: (Pseudo)differential forms and the Weil algebra of a Q-manifold}, that the tangent bundle $T\M$
	of $\mathcal M$ is an $[n]$-manifold over $TM$ \cite{Mehta06,Mehta09},
	whose local generators over $TU\subset TM$ are given in degree zero by $\cin_{TU}(T\M)^0 = C^\infty(TU)$ and in degree $i$ by $\xi_i^1,\ldots,\xi_i^{r_i},\diff\xi_i^1,\ldots,\diff\xi_i^{r_i}$. The shifted tangent bundle $T[1]\M$
	is an $[n+1]$-manifold over $M$, with the following local generators over $U$:
	\begin{itemize}
		\item \textbf{degree 0:} $C^\infty(U)$
		
		\item \textbf{degree 1:} $\xi_1^1,\ldots,\xi_1^{r_1}, \Omega^1(U)$
		
		\item \textbf{degree 2:} $\xi_2^1,\ldots,\xi_2^{r_2}, \diff\xi_1^1,\ldots,\diff\xi_1^{r_1}$
		
		\item[]  $\qquad \vdots$
		
		\item \textbf{degree $n$:} $\xi_n^1,\ldots,\xi_n^{r_n}, \diff\xi_{n-1}^1,\ldots,\diff\xi_{n-1}^{r_{{n-1}}}$
		
		\item \textbf{degree $n+1$:} $\diff\xi_n^1,\ldots,\diff\xi_n^{r_n}$.
	\end{itemize}
	
	\noindent
	In other words,
	the structure sheaf of $T[1]\M$ assigns to every coordinate domain
	$(U,x^1,\ldots,x^m)$ of $M$ that trivialises $\mathcal M$, the space
	\[
	\cin_U(T[1]\M) =
	\bigoplus_i\underset{(0,i)}{\underbrace{\cin_U(\M)^i}}\left<
	\underset{(1,0)}{\underbrace{(\diff x^k)_{k=1}^m}},
	\underset{(1,1)}{\underbrace{(\diff\xi_1^k)_{k=1}^{r_1}}},\ldots,
	\underset{(1,n)}{\underbrace{(\diff\xi_n^k)_{k=1}^{r_n}}}
	\right>,
	\]
	as in Section \ref{Section: (Pseudo)differential forms and the Weil algebra of a Q-manifold}.
	
	In the case of a Lie 1-algebroid $A\to M$, this is the Weil algebra
	from \cite{Mehta06,Mehta09}. For the 1-algebroid case, see also
	\cite{ArCr12} for an approach with without the language of supergeometry. Here we extend the approach of \cite{ArCr12} in the case of split Lie $n$-algebroids for general $n$.
	
	Suppose first that $\M = Q[1]\oplus B^*[2]$ is a split Lie 2-algebroid
	and consider two $TM$-connections on the vector bundles $Q$ and $B^*$,
	both denoted by $\nabla$. Recall from Section
	\ref{adjoint_module_adjoint_representation_isomorphism} the
	(non-canonical) isomorphism of DG $\M$-modules
	\[
	\mathfrak{X}(\M)\cong\cin(\M)\otimes\Gamma(TM[0]\oplus Q[1]\oplus B^*[2]).
	\]
	This implies that
	\[
	\Omega^1(\M)\cong\cin(\M)\otimes\Gamma(B[-2]\oplus Q^*[-1]\oplus T^*M[0])
	\]
	as (left) DG $\M$-modules, and thus the generators of the Weil algebra can be
	identified with
	\[
	\underset{(0,t)}{\underbrace{\cin(\M)^t}}, 
	\underset{(u,0)}{\underbrace{\Gamma(\wedge^uT^*M)}},
	\underset{(\upsilon,\upsilon)}{\underbrace{\Gamma(S^\upsilon Q^*)}}, 
	\underset{(w,2w)}{\underbrace{\Gamma(\wedge^w B)}}.
	\]
	Using also that
	$\cin(\M)^t=\bigoplus_{t=r+2s} \Gamma(\wedge^rQ^*)\otimes\Gamma(S^s
	B)$, the space of $(p,q)$-forms is decomposed as
	\begin{align*}
	W^{p,q}(\M,\nabla) = & \bigoplus_{\substack{q=t+v+2w \\ p=u+w+v}} \cin(\M)^t\otimes
	\Gamma\left( \wedge^uT^*M\otimes S^vQ^*\otimes \wedge^wB \right) \\
	= & \bigoplus_{\substack{q=r+2s+v+2w \\ p=u+w+v}} \Gamma\left( \wedge^uT^*M\otimes
	\wedge^rQ^*\otimes S^vQ^*\otimes \wedge^wB\otimes S^sB \right).
	\end{align*}
	Therefore, after a choice of splitting and $TM$-connections $\nabla$
	on $Q$ and $B^*$, the total space of the Weil algebra of $\M$ can be
	written as
	
	\[
	W(\M,\nabla) = \bigoplus_{r,s,u,v,w} \Gamma\left(
	\wedge^uT^*M\otimes \wedge^rQ^*\otimes S^vQ^*\otimes
	\wedge^wB\otimes S^sB \right).
	\]
	
	The next step is to express the differentials $\ldr{\Q}$ and $\dr$ on
	$W(\M,\nabla)$ in terms of the two $TM$-connections $\nabla$. For the
	vertical differential, recall that by definition the $p$\,-th column of
	the double complex $W(\M,\nabla)$ equals the space of $p$-forms
	$\Omega^p(\M)$ on $\M$ with differential given by the Lie derivative
	$\ldr{\Q}$. Due to the identification of DG $\M$-modules
	\[
	\Omega^p(\M)=\Omega^1(\M)\wedge\ldots\wedge\Omega^1(\M)
	=\cin(\M)\otimes\Gamma(\ad_\nabla^*\wedge\ldots\wedge\ad_\nabla^*)
	\]
	($p$-times) and the Leibniz identity for $\ldr{\Q}$, it follows that
	the $p$-th column of $W(\M,\nabla)$ becomes the $p$-symmetric power of
	the coadjoint representation $\underline{S}^p(\ad_\nabla^*)$ and
	$\ldr{\Q}=\D_{\underline{S}^p(\ad_\nabla^*)}$.
	
	The horizontal differential $\dr$ is built from two 2-representations of
	the tangent Lie algebroid $TM$, namely the dualisation of the $TM$-representations on the graded vector bundles
	$\E_{Q}=Q[0]\oplus Q[-1]$ and $\E_{B^*}=B^*[0]\oplus B^*[-1]$
	whose
	differentials are given by the chosen $TM$-connections
	$(\id_Q,\nabla,R_\nabla)$ and $(\id_{B^*},\nabla,R_\nabla)$,
	respectively. Indeed, suppose first that $\tau\in\Gamma(Q^*)$ and
	$b\in\Gamma(B)$ are functions on $\M$, i.e.~$0$-forms. Then from Remark \ref{Iso_coad_mod_coad_rep}, it follows
	that $\dr$ acts via
	\begin{equation}\label{d acting on functions}
		\dr\tau = \tau + \diff_{\nabla^*}\tau\qquad \text{and}\qquad \dr b
		= b + \diff_{\nabla^*}b.
	\end{equation}
	If now $\tau\in\Gamma(Q^*),b\in\Gamma(B)$ are 1-forms on $\M$, then
	\[
	\dr\tau=\dr(\tau+\diff_{\nabla^*}\tau-\diff_{\nabla^*}\tau)
	=\dr^2\tau-\dr(\diff_{\nabla^*}\tau)=\diff_{\nabla^*}\tau-\diff_{\nabla^*}^2\tau,
	\]
	\[
	\dr b=\dr(b+\diff_{\nabla^*}b-\diff_{\nabla^*}b)=\dr^2b-\dr(\diff_{\nabla^*}b)=\diff_{\nabla^*}b-\diff_{\nabla^*}^2b,
	\]
	where in both lines the second equation uses the formulae in (\ref{d acting on functions}) and the last equation can be seen by a direct computation, e.g., in local coordinates.

	\begin{remark}
		Note that if $B^*=0$, i.e.~$\M$ is an ordinary Lie algebroid
		$A\to M$, the above construction recovers (up to isomorphism) the
		connection version of the Weil algebra $W(A,\nabla)$ from
		\cite{ArCr11,ArCr12,Mehta09}.
	\end{remark}
	
	In the general case of a split Lie $n$-algebroid
	$\M=A_1[1]\oplus\ldots\oplus A_n[n]$ with a choice of $TM$-connections
	on all the bundles $A_i$, one may apply the same procedure as above to
	obtain the (non-canonical) DG $\M$-module isomorphisms
	\[
	\mathfrak{X}(\M)\cong\cin(\M)\otimes\Gamma(TM[0]\oplus A_1[1]\oplus\ldots\oplus A_n[n])
	\]
	\[
	\Omega^1(\M)\cong\cin(\M)\otimes\Gamma(A_n^*[-n]\oplus\ldots\oplus A_1^*[-1]\oplus T^*M[0]),
	\]
	and hence the identification of the generators of the Weil algebra
	with
	
	\[
	\underset{(0,t)}{\underbrace{\cin(\M)^t}}, 
	\underset{(u,0)}{\underbrace{\Gamma(\wedge^{u}T^*M)}},
	\underset{(\upsilon_1,\upsilon_1)}{\underbrace{\Gamma(S^{\upsilon_1} A_1^*)}},
	\underset{(\upsilon_2,2\upsilon_2)}{\underbrace{\Gamma(\wedge^{\upsilon_2} A_2^*)}},\ldots, 
	\underset{(\upsilon_n,n\upsilon_n)}{\underbrace{\Gamma(\wedge^{\upsilon_n} A_n^*)}}.
	\]
	
	\noindent
	This then yields
	\begin{align*}
	W^{p,q}(\M,\nabla) = & \bigoplus_{\substack{q=t+v_1+2v_2+\ldots \\ p=u+v_1+v_2+\ldots}} \cin(\M)^t\otimes\Gamma\left( \wedge^uT^*M\otimes S^{v_1}A_1^*\otimes \wedge^{v_2}A_2^*\otimes\ldots \right) \\
	= & \bigoplus_{\substack{q=r_1+v_1+2r_2+2v_2+\ldots \\ p=u+v_1+v_2+\ldots}} \Gamma\left( \wedge^uT^*M\otimes \wedge^{r_1}A_1^*\otimes S^{v_1}A_1^*\otimes S^{r_2}A_2^*\otimes \wedge^{v_2}A_2^*\otimes\ldots \right).
	\end{align*}
	Similar considerations as before imply that the $p$\,-th column of
	$W(\M,\nabla)$ is given by $\underline{S}^p(\ad_\nabla^*)$ with
	$\ldr{\Q}=\D_{\underline{S}^p(\ad_\nabla^*)}$, and that $\dr$ is built
	again by the dualisation of the 2-representations of $TM$ on the
	graded vector bundles $\underline{E}_{A_i}=A_i[0]\oplus A_i[-1]$, for
	$i=1,\ldots,n$, whose differentials are given by
	$(\id_{A_i},\nabla,R_{\nabla})$.
	
	\section{Poisson Lie algebroids of low degree}\label{Section: Poisson Lie algebroids of low degree}
	
	This section describes in detail the degree $-n$ (anti-)morphism
	$\sharp\colon\ad_\nabla^*\to\ad_\nabla$ of (left) right $n$-representations in the case of Poisson Lie
	$n$-algebroids with a degree $-n$ bracket for $n=0,1,2$. Recall that the map $\sharp$ sends an
	exact $1$-form $\diff\xi$ of the graded manifold $\M$ to the vector
	field $\{\xi,\cdot\}$.
	
	First, consider a Poisson Lie 0-algebroid, i.e.~a usual Poisson
	manifold $(M,\{\cdot\,,\cdot\})$. Then the Lie $0$-algebroid is just
	$M$, with a trivial homological vector field -- it can be thought of as a
	trivial Lie algebroid $A=0\times M\to M$ with trivial differential $\diff_A=0$, and consequently trivial homological vector field.  The
	coadjoint and adjoint representations are just the vector bundles
	$T^*M[0]$ and $TM[0]$, respectively, with zero module differentials,
	and the map $\sharp$ simply becomes the usual vector bundle map
	induced by the Poisson bivector field that corresponds to the Poisson
	bracket
	\[
	\sharp\colon T^*M[0] \to TM[0].
	\]
	
	Consider a Lie algebroid $A\to M$ with anchor $\rho\colon A\to TM$ and
	a linear Poisson structure $\{\cdot\,,\cdot\}$, i.e.~a Lie algebroid
	structure on the dual $A^*\to M$. As we explained before, the latter is equivalent to a Poisson bracket of degree $-1$ on the $[1]$-manifold $A[1]$ and $(A[1],\{\cdot\,,\cdot\})$ is a Poisson Lie algebroid if and only if $(A,A^*)$ is a Lie bialgebroid \cite{MaXu00}. Recall also that the correspondence is given by $\rho'\colon A^*\to TM$,
	$\alpha\mapsto \{\alpha,\cdot\}|_{C^\infty(M)}$ and
	$[\cdot\,,\cdot]_*:=\{\cdot\,,\cdot\}|_{\Omega^1(A)\times\Omega^1(A)}$. After a choice of a
	$TM$-connection $\nabla$ on the vector bundle $A$, the map
	$\sharp\colon \ad_\nabla^*\to\ad_\nabla$ acts via
	\[
	\sharp(\diff f)=\sharp_0(\diff f)= \{f,\cdot\}=-\rho'^*(\diff f)\in\Gamma(A)
	\]
	\[
	\sharp(\beta)=\sharp_0(\beta)+\sharp_1(\cdot)\beta\in\mathfrak{X}(M)\oplus(\Gamma(A^*)\otimes\Gamma(A))
	\]
	for all $f\in C^\infty(M),\beta\in\Gamma(A)$, where we consider
	$\beta$ as an \textit{1-form} on $A[1]$ and identify it with $\dr\beta-\diff_{\nabla^*}\beta\in\Omega^1(A[1])$ as in Remark \ref{Iso_coad_mod_coad_rep}, i.e.~$\sharp(\beta) = \sharp(\dr\beta) - \sharp(\diff_{\nabla^*}\beta) =
	\{\beta,\cdot\}-\sharp(\diff_{\nabla^*}\beta)$. Computing how these
	act on $\alpha\in\Omega^1(M)$ and $g\in C^\infty(M)$, viewed as
	functions of the graded manifold $A[1]$, gives the components of $\sharp(\beta)$: From the right-hand-side of the equation we obtain
	\[
	\left(\sharp_0(\beta)+\sharp_1(\cdot)\beta\right)g = \sharp_0(\beta)g\in C^\infty(M)
	\]
	while from the left-hand-side we obtain
	\[
	\sharp(\beta)g = \sharp(\dr \beta - \diff_{\nabla^*}\beta)g  = 
	\{\beta,g\} - \sharp(\diff_{\nabla^*}\beta)g =  \rho'(\beta)g.
	\]
	From this, it follows that $\sharp_0(\beta) = \rho'(\beta)$. Using now this, the right-hand-side gives
	\[
	\left(\sharp_0(\beta)+\sharp_1(\cdot)\beta\right)\alpha =  \nabla^*_{\rho'(\beta)}\alpha + \left(\sharp_1(\cdot)\beta\right)\alpha\in\Gamma(A^*)\oplus\Gamma(A^*)
	\]
	while the left-hand-side gives
	\[
	\sharp(\beta)\alpha = \sharp(\dr \beta - \diff_{\nabla^*}\beta)\alpha  = 
	\{\beta,\alpha\} - \sharp(\diff_{\nabla^*}\beta)\alpha =
	[\beta,\alpha]_* + \nabla^*_{\rho'(\alpha)}\beta = (\nabla^*)^{\text{bas}}_\beta\alpha.
	\]
	This implies that $(\sharp_1(\cdot)\beta)\alpha = (\nabla^*)^{\text{bas}}_\beta\alpha - \nabla^*_{\rho'(\alpha)}\beta$ and thus $\sharp$ consists
	of the ($-1$)-chain map $\sharp_0$ given by the anti-commutative diagram
	\[
	\begin{tikzcd}
	T^*M[0] \arrow[rr, "-\rho^*"] \arrow[dd, "-\rho'^*"'] & & A^*[-1] \arrow[dd, "\rho'"] \\
	& & \\
	A[1] \arrow[rr, "\rho"']           &                 & TM  [0]                  
	\end{tikzcd}
	\]
	together with
	$\sharp_1(a)\beta = \langle (\nabla^*)^{\text{bas}}_\beta(\cdot) -
	\nabla^*_{\rho'(\beta)}(\cdot),a
	\rangle\in\Gamma(A^{**})\simeq\Gamma(A)$, for all
	$\beta\in\Gamma(A^*)$ and $a\in\Gamma(A)$.
	
	By Theorem
	\ref{thm_poisson}, $\sharp$ is an (anti-)morphism of $2$-representations if
	and only $(A[1],\diff_A,\{\cdot\,,\cdot\})$ is a Poisson Lie
	$1$-algebroid. Hence, $\sharp$ is an (anti-)morphism of $2$-representations if
	and only if $(A,A^*)$ is a Lie bialgebroid.  Similarly,
	\cite{GrJoMaMe18} shows that $\ad_\nabla^*$ and $\ad_\nabla$ form a
	\emph{matched pair} if and only if $(A,A^*)$ is a Lie bialgebroid.
	
	Note that $(A,\{\cdot\,,\cdot\})$ is a Poisson Lie algebroid if the
	induced vector bundle morphism $\sharp\colon T^* A\to TA$ over $A$ is
	a VB-algebroid morphism over $\rho'\colon A^*\to TM$
	\cite{MaXu00}. Then the fact that
	$\sharp\colon \ad_\nabla^*\to\ad_\nabla$ is an (anti-)morphism of
	$2$-representations follows immediately \cite{DrJoOr15}, since
	$\ad_\nabla^*$ and $\ad_\nabla$ are equivalent to decompositions of
	the VB-algebroids $(T^*A\to A^*, A\to M)$ and $(TA\to TM, A\to M)$,
	respectively.
	
	Now we consider the case of 2-algebroids. First recall that a
	symplectic Lie 2-algebroid over a point, that is, a Courant algebroid
	over a point, is a usual Lie algebra $(\mathfrak{g},[\cdot\,,\cdot])$
	together with a non-degenerate pairing
	$\langle \cdot\,,\cdot \rangle\colon
	\mathfrak{g}\times\mathfrak{g}\to\mathfrak{g}$, such that
	\[
	\langle [x,y],z \rangle + \langle y,[x,z] \rangle = 0
	\]
	for all $x,y,z\in\mathfrak{g}$. Using the adjoint and coadjoint representations
	$\ad\colon\mathfrak g\to \End(\mathfrak g)$, $x\mapsto [x,\cdot]$, and
	$\ad^*\colon \mathfrak g\to\End(\mathfrak g^*)$, $x\mapsto -\ad(x)^*$, and
	denoting the canonical linear isomorphism induced by the pairing by
	$P\colon \mathfrak{g}\to\mathfrak{g^*}$, the equation above reads
	\[
	P(\ad(x)y) = \ad^*(x)P(y)
	\]
	for all $x,y\in\mathfrak{g}$. In other words, this condition is precisely what is needed to turn the
	vector space isomorphism $P$ into an isomorphism of Lie algebra
	representations between $\ad$ and $\ad^*$. In fact, the map of representations up to homotopy
	$\sharp\colon \ad^*\to\ad$
	for Poisson Lie 2-algebroids is
	a direct generalisation of this construction.
	
	Let $B\to M$ be a usual Lie algebroid with a 2-term representation
	$(\nabla^Q,\nabla^{Q^*},R)$ on a complex $\partial_Q\colon Q^*\to
	Q$. The representation is called \textbf{self dual}\index{Lie algebroid!self-dual representation} \cite{Jotz18b} if
	it equals its dual, i.e.~$\partial_Q=\partial_Q^*$, the connections
	$\nabla^Q$ and $\nabla^{Q^*}$ are dual to each other, and
	$R^*=-R\in\Omega^2(B,\Hom(Q,Q^*))$,
	i.e.~$R\in\Omega^2(B,\wedge^2Q^*)$.  \cite{Jotz18b} further shows that
	Poisson brackets $\{\cdot\,,\cdot\}$ on a split Lie 2-algebroid
	$Q[1]\oplus B^*[2]$ correspond to self dual 2-representations of $B$
	on $Q^*[1]\oplus Q[0]$ as follows: the bundle map $\partial_Q\colon
	Q^*\to Q$ is
	$\tau\mapsto\{ \tau,\cdot \}|_{\Omega^1(Q)}$, the anchor $\rho_B\colon B\to TM$ is
	$b\mapsto\{ b,\cdot \}|_{C^\infty(M)}$, the $B$-connection on $Q^*$ is given by
	$\nabla^{Q^*}_b\tau=\{b,\tau\}$, and the 2-form $R$ and the Lie bracket of
	$B$ are defined by
	$\{b_1,b_2\} = [b_1,b_2] - R(b_1,b_2)\in\Gamma(B)\oplus\Omega^2(Q)$.
	
	Fix now a Poisson Lie 2-algebroid $(\M,\Q,\{\cdot\,,\cdot\})$ together
	with a choice of a splitting $Q[1]\oplus B^*[2]$ for $\M$, a pair of
	$TM$-connections on $B^*$ and $Q$, and consider the representations
	$\ad_\nabla$ and $\ad_\nabla^*$. Similarly as before, we have that
	\[
	\sharp(\diff f) = \sharp_0(\diff f) = \{f,\cdot\} = -\rho_B^*(\diff f)\in\Gamma(B^*)
	\]
	\[
	\sharp(\tau) = \sharp_0(\tau) +
	\sharp_1(\cdot)\tau\in\Gamma(Q)\oplus(\Omega^1(Q)\otimes\Gamma(B^*))
	\]
	\[
	\sharp(b) = \sharp_0(b) + \sharp_1(\cdot)b + \sharp_2(\cdot\,,\cdot)b +
	\sharp^b(\cdot)b\in\mathfrak{X}(M)\oplus\Omega^1(Q,Q)\oplus\Omega^2(Q,B^*)\oplus(\Gamma(B)\otimes\Gamma(B^*))
	\]
	for $f\in C^\infty(M),\tau\in\Gamma(Q^*),b\in\Gamma(B)$, where we
	identify again $\tau$ with $\dr\tau-\diff_{\nabla^*}\tau$ and $b$ with
	$\dr b-\diff_{\nabla^*}b$ as in Remark \ref{Iso_coad_mod_coad_rep}. Then the map
	$\sharp\colon \ad^*_\nabla\to\ad_\nabla$ consists of the ($-2$)-chain
	map given by the anti-commutative diagram
	
	\[
	\begin{tikzcd}
	T^*M[0] \arrow[rr, "-\rho_Q^*"] \arrow[dd, "-\rho_B^*"'] & & Q^*[-1]
	\arrow[rr, "-\partial_B"] \arrow[dd, "\partial_Q"]
	& & B[-2] \arrow[dd, "\rho_B"] \\
	& & & & \\
	B^*[2] \arrow[rr, "-\partial_B^*"'] & & Q[1] \arrow[rr, "\rho_Q"'] & & TM[0]
	\end{tikzcd}
	\]
	and the elements
	\[
	\sharp_1(q)\tau = \langle \tau, \nabla^Q_\cdot q - \nabla_{\rho_B(\cdot)}q \rangle\in\Gamma(B^*)
	\]
	\[
	\sharp_1(q)b = \nabla_b^Q q - \nabla_{\rho_B(b)}q\in\Gamma(Q)
	\]
	for $q\in\Gamma(Q),\tau\in\Gamma(Q^*),b\in\Gamma(B)$,
	\[
	\sharp_2(q_1,q_2)b = - \langle R(b,\cdot)q_1,q_2 \rangle\in\Gamma(B^*)
	\]
	for $q_1,q_1\in\Gamma(Q),b\in\Gamma(B)$, where $R$ is the
	component that comes from the self-dual 2-representation of
	$B$ from the Poisson structure,
	\[
	\sharp^b(\beta)b = \langle \beta,\nabla^{\text{bas}}_b(\cdot) -
	\nabla^*_{\rho_B(b)}(\cdot) \rangle \in\Gamma(B^*)
	\]
	for $\beta\in\Gamma(B^*),b\in\Gamma(B)$. 
	
	Suppose now that the split Lie 2-algebroid is symplectic, i.e.~that it
	is of the form $E[1]\oplus T^*M[2]$ for a Courant algebroid $E\to
	M$. The only thing that is left from the construction in the Example
	\ref{Split_symplectic_Lie_2-algebroid_example} is a choice of a
	$TM$-connection on $TM$, and hence on the dual $T^*M$. The (anti-)isomorphism
	$\sharp\colon \ad_\nabla^*\to\ad_\nabla$ consists of the (-2)-chain
	map of the anti-commutative diagram
	\[
	\begin{tikzcd}
	T^*M[0] \arrow[rr, "-\rho^*"] \arrow[dd, "-\id"'] & & E^*[-1] \arrow[rr, "-\rho"] \arrow[dd, "P^{-1}"] & & TM[-2] \arrow[dd, "\id"] \\
	& & & & \\
	T^*M[2] \arrow[rr, "-\rho^*"'] & & E[1] \arrow[rr, "\rho"'] & & TM [0]
	\end{tikzcd}
	\]
	where $P\colon E\overset{\sim}{\to} E^*$ is the pairing, and the elements
	$\langle \sharp_2(e_1,e_2)X,Y \rangle = - \langle
	R_\nabla(X,Y)e_1,e_2 \rangle$ and
	$\langle \sharp^b(\alpha)X, Y \rangle = - \langle
	\alpha,T_\nabla(X,Y) \rangle$. Its inverse consists of the
	2-chain map given by the anti-commutative diagram
	\[
	\begin{tikzcd}
	T^*M[2] \arrow[rr, "-\rho^*"] \arrow[dd, "-\id"'] & & E[1] \arrow[rr, "\rho"] \arrow[dd, "P"] & & TM[0] \arrow[dd, "\id"] \\
	&&&& \\
	T^*M[0] \arrow[rr, "-\rho^*"']         &          & E^*[-1] \arrow[rr, "-\rho"']          &            & TM  [-2]              
	\end{tikzcd}
	\]
	and the elements
	$\langle \sharp^{-1}_2(e_1,e_2)X,Y \rangle = - \langle
	R_\nabla(X,Y)e_1,e_2 \rangle$ and
	$\langle (\sharp^{-1})^b(\alpha)X, Y \rangle = - \langle
	\alpha,T_\nabla(X,Y) \rangle$. In other words, $\sharp^2=\id$. If the
	connection on $TM$ is torsion-free, then the terms $\sharp^b$ and
	$(\sharp^{-1})^b$ vanish, as well. In particular, if the base manifold
	$M$ is just a point, then the bundles $TM$ and $T^*M$, and the
	elements $\sharp_2$ and $\sharp^{-1}_2$ are zero. Therefore, the map
	$\ad^*_\nabla\to\ad_\nabla$ reduces to the linear isomorphism of the pairing and agrees with the one explained above.
	
	\chapter{Linear structures on vector bundles}\label{Chapter: Linear structures on vector bundles}
	
	In this chapter, we consider the case where the structures defined before are are considered on vector bundles over graded manifolds and are in some sense ``linear". It puts the notions studied before in the context of linear structures on graded vector bundles over graded manifolds and compares with the existing literature.
	In what follows, suppose that $q:\mathcal{E}\to\M$ is a vector bundle in the category of $\mathbb{Z}$-graded manifolds.
	
	\section{Linear multivector fields on vector bundles}
	
	Recall from Section \ref{Section: Vector bundles over graded manifolds} the basic and linear functions of the graded manifold $\Em$; the former are elements of $q^\star(\cin(\M))$, while the latter are functions on $\Em$ which are linear in the fibre coordinates (i.e.~locally of the form $e^j\xi_j$, where $e^j$ are fibre coordinates of $\Em$ and $\xi_j$ are local functions on $\M$). Let $\widetilde{\X}$ be a degree $j$ vector field on the graded manifold $\Em$. Then $\widetilde{\X}$ is called \textbf{linear} if  the spaces of basic and linear functions are stable under its action, i.e.~if
	
	\[
	\widetilde{\X}(\cin_{\text{lin}}(\Em)) \subset \cin_{\text{lin}}(\Em)
	\qquad \text{and} \qquad 
	\widetilde{\X}(\cin_{\text{bas}}(\Em)) \subset \cin_{\text{bas}}(\Em).
	\]
	The graded subspace of \textbf{linear vector fields}\index{vector bundle over a graded manifold!linear vector field} of the vector bundle $\Em$ is denoted $\mathfrak{X}_\text{lin}(\Em)\subset\mathfrak{X}(\Em)$. It follows easily from the definition that $\mathfrak{X}_\text{lin}(\Em)$ is closed under the Lie bracket of vector fields, and hence, one obtains the graded Lie subalgebra $(\mathfrak{X}_\text{lin}(\Em),[\cdot\,,\cdot])$ of $(\mathfrak{X}(\Em),[\cdot\,,\cdot])$.
	
	Similarly to the ordinary case of vector bundles over smooth manifolds, it follows that a linear vector field $\widetilde{\X}$ induces a degree $j$ vector field on the base manifold $\X\colon\cin(\M)\to\cin(\M)$ and corresponds to a \textbf{degree $j$ derivation of $\Em^*$ (over $\X$): $\D^*\in\mathfrak{Der}^j(\Em^*)$}\index{vector bundle over a graded manifold!derivation}, i.e.~a degree $j$ linear map $\D^*\colon\Gamma(\Em^*)\to\Gamma(\Em^*)$ with the following property:
	\[
	\D^*(\xi\theta) = \X(\xi)\theta + (-1)^{j|\xi|}\xi \D^*(\theta),
	\]
	for all $\xi\in\cin(\M)$ and $\theta\in\Gamma(\Em^*)$. The correspondence is obtained via the formulae
	\[
	\widetilde{\X}(\ell_\theta) = \ell_{\D^*(\theta)}
	\qquad \text{and} \qquad
	\widetilde{\X}(q^\star(\xi)) = q^\star(\X(\xi))
	\] 
	for all $\xi\in\cin(\M)$ and $\theta\in\Gamma(\Em^*)$. A straightforward computation shows that given two linear vector fields $\widetilde{\X},\widetilde{\Y}\in\mathfrak{X}_\text{lin}(\Em)$,
	\[
	[\widetilde{\X},\widetilde{\Y}](\ell_\theta) = \ell_{[\D_X^*,\D_Y^*](\theta)}
	\qquad \text{and} \qquad
	[\widetilde{\X},\widetilde{\Y}](q^\star\xi) = q^\star\left( [\X,\Y](\xi) \right)
	\]
	for all $\xi\in\cin(\M)$ and $\theta\in\Gamma(\Em^*)$, and therefore, it follows that the map
	\[\mathfrak{X}_\text{lin}(\Em) \to \mathfrak{Der}(\Em^*):=\bigoplus_{j\in\mathbb{Z}}\mathfrak{Der}^j(\Em^*),\qquad (\widetilde{\X},\X)\mapsto \D_{(\widetilde{\X},\X)}
	\]
	is an isomorphism of graded Lie algebras, where the Lie bracket of $\mathfrak{Der}(\Em^*)$ is given by the graded commutator.
	
	A degree $j$ derivation $\D^*$ is called \textbf{flat} if $\D^*\circ\D^*=0$. Flat derivations of odd degree correspond to linear vector fields which square to zero. In particular, flat\index{vector bundle over a graded manifold!flat derivation} derivations of degree $1$ on $\Em^*$, i.e.~DG-module structures on $\Em^*$, correspond to linear homological vector fields on $\Em$. This will be analysed extensively in classical differential geometric terms in Chapter \ref{Chapter: Higher split VB-algebroid structures}.
	
	Locally, if $\{\xi^i\}$ are coordinates on $\M$ and $\{e^j\}$ are fibre coordinates of $\Em$ with dual coordinates $\{\theta_j\}$ on the fibres of $\Em^*$, then
	\[
	\widetilde{\X} = \zeta_i\frac{\partial}{\partial \xi^i} + e^k\eta_k^j\frac{\partial}{\partial e^j},
	\qquad
	\X = \zeta_i\frac{\partial}{\partial \xi^i},
	\qquad \text{and} \qquad
	\D_{(\widetilde{\X},\X)}(\theta_j) = e^k\eta_k^j
	\]
	where $e^k\eta_k^j\in\cin_{\text{lin}}(\Em)$ and $\eta_k^j,\zeta_i\in\cin(\M)$.
	
	Let now $k\in\mathbb{Z}$ and consider the space $\mathfrak{A}_k(\Em)$ together with its the Schouten bracket. 
	
	\begin{definition}
		An element $\widetilde{\X}\in\mathfrak{A}_k^{s,\bullet}(\Em)$ on a vector bundle $\Em$ is called \textbf{linear multivector field}\index{vector bundle over a graded manifold!linear multivector field} if the following conditions hold:
		\begin{enumerate}
			\item $[\cin_{\text{lin}}(\Em),[\ldots[\cin_{\text{lin}}(\Em),[\cin_{\text{lin}}(\Em),\widetilde{\X}]_k]_k\ldots]_k]_k \subset \cin_{\text{lin}}(\Em)$ ($s$ linear functions),
			\item $[\cin_{\text{lin}}(\Em),[\ldots[\cin_{\text{lin}}(\Em),[\cin_{\text{bas}}(\Em),\widetilde{\X}]_k]_k\ldots]_k]_k \subset \cin_{\text{bas}}(\Em)$ ($s-1$ linear and one basic functions),
			\item $[\cin_{\text{lin}}(\Em),[\ldots[\cin_{\text{bas}}(\Em),[\cin_{\text{bas}}(\Em),\widetilde{\X}]_k]_k\ldots]_k]_k = 0$ (two or more basic functions).
		\end{enumerate}
	\end{definition}
	
	The space of linear multivector fields on the vector bundle $\Em$ will be denoted $\mathfrak{A}_{\text{lin},k}^{\bullet,\bullet}(\Em)$. From the above definition, it follows that the local description of a linear multivector field $\widetilde{\X}\in\mathfrak{A}_{\text{lin},k}^{s,\bullet}(\Em)$ is given by
	\[
	\widetilde{\X} = \zeta_{i,J} \frac{\partial}{\partial \xi^{i}}\wedge\frac{\partial}{\partial e^J} + \lambda_{K}\frac{\partial}{\partial e^K}
	\]
	where $J$ and $K$ are multi-indices of degree $s-1$ and $s$, respectively, $\zeta_{i,J}\in\cin(\M)$ and $\lambda_{K}\in\cin_{\text{lin}}(\Em)$.
	\begin{remark}
		Note that the space of linear multivector fields is \textit{not} closed under the wedge product. This is easily seen from the local description because a product of two multivector fields may contain a term of the form $\frac{\partial}{\partial \xi^{i_1}}\wedge\frac{\partial}{\partial \xi^{i_2}}$.
	\end{remark}

\begin{remark}\label{Linear multivector fields - Wedge products and Lie brackets}
	If $\widetilde{\X},\widetilde{\Y}\in\mathfrak{X}_{\text{lin}}(\Em)$, then also $[\widetilde{\X},\widetilde{\Y}]\in\mathfrak{X}_{\text{lin}}(\Em)$.
	In particular, the vector fields $[\widetilde{\X},\widetilde{\Y}]$ is linear over $[\X,\Y]$.
\end{remark}
	
	\section{Linear $\Q$-manifold structures}
	
	A \textbf{linear $\Q$-manifold}\index{$\Q$-manifold!linear} is a vector bundle $q:\Em\to\M$ equipped with a \textbf{linear homological vector field}\index{homological vector field!linear}: $\widetilde{\Q}\in\mathfrak{X}_{\text{lin}}^1(\Em)$ such that $\widetilde{\Q}^2=0$. From the discussion above, it follows that given a linear $\Q$-manifold $\Em$ with homological vector field $\widetilde{\Q}$, there is an induced homological vector field $\Q$ on the base $\M$. Since the space of linear vector fields on $\Em$ is a graded subalgebra of $\mathfrak{X}(\Em)$, we obtain the cochain complex
	\[
	\ldr{\widetilde{\Q}}\colon\mathfrak{X}_{\text{lin}}^\bullet(\Em)\to\mathfrak{X}_{\text{lin}}^{\bullet+1}(\Em),\qquad \widetilde{\X}\mapsto[\widetilde{\Q},\widetilde{\X}].
	\]
	It is a subcomplex of $(\mathfrak{X}(\Em),\ldr{\widetilde{\Q}})$ and, as in Chapter \ref{Chapter: Graded tangent and cotangent bundles}, it is the DGLA governing the deformations of the linear  $\Q$-manifold structure of $\Em$. As usual, the infinitesimal deformations and the deformations of $\widetilde{\Q}$ are in one-to-one correspondence with degree 1 cochains and Maurer-Cartan elements of $(\mathfrak{X}_{\text{lin}}(\Em),\ldr{\widetilde{\Q}})$.
	
	\begin{example}[\textbf{Tangent $\Q$-manifold}\index{tangent $\Q$-manifold}]
		It was mentioned before that linear homological vector fields on the vector bundle $\Em$ correspond to DG-module structures on $\Em^*$, and consequently on $\Em$. Hence, the tangent bundle $\pr_{\M}\colon T\M\to\M$ of a $\Q$-manifold $(\M,\Q)$ carries a canonical linear $\Q$-manifold structure corresponding to the adjoint module $(\mathfrak{X}(\M),\ldr{\Q})$. Its linear homological vector field $\Q_T$ is characterised by
			\begin{equation}\label{Tangent prolongation of Q}
				\Q_T(\pr^\star_\M(\xi)) = \pr^\star_\M(\Q(\xi))
				\qquad \text{and} \qquad
				\Q_T(\ell_\theta) = \ell_{\ldr{\Q}(\theta)}
			\end{equation}
			for all $\xi\in\cin(\M)$ and all $\theta\in\Gamma(T^*\M)=\Omega^1(\M)$; here, $\ell_\theta$ is the linear function of $T\M$ that corresponds to the $1$-form $\theta\in\Omega^{1,\bullet}(\M)$. Locally, suppose that $\{\xi^i\}$ are coordinates in a chart of $\M$ such that $\Q$ takes the form
			\[
			\Q = \zeta^i\frac{\partial}{\partial \xi^i}
			\]
			for some local functions $\zeta^i\in\cin(\M)$. Denote the linear coordinates of $T\M$ over this chart by $\{\dot{\xi}^i\}$; that is, $\dot{\xi}^i$ is the linear function that corresponds to the $1$-form $\diff\xi^i$. Then an application of (\ref{Tangent prolongation of Q}) for the coordinate functions $\{\xi^i,\dot{\xi}^i\}$ of the graded manifold $T\M$ implies that the vector field $\Q_T$ has the following local expression:
			\[
			\Q_T = \zeta^i\frac{\partial}{\partial \xi^i} - \dot{\xi}^j\frac{\partial \zeta^s}{\partial \xi^j} \frac{\partial}{\partial \dot{\xi}^s}.
			\]
	\end{example}
	
	\section{Linear homotopy Poisson structures}
	
	A \textbf{linear Poisson structure of degree $k$}\index{graded Poisson bracket!linear} on a graded vector bundle $\Em\to\M$ is a degree $k$ Poisson bracket $\{\cdot\,,\cdot\}_k$ on the graded manifold $\Em$ such that
	\begin{enumerate}
		\item $\{\cin_{\text{lin}}(\Em),\cin_{\text{lin}}(\Em)\}_k \subset \cin_{\text{lin}}(\Em)$,
		\item $\{\cin_{\text{lin}}(\Em),\cin_{\text{bas}}(\Em)\}_k \subset \cin_{\text{bas}}(\Em)$,
		\item $\{\cin_{\text{bas}}(\Em),\cin_{\text{bas}}(\Em)\}_k = 0$.
	\end{enumerate}
	
	Using the corresponding Poisson bivector field $\pi\in\mathfrak{A}_k^{2,-k}(\Em)$ discussed in Section \ref{Section:(Pseudo)multivector fields and the Poisson-Weil algebra}, the above axioms may be rewritten as
	\begin{enumerate}
		\item $[[\cin_{\text{lin}}(\Em),\pi]_k,\cin_{\text{lin}}(\Em)]_k \subset \cin_{\text{lin}}(\Em)$,
		\item $[[\cin_{\text{lin}}(\Em),\pi]_k,\cin_{\text{bas}}(\Em)]_k \subset \cin_{\text{bas}}(\Em)$
		\item $[[\cin_{\text{bas}}(\Em),\pi]_k,\cin_{\text{bas}}(\Em)]_k = 0$.
	\end{enumerate}
	In other words, $\{\cdot\,,\cdot\}_k$ is a linear Poisson bracket on $\Em$ if and only if the corresponding Poisson bivector field is linear: $\pi\in\mathfrak{A}_{\text{lin},k}^{2,-k}(\Em)$. More generally, a \textbf{linear homotopy Poisson structure}\index{homotopy Poisson structure!linear} on a $\mathbb{Z}$-manifold $\M$ is an element $\Theta\in\mathfrak{A}_{\text{lin},k}^{2-k}(\Em)$ such that $[\Theta,\Theta]_k = 0$.
	
	Similarly as in Section \ref{Section:(Pseudo)multivector fields and the Poisson-Weil algebra}, due to the invariance of the space $\mathfrak{A}_{\text{lin},k}(\Em)$ under the Schouten bracket explained in Remark \ref{Linear multivector fields - Wedge products and Lie brackets}, one obtains the induced differential operator of bidegree $(1,0)$
	\[
	\diff_{\pi}:\mathfrak{A}_{\text{lin},k}^{\bullet,\bullet}(\Em)\to\mathfrak{A}_{\text{lin},k}^{\bullet+1,\bullet}(\Em)\qquad \widetilde{\X}\mapsto[\pi,\widetilde{\X}].
	\]
	It follows from the discussion above that there are two embeddings
	\[
	(\mathfrak{A}_{\text{lin},k}^{\bullet,\bullet}(\Em),\ldr{\widetilde{\Q}})\hookrightarrow(\mathfrak{A}_{k}^{\bullet,\bullet}(\Em),\ldr{\widetilde{\Q}})
	\qquad \text{and} \qquad
	(\mathfrak{A}_{\text{lin},k}^{\bullet,\bullet}(\Em),\diff_{\pi})\hookrightarrow(\mathfrak{A}_{k}^{\bullet,\bullet}(\Em),\diff_\pi),
	\] 
	and hence an embedding on the level of total complexes
	\[
	(\mathfrak{A}_{\text{lin},k}^{\bullet,\bullet}(\Em),\ldr{\widetilde{\Q}}+(-1)^{k-1}\diff_\pi)\hookrightarrow(\mathfrak{A}_{k}^{\bullet,\bullet}(\Em),\ldr{\widetilde{\Q}}+(-1)^{k-1}\diff_\pi).
	\]
	
	This yields the natural question of whether the induced map on cohomologies is also an embedding. Although the obvious answer is ``yes'', the proof requires some non-trivial work. For instance, consider the case of the Lie complex $\ldr{\widetilde{\Q}}:\mathfrak{X}^\bullet(\Em)\to \mathfrak{X}^{\bullet+1}(\Em)$ of $1$-vector fields of $\Em$ endowed with a linear homological vector field $\widetilde{\Q}$. One has to prove the following: Given $\widetilde{\X}\in\mathfrak{X}_{\text{lin}}^{\bullet}(\Em)$ with $\ldr{\widetilde{\Q}}(\Y) = \widetilde{\X}$ for some $\Y\in\mathfrak{X}(\Em)$ \textit{not necessarily linear}, there is a \textit{linear} vector field $\widetilde{\mathcal{Z}}\in\mathfrak{X}_{\text{lin}}(\Em)$ such that $\ldr{\widetilde{\Q}}(\mathcal{Z}) = \widetilde{\X}$. For the case of VB-algebroids, i.e.~linear $\mathbb{N}\Q$-manifolds of degree 1, it is solved in \cite{LaPastina20,LaVi18}, using classical differential geometric language and following an analytic approach. 
	
	\section{Graded Lie algebroids and linear $\mathcal{P}$-manifolds}
	
	Now we will prove a result which is familiar from ordinary Lie algebroids and linear Poisson structures over smooth manifolds. Namely, there is a correspondence between Lie algebroid structures of degree $k$ on the vector bundle $\mathcal{A}\to\M$ (in the sense of Mehta \cite{Mehta06,Mehta09}) and linear Poisson structures of degree $k$ on the dual bundle $\mathcal{A^*}$ viewed as a $\mathbb{Z}$-graded manifold.
	
	\begin{definition}
		A \textbf{(graded) Lie algebroid of degree $k$}\index{graded Lie algebroid} over a graded manifold $\M$ is a graded vector bundle $q\colon\mathcal{A}\to\M$ together with a degree $k$ anchor map $\rho$ and a degree $k$ Lie bracket on $\Gamma(\mathcal{A})$, i.e.~a degree $0$ vector bundle morphism $\rho\colon\Gamma(\mathcal{A})\to\mathfrak{X}(\M)[k]$ and a degree $0$ bilinear operator $[\cdot\,,\cdot]\colon\Gamma(\mathcal{A})\times\Gamma(\mathcal{A})\to\Gamma(\mathcal{A})[k]$ satisfying the following conditions:
		\begin{enumerate}
			\item $[\sigma,\tau] = -(-1)^{(|\sigma|+k)(|\tau|+k)}[\tau,\sigma]$,
			\item $[\sigma,[\tau,\upsilon]] = [[\sigma,\tau],\upsilon] + (-1)^{(|\sigma|+k)(|\tau|+k)}[\tau,[\sigma,\upsilon]]$,
			\item $[\sigma,\xi\tau] = (\rho(\sigma)(\xi))\,\tau + (-1)^{(|\sigma|+k)|\xi|}\,\xi\,[\sigma,\tau]$,
		\end{enumerate}
		for homogeneous elements $\xi\in\cin(\M)$ and $\sigma,\tau\in\Gamma(\mathcal{A})$.
	\end{definition}
	
	\begin{example}\label{Shifted tangent bundle as a graded Lie algebroid}
		Given any graded manifold $\M$, the shifted tangent bundle $T[k]\M$ with anchor $\rho=\id_{\mathfrak{X}(\M)[k]}$ and bracket the usual Lie bracket of vector fields $[\cdot\,,\cdot]$ is a Lie algebroid of degree $k$ over $\M$.
	\end{example}
	
	Consider a graded vector bundle $q\colon\mathcal{A}\to\M$ and its dual $p\colon\mathcal{A^*}\to\M$. Suppose that $\{\cdot\,,\cdot\}_k$ is a linear Poisson structure of degree $k$ on $\mathcal{A^*}$. Define a Lie algebroid structure of degree $k$ on $\mathcal{A}$ by the properties
	\begin{equation}\label{Graded Lie algebroids and graded linear Poisson brackets}
		\{\ell_\sigma,p^\star(\xi)\}_k = p^\star(\rho(\sigma)(\xi)),
		\qquad
		\{\ell_\sigma,\ell_{\tau}\}_k = \ell_{[\sigma,\tau]}
		\qquad \text{and}\qquad
		\{p^\star(\xi_1),p^\star(\xi_2)\}_k = 0
	\end{equation}
	for all $\xi_1,\xi_2\in\cin(\M)$ and all $\sigma,\tau\in\Gamma(\mathcal{A})$, where $\ell_\sigma$ and $\ell_{\tau}$ are the linear functions of $\mathcal{A^*}$ corresponding to the sections $\sigma$ and $\tau$, respectively. We show now that the graded anti-symmetry of the Poisson bracket $\{\cdot\,,\cdot\}_k$ is equivalent to the graded anti-symmetry of $[\cdot\,,\cdot]$, the Leibniz identity of $\{\cdot\,,\cdot\}_k$ is equivalent to $\rho(\sigma)$ being a degree $|\sigma|+k$ derivation of $\cin(\M)$, $\rho$ being $\cin(\M)$-linear and $[\cdot\,,\cdot]$ having the Leibniz identity, and the Jacobi identity of $\{\cdot\,,\cdot\}_k$ is equivalent to the Jacobi identity of $[\cdot\,,\cdot]$. In order to see this, consider homogeneous elements $\xi,\xi_1,\xi_2\in\cin(\M)$ and $\sigma,\tau,\upsilon\in\Gamma(\mathcal{A})$. Then we compute
	\[
	\ell_{[\sigma,\tau]} = \{\ell_\sigma,\ell_\tau\}_k = -(-1)^{(|\sigma|+k)(|\tau|+k)}\{\ell_\tau,\ell_\sigma\}_k = \ell_{-(-1)^{(|\sigma|+k)(|\tau|+k)}[\tau,\sigma]},
	\]
	\begin{align*}
		p^\star(\rho(\sigma)(\xi_1\xi_2)) = &\ \{\ell_\sigma,p^\star(\xi_1)\,p^\star(\xi_2)\}_k \\
		= &\ \{ \ell_\sigma,p^\star(\xi_1) \}_k\,p^\star(\xi_2) + (-1)^{(|\sigma|+k)|\xi_1|} p^\star(\xi_1)\{ \ell_\sigma,p^\star(\xi_2) \}_k \\
		= &\ p^\star(\rho(\sigma)(\xi_1)\,\xi_2 + (-1)^{(|\sigma|+k)|\xi_1|} \xi_1\rho(\sigma)(\xi_2)),
	\end{align*}
	\begin{align*}
		p^\star(\rho(\xi_1\sigma)(\xi_2)) = &\ \{\ell_{\xi_1\sigma},p^\star(\xi_2)\}_k \\
		= &\ \{p^\star(\xi_1)\,\ell_\sigma,p^\star(\xi_2)\}_k \\
		= & -(-1)^{(|\xi_2|+k)(|\xi_1|+|\sigma|+k)} \{p^\star(\xi_2),p^\star(\xi_1)\,\ell_\sigma\}_k \\
		= & -(-1)^{(|\xi_2|+k)(|\sigma|+k)}\, p^\star(\xi_1)\,\{p^\star(\xi_2),\ell_\sigma\}_k \\
		= &\ p^\star(\xi_1\rho(\sigma)(\xi_2)),
	\end{align*}
	\begin{align*}
		\ell_{[\sigma,\xi\tau]} = &\ \{ \ell_\sigma,p^\star(\xi)\,\ell_\tau \}_k \\
		= &\ \{ \ell_\sigma,p^\star(\xi) \}_k\, \ell_\tau + (-1)^{(|\sigma|+k)|\xi|}\, p^\star(\xi)\,\{\ell_\sigma,\ell_\tau\}_k \\
		= &\ p^\star(\rho(\sigma)(\xi))\, \ell_\tau + (-1)^{(|\sigma|+k)|\xi|}\, p^\star(\xi)\,\ell_{[\sigma,\tau]} \\
		= &\ \ell_{(\rho(\sigma)(\xi))\,\tau + (-1)^{(|\sigma|+k)|\xi|}\,\xi\,[\sigma,\tau]},
	\end{align*}
	\begin{align*}
		\ell_{[\sigma,[\tau,\upsilon]]} = &\ \{\ell_\sigma,\ell_{[\tau,\upsilon]}\}_k \\
		= &\ \{ \ell_\sigma,\{ \ell_\tau,\ell_\upsilon \}_k \}_k \\
		= &\ \{ \{ \ell_\sigma,\ell_\tau \}_k,\ell_\upsilon \}_k + (-1)^{(|\sigma|+k)(|\tau|+k)} \{ \ell_\tau,\{ \ell_\sigma,\ell_\upsilon \}_k \}_k \\
		= &\ \{ \ell_{[\sigma,\tau]},\ell_\upsilon \}_k + (-1)^{(|\sigma|+k)(|\tau|+k)}\{ \ell_\tau,\ell_{[\sigma,\upsilon]} \}_k \\
		= &\ \ell_{[\sigma,[\tau,\upsilon]] = [[\sigma,\tau],\upsilon] + (-1)^{(|\sigma|+k)(|\tau|+k)}[\tau,[\sigma,\upsilon]]}.
	\end{align*}
	
	Conversely, given a Lie algebroid structure of degree $k$ on $\mathcal{A}\to\M$, the formulae in (\ref{Graded Lie algebroids and graded linear Poisson brackets}) define the action of a linear bivector field $\pi\in\mathfrak{A}_{\text{lin},k}^{2,-k}(\mathcal{A^*})$ on the coordinate functions of $\mathcal{A^*}$, which characterises uniquely the linear Poisson bracket on the whole $\cin(\mathcal{A^*})$.
	
	\begin{example}[Symplectic structure on $T^*\text{[}1-k\text{]}\M$]
		The natural degree $k-1$ Lie algebroid structure on $T[k-1]\M$ obtained from a graded manifold $\M$ (Example \ref{Shifted tangent bundle as a graded Lie algebroid}) corresponds to the linear Poisson structure on the graded manifold $T^*[1-k]\M$ given by the canonical symplectic form $\omega_{\text{can}}$ defined in Example \ref{Symplectic structure of cotangent bundles}. In particular, recall that the algebra of functions of the graded manifold $T^*[1-k]\M$ is given in degree $i\in\mathbb{Z}$ by
		\[
		\mathfrak{A}_{k}^{i}(\M)=\bigoplus_{p+q=i}\mathfrak{A}_k^{p,q}(\M),
		\]
		with basic and linear functions
		\[
		\cin_{\text{bas}}(T^*[1-k]\M)^i = \bigoplus_{p+q=i}\mathfrak{A}_{\text{bas},k}^{p,q}(\M) 
		\qquad \text{and} \qquad
		\cin_{\text{lin}}(T^*[-k]\M)^i = \bigoplus_{p+q=i}\mathfrak{A}_{\text{lin},k}^{p,q}(\M) .
		\]
		The linear Poisson bracket coincides with the degree $k-1$ Schouten bracket defined in Section \ref{Section:(Pseudo)multivector fields and the Poisson-Weil algebra} and one may recover its local description from the formulae in (\ref{Graded Lie algebroids and graded linear Poisson brackets}). For this, write the Poisson bivector field $\pi$ in local coordinates as
		
		\[
		\pi = \kappa^{st} \frac{\partial}{\partial \xi^s}\wedge\frac{\partial}{\partial \xi^t} +
		\eta^{mn} \frac{\partial}{\partial e^m}\wedge\frac{\partial}{\partial e^n} +
		\theta^{ji}\frac{\partial}{\partial e^j}\wedge\frac{\partial}{\partial \xi^i}
		\]
		where $\{\xi^j\}$ are local coordinates on $\M$, $\{ e^j := \ell_{\partial/\partial \xi^j} \}$ are the linear coordinates of $T^*[1-k]\M$ corresponding to the local frame $\{\frac{\partial}{\partial \xi^j}\}$ of $T[k-1]\M$, and $\kappa^{st},\eta^{mn},\theta^{ji}$ are functions of $T^*[1-k]\M$ over this coordinate chart. We recover the functions $\kappa^{st},\eta^{mn},\theta^{ji}$ for the coordinate representation of $\pi$ by plugging the coordinate functions $\{\xi^j,e^j\}$ into $[\cdot\,,[\cdot\,,\pi]_k]_k$, as in Section \ref{Section:(Pseudo)multivector fields and the Poisson-Weil algebra}. The last formula in (\ref{Graded Lie algebroids and graded linear Poisson brackets}) implies $\kappa^{st} = 0$. The second formula in (\ref{Graded Lie algebroids and graded linear Poisson brackets}), together with
		\[
		\left[\frac{\partial}{\partial \xi^m},\frac{\partial}{\partial \xi^n}\right] = 0
		\qquad \Longleftrightarrow \qquad
		\{ e^m,e^n \}_k = 0,
		\]
		yields $\eta^{mn} = 0$. Finally, a straightforward computation using the anti-symmetry and the Leibniz rule of the Schouten bracket $[\cdot\,,\cdot]_k$, together with the first formula in (\ref{Graded Lie algebroids and graded linear Poisson brackets}), shows that
		\[
		\delta_{ij} =  \frac{\partial \xi^i}{\partial \xi^j} = \id\left(\frac{\partial}{\partial \xi^j}\right)(\xi^i) = \{ e^j,\xi^i \}_k = - [\xi^i,[e^j,\pi]_k]_k = (-1)^{|\xi^i|(k-1)}\theta^{ji}.
		\] 
		The coordinate transformation $\zeta^i:= (-1)^{|\xi^i|(k-1)}\xi^i$ brings $\pi$ into the form of Example \ref{Symplectic structure of cotangent bundles}:
		\[
		\pi = \sum_i \frac{\partial}{\partial e^i}\wedge\frac{\partial}{\partial \zeta^i}.
		\]
	\end{example}
	
	\begin{proposition}\label{Lie algebroids and linear Poisson brackets}
		Suppose $(\mathcal{A},\rho,[\cdot\,,\cdot])$ is a Lie algebroid of degree $k$ over $\M$ with corresponding linear Poisson bivector  $\pi$ on $\mathcal{A^*}$. Then a linear vector field $\widetilde{\X}\in\mathfrak{X}_{\text{lin}}^i(\mathcal{A^*})$ over $\X\in\mathfrak{X}(\M)$ is Poisson if and only if for all (homogeneous) $\sigma,\tau\in\Gamma(\mathcal{A})$
		\begin{enumerate}
			\item the corresponding operator $\D$ on $\Gamma(\mathcal{A})$ is a derivation of the Lie bracket:
			\[
			\D[\sigma,\tau] = [\D(\sigma),\tau] + (-1)^{i(\sigma+k)}[\sigma,\D(\tau)],
			\]
			\item $\D$ and $\ldr{\X}$ commute with $\rho$:
			\[
			\ldr{\X}(\rho(\sigma))=[\X,\rho(\sigma)]=\rho(\D(\sigma)).
			\]
		\end{enumerate}
	\end{proposition}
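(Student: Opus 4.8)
The plan is to exploit that, since $\mathcal{A}^*\to\M$ is a vector bundle, the algebra $\cin(\mathcal{A}^*)$ is generated (locally) by the basic functions $p^\star(\xi)$, $\xi\in\cin(\M)$, and the linear functions $\ell_\sigma$, $\sigma\in\Gamma(\mathcal{A})$, of degree $|\ell_\sigma|=|\sigma|$. Recall that $\widetilde{\X}$ over $\X$ corresponds to $\D$ via $\widetilde{\X}(\ell_\sigma)=\ell_{\D(\sigma)}$ and $\widetilde{\X}(p^\star(\xi))=p^\star(\X(\xi))$, that the linear Poisson bracket is characterised by \eqref{Graded Lie algebroids and graded linear Poisson brackets}, and that by definition $\widetilde{\X}$ is Poisson exactly when the defect
\[
\Phi(\xi_1,\xi_2):=\widetilde{\X}\{\xi_1,\xi_2\}_k-\{\widetilde{\X}(\xi_1),\xi_2\}_k-(-1)^{i(|\xi_1|+k)}\{\xi_1,\widetilde{\X}(\xi_2)\}_k
\]
vanishes identically on $\cin(\mathcal{A}^*)$, where $i=|\widetilde{\X}|$.

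The first step I would establish is the reduction to generators. Since $\widetilde{\X}$ is a graded derivation of the algebra and $\{\cdot\,,\cdot\}_k$ satisfies the graded Leibniz rule in each slot, a direct sign count shows that $\Phi$ is a graded biderivation, i.e.\ a derivation in each argument; as $\Phi(\cdot\,,1)=\Phi(1,\cdot)=0$, it then vanishes identically if and only if it vanishes on all pairs of the algebra generators above. This is where the main work lies: the biderivation property is routine but demands careful bookkeeping of the Koszul signs coming from $i$ and from the degree $k$ of the bracket. A clean alternative that sidesteps the sign count is to invoke the earlier Proposition characterising Poisson vector fields as those with $[\pi,\widetilde{\X}]_k=0$ together with Lemma~\ref{Triviality of a multivector field}: the $2$-vector field $[\pi,\widetilde{\X}]_k$ is detected by the iterated expressions $[\xi_2,[\xi_1,[\pi,\widetilde{\X}]_k]_k]_k$, which by the derived-bracket formula again reduce to testing $\Phi$ on generator pairs.

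It then remains to evaluate $\Phi$ on the three types of pairs. For $(\ell_\sigma,\ell_\tau)$, using $\{\ell_\sigma,\ell_\tau\}_k=\ell_{[\sigma,\tau]}$ and the injectivity of $\sigma\mapsto\ell_\sigma$ gives
\[
\Phi(\ell_\sigma,\ell_\tau)=\ell_{\D[\sigma,\tau]-[\D(\sigma),\tau]-(-1)^{i(|\sigma|+k)}[\sigma,\D(\tau)]},
\]
whose vanishing is precisely condition (1). For $(\ell_\sigma,p^\star(\xi))$, using $\{\ell_\sigma,p^\star(\xi)\}_k=p^\star(\rho(\sigma)(\xi))$ and dropping the injective $p^\star$ yields
\[
\X(\rho(\sigma)(\xi))-\rho(\D(\sigma))(\xi)-(-1)^{i(|\sigma|+k)}\rho(\sigma)(\X(\xi))=0,
\]
and recognising the first and third terms as $[\X,\rho(\sigma)](\xi)$ (with $|\rho(\sigma)|=|\sigma|+k$), the arbitrariness of $\xi$ turns this into condition (2), $\ldr{\X}(\rho(\sigma))=[\X,\rho(\sigma)]=\rho(\D(\sigma))$. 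Finally, for $(p^\star(\xi_1),p^\star(\xi_2))$ all three terms vanish because basic functions Poisson-commute and $\widetilde{\X}$ preserves basic functions, so $\Phi$ is automatically zero and imposes no constraint. Collecting the three cases shows that $\widetilde{\X}$ is Poisson if and only if (1) and (2) hold, which completes the argument.
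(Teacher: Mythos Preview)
Your proposal is correct and follows exactly the approach the paper indicates: the paper's proof is the single sentence ``Both conditions follow easily from the actions of the linear vector field and the linear Poisson bracket on basic and linear functions of $\mathcal{A}^*$'', and you have simply unpacked this by evaluating the Poisson defect $\Phi$ on the three types of generator pairs. Your reduction-to-generators step (either via the biderivation property of $\Phi$ or via $[\pi,\widetilde{\X}]_k=0$ and Lemma~\ref{Triviality of a multivector field}) and the three explicit computations are sound.
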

	\begin{proof}
		Both conditions follow easily from the actions of the linear vector field and the linear Poisson bracket on basic and linear functions of $\mathcal{A^*}$.
	\end{proof}
	
	\begin{remark}
		Linear Poisson vector fields on $\mathcal{A^*}$ are related to the ``morphic vector fields'' on $\mathcal{A}$ defined in \cite{Mehta06,Mehta09}. By definition, a vector field $\X$ on $\mathcal{A}$ is morphic if $[\diff_{\mathcal{A}},\X]=0$, where $\diff_{\mathcal{A}}$ is the Lie algebroid differential that encodes the bracket and the anchor of $\mathcal{A}$. As it is shown by Mehta, morphic vector fields can also be described as operators $\D$ on $\Gamma(\mathcal{A})$ with the properties of Proposition \ref{Lie algebroids and linear Poisson brackets}.
	\end{remark}
	
	\chapter{Higher split VB-algebroid structures}\label{Chapter: Higher split VB-algebroid structures}
	
	In this chapter, we introduce the notion of higher split VB-Lie algebroids as a generalisation of VB-algebroids. Intuitively, VB-algebroids may be viewed as vector bundles in the category of Lie algebroids and so in the same spirit higher VB-Lie algebroids may be thought of as vector bundles in the category of higher split Lie algebroids. They are essentially a classical differential geometric description of the linear structures that appeared in the previous chapter and we prove that they are in correspondence with representations up to homotopy.
	
	\section{Classical interpretation}\label{Section: Classical interpretation}
	
	Suppose that $(\underline{D},V,\A,M)$ is a double vector bundle (see Section \ref{Section: Double vector bundles, linear splittings and duals})
	together with graded vector bundle decompositions
	$\underline{D}=D_1[1]\oplus\ldots\oplus D_n[n]$ and
	$\A=A_1[1]\oplus\ldots\oplus A_n[n]$, over $V$ and $M$, respectively, which are compatible with the
	projection $\underline{D}\to\A$. This means that each of the
	individual squares $(D_i,V,A_i,M)$ also forms a double vector bundle\footnote{In supergeometric terminology, the double vector bundle $(\underline{D},V\A,M)$ is the splitting of a vector bundle whose total space and base are both $\n$-manifolds. See Appendix \ref{Appendix: The geometrisation of N-graded vector bundles} for more details about this correspondence.}.
	Schematically, this yields the following sequence of diagrams
	\[
	\begin{tikzcd}
	& D_1[1] \arrow[ldd] \arrow[rr] \arrow[d,symbol=\oplus] &   & A_1[1] \arrow[ldd, crossing over] \arrow[d,symbol=\oplus] \\
	& D_2[2] \arrow[ld] \arrow[rr, crossing over] \arrow[d,symbol=\oplus]  &   & A_2[2] \arrow[ld] \arrow[d,symbol=\oplus]  \\
	V \arrow[rr] & \vdots                     & M & \vdots \\
	& D_n[n] \arrow[lu] \arrow[rr] \arrow[u,symbol=\oplus]  &   & A_n[n] \arrow[lu] \arrow[u,symbol=\oplus] 
	\end{tikzcd}
	\]
	where all the ``planes" are double vector bundles. This yields that
	the core of $(\underline{D},V,\A,M)$ is the graded vector
	bundle $\underline{C}=C_1[1]\,\oplus\ldots\oplus\, C_n[n]$, where $C_i$ is the
	core of $(D_i,V,A_i,M)$, for  $i=1,\ldots,n$.
	
	\begin{definition}\label{VB_lien}
		The quadruple $(\underline{D},V,\A,M)$ is a \textbf{(split) VB-Lie
			$n$-algebroid}\index{VB-Lie $n$-algebroid} if 
		\begin{enumerate}
			\item the graded vector bundle $\underline{D}\to V$ 
			is endowed with a homological vector field
			$\Q_{\underline{D}}$, 
			\item the Lie $n$-algebroid structure of $\underline{D}\to V$
			is \textbf{linear}\index{Lie $n$-algebroid!linear} over $A\to M$, in the sense that
			\begin{enumerate}
				\item the anchor $\rho_D\colon D_1\to TV$ is a double vector bundle morphism,
				\item the map $\partial_{D_i}$ fits into a morphism
				of double vector bundles
				$(\partial_{D_i},\id_V,\partial_{A_i},\id_M)$ between $(D_i,V,A_i,M)$ and
				$(D_{i-1},V,A_{i-1},M)$ for all $i$,
				\item the multi-brackets of
				$\underline{D}$ satisfy the following relations:
				\begin{enumerate}
					\item the $i$-bracket of $i$ linear sections is a linear section;
					\item the $i$-bracket of $i-1$ linear sections with a core section is a core section;
					\item the $i$-bracket of $i-k$ linear sections with $k$ core sections, $i\geq k \geq 2$, is zero.
				\end{enumerate} 
			\end{enumerate} 
		\end{enumerate}
	\end{definition}
	
	\begin{remark}
		\begin{enumerate}
			\item A VB-Lie 1-algebroid as in the definition above is just a VB-algebroid.
			
			\item A VB-Lie $n$-algebroid structure on $(\underline{D},V,\A,M)$ defines uniquely a Lie $n$-algebroid structure on $\A\to M$ as follows: The anchor $\rho_{D}\colon D_1\to TV$ is linear over the anchor $\rho\colon A_1\to TM$, and if all $d_j\in\Gamma_V^l(\underline{D})$ are linear sections over $a_j\in\Gamma(\A)$ for $j=1,2,\ldots,i$, then the section $\llbracket d_1,d_2,\ldots,d_i \rrbracket_{\underline{D}}\in\Gamma_V^l(\underline{D})$ is linear over the section $\llbracket a_1,a_2,\ldots,a_i \rrbracket_{\A}\in\Gamma(\A)$. Therefore, the
			graded vector bundles $\underline{D}\to V$ and $\A\to M$ are endowed with homological vector fields $\Q_{\underline{D}}$
			and $\Q_{\A}$ for which the bundle projection $\underline{D}\to \A$ is a morphism of Lie $n$-algebroids over
			the projection $V\to M$. In particular, the homological vector field $\Q_{\A}$ on the $[n]$-manifold $\A=A_1[1]\oplus\ldots\oplus A_n[n]$ is determined by the equations
			\[
			\Q_{\underline{D}}(q_V^*f) = \pi_{\A}^\star(\Q_{\A}(f))
			\qquad \text{and} \qquad
			\Q_{\underline{D}}(\pi_{\A}^\star(\alpha)) = \pi_{\A}^\star(\Q_{\A}(\alpha))
			\]
			for $f\in C^\infty(M)$ and $\alpha\in\Gamma(\underline{S}(\A^*))$.
		\end{enumerate}
	\end{remark}
	
	We now need to define the right notion of morphism of VB-Lie $n$-algebroids. Intuitively, this is given by conditions similar to the bracket relations of a VB-Lie $n$-algebroid, i.e.~a set of linear sections will remain intact (due to the fixed side Lie $n$-algebroid $\A$), a set of linear sections and a core section is mapped to a core section, a set of linear sections and two or more core sections is mapped to zero. In order to make it precise, we consider the following bigrading on the functions of the Lie $n$-algebroid $\underline{D}\to V$:
	\begin{itemize}
		\item basic functions in $ q^*(C^\infty(M))\subset C^\infty(V)$ have bidegree $(0,0)$;
		
		\item basic functions in $\pi_{\A}^\star(\Gamma(\underline{S}^i(\A^*)))$ have bidegree $(0,i)$;
		
		\item linear functions $\ell_\psi\in C_{\text{lin}}^\infty(V)$, where $\psi\in\Gamma(V^*)$, have bidegree $(1,-1)$;
		
		\item linear functions corresponding to elements in $\Gamma(C_i^*)$ have bidegree $(1,i-1)$.
	\end{itemize}
	In fact, this is the bigrading of the function space $\cin(\underline{D}_V)$ of the $\n$-manifold $\underline{D}_V$ viewed as a vector bundle over the $\n$-manifold $\A$ in the category of graded manifolds.
	
	\begin{definition}\label{Definition of morphism of VB-Lie n-algebroids}
		\textbf{A morphism of VB-Lie $n$-algebroids}\index{VB-Lie $n$-algebroid!morphism} from $(\underline{D},V,\A,M)$ to $(\underline{D}',V',\A,M)$ (with fixed side $\A$) is a bidegree $(0,0)$ morphism of Lie $n$-algebroids $G_D\colon \underline{D} \to \underline{D}'$ over $G_V\colon V\to V'$ whose action on basic functions is trivial\footnote{By abuse of notation, we use the same symbol $\pi_{\A}$ to denote both vector bundle projections $\underline{D}\to \A$ and $\underline{D}'\to \A$.}: $G^\star_D(f\circ q_{V'}) = f\circ q_V$ and $G^\star_D(\pi_{\A}^\star(\alpha)) = \pi_{\A}^\star(\alpha)$ for all $f\in C^\infty(M)$ and all $\alpha\in\Gamma(\underline{S}(\A^*))$.
	\end{definition}
	
	\begin{remark}
		The condition of the trivial action of a VB-Lie $n$-algebroid morphism on basic functions is imposed because here we restrict our study to morphisms of representations up to homotopy over a fixed Lie $n$-algebroid. That is, our aim is to prove a one-to-one correspondence between morphisms of VB-Lie $n$-algebroids with the same side Lie $n$-algebroid $\A\to M$ and morphisms of representations up to homotopy of $\A$ fixing $\A$. Dropping this condition yields a degree-preserving morphism $G_{\A}^\star\colon\Gamma(\underline{S}(\A^*))\to \Gamma(\underline{S}(\A^*))$ satisfying $G^\star_D(\pi_{\A}^\star(\alpha)) = \pi_{\A}^\star(G_{\A}^\star(\alpha))$ and hence a morphism or representations up to homotopy over $G_{\A}^\star$.
	\end{remark}
	
	The resulting category is denoted $\mathbb{VB}\text{-Lie}_n(\A)$ and the set of isomorphy classes of this category is denoted $\text{VB-Lie}_n(\A)$. The category whose objects are the \textbf{decomposed VB-Lie $n$-algebroids}\index{VB-Lie $n$-algebroid!decomposed}, i.e.~VB-Lie $n$-algebroids with decomposed $(D_i,V,A_i,M)$, with morphisms defined as above is denoted $\mathbb{DVB}\text{-Lie}_n(\A)$ and the isomorphy classes $\text{DVB-Lie}_n(\A)$. In fact, since every double vector bundle admits a decomposition, given a VB-Lie $n$-algebroid $(\underline{D},V,\A,M)$, a choice of a decomposition for each double vector bundle $(D_i,V,A_i,M)$ gives a decomposition of $(\underline{D},V,\A,M)$. Therefore, we immediately obtain the following proposition.
	
	\begin{proposition}
		There is an equivalence of categories between $\mathbb{VB}\text{-Lie}_n(\A)$ and $\mathbb{DVB}\text{-Lie}_n(\A)$, and therefore an equivalence between the isomorphy classes $\text{VB-Lie}_n(\A)$ and $\text{DVB-Lie}_n(\A)$.
	\end{proposition}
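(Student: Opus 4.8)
The plan is to exhibit the obvious forgetful functor as an equivalence. A decomposed VB-Lie $n$-algebroid is in particular a VB-Lie $n$-algebroid, and by Definition \ref{Definition of morphism of VB-Lie n-algebroids} the morphisms in $\mathbb{DVB}\text{-Lie}_n(\A)$ are exactly the morphisms of the underlying VB-Lie $n$-algebroids with fixed side $\A$, with no further compatibility with the chosen decompositions demanded. Hence the forgetful functor
\[
\iota\colon \mathbb{DVB}\text{-Lie}_n(\A)\longrightarrow \mathbb{VB}\text{-Lie}_n(\A)
\]
is full and faithful, because the morphism sets on either side are by construction the same sets of VB-Lie $n$-algebroid morphisms fixing $\A$. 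By the standard characterisation of equivalences of categories, it then suffices to prove that $\iota$ is essentially surjective, after which the asserted bijection $\text{VB-Lie}_n(\A)\cong\text{DVB-Lie}_n(\A)$ on isomorphy classes follows formally, since an equivalence of categories induces a bijection on isomorphism classes of objects.

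For essential surjectivity I would begin with an arbitrary VB-Lie $n$-algebroid $(\underline{D},V,\A,M)$ and invoke the fact recalled in Section \ref{Section: Double vector bundles, linear splittings and duals} that every double vector bundle admits a decomposition. Choosing a decomposition $S_i\colon V\times_M A_i\times_M C_i \to D_i$ for each constituent double vector bundle $(D_i,V,A_i,M)$, $i=1,\ldots,n$, and assembling them degreewise yields an isomorphism
\[
S\colon \underline{D}_{\mathrm{dec}}:=\bigoplus_{i=1}^n (V\times_M A_i\times_M C_i)[i]\longrightarrow \underline{D}
\]
of graded double vector bundles over the identities on $V$, $\A$ and $M$. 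Transporting the homological vector field along $S$, i.e.\ setting $\Q_{\mathrm{dec}}:=S^\star\circ\Q_{\underline{D}}\circ(S^{-1})^\star$, turns $\underline{D}_{\mathrm{dec}}$ into a split Lie $n$-algebroid over $V$ for which $S$ is, tautologically, a morphism of $\Q$-manifolds.

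The key verification is then that $(\underline{D}_{\mathrm{dec}},V,\A,M)$ together with $\Q_{\mathrm{dec}}$ is genuinely a decomposed VB-Lie $n$-algebroid in the sense of Definition \ref{VB_lien}, and that $S$ is a morphism in $\mathbb{VB}\text{-Lie}_n(\A)$. Both reduce to the observation that, since $S$ covers the identities on the three sides, it preserves the splitting of $\Gamma_V(\underline{D})$ into core and linear sections as well as the bidegree $(p,q)$ on functions; consequently the double-vector-bundle linearity of the anchor and of the maps $\partial_{D_i}$, together with the three bracket conditions of Definition \ref{VB_lien}, transfer verbatim from $\underline{D}$ to $\underline{D}_{\mathrm{dec}}$. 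Moreover $S$ acts trivially on basic functions, again because it covers identities, so it meets the requirement of Definition \ref{Definition of morphism of VB-Lie n-algebroids} and is an isomorphism fixing $\A$. This exhibits $\underline{D}\cong\iota(\underline{D}_{\mathrm{dec}})$ and completes essential surjectivity. The only step carrying real content is this compatibility check, namely that the levelwise decompositions assemble into a single decomposition intertwining the whole graded structure with $\Q$; but this is precisely the assertion made in the paragraph preceding the statement, and it is routine once the grading-preservation of $S$ has been recorded.
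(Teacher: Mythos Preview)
Your proposal is correct and follows essentially the same approach as the paper: the paper's argument consists only of the sentence preceding the proposition, namely that levelwise decompositions of the double vector bundles $(D_i,V,A_i,M)$ assemble into a decomposition of $(\underline{D},V,\A,M)$, which is exactly your essential surjectivity step. You have simply made explicit the categorical scaffolding (full, faithful, essentially surjective) and the routine verifications that the paper leaves implicit.
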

	
	\begin{example}[\textbf{Tangent prolongation of a (split) Lie $n$-algebroid}\index{Lie $n$-algebroid!tangent prolongation}]\label{Tangent prolongation of a (split) Lie n-algebroid}
		The basic example of a split VB-Lie $n$-algebroid is obtained by
		applying the tangent functor to a split Lie $n$-algebroid
		$\A=A_1[1]\oplus\ldots\oplus A_n[n]\to M$. The double vector
		bundle is given by the diagram
		\[
		\begin{tikzcd}
		\underline{TA} \arrow[dd] \arrow[rr] & & \A \arrow[dd] \\
		& & \\
		TM \arrow[rr]          &             & M           
		\end{tikzcd}
		\]
		where the Lie $n$-algebroid structure of
		$\underline{TA}=T\A=TA_1[1]\oplus\ldots\oplus TA_n[n]$ over
		the manifold $TM$ is defined by the relations
		\begin{enumerate}
			\item $\rho_{TA}=J_M\circ T\rho_A\colon TA_1\to TTM$, where
			$J_M\colon TTM\to TTM$ is the canonical involution from Example \ref{Double tangent bundle}.
			
			\item $\llbracket Ta_{1},\ldots,Ta_{i}\rrbracket = T\llbracket a_{1},\ldots,a_{i}\rrbracket$,
			
			\item
			$\llbracket
			Ta_{1},\ldots,Ta_{i-1},a_{i}^\dagger\rrbracket =
			\llbracket
			a_{k_1},\ldots,a_{{i-1}},a_{i}\rrbracket^\dagger$,
			
			\item
			$\llbracket
			Ta_{1},\ldots,Ta_{j},a_{j+1}^\dagger,\ldots,a_{i}^\dagger\rrbracket
			= 0$ for all $1\le j\le i-2$,
			
			\item $\llbracket a_{1}^\dagger,\ldots,a_{i}^\dagger\rrbracket = 0$,
		\end{enumerate}	
		for all sections $a_{j}\in\Gamma(A_{k_j})$.
	\end{example}
	
	Applying the above construction to a split Lie 2-algebroid
	$Q[1]\oplus B^*[2]\to M$ with structure
	$(\rho_Q,\ell,\nabla^*,\omega)$ yields  as follows the objects
	$(\rho_{TQ},T\ell,T\nabla^*,T\omega)$ of the split Lie 2-algebroid
	structure of $TQ[1]\oplus TB^*[2]\to TM$: The complex
	$TB^*\to TQ\to TTM$ consists of the anchor of $TQ$ given by
	$\rho_{TQ}=J_M\circ T\rho_Q$, and the vector bundle map
	$T\ell\colon TB^*\to TQ$. The bracket of $TQ$ (over $TM$) is defined by the
	relations
	\[
	[Tq_1,Tq_2]_{TQ} = T[q_1,q_2]_Q,\qquad 
	[Tq_1,q_2^\dagger]_{TQ} = [q_1,q_2]_Q^\dagger,\qquad 
	[q_1^\dagger,q_2^\dagger]_{TQ} = 0,
	\]
	for $q_1,q_2\in\Gamma(Q)$. The $TQ$-connection
	$T\nabla^*\colon
	\Gamma_{TM}(TQ)\times\Gamma_{TM}(TB^*)\to\Gamma_{TM}(TB^*)$ is
	defined by
	\[
	(T\nabla^*)_{Tq}(T\beta) = T(\nabla^*_q\beta),\qquad
	(T\nabla^*)_{Tq}(\beta^\dagger) = (\nabla^*_q\beta)^\dagger = (T\nabla^*)_{q^\dagger}\beta,\qquad
	(T\nabla^*)_{q^\dagger}(\beta^\dagger) = 0,
	\]
	for $q\in\Gamma(Q)$ and $\beta\in\Gamma(B^*)$. Finally, the 3-form $T\omega\in\Omega^3(TQ,TB^*)$ is defined by
	\[
	(T\omega)(Tq_1,Tq_2,Tq_3) = T(\omega(q_1,q_2,q_3)),\qquad
	(T\omega)(Tq_1,Tq_2,q_3^\dagger) = \omega(q_1,q_2,q_3)^\dagger,
	\]
	\[
	(T\omega)(q_1,q_2^\dagger,q_3^\dagger) = 0 =(T\omega)(q_1^\dagger,q_2^\dagger,q_3^\dagger),
	\]
	for $q_1,q_2,q_3\in\Gamma(Q)$.
	
	\section{Supergeometric interpretation}\label{Section: Supergeometric interpretation}
	
	Now we give an equivalent description of a VB-Lie $n$-algebroid $(\underline{D},V,\A,M)$ with decomposed double vector bundles $(D_i,V,A_i,M)$ in terms of its homological vector field.
	
	\begin{proposition}\label{Homological vector field of VB-Lie n-algebroids}
		Suppose $(\underline{D},V,\A,M)$ is a decomposed double vector bundle. Then a Lie $n$-algebroid structure on the $\n$-manifold $\underline{D}_V\to V$ is a VB-Lie $n$-algebroid structure for $(\underline{D},V,\A,M)$ if and only if its corresponding homological vector field $\Q_{\underline{D}}$ satisfies the following relations:
		\begin{enumerate}
			\item $\Q_{\underline{D}}$ maps $q_V^*(C^\infty(M))$ into $\pi_{\A}^\star(\Gamma(A_1^*))$;
			\item $\Q_{\underline{D}}$ maps $C^\infty_{\text{lin}}(V)$ into $\Gamma(A_1^*\otimes V^*)\oplus\Gamma(C_1^*)$;
			\item $\Q_{\underline{D}}$ leaves $\pi_{\A}^\star(\Gamma(\underline{S}(\A)))$ invariant;
			\item $\Q_{\underline{D}}$ maps $\Gamma(C_i^*)$ into the degree $i+1$ elements of
			$\pi_{\A}^\star(\Gamma(\underline{S}(\A)))\otimes \Gamma(\underline{C}^*)$.
		\end{enumerate}
	\end{proposition}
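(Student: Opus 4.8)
The plan is to reduce the statement to the single assertion that $\Q_{\underline{D}}$ is a \emph{linear} homological vector field on the vector bundle $\underline{D}_V\to\A$ in the sense of Chapter \ref{Chapter: Linear structures on vector bundles}, and then to obtain the four displayed relations by a degree count against the bigrading $(p,q)$ described above. Recall that for this bundle the basic functions are exactly those of horizontal degree $p=0$, namely $q_V^*(C^\infty(M))$ and $\pi_{\A}^\star(\Gamma(\underline{S}(\A^*)))$, while the linear functions are those of horizontal degree $p=1$, namely $\Gamma(V^*)$ (of bidegree $(1,-1)$) and $\Gamma(C_i^*)$ (of bidegree $(1,i-1)$). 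Since $\cin(\underline{D}_V)$ is generated as an algebra by these functions and $\Q_{\underline{D}}$ is a (given) degree $1$ derivation, linearity of $\Q_{\underline{D}}$, i.e. $\Q_{\underline{D}}(\cin_{\text{bas}})\subseteq\cin_{\text{bas}}$ and $\Q_{\underline{D}}(\cin_{\text{lin}})\subseteq\cin_{\text{lin}}$, is equivalent to preservation of the horizontal degree $p$. First I would observe that, granting linearity, relations (1)--(4) are forced purely by counting degrees: a degree $1$ derivation preserving $p$ must send $q_V^*(C^\infty(M))$ (bidegree $(0,0)$) into bidegree $(0,1)=\pi_{\A}^\star(\Gamma(A_1^*))$, send $\Gamma(V^*)$ (bidegree $(1,-1)$) into bidegree $(1,0)=\Gamma(C_1^*)\oplus\Gamma(A_1^*\otimes V^*)$, leave $\pi_{\A}^\star(\Gamma(\underline{S}(\A^*)))$ invariant, and send $\Gamma(C_i^*)$ (bidegree $(1,i-1)$) into bidegree $(1,i)$, which is the degree $i+1$ part of $\pi_{\A}^\star(\Gamma(\underline{S}(\A^*)))\otimes\Gamma(\underline{C}^*)$. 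These are precisely relations (1)--(4), and conversely the four relations together say exactly that $\Q_{\underline{D}}$ preserves $p$ on every generator, hence that $\Q_{\underline{D}}$ is linear.

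It therefore remains to match linearity of $\Q_{\underline{D}}$ with the geometric conditions of Definition \ref{VB_lien}. Here I would use the dictionary of Section \ref{Section: Q-structures and Lie n-algebroids} between $\Q_{\underline{D}}$ and the structure objects of the Lie $n$-algebroid $\underline{D}\to V$ (the anchor $\rho_D$, the maps $\partial_{D_i}$, and the multibrackets), together with the pairing between sections of $\underline{D}\to V$ and functions: core sections $c^\dagger$, for $c\in\Gamma(C_i)$, pair nondegenerately with $\Gamma(C_i^*)$, while linear sections detect $\pi_{\A}^\star(\Gamma(\A^*))$ and $\Gamma(V^*)$. Under this pairing the horizontal degree of a function records how many core inputs a bracket (or an anchor/$\partial$ evaluation) feeds on. Concretely: the anchor $\rho_D\colon D_1\to TV$ being a double vector bundle morphism is equivalent to $\Q_{\underline{D}}$ preserving $p$ on $q_V^*(C^\infty(M))$ and on $\Gamma(V^*)$ (relation (1) and the $\Gamma(A_1^*\otimes V^*)$-part of (2)); the maps $\partial_{D_i}$ being double vector bundle morphisms over $\partial_{A_i}$ correspond to the $\Gamma(C_1^*)$- and $\Gamma(\underline{C}^*)$-parts of (2) and (4); the bracket of linear sections being linear corresponds to the invariance (3) of the basic functions, which simultaneously produces the induced side structure $\Q_{\A}$ as in the remark following Definition \ref{VB_lien}; and the two remaining bracket rules (the $i$-bracket of $i-1$ linear sections with one core section is core, and brackets with two or more core sections vanish) correspond respectively to the presence of the $\Gamma(\underline{C}^*)$-component of (4) and to the absence of any component of horizontal degree $p\ge2$ in $\Q_{\underline{D}}(\Gamma(C_i^*))$.

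For the forward direction I would assume the VB-Lie $n$-algebroid axioms and verify each support condition by evaluating $\Q_{\underline{D}}$ on the relevant generators and invoking the dictionary above; the vanishing in relation (4) follows from axiom (c)(iii) by a Lemma \ref{Triviality of a multivector field}-type argument, since a bracket that vanishes on every tuple of core sections forces the corresponding higher-core coordinate functions of $\Q_{\underline{D}}$ to vanish. For the converse I would reconstruct $\rho_D$, the $\partial_{D_i}$ and the multibrackets from $\Q_{\underline{D}}$ and read off from (1)--(4) that each is a double vector bundle morphism and that the brackets obey (c)(i)--(c)(iii), with (3) supplying $\Q_{\A}$ and the side Lie $n$-algebroid.

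The step I expect to be the main obstacle is the careful separation, inside the single bidegree-$(1,i)$ condition (4), of the two distinct bracket rules (c)(ii) and (c)(iii): the $\Gamma(\underline{C}^*)$-linear part encodes ``\,$i-1$ linear and one core yields core,\,'' whereas the requirement of \emph{no} contributions of horizontal degree $p\ge2$ encodes ``\,$k\ge2$ cores yield zero,\,'' and one must additionally check that the \emph{absence} of a $\Gamma(V^*)$-factor in (4) is what forces $\partial_{D_i}$ to be a genuine double vector bundle morphism over $\partial_{A_i}$ rather than merely a linear map over $V$. Keeping track of the $C^\infty(M)$- versus $C^\infty(V)$-linearity and of the Koszul signs throughout this translation is routine but error-prone, and is where the bulk of the bookkeeping will lie.
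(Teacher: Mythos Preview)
Your strategy---recasting the VB-Lie $n$-algebroid axioms as linearity of $\Q_{\underline{D}}$ for the bundle $\underline{D}_V\to\A$ and then reading off (1)--(4) by a bidegree count---is exactly the logic underlying the paper's argument, and your framing makes the ``if and only if'' more transparent than the paper's proof, which in fact only writes out the forward implication.

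One point in your dictionary needs correction. You attribute the vanishing of the horizontal-degree $p\ge2$ components of $\Q_{\underline{D}}(\gamma_i)$ in relation (4) entirely to axiom (c)(iii). That is correct for the $\ell$-bracket contributions with $\ell\ge3$, since those are $C^\infty(V)$-multilinear and the Koszul formula carries no anchor term. But for the $2$-bracket component in $\Gamma(C_j^*\wedge C_k^*)$ with $j+k=i+1$, the formula is
\[
\Q_{\underline{D}}(\gamma_i)(c_j^\dagger,c_k^\dagger)=\rho_{D_1}(c_j^\dagger)\langle\gamma_i,c_k^\dagger\rangle-\rho_{D_1}(c_k^\dagger)\langle\gamma_i,c_j^\dagger\rangle-\langle\gamma_i,\llbracket c_j^\dagger,c_k^\dagger\rrbracket_2\rangle,
\]
and while the bracket term vanishes by (c)(iii), the surviving anchor terms $\rho_{D_1}(c_1^\dagger)\langle\gamma_i,c_i^\dagger\rangle$ do not. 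The paper kills them by invoking axiom (a): as a double vector bundle morphism, $\rho_{D_1}$ sends core sections to vertical lifts, and vertical lifts annihilate the pull-back function $\langle\gamma_i,c_i^\dagger\rangle\in q_V^*C^\infty(M)$. So axiom (a) is needed for (4) as well as for (1) and (2), and your sentence ``the vanishing in relation (4) follows from axiom (c)(iii)'' should be amended accordingly. In the converse direction this is harmless---you are assuming all of (1)--(4), hence linearity, and the DVB-morphism property of $\rho_{D_1}$ then drops out---but in the forward direction the attribution matters for the bookkeeping you flag in your last paragraph.
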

	\begin{proof}
		The first and third assertions follow from the fact that the projection $\pi_{\A}$ is a morphism of Lie $n$-algebroids from $\underline{D}_V\to V$ to $\A\to M$. To see assertion (2), one proceeds as follows: Recall that the spaces $\Gamma_V(D_i^*)$ are generated as $C^\infty(V)$-modules by core and linear sections, where for the latter we have the equality
		\begin{equation}\label{Sections of dual DVD}
		\Gamma_V^l(D_i^*) = \Gamma(A_i^*\otimes V^*)\oplus\Gamma(C_i^*)
		\end{equation}
		given by the decomposition, and the (canonical) identification of $\alpha\in\Gamma(\A^*)$ with $\pi_{\A}^\star(\alpha)\in\Gamma(D_V^*)$. Moreover, as vector bundles over $V$, we have the equality 
		\begin{equation}\label{Decomposed DVD}
		D_i=q_V^!(A_i\oplus C_i).
		\end{equation}
		Note that for $\psi\in\Gamma(V^*)$ the 1-form $\diff\ell_\psi$ is a linear section of $T^*V\to V$ over $\psi:M\to V^*$ and $\rho_{D_1}:D_1\to TV$ is a morphism of double vector bundles. Hence, (2) follows from
		\[
		\Q_{\underline{D}}(\ell_\psi) = \rho_{D_1}^*\diff\ell_\psi\in\Gamma_V^l(D_1^*)=\Gamma(A_1^*\otimes V^*)\oplus\Gamma(C_1^*).
		\]
		
		Suppose now that $\gamma_i\in\Gamma(C_i^*)$. Then using (\ref{Decomposed DVD}) we may write the $(i+1)$-degree function $\Q_{\underline{D}}(\gamma)$ in terms of its various components. The components with $\Gamma(\underline{S}^k(\A))$ together with two or more $\Gamma(C_j^*)$ vanish due to the bracket conditions of a VB-Lie $n$-algebroid. Recall that from the definition of split Lie $n$-algebroids in terms of higher brackets (Section \ref{Section: Q-structures and Lie n-algebroids}), we have that for terms of the form $\Gamma(C_j^*\wedge C_{k}^*)$ with $j+k=i+1$
		\begin{equation}\label{Homological vector field with two C_j}
		\Q_{\underline{D}}(\gamma_i)(c_j,c_k)=\rho_{D_1}(c_j)\langle \gamma_i,c_k \rangle - \rho_{D_1}(c_k)\langle \gamma_i,c_j \rangle - \langle \gamma_i,\llbracket c_j,c_k \rrbracket_2 \rangle,
		\end{equation}
		where $\rho_{D_1}(c_j)$ and $\rho_{D_1}(c_k)$ are zero if $j\neq1$ and $k\neq1$, respectively. If $i\neq j$ and $i\neq k$, then the first two terms are again zero, due to the vanishing of the pairing. Observe now that the last term is always zero because it is a bracket with two core sections.
		
		It remains to prove that the terms of the form $\rho_{D_1}(c_1)\langle \gamma_i,c_i \rangle$ are always zero. As a morphism of double vector bundles, the anchor $\rho_{D_1}$ sends the core section $c_1$ to a vector field on $V$ obtained by a vertical lift. By definition, a vertical lift has flow which lies entirely on the fibre directions. A simple observation of the fact that the sections $\gamma_i\in\Gamma_V(D_i^*), c_i\in\Gamma_V(D_i), c_1\in\Gamma_V(D_1)$ are obtained by the pull-back under $q_V$ of the corresponding sections $\gamma_i\in\Gamma(C_i^*),c_i\in\Gamma(C_i),c_1\in\Gamma(C_1)$ implies that the function $\langle \gamma_i,c_i \rangle\in C^\infty(V)$ is invariant under the flow of $\rho_{D_1}(c_1)$. Therefore, all the terms in equation (\ref{Homological vector field with two C_j}) are zero and we obtain the result. 
	\end{proof}
	
	\section{The Weil algebra of a split VB-Lie $n$-algebroid}\label{Section: The Weil algebra of a split VB-Lie n-algebroid}
	
	We now define the Weil algebra of a double vector bundle $(\underline{D},V,\A,M)$ as above (with no Lie $n$-algebroid structure a priori) and see how this can be used to give an alternative description of a VB-Lie $n$-algebroid structure. The case of $n=1$, i.e.~the ordinary double vector bundles $(D,V,A,M)$, is studied in great detail in \cite{MePi19}.
	
	\begin{definition}\label{Weil algebra of DVB}
		The \textbf{Weil algebra of $(\underline{D},V,\A,M)$}\index{double vector bundle!Weil algebra}, denoted $W(\underline{D})$, is defined as the space of functions of the graded manifold $\underline{D}[1]$ over $M$, where the shift functor $[1]$ is applied to the horizontal arrows of $(\underline{D},V,\A,M)$.
	\end{definition}
	
	\begin{remark}
		\begin{enumerate}
			\item The graded manifold $\underline{D}[1]$ whose space of functions defines the Weil algebra of $(\underline{D},V,\A,M)$ is the total space of the $[1]$-shifted vector bundle $\underline{D}[1]\to\A$ in the category of graded manifolds.
			
			\item Note that in the case of an ordinary (non-graded) double vector bundle $(D,V,A,M)$, our notation for the Weil algebra $W(D)$ is different from the notation in \cite{MePi19}, where a double shift $[1,1]$ for both horizontal and vertical arrows of $(D,V,A,M)$ is used. According to our convention, $D$ is already concentrated in degree $-1$ and so the induced $[1]$-manifold over $V$ satisfies $\cin(D)^1=\Gamma_V(D^*_V)$. In \cite{MePi19}, the vector bundle $D$ over $V$ is concentrated in degree $0$ and thus a (vertical) shift over $V$ is required, in order to give the same $[1]$-manifold.
		\end{enumerate}
	\end{remark}
	
	Unravelling the definition of the Weil algebra of $(\underline{D},V,\A,M)$, we find that $W(\underline{D})$ carries a bigrading such that $0$-functions of the form $f\circ q_V\in C^\infty(V)$ obtained by pulling-back smooth functions $f\in C^\infty(M)$ are elements of bidegree $(0,0)$, $1$-functions $\ell_\psi\in C^\infty(V)$ for $\psi\in\Gamma(V^*)$ are elements of bidegree $(1,0)$, $i$-functions $\alpha\in\Gamma(\underline{S}^i(\A))$ are elements of bidegree $(0,i)$ and $(i+1)$-functions $\gamma_i\in\Gamma(C_i^*)$ are elements of bidegree $(1,i)$. In other words, the Weil algebra of $(\underline{D},V,\A,M)$ is generated by
	\[
	W^{0,0}(\underline{D})=C^\infty(M),
	\qquad
	W^{0,i}(\underline{D})=\Gamma(\underline{S}^i(\A^*)),
	\]
	\[
	W^{1,0}(\underline{D})=\Gamma(V^*),
	\qquad
	W^{1,i}(\underline{D})=\bigoplus_{j+k=i}\Gamma(\underline{S}^j(\A^*))\otimes\Gamma(C_k^*).
	\]
	
	\begin{remark}
		The Weil algebra of a split Lie $n$-algebroid $\A$ over $M$ discussed in Section \ref{Section: The Weil algebra of a Lie n-algebroid} can be obtained from the definition above as $W(T\A)$, where $(T\A,TM,\A,M)$ is the tangent prolongation of $\A\to M$.
	\end{remark}
	In this setting, it is clear that Proposition \ref{Homological vector field of VB-Lie n-algebroids} above may be reformulated as follows.
	\begin{theorem}\label{VB-Lie n-algebroids via the Weil algebra}
		Let $(\underline{D},V,\A,M)$ be a decomposed double vector bundle. A VB-Lie $n$-algebroid structure on $(\underline{D},V,\A,M)$ is equivalent to a homological vector field of bidegree $(0,1)$ on the graded manifold $\underline{D}[1]$.
	\end{theorem}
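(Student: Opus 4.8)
The plan is to recognise Theorem~\ref{VB-Lie n-algebroids via the Weil algebra} as a bigraded restatement of Proposition~\ref{Homological vector field of VB-Lie n-algebroids}, so that the work is almost entirely a matter of matching gradings. First I would record the dictionary between the two function algebras involved. The homological vector field $\Q_{\underline{D}}$ of a (split) Lie $n$-algebroid structure on $\underline{D}_V\to V$ is a total-degree-$1$, square-zero derivation of $\cin(\underline{D}_V)$, whereas $W(\underline{D})=\cin(\underline{D}[1])$ is the function algebra of the horizontally shifted bundle $\underline{D}[1]\to\A$ over $M$. Both algebras are generated by the same objects --- $C^\infty(M)$, the linear functions $\Gamma(V^*)$, the fibre coordinates $\Gamma(\A^*)$, and the core coordinates $\Gamma(\underline{C}^*)$ --- and the only effect of the shift $[1]$ on the horizontal arrows is to raise the total degree of the polynomial-degree-$1$ generators $\Gamma(V^*)$ and $\Gamma(\underline{C}^*)$ by one, while fixing the polynomial-degree-$0$ generators $\Gamma(\underline{S}(\A^*))$. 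Comparing the bidegrees listed just before the theorem with those used in Proposition~\ref{Homological vector field of VB-Lie n-algebroids} gives the clean identity $W^{p,q}(\underline{D})=\cin(\underline{D}_V)^{p,\,q-p}$; in particular $W(\underline{D})$ and $\cin(\underline{D}_V)$ coincide as bigraded commutative algebras, so a derivation of one is, by relabelling, a derivation of the other with the square-zero property intact (the sign in the Leibniz rule is unchanged because the shift tensors from the right, as fixed earlier in the thesis).

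Next I would track what the total-degree-$1$ condition becomes under this relabelling. A component of $\Q_{\underline{D}}$ of $\cin(\underline{D}_V)$-bidegree $(a,1-a)$ is sent, via $W^{p,q}=\cin(\underline{D}_V)^{p,\,q-p}$, to a derivation of $W(\underline{D})$-bidegree $(a,1)$: every total-degree-$1$ derivation automatically raises the vertical ($\A$-)degree on $W(\underline{D})$ by exactly one, while its horizontal-degree shift $a$ is carried over unchanged. Consequently, demanding that $\Q_{\underline{D}}$ have $W(\underline{D})$-bidegree precisely $(0,1)$ is the same as demanding that, as a derivation of $\cin(\underline{D}_V)$, it preserve the polynomial (horizontal) degree --- that is, that it be linear over $\A$ in the sense of Definition~\ref{VB_lien}.

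Finally I would read off the equivalence on generators. The four conditions of Proposition~\ref{Homological vector field of VB-Lie n-algebroids} say exactly that $\Q_{\underline{D}}$ maps each type of generator into the next vertical slot without altering the horizontal degree: $W^{0,0}\to W^{0,1}$ (condition~1), $W^{1,0}\to W^{1,1}$ (condition~2, since $\Gamma(A_1^*\otimes V^*)\oplus\Gamma(C_1^*)\subset W^{1,1}$), $W^{0,i}\to W^{0,i+1}$ (condition~3, the invariance of the $\A$-functions $\Gamma(\underline{S}(\A^*))$), and $W^{1,i}\to W^{1,i+1}$ (condition~4). As these generate $W(\underline{D})$ and $\Q_{\underline{D}}$ is a derivation, having bidegree $(0,1)$ on all of $W(\underline{D})$ is equivalent to this behaviour on generators, hence to conditions~1--4. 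Both directions then follow at once: a VB-Lie $n$-algebroid structure yields a bidegree $(0,1)$ homological vector field on $\underline{D}[1]$ by Proposition~\ref{Homological vector field of VB-Lie n-algebroids}, and conversely any such vector field transfers back to a degree-$1$, square-zero derivation of $\cin(\underline{D}_V)$ --- a split Lie $n$-algebroid structure on $\underline{D}_V\to V$ that is linear over $\A$. I expect the only real obstacle to be the bookkeeping: checking the bidegree identity generator by generator and confirming that the shift contributes no extra sign to $\Q_{\underline{D}}^2=0$. Once that is settled, the theorem is, as the surrounding text already signals, an immediate reformulation of the proposition.
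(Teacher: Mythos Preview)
Your proposal is correct and follows exactly the approach the paper takes: the paper's entire proof is the single sentence ``In this setting, it is clear that Proposition~\ref{Homological vector field of VB-Lie n-algebroids} above may be reformulated as follows,'' and your argument is a careful unpacking of precisely that reformulation via the bidegree dictionary $W^{p,q}(\underline{D})=\cin(\underline{D}_V)^{p,\,q-p}$. One small caution: the two algebras do \emph{not} literally coincide as graded-commutative algebras (the sign rules differ at polynomial degree $\geq 2$, e.g.\ $\ell_{\psi_1}\ell_{\psi_2}$ is symmetric in $\cin(\underline{D}_V)$ but antisymmetric in $W(\underline{D})$), but your argument only uses the identification on generators of polynomial degree $\leq 1$, where the algebras and the Leibniz signs agree, so the transfer of $\Q_{\underline{D}}^2=0$ goes through exactly as you anticipate in your ``bookkeeping'' remark.
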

	
	\section{The fat Lie $n$-algebroid}\label{Section: The fat Lie n-algebroid}
	
	The space of linear functions $\cin_{\text{lin}}(\underline{D})$ of a VB-Lie $n$-algebroid inherits an obvious structure of an algebra over the ring $C^\infty(M)$ of smooth functions on $M$. With this structure, $\cin_{\text{lin}}(\underline{D})$ becomes a sheaf of locally free, $\mathbb{N}$-graded, graded commutative, associative, unital $C^\infty(M)$-algebras over $M$, with local generators given by local frames of $(V^*\otimes \underline{C})^*$ and by pulling-back the local generators of $\A$ under $\pi_{\A}$. Hence, it is the space of functions of an $\n$-manifold over $M$, denoted $\underline{\widehat{A}}$. In fact, one has that as a vector bundle over $M$,
	\[
	\underline{\widehat{A}}=\bigoplus_{i=1}^n \widehat{A_i}[i],
	\]
	where $\widehat{A_i}$ is the (non-graded) vector bundle over $M$ whose sheaf of sections corresponds to the locally free $C^\infty(M)$-module structure on $\Gamma_V^l(D_i)$. From the short exact sequence (\ref{Short exact sequence of linear sections}), it follows that the vector bundle $\widehat{A_i}$ has rank equal to $\rank(A_i)+\rank(V)\cdot\rank(C_i)$, and thus the $\n$-manifold corresponding to $\underline{\widehat{A}}$ is of dimension
	\[
	(\dim(M);\rank(A_n)+\rank(V)\cdot\rank(C_n),\ldots,\rank(A_1)+\rank(V)\cdot\rank(C_1)).
	\]
	
	Another interesting fact about the $\n$-manifold $\underline{\widehat{A}}$ is that it carries a Lie $n$-algebroid structure which is induced by the VB-Lie $n$-algebroid structure on $(\underline{D},V,\A,M)$. In particular, it is given by the multi-brackets
	\[
	\llbracket a_1,\ldots,a_i \rrbracket_{\underline{\widehat{A}}} = 
	\llbracket a_1,\ldots,a_i \rrbracket_{\underline{D}} 
	\]
	for all $a_1,\ldots,a_i\in\Gamma(\underline{\widehat{A}})=\Gamma_V^l(\underline{D})$, or equivalently, by the homological vector field on $\underline{\widehat{A}}$ defined by $\Q_{\underline{\widehat{A}}}(\alpha) = \Q_{\underline{D}}(\alpha)$, for all $\alpha\in\Gamma(\underline{S}(\underline{\widehat{A}}))$. We call $\underline{\widehat{A}}$ the \textbf{fat Lie $n$-algebroid}\index{Lie $n$-algebroid!fat}. 
	
	The projection map $\underline{\widehat{A}}\to\A$ is a Lie $n$-algebroid morphism whose kernel may be identified with $V^*\otimes\underline{C}=\Hom(V,\underline{C})$. This yields that the graded vector bundle
	\[
	\underline{\Hom(V,\underline{C})}:=\bigoplus_{i=1}^n(V^*\otimes C_i)[i]=V^*\otimes\underline{C}=\Hom(V,\underline{C})=\bigoplus_{i=1}^n(\Hom(V,C_i))[i]
	\]
	may be equipped with a Lie $n$-algebroid structure so that the sequence
	\begin{equation}\label{Short exact sequence of fat Lie n-algebroid}
		0\longrightarrow\underline{\Hom(V,\underline{C})}\longrightarrow\underline{\widehat{A}}\longrightarrow \A\longrightarrow 0
	\end{equation}
	is a short exact sequence of Lie $n$-algebroids over $M$. Therefore, we obtain the following result.
	
	\begin{theorem}
		Given a VB-Lie $n$-algebroid $(\underline{D},V,\A,M)$, the construction explained above equips the graded vector bundle $\underline{\widehat{A}}$ with a Lie $n$-algebroid structure over the manifold $M$. With this Lie $n$-algebroid structure, $\underline{\widehat{A}}$ fits in the short exact sequence (\ref{Short exact sequence of fat Lie n-algebroid}) of Lie $n$-algebroids over $M$.
	\end{theorem}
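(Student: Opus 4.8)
The plan is to obtain the homological vector field of $\underline{\widehat{A}}$ as a restriction of $\Q_{\underline{D}}$, and then to read the short exact sequence (\ref{Short exact sequence of fat Lie n-algebroid}) off degree by degree from (\ref{Short exact sequence of linear sections}). Under the identification used to define $\underline{\widehat{A}}$, the structure sheaf $\cin(\underline{\widehat{A}})$ is the $C^\infty(M)$-subalgebra of $\cin(\underline{D})$ generated by the pull-backs $\pi_{\A}^\star(\Gamma(\underline{S}(\A^*)))$ together with the linear functions $\Gamma(\underline{C}^*)$. First I would show that this subalgebra is invariant under $\Q_{\underline{D}}$: by Proposition \ref{Homological vector field of VB-Lie n-algebroids}, condition (1) sends $q_V^*C^\infty(M)$ into $\pi_{\A}^\star(\Gamma(A_1^*))$, condition (3) preserves $\pi_{\A}^\star(\Gamma(\underline{S}(\A^*)))$, and condition (4) sends each $\gamma_i\in\Gamma(C_i^*)$ into $\bigoplus_{j+k=i+1}\pi_{\A}^\star(\Gamma(\underline{S}^j(\A^*)))\otimes\Gamma(C_k^*)$, and all of these land again in $\cin(\underline{\widehat{A}})$; invariance on arbitrary elements then follows since $\Q_{\underline{D}}$ is a derivation. (Condition (2), which concerns the functions $\ell_\psi$ with $\psi\in\Gamma(V^*)$, plays no role here, as such functions are not generators of $\cin(\underline{\widehat{A}})$.) Setting $\Q_{\underline{\widehat{A}}}:=\Q_{\underline{D}}|_{\cin(\underline{\widehat{A}})}$ gives a degree $1$ derivation with $\Q_{\underline{\widehat{A}}}^2=(\Q_{\underline{D}}^2)|_{\cin(\underline{\widehat{A}})}=0$, so $(\underline{\widehat{A}},\Q_{\underline{\widehat{A}}})$ is a Lie $n$-algebroid whose multi-brackets restrict those of $\underline{D}$ to the linear sections $\Gamma_V^l(\underline{D})=\Gamma(\underline{\widehat{A}})$.

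Next I would build the short exact sequence. The bundle projection $\underline{\widehat{A}}\to\A$ is dual to the inclusion $\pi_{\A}^\star\colon\cin(\A)\hookrightarrow\cin(\underline{\widehat{A}})$; this inclusion intertwines $\Q_{\A}$ and $\Q_{\underline{\widehat{A}}}$, since $\pi_{\A}\colon\underline{D}\to\A$ is a morphism of Lie $n$-algebroids (the Remark after Definition \ref{VB_lien}), so the projection is a Lie $n$-algebroid morphism, and it is fibrewise surjective because (\ref{Short exact sequence of linear sections}) splits in each degree. Its kernel is, in degree $i$, the subspace $\Gamma(V^*\otimes C_i)$ of core-linear sections, so it is identified with $\underline{\Hom(V,\underline{C})}=\bigoplus_{i=1}^n(V^*\otimes C_i)[i]$. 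I would equip the latter with the Lie $n$-algebroid structure given by restricting the multi-brackets of $\underline{\widehat{A}}$ to core-linear sections: a core-linear section is a linear section over the zero section of $\A$, so the bracket relations of Definition \ref{VB_lien} ensure that the $i$-bracket of $i$ such sections is linear over $\llbracket 0,\ldots,0\rrbracket_{\A}=0$, hence again core-linear, and that the induced anchor vanishes. Skew-symmetry, the Leibniz rule and the higher Jacobi identities are then inherited from $\underline{\widehat{A}}$, the inclusion is a morphism of Lie $n$-algebroids because the corresponding surjection of structure sheaves commutes with the homological vector fields by construction, and exactness at all three terms is immediate from (\ref{Short exact sequence of linear sections}).

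The step I expect to be the main obstacle is the closure of the brackets on the core-linear sections and the verification that $\underline{\Hom(V,\underline{C})}$ is a genuine sub-Lie $n$-algebroid for which the inclusion is a morphism. This is exactly where the full strength of the linearity axioms of Definition \ref{VB_lien} is needed --- in particular the vanishing of the $i$-bracket of $i-k$ linear sections with $k\geq2$ core sections --- and it is the same delicate point that, in the proof of Proposition \ref{Homological vector field of VB-Lie n-algebroids}, required the argument about vertical lifts and the invariance of the pairings $\langle\gamma_i,c_i\rangle$ under the flow of $\rho_{D_1}(c_1)$. Granting Proposition \ref{Homological vector field of VB-Lie n-algebroids}, however, most of this work is already done, and the remainder is a routine bookkeeping of degrees.
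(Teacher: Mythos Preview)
Your identification of $\cin(\underline{\widehat{A}})$ with a $C^\infty(M)$-subalgebra of $\cin(\underline{D})$ is where the argument fails. The subalgebra generated by $\pi_{\A}^\star(\Gamma(\underline{S}(\A^*)))$ and $\Gamma(\underline{C}^*)$ is, in the decomposed picture, simply $\Gamma(\underline{S}(\A^*\oplus\underline{C}^*))$; its degree-$i$ generators form a bundle of rank $\rank(A_i)+\rank(C_i)$. But $\widehat{A_i}$ has rank $\rank(A_i)+\rank(V)\cdot\rank(C_i)$, and after a splitting $\widehat{A_i}^*\cong A_i^*\oplus(V\otimes C_i^*)$: the second summand pairs with core-linear sections $\Gamma(V^*\otimes C_i)$ and involves sections of $V$, not functions on $V$, so it has no realisation inside $\cin(\underline{D})$. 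Your subalgebra \emph{is} $\Q_{\underline{D}}$-invariant by the reasoning you give, but the Lie $n$-algebroid it produces lives on $\A\oplus\underline{C}$, not on $\underline{\widehat{A}}$, and its brackets are not the restriction of the $\underline{D}$-brackets to $\Gamma_V^l(\underline{D})$; the last sentence of your first paragraph therefore does not follow from what precedes it.

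The paper avoids this by never attempting an embedding on the function side. It works directly with sections: since $\Gamma(\underline{\widehat{A}})=\Gamma_V^l(\underline{D})$ and axiom (2)(c)(i) of Definition~\ref{VB_lien} guarantees that the $i$-bracket of $i$ linear sections is again linear, one sets $\llbracket a_1,\ldots,a_i\rrbracket_{\underline{\widehat{A}}}:=\llbracket a_1,\ldots,a_i\rrbracket_{\underline{D}}$ and inherits skew-symmetry and the higher Jacobi identities; the $C^\infty(M)$-Leibniz rule uses that $\rho_{D_1}$ is a double vector bundle morphism, so that a linear section has anchor a linear vector field on $V$ projecting to a vector field on $M$. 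Your second paragraph, establishing the short exact sequence via closure of the brackets on core-linear sections, is correct and matches the paper's reasoning --- it just needs to be grounded in this bracket definition rather than in the subalgebra picture.
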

	
	\section{Split VB-Lie $n$-algebroids and $(n+1)$-representations}\label{Section: Split VB-Lie n-algebroids and (n+1)-representations}
	
	As it is shown in \cite{GrMe10}, an interesting fact about the
	tangent prolongation of a Lie algebroid is that it encodes its
	adjoint representation. The same holds for a split Lie $n$-algebroid $A_1[1]\oplus\ldots\oplus A_n[n]$, since by definition the adjoint module is exactly the space of sections of the $\Q$-vector bundle $T(A_1[1]\oplus\ldots\oplus A_n[n])\to A_1[1]\oplus\ldots\oplus A_n[n]$. The next example shows this correspondence explicitly in the case of split Lie
	2-algebroids $Q[1]\oplus B^*[2]$.
	
	\begin{example}
		Choose two $TM$-connections on $Q$ and $B^*$, both denoted by
		$\nabla$. These choices induce the horizontal lifts
		$\Gamma(Q)\to\Gamma_{TM}^l(TQ)$ and
		$\Gamma(B^*)\to\Gamma_{TM}^l(TB^*)$, both denoted by $h$. More
		precisely, given a section $q\in\Gamma(Q)$, its lift is defined as
		$h(q) = Tq - (\nabla_{.}q)^\wedge$.
		A similar formula holds for
		$h(\beta)$, as well. Then an easy computation yields the following:
		\begin{enumerate}
			\item $\rho_{TQ}(q^\dagger) = \rho(q)^\uparrow$ and $(T\ell)(\beta^\dagger) = \ell(\beta)^\uparrow$
			\item $\rho_{TQ}(h(q)) = X_{\nabla_q^{\text{bas}}}$
			\item $(T\ell)(h(\beta)) = h(\ell(\beta)) + (\nabla_.(\ell(\beta)) - \ell(\nabla_.\beta))^\wedge$
			\item $[h(q_1),h(q_2)]_{TQ} = h[q_1,q_2]_Q - R_\nabla^{\text{bas}}(q_1,q_2)^\wedge$
			\item $[h(q_1),q_2^\dagger]_{TQ} = (\nabla_{q_1}^{\text{bas}}q_2)^\dagger$
			\item $(T\nabla^*)_{h(q)}(\beta^\dagger) = (\nabla_q^*\beta)^\dagger$ 
			\item $(T\nabla^*)_{q^\dagger}(h(\beta)) = (\nabla_q^*\beta - \nabla_{\rho(q)}\beta)^\dagger$
			\item
			$(T\nabla^*)_{h(q)}(h(\beta)) =
			h(\nabla^*_q\beta) + \left(\nabla_{\nabla_\cdot q}^*\beta -
			\nabla_{\rho(\nabla_\cdot q)}\beta + \nabla^*_q\nabla_\cdot\beta -
			\nabla_\cdot\nabla_q^*\beta -
			\nabla_{[\rho(q),\cdot]}\beta\right)^\wedge$
			\item $(T\omega)(h(q_1),h(q_2),h(q_3)) = h(\omega(q_1,q_2,q_3)) + ((\nabla_\cdot\omega)(q_1,q_2,q_3))^\wedge$
			\item $(T\omega)(h(q_1),h(q_2),q_3^\dagger) = (\omega(q_1,q_2,q_3))^\dagger$.
		\end{enumerate}
	\end{example}
	
	The reader should compare the calculations above with the structure objects of the adjoint representation up to homotopy of a split Lie $2$-algebroid given in Proposition \ref{Adjoint_representation_of_Lie_2-algebroid}. In fact, this result is a special case of a
	correspondence between VB-Lie $n$-algebroid structures on a decomposed graded double vector bundle
	$(\underline{D},V,\A,M)$ and $(n+1)$-representations of the Lie $n$-algebroid
	$\A$ on the vector bundle
	$\E:=V[0] \oplus C_1[1] \oplus \ldots \oplus C_n[n]$ over $M$. In the
	general case, it is easier to give the correspondence in terms
	of the homological vector field on $\underline{D}$ and the
	dual representation of $\A$ on
	$\E^*=C_n^*[-n]\oplus\ldots\oplus C_1^*[-1] \oplus V^*[0]$.
	
	Suppose that $(\underline{D},V,\A,M)$ is a VB-Lie
	$n$-algebroid with homological vector fields
	$\Q_{\underline{D}}$ and $\Q_{\A}$, and choose a decomposition
	for each double vector bundle $(D_i,V,A_i,M)$\footnote{In the
		case of the tangent Lie $n$-algebroid, this corresponds to
		choosing the $TM$-connections on the vector bundles of the
		adjoint complex.}, and consequently for
	$(\underline{D},V,\A,M)$. Consider the dual
	$\underline{D}_V^*$ and recall that the spaces
	$\Gamma_V(D_i^*)$ are generated as $C^\infty(V)$-modules by their
	core and linear sections. For the latter, use the
	identification
	$\Gamma_V^l(D_i^*) = \Gamma(A_i^*\otimes
	V^*)\oplus\Gamma(C_i^*)$ induced by the
	decomposition. Moreover, the element
	$\alpha\in\Gamma(A_i^*)$ is identified with the core section
	$\pi_{\A}^{\star}(\alpha)\in\Gamma_V^c(\underline{D}^*)$. Define the representation
	$\D^*$ of $\A$ on the dual complex $\E^*$ by the equations
	\begin{equation}\label{Equation 1 for VB-Lie n-algebroids and (n+1)-representation}
		\D^*(\psi) = \Q_{\underline{D}}(\ell_\psi) = \rho_{D_1}^*\diff\ell_\psi
		\qquad \text{and} \qquad
		\D^*(\gamma) = \Q_{\underline{D}}(\gamma)
	\end{equation}
	for all $\psi\in\Gamma(V^*)$ and all
	$\gamma\in\Gamma(C_i^*)$. From Proposition \ref{Homological vector field of VB-Lie n-algebroids} and the fact that $\Q_{\underline{D}}^2 = 0$, it follows that $\D^*$ defined by (\ref{Equation 1 for VB-Lie n-algebroids and (n+1)-representation}) is the differential of a representation up to homotopy of $(\A,\Q_{\A})$ on the graded vector bundle $\E^*$ over $M$.
	
	Conversely, given a representation $\D^*$ of $(\A,\Q_{\A})$ on $\E^*$, the equations in (\ref{Equation 1 for VB-Lie n-algebroids and (n+1)-representation}) together with
	\[
	\Q_{\underline{D}}(q_V^*f) = \pi_{\A}^\star(\Q_{\A}(f))
	\qquad \text{and} \qquad
	\Q_{\underline{D}}(\pi_{\A}^\star(\alpha)) = \pi_{\A}^\star(\Q_{\A}(\alpha)),                       
	\]
	for all $f\in C^\infty(M)$ and $\alpha\in\Gamma(\A^*)$, define a
	VB-Lie $n$-algebroid structure on the double vector bundle
	$(\underline{D},V,\A,M)$.
	This yields the following theorem.
	\begin{theorem}\label{Theorem RUTHS and higher VB-Lie algebroids}
		Let $(\underline{D},V,\A,M)$ be a decomposed graded double vector
		bundle with core $\underline{C}$. There is a one-to-one
		correspondence between VB-Lie $n$-algebroid structures on
		$(\underline{D},V,\A,M)$ and $(n+1)$-representations up to
		homotopy of $\A$ on the complex
		$V[0] \oplus C_1[1] \oplus \ldots \oplus C_n[n]$.
	\end{theorem}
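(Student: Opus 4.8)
The plan is to reduce the theorem to the generator-level classification of homological vector fields already carried out in Proposition \ref{Homological vector field of VB-Lie n-algebroids} and repackaged in Theorem \ref{VB-Lie n-algebroids via the Weil algebra}, turning the asserted bijection into a matching of structure data on a common generating set. First I would fix the decomposition of $(\underline{D},V,\A,M)$ and recall, using Remark \ref{Remark: Splitting of vector bundles}, that once this choice is made the shifted bundle $\underline{D}[1]\to\A$ is the total space of a graded vector bundle over $\A$ whose linear functions form the $\cin(\A)$-module $\cin(\A)\otimes\Gamma(\E^*)$, where $\E^*=C_n^*[-n]\oplus\ldots\oplus C_1^*[-1]\oplus V^*[0]$. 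Under this identification a representation up to homotopy of $\A$ on $\E^*$ is, by definition, nothing but a left DG $\A$-module structure on $\cin(\A)\otimes\Gamma(\E^*)$, that is, a degree $1$ operator obeying the Leibniz rule over $\Q_{\A}$ and squaring to zero; passing to the dual module recovers the $(n+1)$-representation on $\E=V[0]\oplus C_1[1]\oplus\ldots\oplus C_n[n]$, which is the object in the statement.

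For the forward direction, given a VB-Lie $n$-algebroid structure I would take its homological vector field $\Q_{\underline{D}}$, which by Theorem \ref{VB-Lie n-algebroids via the Weil algebra} is a bidegree $(0,1)$ derivation of the Weil algebra $W(\underline{D})=\cin(\underline{D}[1])$, and define $\D^*$ on generators by the equations (\ref{Equation 1 for VB-Lie n-algebroids and (n+1)-representation}), namely $\D^*(\psi)=\Q_{\underline{D}}(\ell_\psi)$ for $\psi\in\Gamma(V^*)$ and $\D^*(\gamma)=\Q_{\underline{D}}(\gamma)$ for $\gamma\in\Gamma(C_i^*)$. The four conditions of Proposition \ref{Homological vector field of VB-Lie n-algebroids} are exactly what ensures that these values lie in $\cin(\A)\otimes\Gamma(\E^*)$ with the correct bidegree, so that by Lemma \ref{wedge_product-operators_Correspondence_Lemma} they extend uniquely to a graded $\cin(\A)$-linear operator; the Leibniz rule over $\Q_{\A}$ is inherited from $\Q_{\underline{D}}$ being a derivation that covers $\Q_{\A}$ on basic functions, and $\D^{*2}=0$ drops out of $\Q_{\underline{D}}^2=0$ evaluated on generators.

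The converse I would run by the same construction in reverse: starting from a representation $\D^*$ of $(\A,\Q_{\A})$ on $\E^*$, declare $\Q_{\underline{D}}$ on the four families of generators by (\ref{Equation 1 for VB-Lie n-algebroids and (n+1)-representation}) together with $\Q_{\underline{D}}(q_V^*f)=\pi_{\A}^\star(\Q_{\A}(f))$ and $\Q_{\underline{D}}(\pi_{\A}^\star(\alpha))=\pi_{\A}^\star(\Q_{\A}(\alpha))$, and extend to the unique derivation of $W(\underline{D})$ determined by these values. Being a derivation it is automatically of bidegree $(0,1)$ by inspection of the targets, it squares to zero since this may be checked on generators using $\Q_{\A}^2=0$ and $\D^{*2}=0$, and it satisfies the four conditions of Proposition \ref{Homological vector field of VB-Lie n-algebroids} by construction; hence it is the homological vector field of a VB-Lie $n$-algebroid structure on $(\underline{D},V,\A,M)$. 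Because both assignments are completely determined by their action on the same generators through the identical equations, they are mutually inverse, giving the bijection; a closing remark invoking the equivalence $\mathbb{VB}\text{-Lie}_n(\A)\simeq\mathbb{DVB}\text{-Lie}_n(\A)$ would confirm that the correspondence does not depend on the decomposition chosen at the outset.

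The hard part will be the middle identification: verifying that the linearity of the homological vector field --- the bidegree $(0,1)$ property --- corresponds precisely, with no extraneous components and with the right Koszul signs, to a bona fide DG-module operator on $\Gamma(\E^*)$, and that the two square-zero conditions coincide rather than merely imply one another in one direction. This delicate point is, however, essentially settled by the generator-by-generator analysis of Proposition \ref{Homological vector field of VB-Lie n-algebroids} --- in particular by the vanishing of the multi-core terms, which rests on the vertical-lift nature of the core anchor $\rho_{D_1}$ --- so the remaining work is careful bookkeeping of the bidegrees and of the signs entering the Leibniz identities, rather than any new geometric input.
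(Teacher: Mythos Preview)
Your proposal is correct and follows essentially the same route as the paper: both arguments work with the dual $(n+1)$-representation on $\E^*$, define $\D^*$ on the generators $\Gamma(V^*)$ and $\Gamma(C_i^*)$ via the equations (\ref{Equation 1 for VB-Lie n-algebroids and (n+1)-representation}), invoke Proposition \ref{Homological vector field of VB-Lie n-algebroids} to guarantee the targets lie where they should, and deduce $\D^{*2}=0$ from $\Q_{\underline{D}}^2=0$; the converse is built from the same formulas supplemented by the pull-back rules on basic functions and on $\pi_{\A}^\star(\Gamma(\underline{S}(\A^*)))$. Your additional framing through Theorem \ref{VB-Lie n-algebroids via the Weil algebra} and Lemma \ref{wedge_product-operators_Correspondence_Lemma} makes explicit the bookkeeping that the paper leaves implicit, but the underlying argument is the same.
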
 
	
	We will show now that the correspondence of decomposed VB-Lie $n$-algebroids with fixed side algebroid $\A\to M$ and $(n+1)$-representations of $\A\to M$ can be also achieved on the level of morphisms, and therefore, we obtain an equivalence of categories.
	
	\begin{proposition}\label{Correspondence of morphisms}
		Let $(\underline{D},V,\A,M)$ and $(\underline{D}',V',\A,M)$ be two decomposed VB-Lie $n$-algebroids with cores $\underline{C}$ and $\underline{C}'$, respectively. There exists a one-to-one correspondence between morphisms of VB-Lie $n$-algebroids from $(\underline{D},V,\A,M)$ to $(\underline{D}',V',\A,M)$ and morphisms of $\A$-representations from $V[0]\oplus \underline{C}$ to $V'[0]\oplus \underline{C}'$. 
	\end{proposition}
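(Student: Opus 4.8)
The plan is to promote the object-level bijection of Theorem \ref{Theorem RUTHS and higher VB-Lie algebroids} to an equivalence of categories by working throughout with the Weil algebra and reading off a morphism from its restriction to horizontal degree one. Recall from Section \ref{Section: The Weil algebra of a split VB-Lie n-algebroid} and Theorem \ref{VB-Lie n-algebroids via the Weil algebra} that a VB-Lie $n$-algebroid structure on $(\underline{D},V,\A,M)$ is the same datum as a bidegree $(0,1)$ homological vector field $\Q_{\underline{D}}$ on $\underline{D}[1]$, and that the horizontal-degree-one part of its Weil algebra is $W^{1,\bullet}(\underline{D})\cong\cin(\A)\otimes\Gamma(\E^*)$, where $\E^*=C_n^*[-n]\oplus\ldots\oplus C_1^*[-1]\oplus V^*[0]$ (writing $C_0:=V$). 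Since $\Q_{\underline{D}}$ has horizontal degree $0$ it preserves $W^{1,\bullet}(\underline{D})$, and the defining equations (\ref{Equation 1 for VB-Lie n-algebroids and (n+1)-representation}) say exactly that its restriction there is the differential $\D^*$ of the associated $(n+1)$-representation of $\A$ on $\E$.

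First I would construct the forward map. A morphism $G_D$ of VB-Lie $n$-algebroids (Definition \ref{Definition of morphism of VB-Lie n-algebroids}) induces a pullback $G_D^\star\colon W(\underline{D}')\to W(\underline{D})$ which is a bidegree $(0,0)$ morphism of graded algebras commuting with $\Q_{\underline{D}'}$ and $\Q_{\underline{D}}$ and acting trivially on the basic part $W^{0,\bullet}\cong\cin(\A)$. Restricting $G_D^\star$ to $W^{1,\bullet}$ produces a $\cin(\A)$-linear map $\mu^\star\colon\cin(\A)\otimes\Gamma(\E'^*)\to\cin(\A)\otimes\Gamma(\E^*)$ (the $\cin(\A)$-linearity is precisely the triviality on basic functions) which intertwines $\D'^*$ and $\D^*$, and is therefore by definition (the dual of) a morphism of $\A$-representations $\mu\colon V[0]\oplus\underline{C}\to V'[0]\oplus\underline{C}'$. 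Concretely its degree-zero part is the chain map $(G_V,G_{\underline{C}})$ that $G_D$ induces on sides and cores, and its higher components are the polynomial ``$\Phi$''-parts of $G_D^\star$, matching the shape of morphisms in Example \ref{Example decomposed DVB} and Proposition \ref{morphism_of_3-term_representations}.

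Next I would construct the inverse. Given $\mu$, equivalently a $\cin(\A)$-linear $\mu^\star$ commuting with the differentials, I define a map on $W(\underline{D}')$ by declaring it trivial on the basic generators $W^{0,\bullet}$, equal to $\mu^\star$ on the linear generators $\Gamma(V'^*)=W^{1,0}$ and $\Gamma(C_i'^*)\subset W^{1,i}$, and extending multiplicatively. Because the Weil algebra is freely generated as a graded-commutative $\cin(\A)$-algebra by its horizontal-degree-zero and -one parts, this extension is a well-defined bidegree $(0,0)$ algebra morphism $G_D^\star$; being fibrewise linear, it is the pullback of an honest graded double vector bundle morphism $G_D$ over the identity on $\A$. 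To see that $G_D$ is a VB-Lie $n$-algebroid morphism I must check $G_D^\star\circ\Q_{\underline{D}'}=\Q_{\underline{D}}\circ G_D^\star$; both sides are derivations along the algebra map $G_D^\star$, so it suffices to verify the identity on generators. On $W^{0,\bullet}$ it holds because $G_D^\star$ is trivial there and both homological vector fields restrict to the common $\Q_{\A}$ (Proposition \ref{Homological vector field of VB-Lie n-algebroids}, assertions (1) and (3)); on $W^{1,\bullet}$ it is exactly the condition that $\mu^\star$ intertwines $\D'^*$ and $\D^*$. The two assignments are mutually inverse, since each is determined by its effect on $W^{0,\bullet}\oplus W^{1,\bullet}$.

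The hard part will be the backward direction: one must be confident that prescribing the map only on the degree-zero and degree-one generators genuinely yields a morphism of the full graded double vector bundle, and that checking $\Q$-compatibility on generators is legitimate. The derivation-along-a-morphism argument is what makes the latter rigorous without expanding the higher multibrackets, while the free-generation of the Weil algebra in horizontal degrees $0$ and $1$ is what guarantees the former; verifying these two structural facts carefully is where the real content lies, the rest being bookkeeping dual to the proof of Theorem \ref{Theorem RUTHS and higher VB-Lie algebroids}.
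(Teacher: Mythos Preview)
Your proposal is correct and follows essentially the same approach as the paper: both arguments define the map on the generators $C^\infty(M)$, $\Gamma(V'^*)$, $\Gamma(\underline{S}(\A^*))$, and $\Gamma(\underline{C}'^*)$, extend multiplicatively, and check $\Q$-compatibility on generators using $\D'\circ\mu=\mu\circ\D$. The only difference is packaging---you phrase everything through the Weil algebra and Theorem \ref{VB-Lie n-algebroids via the Weil algebra}, making the free-generation and derivation-along-a-morphism arguments explicit, whereas the paper works directly with $\cin(\underline{D}_V)$ and leaves these points implicit in the phrase ``a straightforward computation''.
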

	\begin{proof}
		Suppose first that 
		\[
		\mu\colon\Gamma(\underline{S}(\A^*))\otimes\Gamma(\underline{C}\oplus V)\to \Gamma(\underline{S}(\A^*))\otimes\Gamma(\underline{C}'\oplus V')
		\] 
		is a morphism of representations up to homotopy. Denote the differentials of $V[0]\oplus \underline{C}$ and $V'[0]\oplus \underline{C}'$ by $\D$ and $\D'$, respectively, and the dual of $\mu$ by $\mu^\star$.
		
		Recall that the space of sections $\Gamma_V(\underline{D}^*)$ of a double vector bundle is generated as $C^\infty(V)$-module by core and linear sections, and that the coordinate functions of the smooth manifold $V$ can be taken of the form $\ell_\psi$ (fibre-wise linear) and $f\circ q_V$ (basic functions), for $\psi\in\Gamma(V^*)$ and $f\in C^\infty(M)$. We define the map $G^\star_\mu\colon\Gamma(\underline{D'}_{V'}^*) \to \Gamma(\underline{D}_V^*)$ on the generators by
		\begin{equation}\label{Equations for correspondence of morphisms}
		G^\star_\mu(f\circ q_{V'})=f\circ q_{V}, \quad G^\star_\mu(\ell_\psi)=\ell_{\mu_0^*(\psi)}, \quad G^\star_{\mu}(\alpha)=\alpha, \quad G^\star_{\mu}(\gamma)=\mu^\star(\gamma),
		\end{equation}
		for all $\psi\in\Gamma(V'^*),\alpha\in\Gamma(\underline{S}(\A^*)),\gamma\in\Gamma(\underline{C'}^*),f\in C^\infty(M)$. A straightforward computation, using $\D'\circ\mu=\mu\circ\D$, shows that $G_\mu^\star\circ\Q_{\underline{D'}}=\Q_{\underline{D}}\circ G_\mu^\star$, and thus $G_\mu\colon\underline{D}_V\to \underline{D}'_{V'}$ is a morphism of Lie $n$-algebroids over $\mu_0|_V\colon V\to V'$. It is clear from the equations in (\ref{Equations for correspondence of morphisms}) that $G_\mu\colon\underline{D}_V\to\underline{D}'_{V'}$ is of bidegree $(0,0)$, and hence, together with $\mu_0|_V\colon V\to V'$, they define a morphism of VB-Lie $n$-algebroids as in Definition \ref{Definition of morphism of VB-Lie n-algebroids}. Its principal part $(G_{\mu,0})_i\colon D_i\to D'_i,i=1,\ldots,n,$ with respect to equality (\ref{Decomposed DVD}) is given by $\id_{A_i}\oplus~\mu_0|_{C_i}$, i.e.
		\[
		G_{\mu,0}^\star=\id_{\A^*} \oplus\ (\mu_0|_{\underline{C}})^\star=(\id_{A_1^*} \oplus\ \mu_0|_{C_1}^*)\oplus\ldots\oplus(\id_{A_n^*}\oplus\ \mu_0|_{C_n}^*)
		\]
		and therefore the induced core morphism is $\mu_0|_C\colon C\to C'$ over the identity on $M$.
		
		Conversely, suppose that $(G_D,G_V,\id_{\A},\id_M)$ is a morphism between two (decomposed) VB-Lie $n$-algebroids $(\underline{D},V,\A,M)$ and $(\underline{D}',V',\A,M)$. Observe that, for degree reasons, a morphism of $\A$-representations from $V'^*\oplus\underline{C'}^*$ to $V^*\oplus\underline{C}^*$ must send the vector bundle $V'^*$ into $V^*$ via a bundle map $\mu_0^*$ over the identity on $M$. Since $G_D^\star$ is a morphism of VB-Lie $n$-algebroids over the vector bundle map $G_V$, it has the following properties: 
		\begin{itemize}
			\item it maps the basic function $f\circ q_{V'}$ to $f\circ q_V$;
			
			\item it maps the fibre-wise linear function $\ell_\psi$ to a fibre-wise linear function $\ell_{\mu_0^*(\psi)}$, defining thus a $C^\infty(M)$-linear map $\mu_0^*\colon\Gamma(V'^*)\to\Gamma(V^*)$;
			
			\item it maps the section $\gamma_i\in\Gamma_V^l(D_i'^*)$ into the sum
			\[
			\Big(\Gamma(\underline{S}^i(\A^*))\otimes \Gamma(V^*)\Big) \oplus \left( \bigoplus_{j=1}^{i-1} \Gamma(\underline{S}^{i-j}(\A^*))\otimes \Gamma(C_j^*) \right) \oplus \Gamma(C_i^*) =
			\left( \Gamma(\underline{S}(\A^*))\otimes \Gamma(V^*\oplus \underline{C}^*) \right)^i,
			\]
		\end{itemize}
		Therefore, we may use again the formulae in equation (\ref{Equations for correspondence of morphisms}) to define a $\Gamma(\underline{S}(\A^*))$-linear map
		\[
		\mu^\star:\Gamma(\underline{S}(\A^*))\otimes\Gamma(V'^*\oplus\underline{C'}^*)\to \Gamma(\underline{S}(\A^*))\otimes\Gamma(V^*\oplus\underline{C}^*),
		\] 
		which commutes with the differentials due to the identity $G_D^\star\circ\Q_{\underline{D'}}=\Q_{\underline{D}}\circ G_D^\star$.
	\end{proof}
	
	Combining now Theorem \ref{Theorem RUTHS and higher VB-Lie algebroids} with Proposition \ref{Correspondence of morphisms}, we obtain the following result.
	
	\begin{theorem}\label{Equivalence of categories}
		There is an equivalence of categories between $\mathbb{DVB}\text{-Lie}_n(\A)$ and $\R\text{ep}^{n+1}(\A)$, and hence, an equivalence of categories $\mathbb{VB}\text{-Lie}_n(\A)$ and $\R\text{ep}^{n+1}(\A)$.
	\end{theorem}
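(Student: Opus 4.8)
The plan is to exhibit the equivalence as an explicit functor assembled from the object-level bijection of Theorem \ref{Theorem RUTHS and higher VB-Lie algebroids} and the morphism-level bijection of Proposition \ref{Correspondence of morphisms}, and then to compose with the already-established equivalence $\mathbb{VB}\text{-Lie}_n(\A)\simeq\mathbb{DVB}\text{-Lie}_n(\A)$. First I would define a functor $\mathcal{F}\colon\mathbb{DVB}\text{-Lie}_n(\A)\to\R\text{ep}^{n+1}(\A)$. On an object $(\underline{D},V,\A,M)$ with core $\underline{C}=C_1[1]\oplus\ldots\oplus C_n[n]$, set $\mathcal{F}(\underline{D})$ to be the $(n+1)$-representation of $\A$ on $V[0]\oplus C_1[1]\oplus\ldots\oplus C_n[n]$ whose dual differential $\D^*$ is read off from $\Q_{\underline{D}}$ via \eqref{Equation 1 for VB-Lie n-algebroids and (n+1)-representation}; Theorem \ref{Theorem RUTHS and higher VB-Lie algebroids} guarantees that this is a genuine representation up to homotopy and that the assignment is a bijection on objects. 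On a morphism $G_D$ of VB-Lie $n$-algebroids, set $\mathcal{F}(G_D)$ to be the morphism $\mu$ of $\A$-representations produced by Proposition \ref{Correspondence of morphisms}.

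The substance of the argument is to check that $\mathcal{F}$ really is a functor and that it is fully faithful and essentially surjective. Fullness and faithfulness are immediate from Proposition \ref{Correspondence of morphisms}, which asserts that $G_D\mapsto\mu$ is a bijection between the relevant hom-sets. For essential surjectivity I would start from an arbitrary $(n+1)$-representation of $\A$ on a complex $V[0]\oplus C_1[1]\oplus\ldots\oplus C_n[n]$, form the decomposed double vector bundles $D_i:=V\times_M A_i\times_M C_i$ and the graded double vector bundle $\underline{D}=\bigoplus_{i=1}^n D_i[i]$, and invoke the converse direction of Theorem \ref{Theorem RUTHS and higher VB-Lie algebroids} to equip $(\underline{D},V,\A,M)$ with the VB-Lie $n$-algebroid structure whose homological vector field is determined by the given $\D^*$; by construction $\mathcal{F}$ returns the representation we started with, so $\mathcal{F}$ is even bijective on objects.

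The one point that genuinely needs verification — and which I expect to be the main (though routine) obstacle — is functoriality, namely that $\mathcal{F}$ preserves identities and composition. Preservation of identities is clear from the defining formulae \eqref{Equations for correspondence of morphisms}, since the identity VB-morphism acts trivially on the generators $f\circ q_V$, $\ell_\psi$, $\alpha$ and $\gamma$. For composition I would take two composable VB-morphisms $G_D$ and $G_D'$, write out the pullbacks on each class of generators using \eqref{Equations for correspondence of morphisms}, and compare with the composite of the associated dualised representation morphisms; the compatibility reduces to the fact that the pullback on linear functions is contravariantly functorial (so that $\ell_{\mu_0^*\psi}$ composes correctly) and that the action on the core generators $\gamma$ is exactly the dual $\mu^\star$, whence $\mathcal{F}(G_D'\circ G_D)=\mathcal{F}(G_D')\circ\mathcal{F}(G_D)$. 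Once $\mathcal{F}$ is confirmed to be a fully faithful, essentially surjective functor it is an equivalence of categories, and composing with the equivalence $\mathbb{VB}\text{-Lie}_n(\A)\simeq\mathbb{DVB}\text{-Lie}_n(\A)$ from the preceding proposition yields the stated equivalence $\mathbb{VB}\text{-Lie}_n(\A)\simeq\R\text{ep}^{n+1}(\A)$.
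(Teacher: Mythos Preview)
Your proposal is correct and follows the paper's approach: the paper simply states that the theorem is obtained by ``combining Theorem \ref{Theorem RUTHS and higher VB-Lie algebroids} with Proposition \ref{Correspondence of morphisms}'', together with the earlier equivalence $\mathbb{VB}\text{-Lie}_n(\A)\simeq\mathbb{DVB}\text{-Lie}_n(\A)$. You have spelled out in more detail than the paper the routine checks (functoriality, essential surjectivity) that make this combination into an honest equivalence of categories.
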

	
	\begin{remark}
		The theorem above could be used to understand the adjoint representation of a split Lie $n$-algebroid $\A$ in classical differential geometric terms. Recall that Section \ref{Adjoint representation of a Lie n-algebroid} gave a method for deriving the structure objects of the adjoint representation, once there is a choice of splitting $\A$ and $TM$-connections on the vector bundles $A_i$ for all $i$. This could be very hard in practice if one were to work with a split Lie $n$-algebroid and its adjoint representation. Already in the case $n=2$ the structure objects tend to be long and complicated. The advantage of Theorem \ref{Equivalence of categories} is that it gives a more compact way to define the adjoint representation of a split Lie $n$-algebroid $\A$ for any $n\in\mathbb{N}$. It is given by the isomorphy class in $\text{VB-Lie}_n(\A)$ of the tangent prolongation $\underline{TA}$ from Example \ref{Tangent prolongation of a (split) Lie n-algebroid}.
	\end{remark}
	
	\section{Change of decomposition}\label{Section: Change of decomposition}
	
	In the previous section, we gave the equivalence of VB-Lie $n$-algebroids and $(n+1)$-representations in terms of a decomposition. Now we will see that two different choices of a decomposition for the VB-Lie $n$-algebroid $(\underline{D},V,\A,M)$ lead to isomorphic representations up to homotopy of $\A$.
	
	Suppose that $(\underline{D},V,\A,M)$ is a VB-Lie $n$-algebroid with core $\underline{C}$ and consider a pair of decompositions for $\underline{D}$ obtained by the horizontal lifts $h_i,h_i':\Gamma(A_i)\to\Gamma_V^l(D_i)$ for all the double vector bundles $(D_i,V,A_i,M)$. One then has the isomorphisms of decomposed double vector bundles
	\[
	G_i:V\times_M A_i \times_M C_i\to D_i \to V\times_M A_i \times_M C_i
	\] 
	defined by mapping $(v,a_i,c_i)\in V\times_M A_i\times_M C_i$ from the decomposition obtained by $h_i'$ to the element $(v,a_i,c_i + \Delta^i_{a_i}(v))$ in the decomposition of $h_i$, where $\Delta^i\in\Gamma(A_i^*\otimes V^*\otimes C_i)$ is given by the difference $h_i-h_i'$. Summing up all $G_i$, one obtains a morphism of decomposed double vector bundles
	\[
	G:V\times_M \A \times_M \underline{C}\to \D \to V\times_M \A \times_M \underline{C}
	\] 
	inducing the identity on $\A$. Therefore, it follows from Proposition \ref{Correspondence of morphisms} that $G$ corresponds to a morphism of $(n+1)$-representations of $\A$
	\[
	\mu:\Gamma(\Sym(\A^*))\otimes\Gamma(\underline{C}\oplus V)\to \Gamma(\Sym(\A^*))\otimes\Gamma(\underline{C}\oplus V).
	\]
	
	As usual, we denote the dual morphism by $\mu^\star$. Clearly, the homogeneous part $G^\star_0$ of $G^\star$ with respect to the decomposition $\underline{D}_V=q_V^!(\A\oplus\underline{C})$ is given by $\id_{\A^*\oplus\underline{C}^*}$. Recalling then the construction of the correspondence between morphisms of VB-Lie $n$-algebroids and $(n+1)$-representations, one sees that $\mu_0|_{\underline{C}^*}=\id_{\underline{C}^*}:\underline{C}^*\to\underline{C}^*$. In addition, given a section $\gamma_i\in\Gamma(C_i^*)$, one has that
	\[
	G^\star(\gamma_i) = \gamma_i + (\Delta^i)^*(\gamma_i) = \gamma_i + \Big(h_i^*(\gamma_i) - h_i'^*(\gamma_i)\Big)
	\]
	Finally, using the second equation in (\ref{Equations for correspondence of morphisms}), we compute
	\[
	\langle \mu_0^*|_{V^*}(\psi(m)),G(v_m,a^i_m,c^i_m) \rangle = G^\star(\ell_\psi)(v_m,a^i_m,c^i_m) =
	\langle \psi(m),G(v_m,a^i_m,c^i_m) \rangle
	\]
	for a point $(v_m,a^i_m,c^i_m)\in\underline{D}$ in the decomposition of $h'$ and a section $\psi\in\Gamma(V^*)$. Therefore, we also obtain that $\mu_0^*|_{V^*}=\id_{V^*}:V^*\to V^*$ and we conclude that $\mu^\star = \id_{\underline{C}^*\oplus V^*} \oplus\ \Delta^*$, or equivalently, $\mu = \id_{V\oplus\underline{C}} \oplus\ \Delta$. 
	
	One may use now the same construction for the isomorphism of decomposed double vector bundles $\widetilde{G}$ given by the difference $\widetilde{\Delta} = h' - h$ and obtain another morphism of representations up to homotopy $\widetilde{\mu}$ from the decomposition of $h$ to the decomposition of $h'$. Observe now that, by construction, the maps $G^\star$ and $\widetilde{G}^\star$ act trivially on the space $\Gamma(\underline{A}^*\otimes V^*)$. Since $(\Delta^i)^*(\gamma_i)\in\Gamma(\A^*\otimes V^*)$ and $(\widetilde{\Delta}^i)^*(\gamma_i)\in\Gamma(\A^*\otimes V^*)$, it follows that $\widetilde{G}^\star((\Delta^i)^*(\gamma_i))=(\Delta^i)^*(\gamma_i)$ and $G^\star((\widetilde{\Delta}^i)^*(\gamma_i))=(\widetilde{\Delta}^i)^*(\gamma_i)$ for all $\gamma_i\in\Gamma(C_i^*)$, and similarly for $h_i'$. We conclude that the morphisms $\widetilde{\mu}$ and $\mu$ are inverses to each other and therefore we have proved the following result.
	
	\begin{proposition}\label{Isomorphism of A-representations for decompositions of VB-Lie n-algebroids}
		Suppose $(\underline{D},V,\A,M)$ is a VB-Lie $n$-algebroid. Let $h_i$ and $h_i'$ be two decompositions of the double vector bundle $(D_i,V,A_i,M)$ for all $i=1,\ldots,n$, and denote their difference $\Delta^i=h_i-h_i'$. Then the two corresponding $(n+1)$-representations of $\A$ on the graded vector bundle $V[0]\oplus \underline{C}$ are isomorphic via the maps $\mu = \id_{V[0]\oplus\underline{C}} \oplus\ \Delta$ and $\mu^{-1} =  \id_{V[0]\oplus\underline{C}} \oplus\ (-\Delta)$. 
	\end{proposition}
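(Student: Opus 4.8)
The plan is to deduce the statement directly from the categorical correspondence of Proposition \ref{Correspondence of morphisms}, applied to the tautological isomorphism relating the two decomposed models of one and the same underlying VB-Lie $n$-algebroid. Concretely, I would fix $(\underline{D},V,\A,M)$ together with its (decomposition-independent) homological vector field $\Q_{\underline{D}}$, and view each choice of horizontal lifts $h_i\colon\Gamma(A_i)\to\Gamma_V^l(D_i)$ as a decomposition of the individual double vector bundles $(D_i,V,A_i,M)$. The first step is to identify the change-of-decomposition data: since the side bundle $\A$ and the core $\underline{C}$ are intrinsic to $\underline{D}$, the difference $h_i-h_i'$ is $C^\infty(M)$-linear in $\Gamma(A_i)$ and takes values in core-linear sections, hence is a tensor $\Delta^i\in\Gamma(A_i^*\otimes V^*\otimes C_i)=\Gamma(\Hom(A_i\otimes V,C_i))$ by the short exact sequence \eqref{Short exact sequence of linear sections}. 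Summing over $i$, the change-of-decomposition map $G$ is the morphism of decomposed double vector bundles $(v,a_i,c_i)\mapsto(v,a_i,c_i+\Delta^i_{a_i}(v))$, of the form described in Example \ref{Example decomposed DVB}, inducing the identity on $\A$.

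The second step is to observe that $G$ is automatically a morphism of VB-Lie $n$-algebroids: it is nothing but the identity on the fixed object $\underline{D}$ read in two coordinate systems, so it intertwines the two transported copies of $\Q_{\underline{D}}$ by construction. This is exactly the hypothesis needed to invoke Proposition \ref{Correspondence of morphisms}, which then produces a morphism $\mu$ between the associated $(n+1)$-representations on $V[0]\oplus\underline{C}$. I would next compute $\mu$ explicitly by dualising, following the recipe \eqref{Equations for correspondence of morphisms} in the proof of that proposition: $G^\star$ fixes the basic functions, the generators $\ell_\psi$ (since $G$ covers $\id_V$), and the $\Gamma(\underline{S}(\A^*))$-part, while on a core generator $\gamma_i\in\Gamma(C_i^*)$ it acts by $\gamma_i\mapsto\gamma_i+(\Delta^i)^\star(\gamma_i)$ with $(\Delta^i)^\star(\gamma_i)\in\Gamma(\A^*\otimes V^*)$. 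Transporting through the duality of Proposition \ref{Correspondence of morphisms} then yields precisely $\mu=\id_{V[0]\oplus\underline{C}}\oplus\,\Delta$.

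Finally, to upgrade $\mu$ to an isomorphism with the asserted inverse, I would run the identical construction with the roles of $h$ and $h'$ exchanged, i.e.~with $\widetilde{\Delta}=h'-h=-\Delta$, obtaining $\widetilde{G}$ and a morphism $\widetilde{\mu}=\id_{V[0]\oplus\underline{C}}\oplus\,\widetilde{\Delta}$ in the opposite direction. The crux is that, by construction, both $G^\star$ and $\widetilde{G}^\star$ act trivially on the subspace $\Gamma(\A^*\otimes V^*)$ of the Weil algebra $W(\underline{D})$; since both $(\Delta^i)^\star(\gamma_i)$ and $(\widetilde{\Delta}^i)^\star(\gamma_i)$ lie in this subspace, applying $\widetilde{G}^\star$ after $G^\star$ merely adds $(\widetilde{\Delta}^i)^\star(\gamma_i)=-(\Delta^i)^\star(\gamma_i)$ to $\gamma_i+(\Delta^i)^\star(\gamma_i)$, returning $\gamma_i$. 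Hence $\widetilde{G}^\star\circ G^\star$ and $G^\star\circ\widetilde{G}^\star$ are the identity on every generator, so $\widetilde{\mu}$ and $\mu$ are mutually inverse, giving $\mu^{-1}=\id_{V[0]\oplus\underline{C}}\oplus\,(-\Delta)$.

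The main obstacle I anticipate is bookkeeping rather than conceptual: one must keep careful track of where each generator of $W(\underline{D})$ lands after dualising, distinguishing the bidegrees $(1,i)$ of the $\Gamma(C_i^*)$ from the $(0,i)$ of the $\Gamma(\underline{S}^i(\A^*))$, and confirm that the shifts $(\Delta^i)^\star$ really take values in the subspace $\Gamma(\A^*\otimes V^*)$ on which $G^\star$ is trivial, which is what forces $\mu$ and $\widetilde{\mu}$ to cancel. Everything else is pinned down by the fact that $G$ is the identity on the fixed object $\underline{D}$, together with the already-established equivalence of Theorem \ref{Theorem RUTHS and higher VB-Lie algebroids} and Proposition \ref{Correspondence of morphisms}.
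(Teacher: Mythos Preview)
Your proposal is correct and follows essentially the same route as the paper's proof: both construct the change-of-decomposition morphism $G\colon(v,a_i,c_i)\mapsto(v,a_i,c_i+\Delta^i_{a_i}(v))$, invoke Proposition~\ref{Correspondence of morphisms} to obtain $\mu$, compute $G^\star$ on the generators $\gamma_i\in\Gamma(C_i^*)$ as $\gamma_i\mapsto\gamma_i+(\Delta^i)^\star(\gamma_i)$, and then verify invertibility via the observation that $G^\star$ and $\widetilde{G}^\star$ act trivially on $\Gamma(\A^*\otimes V^*)$.
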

	
	\begin{remark}\label{Transformation of adjoint representation of split Lie n-algebroid}
		Note that in the case of the double vector bundle $(\underline{TA},TM,\A,M)$ obtained by the tangent prolongation of a Lie $n$-algebroid $\A$ over $M$, decompositions correspond to $TM$-connections on the vector bundles $A_i\to M$. The result above then reads that the two representatives of the adjoint representation $\Gamma(\underline{S}(\A^*))\otimes\Gamma(\ad_\nabla)=\Gamma(\underline{S}(\A^*))\otimes\Gamma(TM[0]\oplus\A)=\Gamma(\underline{S}(\A^*))\otimes\Gamma(\ad_{\nabla'})$ of $\A$ corresponding to the two choices of $TM$-connections $\nabla$ and $\nabla'$ are related by the isomorphisms of $\A$-representations $\mu\colon\Gamma(\underline{S}(\A^*))\otimes\Gamma(\ad_{\nabla})\to\Gamma(\underline{S}(\A^*))\otimes\Gamma(\ad_{\nabla'})$ and $\mu^{-1}\colon\Gamma(\underline{S}(\A^*))\otimes\Gamma(\ad_{\nabla'})\to\Gamma(\underline{S}(\A^*))\otimes\Gamma(\ad_{\nabla})$ given by $\mu = \id_{\ad_\nabla} \oplus \Big( \nabla' - \nabla \Big)$ and $\mu^{-1} = \id_{\ad_{\nabla'}} \oplus \Big( \nabla - \nabla' \Big)$.
	\end{remark}

	\chapter{Conclusion: open problems and further research}\label{Chapter: Conclusion, open problems and further research}
	
	In this chapter, we summarise the content of the main body of this thesis and present briefly some ideas for future research topics that are related to our results.
	
	\paragraph*{Summary}
	
	Chapters \ref{Chapter: Preliminaries} and \ref{Chapter: Z- and N-graded supergeometry} laid the mathematical foundations in order for the reader to gain familiarity with our notation and conventions, and understand the structures that are important for the research results lying at the core of this thesis. This included classical differential geometric structures such as graded vector bundles, complexes of vector bundles, various notions of algebroids, double vector bundles, and sheaves on topological spaces, as well as supergeometric notions such as $\mathbb{Z}$- and $\mathbb{N}$-graded manifolds, vector bundles in the category of graded manifolds, $\Q$-structures and Lie $n$-algebroids, graded Poisson and symplectic structures on graded manifolds.
	
	In Chapter \ref{Chapter: Graded tangent and cotangent bundles}, we moved to analysing further vector bundles in the category of graded manifolds, and focused on the two most important examples that are relevant to our work, namely the tangent and the cotangent bundle of a graded manifold. We presented numerous well-known geometric constructions on these vector bundles, such as (pseudo)multivector fields and (pseudo)differential forms, and the Weil and the Poisson-Weil algebras. In addition, we reviewed homotopy Poisson structures explaining how they serve as a unified framework for Poisson brackets and $\Q$-structures on graded manifolds, and recalled the correspondence of bialgebroids with Poisson Lie 1-algebroids, and the correspondence of Poisson manifolds and Courant algebroids with symplectic Lie 1- and symplectic Lie 2-algebroids, respectively.
	
	In Chapter \ref{Chapter: Differential graded modules}, we developed the theory of differential graded modules (DG-modules) for $\Q$-manifolds and Lie $n$-algebroids, and defined the two fundamental examples, i.e.~the adjoint and the coadjoint module (Section \ref{Section: Adjoint and coadjoint modules}). In particular, Sections \ref{Section PQ-manifolds: coadjoint vs adjoint modules} and \ref{Section: PQ-manifolds: Weil vs Poisson-Weil algebras} constructed the morphism relating these two DG-modules (Theorem \ref{thm_poisson}) and the Weil with the Poisson-Weil algebras (Theorem \ref{thm_Weil_Poisson-Weil}), in the case of $\mathcal{P}\Q$-manifolds. This result was applied to Courant algebroids in order to give an alternative description in terms of its adjoint and coadjoint modules.
	
	Thereafter Chapter \ref{Chapter: Representations up to homotopy} defined representations up to homotopy of Lie $n$-algebroids and, similarly to the case of DG-modules, constructed the adjoint and the coadjoint representations (Section \ref{Adjoint representation of a Lie n-algebroid}). In the case of split Lie $2$-algebroids, 3-term representations were studied in detail (Proposition \ref{3-term_representations}) and the explicit formulae for the structure objects of the adjoint and the coadjoint representations were given (Proposition \ref{Adjoint_representation_of_Lie_2-algebroid} and and Section \ref{adjoint_module_adjoint_representation_isomorphism}). Additionally, the coordinate transformation of the adjoint representation of a split Lie $2$-algebroid was computed (Section \ref{Section: Coordinate transformation of the adjoint representation}) and the explicit map connecting the adjoint module with the adjoint representation of a general Lie $n$-algebroid was established (Section \ref{adjoint_module_adjoint_representation_isomorphism}). Section \ref{Section: The Weil algebra of a Lie n-algebroid} analysed the Weil algebra of a split Lie $n$-algebroid expressing its differentials in terms of the coadjoint representation and $n$-many double representations of the tangent Lie algebroid of the base smooth manifold, and finally Section \ref{Section: Poisson Lie algebroids of low degree} computed explicitly the map between the coadjoint and adjoint representations of a split Poisson Lie $n$-algebroid, for $n=0,1,2$.
	
	Chapters \ref{Chapter: Linear structures on vector bundles} and \ref{Chapter: Higher split VB-algebroid structures} dealt with linear structures of graded manifolds. More precisely, Chapter \ref{Chapter: Linear structures on vector bundles} used supergeometric language to put the notions of vector fields, $\Q$-structures, Poisson brackets, and homotopy Poisson structures in the context of vector bundles in the category of graded manifolds. On the other hand, Chapter \ref{Chapter: Higher split VB-algebroid structures} analysed linear $\Q$-structures in the language of classical differential geometry. In particular, it developed the notion of (split) VB-Lie $n$-algebroids as Lie $n$-algebroids that are linear over another Lie $n$-algebroid. These were described both classically using higher brackets (Section \ref{Section: Classical interpretation}) and supergeometrically using a homological vector field (Sections \ref{Section: Supergeometric interpretation} and \ref{Section: The Weil algebra of a split VB-Lie n-algebroid}). We defined the induced fat Lie $n$-algebroid of VB-Lie $n$-algebroid in Section \ref{Section: The fat Lie n-algebroid} and proved the equivalence of categories between VB-Lie $n$-algebroids with fixed side and $(n+1)$-term representations up to homotopy of its side Lie $n$-algebroid in Section \ref{Section: Split VB-Lie n-algebroids and (n+1)-representations} (Theorem \ref{Theorem RUTHS and higher VB-Lie algebroids}, Proposition \ref{Correspondence of morphisms} and Theorem \ref{Equivalence of categories}). Finally, in Section \ref{Section: Change of decomposition} we saw how different choices of decompositions of the VB-Lie $n$-algebroid lead to isomorphic representations up to homotopy of its side (Proposition \ref{Isomorphism of A-representations for decompositions of VB-Lie n-algebroids}) and applied this to express the transformation of the adjoint representation of a split Lie $n$-algebroid under different choices of linear connections (Remark \ref{Transformation of adjoint representation of split Lie n-algebroid}), extending the corresponding result of Proposition \ref{Isomorphism with change of connections} to the general case of $n\in\mathbb{N}$.
	
	\paragraph*{DG-modules and representations of Lie $n$-groupoids}
	As it was mentioned in the introduction, Lie $n$-groupoids are the global objects corresponding to Lie $n$-algebroids \cite{CaFe01,CrFe03}. The natural question that arises is whether the constructions developed in this thesis about Lie $n$-algebroids could be carried over to Lie $n$-groupoids. In the following, we give more details about this idea.
	
	\begin{definition*}
	A \textbf{groupoid}\index{groupoid} is a small category such that every morphism is an isomorphism. Explicitly, a groupoid $\mathcal{G} \rightrightarrows M$ consists of a set of \textbf{objects}\index{groupoid!objects} $M$ (with no extra structure a priori) and a set of \textbf{arrows}\index{groupoid!arrows} $\mathcal{G}$ together with the following structure maps:
	\begin{enumerate}
	\item Two maps $s,t\colon\mathcal{G}\to M$ called \textbf{source}\index{groupoid!source} and \textbf{target}\index{groupoid!target}, respectively.
	
	\item A \textbf{multiplication map}\index{groupoid!multiplication map} $m\colon\mathcal{G}_2\to \mathcal{G},(g,h)\mapsto gh$, where
	\[
	\mathcal{G}_2 := \{ (g,h)\in \mathcal{G}\times \mathcal{G}\ |\ s(g) = t(h) \},
	\]
	that satisfies
	\begin{enumerate}
	\item Compatibility with $s$ and $t$: for $(g,h)\in \mathcal{G}_2$
	\[
	s(gh) = s(h)
	\qquad \text{and} \qquad
	t(gh) = t(g)
	\]
	
	\item Associativity: for $(g,h,k)\in G_3$
	\[
	g(hk) = (gh)k,
	\]
	where in general $\mathcal{G}_0:=M,\mathcal{G}_1:=\mathcal{G}$ and for $k\geq2$
	\[
	\mathcal{G}_k := \{ (g_1,\ldots,g_k)\ |\ g_j\in \mathcal{G}_1\ \text{and}\ s(g_j) = t(g_{j+1})\ \text{for all}\ j \}.
	\]
	\end{enumerate}
	
	\item An embedding $1\colon M\to \mathcal{G},x\to 1_x$ called \textbf{identity map}\index{groupoid!identity map}, such that for all $g\in \mathcal{G}$
	\[
	g1_{s(g)} = g = 1_{t(g)}g
	\]
	and $s(1_x) = x = t(1_x)$ for all $x\in M$.
	
	\item An \textbf{inversion map}\index{groupoid!inversion map} $i\colon\mathcal{G}\to \mathcal{G},g\mapsto g^{-1}$ such that for all $g\in \mathcal{G}$
	\[
	s(g)^{-1} = t(g)
	\qquad
	t(g)^{-1} = s(g)
	\qquad
	g^{-1}g = 1_{s(g)},
	\qquad
	gg^{-1} = 1_{t(g)}.
	\]
	\end{enumerate}
	A \textbf{Lie groupoid\index{Lie groupoid}} is a groupoid $\mathcal{G}\rightrightarrows M$ such that $M$ and $\mathcal{G}$ are smooth manifolds (with $\mathcal{G}$ not necessarily Hausdorff), $M$ is a closed submanifold of $\mathcal{G}$, all the structure maps are smooth, and the source and target maps are submersions.
	\end{definition*}
	
	Every groupoid $\mathcal{G}\rightrightarrows M$ comes with the following simplicial manifold (see e.g.~\cite{Zhu09,ArCr13}) called its \textbf{nerve}\index{Lie groupoid!nerve}: the manifold of $k$-simplices is $\mathcal{G}_k$ with face maps $\diff^k_i\colon\mathcal{G}_k\to\mathcal{G}_{k-1}$
	\[
	\diff_i^k(g_1,\ldots,g_k) = \begin{cases}
		(g_2,\ldots,g_k), & \text{if}\ i=0 \\
		(g_1,\ldots,g_ig_{i+1},\ldots,g_k), & \text{if}\ 0<i<k \\
		(g_1,\ldots,g_{k-1}), & \text{if}\ i=k \\
	\end{cases}
	\]
	and degeneracy maps $s_i^k\colon\mathcal{G}_k\to\mathcal{G}_{k+1}$
	\[
	s_i^k(g_1,\ldots,g_k) = (g_1,\ldots,g_i,1,g_{i+1},\ldots,g_k)
	\]
	for $0\leq i\leq k$. The spaces $C^k(\mathcal{G}):=C^\infty(\mathcal{G}_k)$ are equipped with the differential $\diff_\mathcal{G}$ given by the alternating sum of the face maps $\diff_i$, and hence, $(C^k(\mathcal{G}),\diff_\mathcal{G})$ is a complex whose cohomology is denoted $H^\bullet(\mathcal{G})$. Moreover, the space $C^\bullet(\mathcal{G})$ carries a natural algebra structure given by the multiplication map
	\[
	(f*h)(g_1,\ldots,g_{k+\ell}) := (-1)^{k\ell}f(g_1,\ldots,g_k)h(g_{k+1},\ldots,g_{k+\ell})
	\]
	for $f\in C^k(\mathcal{G})$ and $h\in C^\ell(\mathcal{G})$. The triple $(C^k(\mathcal{G}),*,\diff_\mathcal{G})$ has the structure of a differential graded algebra, i.e.~one has the identity
	\[
	\diff_\mathcal{G}(f*h) = \diff_\mathcal{G}f*h + (-1)^kf\diff_\mathcal{G}h.
	\]
	One uses the construction above to define higher versions of Lie groupoids as follows: A \textbf{Lie $n$-groupoid}\index{Lie $n$-groupoid} is as a simplicial manifold $\mathcal{G}$ that satisfies the Kan conditions (see \cite{Zhu09b} for details).
	
	Suppose now that $\mathcal{G}\rightrightarrows M$ is a Lie groupoid. Given a vector bundle $E\to M$, we define the graded vector space $C^\bullet(\mathcal{G},E)$ which in degree $k$ is given by $C^k(\mathcal{G},E):=\Gamma(\mathcal{G}_k, t^*E)$. This graded space is a (right) graded $C^\bullet(\mathcal{G})$-module whose structure is given by
	\[
	(\eta*f)(g_1,\ldots,g_{k+\ell}) := (-1)^{k\ell} \eta(g_1,\ldots,g_k)f(g_{k+1},\ldots,g_\ell)
	\]
	for $\eta\in C^k(\mathcal{G},E)$ and $f\in C^\ell(\mathcal{G})$. Similarly as for Lie algebroids, one defines a \textbf{representation}\index{Lie $n$-groupoid!representation} of the Lie groupoid $\mathcal{G}\rightrightarrows M$ on the vector bundle $E\to M$ to be a degree $1$ operator $\D$ on $C^\bullet(\mathcal{G},E)$ that satisfies the Leibniz identity
	\[
	\D(\eta*f) = \D(\eta)*f + (-1)^{|\eta|}\eta*\diff_\mathcal{G}f.
	\]
	In the case of a graded vector bundle $\E=\bigoplus_i E_i[i]\to M$, one considers the $\E$-valued cochains of $\mathcal{G}$ with respect to the total grading given by
	\[
	C^k(\mathcal{G},\E) = \bigoplus_{i-j=k}C^i(\mathcal{G},E_{j}).
	\]
	A degree $1$ operator on this space which satisfies the above Leibniz rule is called \textbf{representation up to homotopy}\index{Lie $n$-groupoid!representation up to homotopy} of the Lie groupoid $\mathcal{G}\rightrightarrows M$ on the graded vector bundle $\E\to M$ (for more details see, e.g., \cite{ArCr13,Arias08t,GrMe17,Stefani19}).
	
	One could generalise the constructions above to the world of Lie $n$-groupoids and try to prove the results of this thesis in this setting. More precisely, one could define DG-modules over the differential graded algebra $(C^\bullet(\mathcal{G}),\diff_{\mathcal{G}})$ given by a Lie $n$-groupoid $\mathcal{G}$ and representations up to homotopy of $\mathcal{G}$, and prove that they are, in fact, equivalent. The first step for $n=1$ is done in \cite{Stefani19}, where it is proved that representations up to homotopy of a  Lie groupoid $\mathcal{G}\rightrightarrows M$ are the same as \textbf{cohesive modules}\index{cohesive module}\index{Lie groupoid!cohesive module} over the differential graded algebra $(C^\bullet(\mathcal{G}),\diff_{\mathcal{G}})$, i.e.~(bounded) graded right modules $\Em$ over $(C^\bullet(\mathcal{G}),\diff_{\mathcal{G}})$ which are finitely generated, projected, and equipped with a structure differential. One then could focus on the adjoint module and the adjoint representation of $\mathcal{G}$ and describe explicitly the isomorphism between them. Further, it would be interesting to understand when one can find a global analogue of Theorem \ref{thm_poisson}, i.e.~a connection of Poisson structures on the Lie $n$-groupoid with a natural morphism between its coadjoint and adjoint modules. Lastly, the question that arises is whether and how one can pass from the construction on Lie $n$-groupoids to the corresponding constructions on Lie $n$-algebroids and vice versa, i.e.~differentiation and integration of modules and representations up to homotopy of Lie $n$-groupoids.

	\paragraph*{Double $\Q$-manifolds, ideals and reduction}
	
	Recall that in chapters \ref{Chapter: Linear structures on vector bundles} and \ref{Chapter: Higher split VB-algebroid structures}, we developed the notion of linear $\Q$-manifolds, or in the split setting the notion of higher VB-algebroids. These were essentially described as commutative diagrams of the form
	\[
	\begin{tikzcd}
		\underline{D} \arrow[dd] \arrow[rr] &  & \underline{A} \arrow[dd] \\
		&  &                          \\
		V \arrow[rr]                        &  & M                       
	\end{tikzcd}
	\]
	together with a homological vector field on bidegree $(0,1)$ on the graded manifold $\underline{D}[1]$ (see Theorem \ref{VB-Lie n-algebroids via the Weil algebra}). One could generalise the work of Voronov \cite{Voronov12} and consider pairs of commuting homological vector fields $(\Q_1,\Q_2)$ on the graded manifold $\underline{D}[1]$ of bidegree $(1,0)$ and $(0,1)$, respectively; that is, $[\Q_1,\Q_2] = 0$. This would lead to the notion of double structures in graded geometry, i.e.~\textbf{double $\Q$-manifolds}\index{double $\Q$-manifold} and \textbf{double Lie $n$-algebroids}\index{double Lie $n$-algebroid}. 
	
	The basic example of such double structures should be the tangent prolongation of a (split) Lie $n$-algebroid $\underline{A}\to M$. Next, one could consider subobjects of the tangent double Lie $n$-algebroid $(T\A,TM,\A,M)$ which in turn would form a generalisation of the notion of \textit{infinitesimal ideal systems} of Lie $n$-algebroids (IIS) from \cite{JoOr14}. Infinitesimal ideal systems of a Lie $n$-algebroid are expected to be linked to subrepresentations of the adjoint representation (up to homotopy) $\ad$, similarly to the work done in \cite{DrJoOr15}. The final step in this direction would be to define the quotient of the Lie $n$-algebroid $\underline{A}\to M$ with its IIS and obtain a Lie $n$-algebroid of smaller dimension over a quotient manifold, as it is proved in \cite{JoOr14} for Lie algebroids. Note that this base manifold should be the leaf space of $M$ with respect to a foliation given by the IIS. Consequently, one would have a well-defined notion of reduction of Lie $n$-algebroids for general $n\in\mathbb{N}$.

	\begin{appendices}
	\appendixpage
	\noappendicestocpagenum
	\addappheadtotoc
	
	\chapter{More on graded geometry}\label{Chapter: More on graded geometry}
	
	\section{The geometrisation of $\mathbb{N}$-graded vector bundles over $\mathbb{N}$-manifolds}\label{Appendix: The geometrisation of N-graded vector bundles}
	
	A \textbf{$\mathbb{Z}$-graded double vector bundle}\index{graded double vector budnle} is a double vector bundle of the form $(\underline{D}:=\bigoplus_{i\in\mathbb{Z}} D_i[i],V,\A:=\bigoplus_{i\in\mathbb{Z}} A_i[i],M)$:
	\begin{center}
		\begin{tikzcd}
		\underline{D} \arrow[rr] \arrow[dd] & & \A \arrow[dd] \\
		& & \\
		V \arrow[rr]      &       & M            
		\end{tikzcd}
	\end{center}
	with core $\underline{C}:=\bigoplus_{i\in\mathbb{Z}} C_i[i]$, where the (finite) direct sums are taken over the bottom arrow $V\to M$, such that each $(D_i,V,A_i,M)$ is a double vector bundle with core $C_i$. In fact, since every double vector bundle $(D,V,A,M)$ with core $C$ is non-canonically isomorphic to a decomposed double vector bundle $A\times_M V\times_M C$, it follows that every graded double vector bundle $(\underline{D},V,\A,M)$ is non-canonically isomorphic to a \textbf{decomposed graded double vector bundle}\index{graded double vector budnle!decomposed} $\A\times_M V\times_M \underline{C}$.
	
	\begin{proposition}
		There is an one-to-one correspondence between $\mathbb{N}$-graded double vector bundles of degree $n$ and vector bundles over $\n$-manifolds whose total space is also an $\n$-manifold.
	\end{proposition}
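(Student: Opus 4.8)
The plan is to establish the correspondence by exhibiting two mutually inverse constructions and then reducing the general statement to the decomposed/split case with the help of the splitting theorems already available in the excerpt. First I would go from the double vector bundle picture to the graded picture. Given a graded double vector bundle $(\underline{D},V,\A,M)$ of degree $n$ with core $\underline{C}$, set $\M:=\A$ to be the split $[n]$-manifold associated with the graded vector bundle $\A=\bigoplus_{i=1}^n A_i[i]\to M$, so that $\cin(\M)=\Gamma(\underline{S}(\A^*))$, and set $\Em:=\underline{D}_V$ to be the split $[n]$-manifold associated with the graded vector bundle $\underline{D}\to V$, so that $\cin(\Em)=\Gamma(\underline{S}(\underline{D}_V^*))$. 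The double vector bundle projection $\pi_{\A}\colon\underline{D}\to\A$ over $q_V\colon V\to M$ induces an algebra morphism $q^\star\colon\cin(\M)\to\cin(\Em)$ sending $f\in C^\infty(M)$ to $q_V^*f$ and $\alpha\in\Gamma(\A^*)$ to the core section $\pi_{\A}^\star(\alpha)\in\Gamma_V^c(\underline{D}_V^*)$, giving a morphism $q\colon\Em\to\M$ of graded manifolds over $q_V$. The key point is that the \emph{horizontal} vector bundle structure $\underline{D}\to\A$ turns $q$ into a vector bundle in the category of graded manifolds: the fibrewise scalar multiplication of $\underline{D}\to\A$ induces an Euler-type grading on $\cin(\Em)$ whose polynomial-degree-one part is exactly the sheaf $\cin_{\text{lin}}(\Em)$ of linear functions, and the double vector bundle axioms (compatibility of the two linear structures) are precisely what guarantees that the transition functions of $\Em\to\M$ are linear in the fibres. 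Since $\A$ and $\underline{D}\to V$ are honest graded vector bundles, both $\M$ and $\Em$ are $[n]$-manifolds, which is the required object.

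For the converse I would start with a vector bundle $q\colon\Em\to\M$ of graded manifolds in which $\M$ and $\Em$ are both $[n]$-manifolds. Choosing a splitting $\M\cong\A=\bigoplus_{i=1}^n A_i[i]$ of the base (available by the splitting theorem of \cite{Roytenberg02,BoPo13}) and, using the splitting theorem for vector bundles over $\mathbb{N}$-manifolds of \cite{Mehta14} recalled in Remark~\ref{Remark: Splitting of vector bundles}, writing $\Gamma(\Em)\cong\cin(\M)\otimes\Gamma(\underline{E})$ for a graded vector bundle $\underline{E}\to M$. Because $\Em$ is an $[n]$-manifold, $\underline{E}$ is non-negatively graded and bounded, hence $\underline{E}=V[0]\oplus C_1[1]\oplus\ldots\oplus C_n[n]$, where $V=E_0$ is the vector bundle over $M$ underlying the body of $\Em$ (the degree-zero fibre datum of $q$) and the $C_i$ are to become the cores. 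I would then put $D_i:=V\times_M A_i\times_M C_i$, the decomposed double vector bundle with sides $V,A_i$ and core $C_i$ (Example~\ref{Example decomposed DVB}), and assemble the graded double vector bundle $(\underline{D}=\bigoplus_{i=1}^n D_i[i],V,\A,M)$ with core $\underline{C}=\bigoplus_{i=1}^n C_i[i]$.

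The verification that these two assignments are inverse is then a matter of comparing generators over a trivialising chart $U\subset M$: the coordinates $x$ of $M$ together with frames of the $A_i^*$ give the basic functions of $\Em$, of bidegree $(0,i)$, while a frame of $V^*$ together with frames of the $C_i^*$ give the fibre (linear) functions, of bidegree $(1,i-1)$; matching this bigraded generating data with the core/linear decomposition $\Gamma_V^l(\underline{D}_V^*)\supset\Gamma(\A^*\otimes V^*)\oplus\Gamma(\underline{C}^*)$ of the double vector bundle shows that each composite is the identity up to isomorphism. The main obstacle, and the step deserving the most care, is the intrinsic identification of the two vector bundle structures on $\underline{D}$ with the single datum ``$q$ is a vector bundle of graded manifolds'': I must check that the horizontal linearity axioms are equivalent to $q^\star$ preserving basic functions and to $\cin_{\text{lin}}(\Em)$ being stable and locally free over $\cin(\M)$, which amounts to unwinding the cocycle/transition data on both sides and confirming that the grading operator coming from the scalar multiplication of $\underline{D}\to\A$ coincides with the fibre polynomial degree. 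The bidegree bookkeeping inherited from the decomposed picture is exactly the tool that makes this transparent.

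Finally, to upgrade the two constructions into a genuine one-to-one correspondence, I would verify that the choices involved — the splitting of $\M$ and the identification $\Gamma(\Em)\cong\cin(\M)\otimes\Gamma(\underline{E})$ on the graded side, and the decomposition of each $(D_i,V,A_i,M)$ on the double side — produce isomorphic objects, so that the correspondence is well defined on isomorphism classes. This is the precise analogue of the non-canonical splitting statements already invoked in the excerpt, and beyond organising these isomorphisms I expect no further essential difficulty.
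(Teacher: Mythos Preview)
Your proposal is correct and follows essentially the same route as the paper: reduce to the decomposed/split situation via the splitting theorems for $[n]$-manifolds and for vector bundles over them, build the decomposed double vector bundles $D_i=V\times_M A_i\times_M C_i$ in the converse direction, and observe that the two constructions are inverse up to the non-canonical choices. The paper's proof is considerably terser---it defines the forward direction directly in split form via the sheaf of sections $\Gamma(\Em_{\underline{D}})=\cin(\M)\otimes\Gamma(V[0]\oplus\underline{C})$ rather than arguing intrinsically from the horizontal linear structure as you do---but the content is the same.
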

	
	\begin{proof}
		Given an $\mathbb{N}$-graded double vector bundle $\underline{D}=\bigoplus_{i=1}^nD_i[i]$ of degree $n$, one has the associated vector bundle in the category of graded manifolds $\Em_{\underline{D}}\to\M$, where the base is given by the split $\n$-manifold $\M$ with sheaf $\cin(\M)=\Gamma(\underline{S}(\A^*))$ and the sheaf of sections is given by $\Gamma(\Em_{\underline{D}})=\cin(\M)\otimes\Gamma(C_n[n]\oplus\ldots\oplus C_{1}[1]\oplus V[0])$.
		
		Conversely, suppose $\Em\to\M$ is a vector bundle in the category of graded manifolds, such that both $\Em$ and $\M$ are $\n$-manifolds. Choose splittings of $\M$ and $\Em$ as $\cin(\M)=\Gamma(\underline{S}(\A^*))$ and $\Gamma(\Em)=\Gamma(\underline{S}(\A^*))\otimes\Gamma(\E)$, for some negatively graded vector bundles $\A:=\bigoplus_{i=1}^nA_i[i]\to M$ and $\E:=\bigoplus_{i=1}^nE_i[i]\to M$, respectively.
		Then one obtains the family of decomposed double vector bundles $D_i:=A_i\times_M E_0\times_M E_i$, for $i=1,\ldots,n$. Summing over all such $i$ yields a decomposed $\mathbb{N}$-graded double vector bundle $\underline{D}_\Em:=\bigoplus_{i=1}^nD_i[i]$ of degree $n$. Since any vector bundle over an $\mathbb{N}$-manifold admits a splitting and any graded double vector bundle admits a decomposition, it follows that the two procedures above are inverse to each other.
	\end{proof}
	
	\section{Cartan calculus on $\mathbb{Z}$-graded manifolds}\label{Cartan calculus on Z-graded manifolds}
	
	In this appendix, we recall some important formulae from \cite{Mehta06} that generalise the known results from the Cartan calculus on ordinary manifolds and are used extensively in this thesis.
	
	In what follows, $\{\xi^j\}$ are local coordinates on the $\mathbb{Z}$-graded manifold $\M$, $\{\xi^j,\dot{\xi}^j\}$ are the corresponding coordinates on $T[1]\M$, and $\xi,\zeta^j\in\cin(\M),\X,\Y\in\mathfrak{X}(\M)$ are all homogeneous. Recall that $\diff\in\mathfrak{X}^1(T[1]\M)$ sends $\xi^j$ to $\diff\xi^j=\dot{\xi}^j$ and $\dot{\xi}^j$ to $0$, and thus it follows immediately that $\diff^2=0$ and $[\diff,\diff] = \diff^2 + \diff^2 = 0$.
	Recall also the vector field $i_\X\in\mathfrak{X}^{|\X|-1}(T[1]\M)$ which sends $\xi^j$ to $0$ and $\dot{\xi}^j$ to $\X(\xi^j)$, and the Lie derivative $\ldr{\X}=[i_\X,\diff]$. As we explained in Section \ref{Section: (Pseudo)differential forms and the Weil algebra of a Q-manifold}, locally these vector fields have the following form
	\[
	\diff = \dot{\xi}^j\frac{\partial}{\partial\xi^j},\qquad 
	i_\X = \zeta^j\frac{\partial}{\partial\dot{\xi}^j},\qquad 
	\ldr{\X} = \zeta^j\frac{\partial}{\partial \xi^j} + (-1)^{|\X|}\diff\zeta^j\frac{\partial}{\partial\dot{\xi}^j},
	\]
	where $\X=\zeta^j\frac{\partial}{\partial\xi^j}$. This implies that
	\[
	i_{\xi\X} = \xi\, i_\X
	\qquad \text{and} \qquad
	\ldr{\xi \X} = \xi\,\ldr{\X} + (-1)^{|\xi|+|\X|}\diff\xi\, i_\X.
	\]
	
	\begin{proposition}
		The following identities hold:
		\begin{enumerate}
			\item $[i_\X,i_\Y] = 0$,
			\item $[\ldr{\X},i_\Y] = i_{[\X,\Y]}$,
			\item $[\ldr{\X},\ldr{\Y}] = \ldr{[\X,\Y]}$,
			\item $[\diff,\ldr{\X}] = 0$.
		\end{enumerate}
	\end{proposition}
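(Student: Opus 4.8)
The plan is to exploit the fact that every operator in sight --- $\diff$, $i_\X$, $i_\Y$, $\ldr{\X}=[i_\X,\diff]$, and each of their graded commutators --- is a homogeneous vector field on $T[1]\M$, i.e.\ a graded derivation of $\cin(T[1]\M)$, the bracket of two vector fields being again a vector field as recalled earlier. Since a graded derivation is completely determined by its values on a set of algebra generators, and $\cin(T[1]\M)$ is generated over $C^\infty(M)$ by the $\xi^j$ together with the $\dot{\xi}^j=\diff\xi^j$, it suffices to verify each of the four identities by evaluating both sides on $\xi^j$ and on $\dot{\xi}^j$. The only inputs I would use are the generator-level formulas recalled above, namely $\diff\xi^j=\dot{\xi}^j$, $\diff\dot{\xi}^j=0$, $i_\X\xi^j=0$, $i_\X\dot{\xi}^j=\X(\xi^j)$, together with $\ldr{\X}\xi^j=\X(\xi^j)$ and $\ldr{\X}\dot{\xi}^j=(-1)^{|\X|}\diff(\X(\xi^j))$, and the two elementary facts that $i_\X f=0$ and $i_\X(\diff f)=\X(f)$ for $f\in\cin(\M)$.

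First I would prove (1). On $\xi^j$ both composites $i_\X i_\Y$ and $i_\Y i_\X$ vanish because $i_\Y\xi^j=0=i_\X\xi^j$; on $\dot{\xi}^j$ one has $i_\Y\dot{\xi}^j=\Y(\xi^j)\in\cin(\M)$, which $i_\X$ then annihilates, and symmetrically. Hence $[i_\X,i_\Y]$ kills all generators and is zero. Next I would establish (2) directly on generators: on $\xi^j$ both sides vanish (the right side because $i_{[\X,\Y]}\xi^j=0$, the left because $i_\Y\xi^j=0$ and $i_\X$ annihilates the base function $\ldr{\X}\xi^j$). The substantive computation is on $\dot{\xi}^j$, where expanding $[\ldr{\X},i_\Y]\dot{\xi}^j=\ldr{\X}(\Y(\xi^j))-(-1)^{|\X|(|\Y|-1)}i_\Y\big((-1)^{|\X|}\diff(\X(\xi^j))\big)$ and using $\ldr{\X}|_{\cin(\M)}=\X$ and $i_\Y(\diff f)=\Y(f)$ collapses the right-hand side to $\X\Y(\xi^j)-(-1)^{|\X||\Y|}\Y\X(\xi^j)=[\X,\Y](\xi^j)=i_{[\X,\Y]}\dot{\xi}^j$, the sign exponent $|\X|(|\Y|-1)+|\X|$ simplifying to $|\X||\Y|$. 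This is the step I expect to be the main obstacle: every Koszul sign --- in the graded commutator $[\ldr{\X},i_\Y]$, in the formula for $\ldr{\X}\dot{\xi}^j$, and in the convention $[\X,\Y]=\X\Y-(-1)^{|\X||\Y|}\Y\X$ --- must line up so that precisely the contraction $i_{[\X,\Y]}$ emerges rather than a twisted variant.

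Finally I would derive (4) and then (3). For (4) the quickest route is graded Jacobi together with $\diff^2=0$: expanding $[\diff,\ldr{\X}]=[\diff,[i_\X,\diff]]$ via property (iii) of the vector-field bracket and using $[\diff,\diff]=2\diff^2=0$ yields $[\diff,\ldr{\X}]=-[\diff,\ldr{\X}]$, whence $[\diff,\ldr{\X}]=0$ since we work over $\R$; alternatively this is immediate on generators from $\ldr{\X}\dot{\xi}^j=(-1)^{|\X|}\diff(\X\xi^j)$ and $\diff^2=0$. For (3) I would avoid a direct generator computation and instead apply graded Jacobi to $[\ldr{\X},\ldr{\Y}]=[\ldr{\X},[i_\Y,\diff]]$, obtaining $[[\ldr{\X},i_\Y],\diff]+(-1)^{|\X|(|\Y|-1)}[i_\Y,[\ldr{\X},\diff]]$; the second summand vanishes by (4) since $[\ldr{\X},\diff]=-(-1)^{|\X|}[\diff,\ldr{\X}]=0$, and the first equals $[i_{[\X,\Y]},\diff]=\ldr{[\X,\Y]}$ by (2) and the definition of $\ldr{}$. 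Thus (1) and (4) are self-contained, (2) is the computational heart that pins down the signs, and (3) follows formally from (2) and (4).
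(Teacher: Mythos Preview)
Your proposal is correct. For identities (1) and (2) you follow exactly the paper's approach: evaluate both sides of each identity on the generators $\xi^j$ and $\dot{\xi}^j$ of $\cin(T[1]\M)$, using the explicit formulas for $i_\X$, $\ldr{\X}$, and $\diff$ on these. Your sign bookkeeping in (2) matches the paper's line by line.

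Where you diverge is in (3) and (4). The paper treats all four identities uniformly, verifying (3) and (4) also by direct evaluation on $\xi$ and $\diff\xi$. You instead deduce (4) from the graded Jacobi identity applied to $[\diff,[i_\X,\diff]]$ together with $[\diff,\diff]=0$, and then obtain (3) by expanding $[\ldr{\X},[i_\Y,\diff]]$ via Jacobi and invoking (2) and (4). This is a genuinely different, more structural route: it avoids repeating the generator computations and makes transparent that (3) and (4) are formal consequences of (2), $\diff^2=0$, and the definition $\ldr{\X}=[i_\X,\diff]$. The paper's approach has the virtue of being completely self-contained and uniform, requiring no appeal to the Jacobi identity for vector fields on $T[1]\M$; your approach is shorter for (3) and highlights the algebraic hierarchy among the Cartan identities.
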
    
	\begin{proof}
		Write $\X=\zeta^j\frac{\partial}{\partial\xi^j},\Y=\eta^i\frac{\partial}{\partial\xi^i}$. Using that any two vector fields on $T[1]\M$ are equal if they agree on functions of the form $\xi$ and $\diff\xi$, we compute:  
		\[
		[i_\X,i_\Y](\xi) = i_\X(i_\Y(\xi)) - (-1)^{(|\X|-1)(|\Y|-1)}i_\Y(i_\X(\xi)) = 0
		\]
		and similarly
		\[
		[i_\X,i_\Y](\diff\xi) = i_\X(\Y(\xi)) - (-1)^{(|\X|-1)(|\Y|-1)}i_\Y(\X(\xi)) = 0.
		\]
		This proves the first identity. For the second we compute
		\[
		[\ldr{\X},i_\Y](\xi) = -(-1)^{|\X|(|\Y|-1)}i_\Y(\ldr{\X}(\xi)) = 0 = i_{[\X,\Y]}(\xi) 
		\]
		and
		\begin{align*}
		[\ldr{\X},i_\Y](\diff\xi) = &\ \ldr{\X}(\Y(\xi)) - (-1)^{|\X|(|\Y|-1)}i_\Y(\ldr{\X}(\diff\xi)) \\
		= &\ \X(\Y(\xi)) - (-1)^{|\X||\Y|}\Y(\X(\xi)) \\
		= &\ [\X,\Y](\xi) \\
		= &\ i_{[\X,\Y]}(\diff\xi).
		\end{align*}
		For the third identity, clearly we have
		\[
		[\ldr{\X},\ldr{\Y}](\xi) = \X(\Y(\xi)) - (-1)^{|\X||\Y|}\Y(\X(\xi)) = \ldr{[\X,\Y]}(\xi)
		\]
		but also
		\begin{align*}
		[\ldr{\X},\ldr{\Y}](\diff\xi) = &\ -(-1)^{|\Y|-1}\ldr{\X}(\diff\Y(\xi)) + (-1)^{|\X||\Y|+|\X|-1}\ldr{\Y}(\diff\X(\xi)) \\
		= &\ (-1)^{|\X|+|\Y|}\diff(\X(\Y(\xi))) - (-1)^{|\X|+|\Y|+|\X||\Y|}\diff(\Y(\X(\xi))) \\
		= &\ (-1)^{|\X|+|\Y|}\diff([\X,\Y](\xi)) \\
		= &\ \ldr{[\X,\Y]}(\diff\xi).
		\end{align*}
		To see the last identity we compute
		\[
		[\diff,\ldr{\X}](\xi) = \diff\X(\xi) - (-1)^{|\X|}\ldr{\X}(\diff\xi) = \diff\X(\xi) - \diff\X(\xi) = 0
		\]
		and
		\[
		[\diff,\ldr{\X}](\diff\xi) = \diff(\ldr{\X}(\diff\xi)) = -(-1)^{|\X|-1}\diff^2\X(\xi) = 0.\qedhere
		\]
	\end{proof}
	
	\section{Characteristic classes of $1^{\text{st}}$ order}
	
	In this appendix, we review some constructions that are relevant to the theory of representations up to homotopy of Lie $n$-algebroids. Namely, we consider a relaxed version of representations $\D$ forgetting the condition $\D^2=0$ (called connections up to homotopy) and use them to define the notion of the Pontryagin algebra (also know as characteristic algebra) of a graded vector bundle with respect to a Lie $n$-algebroid. The case of ordinary Lie algebroids can be found in \cite{Fernandes02,Quillen85,Tu17,Bott72}.
	
	First, we briefly recall some facts from the classical $A$-Pontryagin algebra of a vector bundle $E\to M$, where $A\to M$ is a Lie algebroid with differential $\diff_A$. Given an $A$-connection $\nabla$ on the vector bundle $E\to M$, one has the curvature operator $R_\nabla\in\Omega^2(A,\End(E))$ and consequently its $i$-th powers $R_\nabla^i:=R_\nabla\circ\ldots\circ R_\nabla\in\Omega^{2i}(A,\End(E))$ for all $i\geq1$. The trace of the vector-valued $2i$-forms $R^i_\nabla$ is $\diff_A$-closed and therefore we obtain the cohomology classes $[\tr(R^i_\nabla)]\in H^i(A)$. It is a standard result proved in \cite{Fernandes02}, that the cohomology class $[\tr(R^i_\nabla)]$ does not depend on the choice of the $A$-connection $\nabla$ on $E$, for all $i\geq1$. The \textbf{$A$-Pontryagin algebra}\index{Pontryagin algebra} of $E$ is the $\mathbb{R}$-subalgebra $\Pont_A^\bullet(E)\subset H^\bullet(A)$ generated by the cohomology classes $\sigma_A^i(E) := [\tr(R^i_\nabla)]$.
	
	\begin{remark}
	The Pontryagin algebra $\Pont_A^\bullet(E)$ may be viewed as the image of a suitable $\mathbb{R}$-algebra morphism with target $H^{2\bullet}(A)$, as follows: Let $\operatorname{Sym}^\bullet(\mathfrak{gl}_k(\mathbb{R}))^{\GL_k(\mathbb{R})}$ denote the set of $\GL_k(\mathbb{R})$-invariant polynomial functions over $\mathfrak{gl}_k(\mathbb{R})$, i.e.~polynomial functions $p\colon\mathfrak{gl}_k(\mathbb{R})\to \mathbb{R}$ such that $p(gXg^{-1}) = p(X)$ for $g\in\GL_k(\mathbb{R})$ and $X\in\mathfrak{gl}_k(\mathbb{R})$. As an $\mathbb{R}$-algebra, $\operatorname{Sym}^\bullet(\mathfrak{gl}_k(\mathbb{R}))^{\GL_k(\mathbb{R})}$ is generated by the polynomial functions $P_0,P_1,P_2,\ldots$, where $P_i(X) := \tr(X^i)$ for $X\in\mathfrak{gl}_k(\mathbb{R})$
	and $\tr$ is the usual trace operator (see \cite{Bott72}). Given now a Lie algebroid $A\to M$, a vector bundle $E\to M$ and an $A$-connection $\nabla$ on $E$, each polynomial $p\in\operatorname{Sym}^\bullet(\mathfrak{gl}_k(\mathbb{R}))^{\GL_k(\mathbb{R})}$ defines a closed form $p(R_\nabla)\in\Omega^\bullet(A)$ and hence a cohomology class $[p(R_\nabla)]\in H^\bullet(A)$. This yields the \textbf{Chern-Weil morphism}\index{Chern-Weil morphism} of $\mathbb{R}$-algebras
	\[
	\operatorname{Sym}^\bullet(\mathfrak{gl}_k(\mathbb{R}))^{\GL_k(\mathbb{R})}
	\to
	H^{2\bullet}(A),\qquad p\mapsto [p(R_\nabla)],
	\]
	whose image in $H^\bullet(A)$ is the Pontryagin algebra $\Pont_A(E)$ of the vector bundle $E\to M$.
    \end{remark}
		
	Now, we proceed to generalising the construction of the Pontryagin algebra in the context of general Lie $n$-algebroids, $n\in\mathbb{N}$. In what follows, $\A=A_1[1]\oplus\ldots\oplus A_n[n]$ will always be a split Lie $n$-algebroid with homological vector field $\Q$. 
	
	Suppose that $\E$ is a graded vector bundle over $M$. The \textbf{(graded) commutator}\index{graded commutator} of two homogeneous elements $\omega_1,\omega_2\in\Gamma(\underline{S}(\A^*))\otimes\Gamma(\GEnd(\E))$ is defined by
	\[
	[\omega_1,\omega_2] = \omega_1\circ\omega_2 - (-1)^{|\omega_1||\omega_2|}\omega_2\circ\omega_1.
	\]
	The \textbf{graded} or \textbf{super trace operator}\index{graded trace operator}\index{super trace operator} $\gtr\colon\Gamma(\GEnd^0(\E))\to C^\infty(M)$ is defined by
	\[
	\gtr(\Phi) = (-1)^i\tr(\Phi)
	\]
	for $\Phi\in\Gamma(\End(E_i))$; here, $\tr$ denotes the usual trace
	operator. The graded trace operator can be viewed as an element 
	of $\Gamma(\underline{S}(\A^*))\otimes\Gamma(\underline{\Hom}(\GEnd^0(\E),\mathbb{R}))$ of degree $0$ and
	hence, due to Lemma \ref{wedge_product-operators_Correspondence_Lemma}, there is a corresponding degree-preserving graded
	$\Gamma(\underline{S}(\A^*))$-linear map
	\[
	\gtr\colon\Gamma(\underline{S}(\A^*))\otimes\Gamma(\GEnd(\E)) \to \Gamma(\underline{S}(\A^*))
	\]
	which, by definition, vanishes on
	$\Gamma(\underline{S}^k(\A^*))\otimes\Gamma(\GEnd^i(\E))$ for all $k\geq0$ and
	$i\neq0$ (i.e.~$\gtr$ is non-trivial only on
	$\Gamma(\underline{S}(\A^*))\otimes\Gamma(\GEnd^0(\E))$). The proof of the following proposition can be carried over verbatim from the Lie algebroid case in \cite{Jotz19c}.
	
	\begin{proposition}\label{gtr_of_commutator_is_0}
		The following identity holds for all $\omega_1,\omega_2\in\CM\otimes\Gamma(\GEnd(\E))$:
		\[
		\gtr[\omega_1,\omega_2] = 0.
		\]
	\end{proposition}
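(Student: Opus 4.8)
The plan is to reduce the statement to the pointwise linear-algebraic fact that the super trace of a graded commutator of endomorphisms of a graded vector space vanishes, and then to keep track of the extra signs produced by the $\Gamma(\underline{S}(\A^*))$-coefficients. Since both $\gtr$ and the graded commutator $[\cdot\,,\cdot]$ are bilinear over $\mathbb{R}$, it suffices to verify the identity on homogeneous generators. So first I would write $\omega_1=\alpha\otimes\Phi$ and $\omega_2=\beta\otimes\Psi$, with $\alpha,\beta\in\Gamma(\underline{S}(\A^*))$ homogeneous of degrees $a,b$ and $\Phi,\Psi\in\Gamma(\GEnd(\E))$ homogeneous of endomorphism degrees $p,q$; here $|\omega_1|=a+p$ and $|\omega_2|=b+q$.

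Next, I would rewrite the two compositions in $[\omega_1,\omega_2]=\omega_1\circ\omega_2-(-1)^{|\omega_1||\omega_2|}\omega_2\circ\omega_1$ using the wedge product $\wedge_\circ$ associated to the composition map of endomorphisms, as in the section on generalised functions. This gives $\omega_1\circ\omega_2=(-1)^{pb}\,\alpha\beta\otimes(\Phi\circ\Psi)$ and $\omega_2\circ\omega_1=(-1)^{qa}\,\beta\alpha\otimes(\Psi\circ\Phi)$, the signs being the Koszul signs from moving the endomorphism past the function coefficient. Then I apply $\gtr$ and split into two cases according to the endomorphism degree $p+q$ of $\Phi\circ\Psi$ and $\Psi\circ\Phi$. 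If $p+q\neq0$, both terms lie in $\Gamma(\underline{S}(\A^*))\otimes\Gamma(\GEnd^{p+q}(\E))$ with $p+q\neq0$, on which $\gtr$ vanishes by definition, so the identity holds trivially. If $p+q=0$, both compositions land in $\GEnd^0(\E)$, and I would invoke the cyclicity of the ordinary super trace, namely $\gtr(\Phi\circ\Psi)=(-1)^{pq}\gtr(\Psi\circ\Phi)$ (the fibrewise statement that $\gtr$ annihilates a graded commutator of genuine endomorphisms, carried over verbatim from the Lie algebroid case of \cite{Jotz19c}), together with the graded commutativity $\beta\alpha=(-1)^{ab}\alpha\beta$ and the $\Gamma(\underline{S}(\A^*))$-linearity of $\gtr$. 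Substituting $q=-p$ and collecting the powers of $(-1)$ then shows that the two surviving terms are equal and cancel.

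The main obstacle is purely the sign bookkeeping: three independent sources of signs intervene --- the Koszul sign in the wedge product $\wedge_\circ$, the graded commutativity sign of $\Gamma(\underline{S}(\A^*))$, and the cyclicity sign of the super trace --- and one must check that in the case $p+q=0$ the accumulated exponents agree modulo $2$. Concretely, the exponent in the second term simplifies (using $q=-p$ and discarding even contributions such as $-2ap$) to $pb$ modulo $2$, matching the sign $(-1)^{pb}$ of the first term, so that $\gtr[\omega_1,\omega_2]=(-1)^{pb}\alpha\beta\,\gtr(\Phi\circ\Psi)-(-1)^{pb}\alpha\beta\,\gtr(\Phi\circ\Psi)=0$. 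Once the conventions for $\wedge_\circ$, for $\gtr$ viewed through Lemma \ref{wedge_product-operators_Correspondence_Lemma}, and for the endomorphism degree $|\Phi|=p$ are fixed consistently, the cancellation is forced; the only genuine work is ensuring these conventions are mutually compatible.
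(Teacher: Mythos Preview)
Your proposal is correct and follows essentially the same approach as the paper's proof (which the paper only cites from \cite{Jotz19c}): reduce to simple tensors $\omega_i=\xi_i\otimes\Phi_i$, observe that $\gtr$ kills the commutator unless the endomorphism degrees are opposite, and then use ordinary trace cyclicity together with the Koszul and graded-commutativity signs to get the cancellation. The only cosmetic difference is that the paper packages the commutator as a single scalar multiple of $\xi_1\xi_2\otimes[\Phi_1,\Phi_2]$ before applying $\gtr$, whereas you track the two terms separately; the sign bookkeeping is the same.
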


	\begin{definition}
		A \textbf{connection up to
			homotopy}\index{connection up to
			homotopy} of $(\A,\Q)$ on $\E$ is a degree 1 operator
		
		\[
		\D\colon\Gamma(\underline{S}(\A^*))\otimes\Gamma(\E)\to\Gamma(\underline{S}(\A^*))\otimes\Gamma(\E)
		\]
		which satisfies the graded product rule
		\[
		\D(\xi\wedge\omega) = \Q(\xi)\wedge\omega + (-1)^{|\xi|}\xi\wedge\D(\omega)
		\]
		for all homogeneous $\xi\in\Gamma(\underline{S}(\A^*))$ and all $\omega\in\Gamma(\underline{S}(\A^*))\otimes\Gamma(\E)$. If $\E$ is concentrated only in $n$ degrees,
		i.e.~$\E = E_0[0]\oplus\ldots\oplus E_{n-1}[n-1]$, then a
		connection up to homotopy on $\E$ is called an
		\textbf{$n$-connection}\index{$n$-connection}.
	\end{definition}
	
	\begin{remark}
		Note that if $(\A,\Q)$ is just a Lie algebroid $A$ with
		differential $\diff_A = \Q$ and $\E = E$ is a usual (non-graded)
		vector bundle, then the notion of connection up to homotopy
		agrees with the usual notion of an $A$-connection on $E$.
	\end{remark}
	
	In supergeometric terms, a connection up to homotopy is a degree 1 derivation of the vector bundle over the $\n$-manifold $\A$ whose sheaf of sections is given by $\Gamma(\underline{S}(\A^*))\otimes\Gamma(\E)$. It follows that a connection up to homotopy $\D$ on $\E$ which
	is \textbf{flat}\index{connection up to homotopy!flat}, i.e.~such that $\D^2=0$, is a representation up to homotopy of $\A$. Moreover, given connections up to homotopy $\D_{\E}$ on $\E$ and $\D_{\F}$ on
	$\F$, the same construction as for the representations up to homotopy defines connections up to homotopy on the bundles
	$\underline{S}(\E), \GHom(\E,\F)$, etc. In particular, the graded vector bundle $\GEnd(\E)$ is endowed with the
	connection $\D_{\GEnd}$ given
	by the graded commutator
	\[
	\D_{\GEnd}(\Phi) = [\D_{\E},\Phi] := \D_{\E}\circ\Phi - (-1)^{|\Phi|}\Phi\circ\D_{\E}.
	\]
	
	The following three lemmas are crucial for defining the Pontryagin
	characters. Their proofs work as the proofs of the similar results in
	\cite{Jotz19c}, but are repeated here for the convenience of the
	reader.
	
	\begin{lemma}\label{lemma_decomposition_for_D}
		A connection up to homotopy $\D$ of the split Lie $n$-algebroid
		$\A$ on $\E$ can always be written as a sum
		\[
		\D = \diff_\nabla + \Theta,
		\]
		where $\nabla$ is a usual degree-preserving connection of the (skew-symmetric)
		dull algebroid $A_1\to M$ and $\Theta$ is an element in
		
		\[
		\bigoplus_{i+j=1,j\neq 0}\Gamma(\underline{S}^i(\A^*))\otimes\Gamma(\GEnd^j(\E)).
		\]
	\end{lemma}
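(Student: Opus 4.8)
The plan is to realise the set of connections up to homotopy as an affine space and to pin down its linear part by means of Lemma \ref{wedge_product-operators_Correspondence_Lemma}. First I would record the elementary but crucial observation that the defining Leibniz rule
\[
\D(\xi\wedge\omega)=\Q(\xi)\wedge\omega+(-1)^{|\xi|}\xi\wedge\D(\omega)
\]
forces any connection up to homotopy to be determined by its restriction to $\Gamma(\E)$, and that if $\D$ and $\D'$ are two such operators then their difference $\Theta:=\D-\D'$ satisfies $\Theta(\xi\wedge\omega)=(-1)^{|\xi|}\xi\wedge\Theta(\omega)$, since the two $\Q(\xi)\wedge\omega$ terms cancel. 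Thus $\Theta$ is $\cin(\M)$-linear in the graded sense of Lemma \ref{wedge_product-operators_Correspondence_Lemma}, and it suffices to exhibit one connection up to homotopy of the form $\diff_\nabla$ and then control the difference.

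Next I would extract $\nabla$ directly from $\D$. Decomposing $\D\rvert_{\Gamma(\E)}$ into its bihomogeneous components, the piece landing in $\cin(\M)^1\otimes\Gamma(\E)=\Gamma(A_1^*)\otimes\Gamma(\E)$ (which, for degree reasons, automatically preserves the internal grading of $\E$) defines an operator $\nabla$. Applying the Leibniz rule to a degree-zero function $f\in C^\infty(M)$ and using $\Q(f)=\rho^*(\diff f)$ gives $\nabla(fe)=\rho^*(\diff f)\otimes e+f\,\nabla(e)$; pairing with $a\in\Gamma(A_1)$ this reads $\nabla_a(fe)=\rho(a)(f)\,e+f\,\nabla_a e$, which is exactly the rule for an $A_1$-connection on each $E_k$ with respect to the anchor of the skew-symmetric dull algebroid $A_1$. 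I would then let $\diff_\nabla$ be the unique connection up to homotopy obtained by extending this degree-preserving connection through the Leibniz rule. The extension is well defined because reassociating $(\xi f)\wedge e=\xi\wedge(fe)$ produces the same value precisely on account of $\Q(f)=\rho^*(\diff f)$, and on $A_1$-forms it agrees with the usual Koszul operator.

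Finally I would set $\Theta:=\D-\diff_\nabla$. By the first paragraph $\Theta$ is a graded $\cin(\M)$-linear operator of degree one, so Lemma \ref{wedge_product-operators_Correspondence_Lemma} identifies it with an element of $\bigl(\cin(\M)\otimes\Gamma(\GEnd(\E))\bigr)^1$, which splits by the total degree $i$ of the function part as $\bigoplus_{i+j=1}\Gamma(\underline{S}^i(\A^*))\otimes\Gamma(\GEnd^j(\E))$. Since $\diff_\nabla$ was chosen to reproduce exactly the degree-preserving, word-length-one component of $\D$, the operator $\Theta$ has vanishing $(i,j)=(1,0)$ part; as $j=0$ together with $i+j=1$ forces $i=1$, this is precisely the statement that $\Theta$ lies in $\bigoplus_{i+j=1,\,j\neq0}\Gamma(\underline{S}^i(\A^*))\otimes\Gamma(\GEnd^j(\E))$, which completes the argument.

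The affine-space identification and the consistency of the Leibniz extension are immediate once $\Q(f)=\rho^*(\diff f)$ and Lemma \ref{wedge_product-operators_Correspondence_Lemma} are invoked, so the only real obstacle is the bidegree bookkeeping: one must check carefully that the component selected as $\nabla$ is genuinely the degree-preserving word-length-one term, and that subtracting $\diff_\nabla$ removes exactly the $j=0$ summand and leaves every $j\neq0$ summand untouched.
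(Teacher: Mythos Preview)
Your proof is correct and follows essentially the same strategy as the paper's: both recognise that connections up to homotopy form an affine space modelled on degree-one graded-linear operators, invoke Lemma~\ref{wedge_product-operators_Correspondence_Lemma} to identify the difference with an element of $\bigoplus_{i+j=1}\Gamma(\underline{S}^i(\A^*))\otimes\Gamma(\GEnd^j(\E))$, and then arrange for the $(i,j)=(1,0)$ summand to vanish. The only difference is the order of operations: you extract $\nabla$ directly as the $\Gamma(A_1^*)\otimes\Gamma(\GEnd^0(\E))$-component of $\D|_{\Gamma(\E)}$ and verify the $A_1$-Leibniz rule by hand, whereas the paper starts from an arbitrary auxiliary $A_1$-connection $\nabla'$, writes $\Theta'=\D-\diff_{\nabla'}$, and then absorbs the component $\Theta'_1$ into $\nabla'$ to produce $\nabla:=\nabla'+\Theta'_1$. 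Your route is marginally more direct; the paper's route sidesteps the need to check that the extracted component is an $A_1$-connection, since the sum of a connection and an $\End^0$-valued one-form is automatically one.
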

	\begin{proof}
		Choose a degree-preserving $A_1$-connection
		$\nabla'\colon \Gamma(A_1)\times\Gamma(\E)\to\Gamma(\E)$. The degree 1
		operator $\D-\diff_\nabla'$ on $\Gamma(\underline{S}(\A^*))\otimes\Gamma(\E)$ is graded
		$\Gamma(\underline{S}(\A^*))$-linear and hence it is given as the wedge product with a
		degree 1 element $\Theta'\in\Gamma(\underline{S}(\A^*))\otimes\Gamma(\GEnd(\E))$. Writing
		$\Theta'$ in terms of its components
		\[
		\Theta' = \sum_{k\in\mathbb{Z}} \Theta'_k
		\in\bigoplus_{k\in\mathbb{Z}} \Gamma(\underline{S}^k(\A^*))\otimes\Gamma(\GEnd^{1-k}(\E)),
		\] 
		one easily checks that
		$\diff_{\nabla} := \diff_{\nabla'} +\ \Theta'_1$ is a new
		degree-preserving $A_1$-connection on $\E$ and
		$\D = \diff_{\nabla} +(\Theta'-\Theta_1')$.  The claim is
		proved with
		$\Theta=\Theta'-\Theta_1'\in \bigoplus_{k\in\mathbb{Z}, k\neq
			1} \Gamma(\Sym^k(\A^*))\otimes\Gamma(\GEnd^{1-k}(\E))$.
	\end{proof}
	
	\begin{lemma}\label{lemma_gtr_is_flat}
		Let $A\to M$ be a dull algebroid and $\diff_{\nabla}$ a
		degree-preserving $A$-connection on a graded vector bundle $\E$.
		The $A$-connections $\diff_{\nabla^{\GEnd}}$ on $\GEnd(\E)$
		and $\diff_{A}$ on $M\times\mathbb{R}$ give rise to the
		$A$-connection $\diff_{\nabla^{\GHom}}$ on the graded vector
		bundle $\GHom(\GEnd(\E),\mathbb{R})$.
		Then
		\[
		\diff_{\nabla^{\GHom}}\gtr = 0.
		\]
	\end{lemma}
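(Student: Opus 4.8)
The plan is to peel off the definition of the induced connection on $\GHom(\GEnd(\E),\mathbb{R})$ and thereby reduce the vanishing $\diff_{\nabla^{\GHom}}\gtr=0$ to a single compatibility identity between $\diff_A$ and $\diff_{\nabla^{\GEnd}}$. By the defining property of the $\GHom$-connection, one has
\[
\diff_A\big(\gtr(\Phi)\big)=\big(\diff_{\nabla^{\GHom}}\gtr\big)(\Phi)+(-1)^{|\gtr|}\gtr\big(\diff_{\nabla^{\GEnd}}\Phi\big)
\]
for all $\Phi\in\Gamma(\underline{S}(\A^*))\otimes\Gamma(\GEnd(\E))$. Since $\gtr$ has degree $0$ the sign is trivial, and so $\diff_{\nabla^{\GHom}}\gtr=0$ is equivalent to the assertion that $\gtr$ intertwines the two differentials,
\[
\diff_A\circ\gtr=\gtr\circ\diff_{\nabla^{\GEnd}}.
\]
This is precisely the statement that the graded trace is a parallel tensor, and it is what I would establish directly.

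First I would reduce the check to endomorphism-degree $0$. The operator $\diff_{\nabla^{\GEnd}}$ raises the $A$-form degree by one but preserves the endomorphism degree, because $\nabla$, and hence $\nabla^{\GEnd}$, is degree-preserving; meanwhile $\gtr$ annihilates $\Gamma(\underline{S}^k(\A^*))\otimes\Gamma(\GEnd^j(\E))$ for $j\neq 0$ by definition. Hence both sides of the intertwining identity vanish unless $\Phi$ lies in $\Gamma(\underline{S}(\A^*))\otimes\Gamma(\GEnd^0(\E))$, where $\GEnd^0(\E)=\bigoplus_i\End(E_i)$ is block-diagonal. Moreover $\gtr$ is $\Gamma(\underline{S}(\A^*))$-linear and both $\diff_A$ and $\diff_{\nabla^{\GEnd}}$ obey the Leibniz rule, so it suffices to verify the identity on the generators, i.e.\ on genuine sections $\Phi\in\Gamma(\GEnd^0(\E))$, and then propagate it to all $A$-forms by the derivation property (this is exactly where the dull-algebroid setting is enough, as $\diff_A^2=0$ is never needed).

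On each diagonal block $\End(E_i)$ the claim becomes the classical fact that the trace is parallel, namely $\rho(q)(\tr\Phi)=\tr\big(\nabla^{\End(E_i)}_q\Phi\big)$ for $q\in\Gamma(A)$ and $\Phi\in\Gamma(\End(E_i))$; the locally constant factor $(-1)^i$ distinguishing $\gtr$ from $\tr$ simply factors through. I would prove this either by a one-line local-frame computation — writing $\nabla^i_q e_a=\Gamma^b_a(q)e_b$ and checking that the two correction terms $\langle\nabla^*_q\varepsilon^a,\Phi(e_a)\rangle$ and $-\langle\varepsilon^a,\Phi(\nabla^i_q e_a)\rangle$ cancel after relabelling the summation indices — or, more invariantly, by viewing $\gtr$ as the composite of the canonical isomorphism $\GEnd(\E)\cong\E^*\otimes\E$ with the evaluation pairing, both of which are parallel because the dual connection $\nabla^*$ is defined precisely so that $\langle\cdot\,,\cdot\rangle$ is $\diff_A$-parallel.

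The only genuine subtlety, and the step I expect to demand the most care, is the sign bookkeeping in the graded setting: one must confirm that the Koszul signs entering the identification $\GEnd(\E)\cong\E^*\otimes\E$ and the graded dual connection combine to produce exactly the factor $(-1)^i$ appearing in $\gtr$, and that the bigrading conventions ($A$-form degree versus endomorphism degree) are respected throughout. Once these signs are pinned down blockwise, summing over $i$ and extending by the Leibniz rule yields $\diff_A\circ\gtr=\gtr\circ\diff_{\nabla^{\GEnd}}$, and hence $\diff_{\nabla^{\GHom}}\gtr=0$.
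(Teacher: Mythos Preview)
Your proposal is correct and essentially matches the paper's proof: the paper goes directly to the local-frame computation you describe, evaluating $(\nabla^{\GHom}_a\gtr)$ on the basis elements $e_i^k\otimes\varepsilon_j^k$ of $\GEnd^0(\E)$ and checking that the two correction terms cancel. The paper omits your conceptual framing (the reformulation as an intertwining relation and the reduction to endomorphism-degree~$0$) and does not mention the invariant argument via the evaluation pairing, but the computational core is identical.
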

	\begin{proof}
		Consider a local frame $\{ e_1^k,\ldots,e_{r_k}^k \}$ of $E_k$ for
		each $k$ and denote its dual frame on $E_k^*$ by
		$\{ \varepsilon_1^k,\ldots,\varepsilon_{r_k}^k \}$. Then, for all
		$a\in\Gamma(A_1)$ one computes
		\begin{align*}
		(\nabla^{\GHom}_a\gtr)(e_i^k\otimes\varepsilon_j^k) = & -\gtr(\nabla^{\GEnd}_a (e_i^k\otimes\varepsilon_j^k)) \\
		= & -(-1)^{k}\sum_{p=1}^{r_k}\langle \varepsilon_p^k,\nabla^{\GEnd}_a (e_i^k\otimes\varepsilon_j^k)(e_p^k) \rangle \\
		= &\ (-1)^{k+1} \sum_{p=1}^{r_k} \langle \varepsilon_p^k,\nabla_a(\delta_{jp}e_i^k) - e_i^k\langle \varepsilon_j^k,\nabla_a e_p^k \rangle \rangle \\
		= &\ 0.\qedhere
		\end{align*}
	\end{proof}
	\begin{lemma}\label{lemma_gtr_commutes_with_differentials}
		Let $\D$ be a connection up to homotopy of $(\A,\Q)$ on $\E$. Then
		\begin{equation}\label{trace_da}
		\gtr\circ\,\D_{\GEnd} = \Q\circ\gtr.
		\end{equation}
	\end{lemma}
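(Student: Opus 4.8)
The plan is to reduce the identity to a statement about ``constant'' endomorphisms and then feed it to the two preceding lemmas together with Proposition \ref{gtr_of_commutator_is_0}. Write $F := \gtr\circ\D_{\GEnd} - \Q\circ\gtr$; the goal is $F = 0$. First I would observe that, although $\D_{\GEnd}$ and $\Q$ are only derivations, the difference $F$ is in fact $\Gamma(\underline{S}(\A^*))$-linear in the graded sense. Indeed, for homogeneous $\xi\in\Gamma(\underline{S}(\A^*))^k$ and $\omega\in\Gamma(\underline{S}(\A^*))\otimes\Gamma(\GEnd(\E))$, the Leibniz rule for $\D_{\GEnd}$, the degree-$0$ linearity of $\gtr$ (Lemma \ref{wedge_product-operators_Correspondence_Lemma}, giving $\gtr(\xi\wedge\omega)=\xi\,\gtr(\omega)$), and the Leibniz rule for $\Q$ combine to give
\[
F(\xi\wedge\omega) = (-1)^k\,\xi\,\big(\gtr(\D_{\GEnd}(\omega)) - \Q(\gtr(\omega))\big) = (-1)^k\,\xi\,F(\omega),
\]
since the two terms carrying $\Q(\xi)$ cancel. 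Hence it suffices to verify $F(\psi)=0$ on the generators $\psi\in\Gamma(\GEnd(\E))$, i.e.\ to prove $\gtr([\D_{\E},\psi]) = \Q(\gtr(\psi))$, using $\D_{\GEnd}(\psi)=[\D_{\E},\psi]$.

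Next I would invoke Lemma \ref{lemma_decomposition_for_D} to split $\D_{\E} = \diff_\nabla + \Theta$ with $\nabla$ a degree-preserving $A_1$-connection and $\Theta$ of strictly positive endomorphism degree in the relevant components. This induces $[\D_{\E},\psi] = \diff_{\nabla^{\GEnd}}\psi + [\Theta,\psi]$, where the first summand is the induced $A_1$-connection differential on $\GEnd(\E)$. Applying $\gtr$ and using Proposition \ref{gtr_of_commutator_is_0} to discard the commutator term yields
\[
\gtr([\D_{\E},\psi]) = \gtr(\diff_{\nabla^{\GEnd}}\psi).
\]
Lemma \ref{lemma_gtr_is_flat}, which states $\diff_{\nabla^{\GHom}}\gtr = 0$, is precisely the flatness needed to commute $\gtr$ past $\diff_{\nabla^{\GEnd}}$, giving $\gtr(\diff_{\nabla^{\GEnd}}\psi) = \diff_{A_1}(\gtr(\psi))$, where $\diff_{A_1}$ is the dull-algebroid differential of $A_1$ acting on the function $\gtr(\psi)$.

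The step I expect to be the crux is matching the right-hand side: here one uses that $\gtr(\psi)\in C^\infty(M)$ is a degree-$0$ function, so that $\Q$ acts on it through the anchor alone, $\Q(\gtr(\psi)) = \rho^*\diff(\gtr(\psi)) = \diff_{A_1}(\gtr(\psi))$, with no contribution from the higher components of $\Q$. This is exactly where the apparent danger---that the positive-degree part of $\Q$ might survive---is neutralized: the graded trace lands in $C^\infty(M)$, on which $\Q$ and $\diff_{A_1}$ coincide. Combining the three displays closes the computation on generators, and the linearity established in the first paragraph then upgrades it to the full operator identity $\gtr\circ\D_{\GEnd} = \Q\circ\gtr$. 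The only point requiring care is bookkeeping the signs in the Leibniz computation for $F$ and confirming that $\Theta$ indeed contributes only through graded commutators annihilated by $\gtr$, rather than through any term that could raise the symmetric degree of $\gtr(\psi)$ itself.
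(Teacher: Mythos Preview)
Your proof is correct and follows essentially the same approach as the paper: reduce to generators by graded linearity, decompose $\D$ via Lemma~\ref{lemma_decomposition_for_D}, kill the $[\Theta,\psi]$ term with Proposition~\ref{gtr_of_commutator_is_0}, commute $\gtr$ past $\diff_{\nabla^{\GEnd}}$ using Lemma~\ref{lemma_gtr_is_flat}, and match with $\Q$ acting on $C^\infty(M)$ via the anchor. The only cosmetic difference is that the paper splits the verification on generators into the cases $\psi\in\Gamma(\GEnd^0(\E))$ and $\psi\in\Gamma(\GEnd^i(\E))$ with $i\neq 0$ explicitly, whereas your argument treats them uniformly (the $i\neq 0$ case being absorbed since $\gtr(\psi)=0$ there); both are fine.
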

	\begin{proof}
		Due to the derivation rule of $\Q$ and $\D_{\GEnd}$, and the
		$\Gamma(\Sym(\A^*))$-linearity of $\gtr$, it suffices to check the identity on
		graded endomorphism $\Phi\in\Gamma(\GEnd^i(\E))$ for all
		$i\in\mathbb{Z}$. Suppose first
		that $\Phi\in\Gamma(\GEnd^0(\E))$. This means that
		$\gtr(\Phi)\in\Gamma(\Sym^0(\A^*))=C^\infty(M)$ and thus
		$\Q(\gtr(\Phi)) = \diff_{A_1}(\gtr(\Phi))\in\Gamma(A_1^*)$.
		Write  $\D=\diff_{\nabla} + \Theta$ with $\nabla$ and $\Theta$ as in Lemma \ref{lemma_decomposition_for_D}
		and compute
		\[
		\D_{\GEnd}(\Phi) = \D\circ\Phi - (-1)^{|\Phi|}\Phi\circ\D = \diff_{\nabla^{\GEnd}}\Phi + [\Theta,\Phi],
		\]
		where $\diff_{\nabla^{\GEnd}}$ is the (degree-preserving)
		$A_1$-connection on $\GEnd(\E)$ induced by $\diff_{\nabla}$ on
		$\E$. Hence, by Proposition \ref{gtr_of_commutator_is_0}:
		
		\[
		\gtr(\D_{\GEnd}(\Phi)) = \gtr(\diff_{\nabla^{\GEnd}}\Phi).
		\]
		Thus, Lemma \ref{lemma_gtr_is_flat} and the above give
		\begin{align*}
			\mathcal Q(\gtr\Phi)-\gtr(\D_{\GEnd}\Phi)= &\ \diff_{A_1}(\gtr\Phi) - \gtr(\D_{\GEnd}\Phi) \\
			= &\ \diff_{A_1}(\gtr\Phi) - \gtr(\diff_{\nabla^{\GEnd}}\Phi) \\
			= &\ (\diff_{\nabla^{\GHom}}\gtr)(\Phi) = 0.
		\end{align*}
		
		Suppose now that $\Phi\in\Gamma(\GEnd^i(\E))$ with
		$i\neq0$. By definition, $\Q(\gtr(\Phi))=\Q(0)=0$. On
		the other hand, the decomposition
		\[\D_{\GEnd}\Phi=\diff_{\nabla^{\GEnd}}\Phi+ [\Theta,\Phi]\in \Big(\Gamma(A_1^*)\otimes\Gamma(\GEnd^i(\E))\Big)\oplus \Gamma(\GEnd^{i+1}(\E))
		\]
		yields
		\[
		\gtr(\D_{\GEnd}(\Phi)) = \gtr(\diff_{\nabla^{\GEnd}}\Phi) =0
		\]
		since $i\neq 0$ and the graded trace of $[\Theta,\Phi]$ vanishes
		by Proposition \ref{gtr_of_commutator_is_0}.
	\end{proof}
	
	Let $(\M,\Q)$ be a Lie $n$-algebroid with a connection up to homotopy
	$\D$ on $\E$. The product rule of $\D$ yields the identity
	\[
	\D^2(\xi\wedge\omega) = \D(\Q(\xi)\wedge\omega + (-1)^{|\xi|}\xi\wedge\D(\omega)) = (-1)^{2|\xi|}\xi\wedge\D^2(\omega) = \xi\wedge\D^2(\omega)
	\]
	for all homogeneous $\xi\in\CM$ and all
	$\omega\in\CM\otimes(\Gamma(\E))$. In other words, $\D^2$ is graded
	$\CM$-linear and thus $\D^2 = R_\D$, where
	$R_\D\in\CM\otimes\Gamma(\GEnd(\E))$ of total degree 2.
	
	Define now for each $k\geq1$ the elements
	$R_\D^k:=R_\D\circ\ldots\circ
	R_\D=\D^{2k}\in\CM\otimes\Gamma(\GEnd(\E))$ (k-times) of total degree
	$2k$. The \textbf{Bianchi identity}\index{Bianchi identity}
	\begin{equation}\label{bianchi}
	\D_{\GEnd}R_\D^k = [\D,\D^{2k}] = 0
	\end{equation}
	is immediate for all $k\geq1$.
	\begin{proposition}\label{pont_char_prop}
		Let $(\M,\Q)$ be a Lie $n$-algebroid and $\E$ a graded vector
		bundle over $M$.
		\begin{enumerate}
			\item For any connection up to homotopy $\D$ of $(\M,\Q)$ on
			$\E$, 
			\[
			\Q(\gtr(R_\D^k)) = 0
			\]
			for all $k\geq1$. 
			\item The cohomology classes
			$[\gtr(R_\D^k)]\in H^{2k}(\M,\Q)$ do not depend on the
			connection $\D$.
		\end{enumerate}
	\end{proposition}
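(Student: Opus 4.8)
The plan is to treat the two assertions separately: part (1) follows directly from the Bianchi identity together with Lemma \ref{lemma_gtr_commutes_with_differentials}, while part (2) is a transgression argument along an affine path of connections. For part (1), I would first observe that $\gtr(R_\D^k)\in\CM^{2k}$, since $R_\D^k$ has total degree $2k$ and $\gtr$ is $\Gamma(\underline{S}(\A^*))$-linear and non-trivial only on the $\GEnd^0$-component. The vanishing $\Q(\gtr(R_\D^k))=0$ is then immediate: the Bianchi identity \eqref{bianchi} gives $\D_{\GEnd}(R_\D^k)=0$, so Lemma \ref{lemma_gtr_commutes_with_differentials} yields $\Q(\gtr(R_\D^k))=\gtr(\D_{\GEnd}(R_\D^k))=\gtr(0)=0$. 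In particular each $\gtr(R_\D^k)$ is a $\Q$-cocycle and defines a class in $H^{2k}(\M,\Q)$.

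For part (2), let $\D_0$ and $\D_1$ be two connections up to homotopy of $(\M,\Q)$ on $\E$. Their difference $\Theta:=\D_1-\D_0$ satisfies $\Theta(\xi\wedge\omega)=(-1)^{|\xi|}\xi\wedge\Theta(\omega)$, i.e.\ it is graded $\CM$-linear, so by Lemma \ref{wedge_product-operators_Correspondence_Lemma} it is a degree $1$ element of $\CM\otimes\Gamma(\GEnd(\E))$. I would then consider the affine family $\D_t:=\D_0+t\Theta$, each member of which is again a connection up to homotopy, together with its curvature $R_{\D_t}=\D_t^2$. Differentiating gives $\tfrac{d}{dt}R_{\D_t}=\D_t\circ\Theta+\Theta\circ\D_t=\D_{t,\GEnd}(\Theta)$, where the last equality uses $|\Theta|=1$.

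The heart of the argument is to show that $\tfrac{d}{dt}\gtr(R_{\D_t}^k)$ is $\Q$-exact for each $t$. Differentiating $R_{\D_t}^k=R_{\D_t}\circ\cdots\circ R_{\D_t}$ and applying $\gtr$, I would use the graded cyclicity of $\gtr$ (a consequence of Proposition \ref{gtr_of_commutator_is_0}) together with the fact that $R_{\D_t}$ has even total degree $2$, so that all signs from permuting the even factors $R_{\D_t}^i$ are trivial. This collapses the $k$ summands to $\gtr(\tfrac{d}{dt}R_{\D_t}^k)=k\,\gtr(\D_{t,\GEnd}(\Theta)\circ R_{\D_t}^{k-1})$. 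The Bianchi identity $\D_{t,\GEnd}(R_{\D_t}^{k-1})=0$ and the product rule for $\D_{t,\GEnd}$ give $\D_{t,\GEnd}(\Theta)\circ R_{\D_t}^{k-1}=\D_{t,\GEnd}(\Theta\circ R_{\D_t}^{k-1})$, so Lemma \ref{lemma_gtr_commutes_with_differentials} produces
\[
\gtr\!\left(\tfrac{d}{dt}R_{\D_t}^k\right)=k\,\Q\!\left(\gtr(\Theta\circ R_{\D_t}^{k-1})\right).
\]
Integrating in $t$ over $[0,1]$, and using that $\Q$ is $\mathbb{R}$-linear and acts coefficient-wise (hence commutes with the integral), yields $\gtr(R_{\D_1}^k)-\gtr(R_{\D_0}^k)=\Q\bigl(k\int_0^1\gtr(\Theta\circ R_{\D_t}^{k-1})\,dt\bigr)$, so the two cocycles differ by a $\Q$-coboundary.

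The main point requiring care is the sign bookkeeping in the cyclicity step: one must verify that $\gtr$ is genuinely graded-cyclic on $\CM\otimes\Gamma(\GEnd(\E))$ and that the even degree of $R_{\D_t}$ makes the relevant signs disappear, so that the transgression formula comes out without stray factors. I would also stress that the $t$-integral is purely formal, since $R_{\D_t}^{k-1}$ is a polynomial in $t$ with coefficients in $\CM\otimes\Gamma(\GEnd(\E))$, so integration just means integrating these coefficients and no analytic input is required. Everything else is a routine adaptation of the Lie algebroid Chern--Weil argument of \cite{Fernandes02,Jotz19c} to the graded, $\Q$-twisted setting; the only genuinely new ingredient is Lemma \ref{lemma_gtr_commutes_with_differentials}, which replaces the usual compatibility of the trace with the Lie algebroid differential.
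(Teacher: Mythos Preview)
Your proposal is correct and follows essentially the same approach as the paper: part (1) is exactly the Bianchi identity combined with Lemma \ref{lemma_gtr_commutes_with_differentials}, and part (2) is the standard transgression argument along an affine path of connections, which the paper simply cites as ``the standard technique from classical $TM$-Pontryagin classes of vector bundles \cite{Quillen85}'' without spelling it out. You have filled in precisely the details that the paper leaves to the reader, including the sign bookkeeping and the observation that the $t$-integral is purely formal.
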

	\begin{proof}
		The first part follows from the Bianchi identity and Lemma
		\ref{lemma_gtr_commutes_with_differentials}. The second part is
		proved with the standard technique from classical $TM$-Pontryagin
		classes of vector bundles \cite{Quillen85}.
	\end{proof}
	
	\begin{definition}
		Let $(\M,\Q)$ be a Lie $n$-algebroid over the smooth manifold $M$ and let $\E\to M$ be a graded vector bundle. The \textbf{$\M$-Pontryagin algebra}\index{Pontryagin algebra} of $\E$
		\[
		\Pont_\M^\bullet(\E)\subset H^\bullet(\M)
		\]
		is defined as the subalgebra generated by the cohomology classes
		\[
		\sigma_\M^i(\E):=[\gtr(R^i_{\D})]\in H^{2i}(A)
		\]
		for all $i\geq1$.
	\end{definition}
	
	\begin{remark}\label{Pullback of Pontryagin algebra}
		Note that after a choice of splitting for the Lie $n$-algebroid $\M\simeq A_1[1]\oplus\ldots\oplus A_n[n]$, one can represent the generators $\sigma_\M^i(\E)=[\gtr(R^i_{\D})]$ using a degree-preserving $A_1$-connection on $\E=\bigoplus_{i\in\mathbb Z}E_i$, i.e.~$\D=(\nabla^i)_{i\in\mathbb{Z}}$ with $\nabla^i$ an $A_1$-connection on $E_i$. A choice of $TM$-connections $\nabla^i$ on the vector bundles $E_i$ yields $A_1$-connections on $E_i$, denoted again by $\nabla^i$, by the formula $\nabla_a^i e=\nabla_{\rho(a)}^ie$ for $a\in\Gamma(A_1)$ and $e_i\in\Gamma(E_i)$. Hence, one has that
		\[
		\Pont_\M^\bullet(\E)=\rho^\star\Pont^\bullet(\E),
		\]
		where $\Pont^\bullet(\E)$ denotes the usual $TM$-Pontryagin algebra of $\E$, which implies that the $\M$-Pontryagin algebra of $\E\to M$ is completely characterised by those of $TM$. However, as \cite{Jotz19c} shows the Pontryagin classes with respect to a Lie algebroid $A\to M$ can be used in order to give obstructions for the existence of $A$-representations on a graded vector bundle $\E\to M$. In particular, \cite[Thm.~4.13]{Jotz19c} proves that there is a $2$-representation of $A\to M$ on the graded vector bundle $E_0[0]\oplus E_1[1]\to M$ only if
		\[
		\Pont_A^\bullet(E_0) = \Pont_A^\bullet(E_1) \subset H^\bullet(A),
		\]
		or alternatively, only if the pull-backs under $\rho:A\to TM$ of the $TM$-Pontryagin classes of $E_0$ and $E_1$ coincide.
	\end{remark}
	
	\begin{remark}
		Characteristic classes of $2^{\text{nd}}$ order in the context of Lie algebroids were defined by Mehta in \cite{Mehta14}. These classes are associated to graded vector bundles $\E\to M$ equipped with the differential $\D$ of an $A$-representation up to homotopy, in contrast to classes of $1^{\text{st}}$ order which vanish if $\D^2 = 0$. In particular, \cite[Thm.~7.7]{Mehta14} shows that characteristic classes of $2^{\text{nd}}$ order are invariant with respect to \textbf{gauge transformations}\index{gauge transformation}; that is, DG $A$-module automorphisms $\mu$ between the representations up to homotopy $(\E,\D)$ and $(\E,\D')$ on the same $\E$ that make the following diagram commute
		\[
		\begin{tikzcd}
			\Omega(A,\E) \arrow[dd, "\pr"'] \arrow[rr, "\mu"] & & \Omega(A,\E) \arrow[dd, "\pr"] \\
			& & \\
			\Gamma(\E) \arrow[rr, "\id"']          &             & \Gamma(\E)           
		\end{tikzcd}
		\]
		Here, the map $\pr\colon\Omega(A,\E)\to\Gamma(\E)$ is the natural projection whose kernel is $\bigoplus_{p\geq1}\Omega^p(A)\otimes\Gamma(\E)$.
	\end{remark}

	\chapter{Computations}\label{Chapter: Computations}
	
	\section{Split Lie $2$-algebroids in the geometric setting}\label{Appendix: Split Lie $2$-algebroids in the geometric setting}
	
	Our goal in this appendix is to prove that Definition \ref{abstract Lie algebroids} and Definition \ref{geom split 2-alg} for split Lie $2$-algebroids are equivalent. The original source of the following proof is \cite{Jotz19b}. For convenience, we repeat the two definitions here.
	
	\begin{definition}\label{Definition 1}
		A split Lie $2$-algebroid is a split $[2]$-manifold $Q[1]\oplus B^*[2]\to M$ equipped with a homological vector field $\Q$.
	\end{definition}
	
	\begin{definition}\label{Definition 2}
		A split Lie 2-algebroid is given by a pair of
		an anchored vector bundle $(Q\to M,\rho_Q)$ and a vector bundle
		$B\to M$, together with a vector bundle map $\ell\colon B^*\to Q$, a
		skew-symmetric dull bracket
		$[\cdot\,,\cdot]\colon \Gamma(Q)\times\Gamma(Q)\to \Gamma(Q)$, a
		linear $Q$-connection $\nabla$ on $B$, and a vector valued 3-form
		$\omega\in\Omega^3(Q,B^*)$ such that
		\begin{enumerate}[(i)]
			\item $\nabla^*_{\ell(\beta_1)}\beta_2 + \nabla^*_{\ell(\beta_2)}\beta_1 = 0$, for all $\beta_1,\beta_2\in\Gamma(B^*)$,
			\item $[q,\ell(\beta)]=\ell(\nabla_q^*\beta)$ for all
			$q\in\Gamma(Q)$ and $\beta\in\Gamma(B^*)$,
			\item $\Jac_{[\cdot\,,\cdot]} = \ell\circ\omega\in\Omega^3(Q,Q)$,
			\item $R_{\nabla^*}(q_1,q_2)\beta = -\omega(q_1,q_2,\ell(\beta))$ for $q_1,q_2\in\Gamma(Q)$ and $\beta\in\Gamma(B^*)$,
			\item $\diff_{\nabla^*}\omega = 0$.
		\end{enumerate}  
	\end{definition}
	
	Note that the third condition in the last definition implies $\rho_Q\circ\ell=0$. To prove this correspondence, we will need the following elements: the bundle map $\partial_B=\ell^*:Q^*\to B$, the \textbf{Dorfman connection}\index{Dorfman connection} $\Delta:\Gamma(Q)\times\Gamma(Q^*)\to \Gamma(Q^*)$ that is dual to the (skew-symmetric) dull bracket $[\cdot\,,\cdot]$
	\[
	\langle \Delta_q\tau,q' \rangle = \rho(q)\langle \tau,q' \rangle - \langle \tau,[q,q'] \rangle
	\]
	for $q,q'\in\Gamma(Q),\tau\in\Gamma(Q^*)$, and the element $R_\omega\in\Omega^2(Q,\Hom(B,Q^*))$ defined by 
	\[
	R_\omega(q_1,q_2)b = \langle i_{q_2}i_{q_1}\omega,b \rangle.
	\]
	One then easily checks that (ii) is equivalent to $\partial_B\circ\Delta_q = \nabla_q\circ\partial_B$ and (iii) is equivalent to $R_\omega(q_1,q_2)\circ\partial_B = R_\Delta(q_1,q_2)$ for $q,q_1,q_2\in\Gamma(Q)$.
	
	Starting with the data given in Definition \ref{Definition 2}, the homological vector field $\Q$ is defined by the following equations:
	\begin{equation}\label{Equation 1}
	\Q(f) = \rho_Q^*\diff f\in\Gamma(Q^*),
	\end{equation}
	\begin{equation}\label{Equation 2}
	\Q(\tau) = \diff_Q\tau + \partial_B \tau\in\Omega^2(Q)\oplus\Gamma(B),
	\end{equation}
	\begin{equation}\label{Equation 3}
	\Q(b) = \diff_\nabla b - \langle\omega,b\rangle\in\Omega^1(Q,B)\oplus\Omega^3(Q), 
	\end{equation}
	for $f\in C^\infty(M),\tau\in\Gamma(Q^*),b\in\Gamma(B)$. Conversely, using the Leibniz identity for an arbitrary vector field of degree 1 on the [2]-manifold $\M = Q[1]\oplus B^*[2]$, one can show that it must be given by the equations above, defining an anchor map $\rho_Q:Q\to TM$, a dull bracket $[\cdot\,,\cdot]$ on $Q$, a $Q$-connection $\nabla$ on $B$, a vector valued 3-form $\omega\in\Omega^3(Q,B^*)$, and a bundle morphism $\partial_B:Q^*\to B$. It remains to show that $\Q^2 = 0$ corresponds to the various relations between these objects. For $f\in C^\infty(M)$ we have
	\[
	\Q^2(f) = \diff_Q(\rho_Q^*\diff f) + \partial_B(\rho_Q^*\diff f)\in\Omega^2(Q)\oplus\Gamma(B).
	\]
	Since for $q_1,q_2\in\Gamma(Q)$
	\begin{align*}
	\diff_Q(\rho_Q^*\diff f)(q_1,q_2) = & \rho_Q(q_1)\left(\langle \rho_Q^*\diff f,q_2 \rangle\right)-
	\rho_Q(q_2)\left(\langle \rho_Q^*\diff f,q_1 \rangle\right) -
	\langle \rho_Q^*\diff f,[q_1,q_2] \rangle\\
	= & \left(\rho_Q(q_1)\rho_Q(q_2) - \rho_Q(q_2)\rho_Q(q_1) -\rho_Q[q_1,q_2]\right)f,
	\end{align*}
	we have that $\Q^2(f) = 0$ for all $f\in C^\infty(M)$ if and only if $\partial_B\circ\rho_Q^* = 0$ and $\rho_Q[q_1,q_2] = [\rho_Q(q_1),\rho_Q(q_2)]$ for all $q_1,q_2\in\Gamma(Q)$.
	
	\begin{definition}
		Given $\partial_B:Q^*\to B$ as above, we define the vector bundle morphism $\partial_B:\Omega^k(Q)\to \Omega^{k-1}(Q,B)$ by
		\[
		\partial_B\left(\tau_1\wedge\ldots\wedge\tau_k\right) = \sum_{i=1}^{k}(-1)^{i+1}\tau_1\wedge\ldots\wedge\widehat{\tau_i}\wedge\ldots\wedge\tau_k\wedge\partial_B\tau_i.
		\]
	\end{definition}
	
	\begin{lemma}\label{Homol.VF on $Q$-forms}
		For the homological vector field $\Q$ defined on $\Gamma(Q^*)$ by equation \ref{Equation 2}, the following two identities hold:
		\begin{enumerate}[(i)]
			\item $\Q = \diff_Q + \partial_B$ on $\Omega^\bullet(Q)$.
			\item $\Q = \diff_\nabla - \langle \omega,\cdot \rangle$ on $\Gamma(S^\bullet B)$, where we define
			\[
			\langle \omega,b_1b_2\ldots b_k \rangle = \sum_{i=1}^{k} b_1b_2\ldots\langle \omega,b_i\rangle \ldots b_k,
			\]
			for $b_1,b_2,\ldots, b_k\in\Gamma(B)$.
		\end{enumerate}
	\end{lemma}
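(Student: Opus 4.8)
The plan is to use that $\Q$ is, by definition, a degree $1$ graded derivation of the structure sheaf $\cin(\M)=\Gamma(\wedge^\bullet Q^*\otimes S^\bullet B)$, and that $\Omega^\bullet(Q)=\Gamma(\wedge^\bullet Q^*)$ and $\Gamma(S^\bullet B)$ are precisely the subalgebras of $\cin(\M)$ generated over $C^\infty(M)$ by $\Gamma(Q^*)$ and by $\Gamma(B)$, respectively. Since a graded derivation is completely determined by its values on a generating set together with the graded Leibniz rule, in each case it suffices to exhibit the right-hand side as a degree $1$ graded derivation of $\cin(\M)$ that agrees with $\Q$ on these generators.

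For part (i), I would first check that $\diff_Q+\partial_B$ is a degree $1$ graded derivation, where $\partial_B=\ell^*\colon Q^*\to B$ is extended to $\Omega^\bullet(Q)$ by the formula $\partial_B(\tau_1\wedge\dots\wedge\tau_k)=\sum_i(-1)^{i+1}\tau_1\wedge\dots\wedge\widehat{\tau_i}\wedge\dots\wedge\tau_k\wedge\partial_B\tau_i$. The key observation is that this is exactly the unique graded-derivation extension of $\partial_B$: since $\partial_B$ is odd and the generators $\tau\in\Gamma(Q^*)$ are odd, the Koszul sign produced in moving $\partial_B$ past $\tau_1,\dots,\tau_{i-1}$ is precisely $(-1)^{i+1}$, while $\diff_Q$ is the usual skew derivation of $\Omega^\bullet(Q)$. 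On generators the two operators coincide: on $f\in C^\infty(M)$ one has $(\diff_Q+\partial_B)(f)=\diff_Q f=\rho_Q^*\diff f=\Q(f)$ by \eqref{Equation 1} (with $\partial_B f=0$, since $\partial_B$ lowers the $Q^*$-wedge degree by one), and on $\tau\in\Gamma(Q^*)$ one has $(\diff_Q+\partial_B)(\tau)=\diff_Q\tau+\partial_B\tau=\Q(\tau)$ by \eqref{Equation 2}. An induction on the wedge degree $k$, splitting $\tau_1\wedge\dots\wedge\tau_k=\tau_1\wedge(\tau_2\wedge\dots\wedge\tau_k)$ and applying the Leibniz rule to both operators, then yields the identity on all of $\Omega^\bullet(Q)$.

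For part (ii) I would argue in the same way, this time extending $\diff_\nabla$ and $\langle\omega,\cdot\rangle$ to degree $1$ graded derivations of $\cin(\M)$. Because $B$ lies in even degree, the graded-derivation extension of $\langle\omega,\cdot\rangle$ carries only positive signs, which is exactly the stated formula $\langle\omega,b_1\cdots b_k\rangle=\sum_i b_1\cdots\langle\omega,b_i\rangle\cdots b_k$; likewise $\diff_\nabla$ extends by the (sign-free) symmetric Leibniz rule. The derivations $\Q$ and $\diff_\nabla-\langle\omega,\cdot\rangle$ agree on generators: on $f\in C^\infty(M)$ one has $\diff_\nabla f=\rho_Q^*\diff f$ and $\langle\omega,f\rangle=0$, matching \eqref{Equation 1}, while on $b\in\Gamma(B)$ the identity is exactly \eqref{Equation 3}. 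Induction on the symmetric degree then closes the argument.

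The main obstacle is the sign bookkeeping. Because $Q^*$ and $B$ sit in degrees of opposite parity, the two halves of the lemma require genuinely different sign patterns in the derivation extensions -- the alternating $(-1)^{i+1}$ for the odd $\partial_B$ versus uniformly positive signs for $\diff_\nabla$ and $\langle\omega,\cdot\rangle$ on the even generators -- and the delicate step is verifying that the extension formulas quoted in the definitions coincide with the Koszul-signed graded-derivation extensions, so that the graded Leibniz rule for $\Q$ reproduces them term by term in the inductive step. Once these conventions are pinned down, each inductive step is a short and routine computation.
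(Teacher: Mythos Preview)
Your proposal is correct and follows essentially the same approach as the paper: both arguments exploit that $\Q$ is a degree $1$ graded derivation and that the right-hand sides are, by construction, the graded-derivation extensions of the defining formulas \eqref{Equation 2} and \eqref{Equation 3} on generators. The paper simply writes out the Leibniz expansion of $\Q(\tau_1\wedge\cdots\wedge\tau_k)$ and $\Q(b_1\cdots b_k)$ directly and identifies the resulting sums, whereas you phrase the same computation as ``two derivations agreeing on generators coincide'' together with an inductive splitting; the content is identical, and your remarks on why the Koszul signs yield $(-1)^{i+1}$ in the odd case and no signs in the even case make explicit what the paper leaves implicit.
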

	\begin{proof}
		\begin{enumerate}[(i)]
			\item Consider an element $\theta = \tau_1\wedge\ldots\wedge\tau_k\in\Omega^k(Q)$
			with $\tau_1,\ldots,\tau_k\in\Gamma(Q^*)$. Using the Leibniz identity, one computes
			\begin{align*}
			\Q(\theta) = &\ \Q\left(\tau_1\wedge\ldots\wedge\tau_k\right)
			= \sum_{i=1}^{k} (-1)^{i+1}\tau_1\wedge\ldots\wedge\Q(\tau_i)\wedge\ldots\wedge\tau_k\\
			= &\ \sum_{i=1}^{k} (-1)^{i+1}\tau_1\wedge\ldots\wedge \diff_Q(\tau_i)\wedge\ldots\wedge\tau_k + \sum_{i=1}^{k}
			(-1)^{i+1}\tau_1\wedge\ldots\wedge\partial_B(\tau_i)\wedge\ldots\wedge\tau_k\\
			= &\ \diff_Q\theta + \partial_B\theta.
			\end{align*}
			\item Similarly, considering an element $b=b_1b_2\ldots b_k\in\Gamma(S^k B)$, we compute
			\begin{align*}
			\Q(b) = &\ \Q(b_1b_2\ldots b_k)
			= \sum_{i=1}^{k} b_1b_2\ldots\Q(b_i)\ldots b_k \\
			= &\ \sum_{i=1}^{k} b_1b_2\ldots \diff_\nabla b_i\ldots b_k - \sum_{i=1}^{k} b_1b_2\ldots\langle \omega,b_i\rangle\ldots b_k \\
			= &\ \diff_\nabla b - \langle \omega,b \rangle.\qedhere
			\end{align*}
		\end{enumerate}
	\end{proof}
	
	Let now $\tau\in\Gamma(Q^*)$. Then we have
	\[
	\Q^2(\tau) = \Q(\diff_Q\tau) + \diff_\nabla\partial_B\tau - \langle \omega,\partial_B\tau \rangle,
	\]
	and using Lemma \ref{Homol.VF on $Q$-forms} it becomes
	\[
	\Q^2(\tau) = \left(\diff_Q^2\tau - \langle \omega,\partial_B\tau \rangle\right) + \left(\partial_B\diff_Q\tau + \diff_\nabla\partial_B\tau\right)\in\Omega^3(Q)\oplus\Omega^1(Q,B).
	\]
	Since for all $q,q_1,q_2,q_3\in\Gamma(Q)$ and $\tau\in\Gamma(Q^*)$, we have $\diff_Q^2\tau(q_1,q_2,q_3) = \langle \Jac_{[\cdot,\cdot]}(q_1,q_2,q_3),\tau\rangle$ and $\langle \beta,(\partial_B\diff_Q\tau)(q) \rangle = - \langle \beta,\partial_B\Delta_q\tau \rangle$, it follows that $\Q^2(\tau) = 0$ if and only if $\Jac_{[\cdot,\cdot]}(q_1,q_2,q_3) = \ell(\omega(q_1,q_2,q_3))$ and $\partial_B\Delta_q\tau = \nabla_q(\partial_B\tau)$.
	
	Lastly, for $b\in\Gamma(B)$ we have
	\[
	\Q^2(b) = \Q(\diff_\nabla b) - \diff_Q\langle \omega,b \rangle - \partial_B\langle \omega,b \rangle.
	\]
	\begin{definition}
		For $b\in\Gamma(B)$, we define the element $\langle \omega,\diff_\nabla b \rangle\in\Omega^4(Q)$ by
		\[
		\langle \omega,\diff_\nabla b \rangle(q_1,q_2,q_3,q_4) = -\sum_{\sigma\in C_4}
		\sgn(\sigma) \left\langle \omega\left(q_{\sigma(1)},q_{\sigma(2)},q_{\sigma(3)}\right),\nabla_{q_{\sigma(4)}}b \right\rangle
		\]
		for all $q_i\in\Gamma(Q),i=1,2,3,4$, where $C_4$ is the group of 4-cyclic permutations of four elements.
	\end{definition}
	\begin{definition}
		For $b\in\Gamma(B)$, we define the element $\nabla_{\partial_B^*}b\in\Gamma(S^2B)$ given by
		\[
		(\nabla_{\partial_B^*}b)(\beta_1,\beta_2) = \left\langle \nabla_{\partial_B^*\beta_1}b,\beta_2 \right\rangle +
		\left\langle \nabla_{\partial_B^*\beta_2}b,\beta_1 \right\rangle
		\]
		for all $\beta_1,\beta_2\in\Gamma(B^*)$.
	\end{definition}
	\begin{lemma}
		Let $b\in\Gamma(B)$, then we have
		\[
		\Q(\diff_\nabla b) = \diff_\nabla^2b + \nabla_{\partial_B^*}b - \langle \omega,\diff_\nabla b \rangle
		\in\Omega^2(Q,B)\oplus\Gamma(S^2B)\oplus\Omega^4(Q).
		\]
	\end{lemma}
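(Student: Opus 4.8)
The plan is to expand $\diff_\nabla b\in\Omega^1(Q,B)$ in a local frame and apply the graded Leibniz rule for $\Q$, then reassemble the resulting terms according to their bidegree. Concretely, I would write $\diff_\nabla b=\sum_i\tau_i\otimes b_i$ with $\tau_i\in\Gamma(Q^*)$ of degree $1$ and $b_i\in\Gamma(B)$ of degree $2$, regarded as a degree-$3$ function on $\M=Q[1]\oplus B^*[2]$ lying in the $\Gamma(Q^*\otimes B)$-component of $\cin(\M)^3$. Since $\Q$ is a degree-$1$ derivation and $|\tau_i|=1$, the Leibniz rule gives
\[
\Q(\diff_\nabla b)=\sum_i\Big(\Q(\tau_i)\,b_i-\tau_i\,\Q(b_i)\Big).
\]

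Next I would substitute the actions $\Q(\tau_i)=\diff_Q\tau_i+\partial_B\tau_i$ from \eqref{Equation 2} and $\Q(b_i)=\diff_\nabla b_i-\langle\omega,b_i\rangle$ from \eqref{Equation 3}. This produces four families of terms that sort themselves cleanly by output type into the three claimed summands:
\[
\sum_i\big(\diff_Q\tau_i\otimes b_i-\tau_i\wedge\diff_\nabla b_i\big)\in\Omega^2(Q,B),\qquad \sum_i\partial_B\tau_i\cdot b_i\in\Gamma(S^2B),\qquad \sum_i\tau_i\wedge\langle\omega,b_i\rangle\in\Omega^4(Q).
\]
The first family is exactly $\diff_\nabla\big(\sum_i\tau_i\otimes b_i\big)=\diff_\nabla^2 b$, by the Leibniz rule $\diff_\nabla(\tau\wedge\eta)=\diff_Q\tau\wedge\eta-\tau\wedge\diff_\nabla\eta$ for $\tau\in\Omega^1(Q)$. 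For the second family I would use $\langle\partial_B\tau_i,\beta\rangle=\langle\tau_i,\ell(\beta)\rangle$ (as $\partial_B=\ell^*$) together with $(\diff_\nabla b)(\ell(\beta))=\nabla_{\ell(\beta)}b=\sum_i\langle\tau_i,\ell(\beta)\rangle b_i$ to obtain $\sum_i(\partial_B\tau_i\cdot b_i)(\beta_1,\beta_2)=\langle\nabla_{\ell(\beta_1)}b,\beta_2\rangle+\langle\nabla_{\ell(\beta_2)}b,\beta_1\rangle$, which is precisely $\nabla_{\partial_B^*}b$ since $\partial_B^*=\ell$.

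The delicate step, and the one I expect to be the main obstacle, is the third family, which must reproduce $-\langle\omega,\diff_\nabla b\rangle$. Evaluating $\sum_i\tau_i\wedge\langle\omega,b_i\rangle$ on $q_1,\dots,q_4$ gives a signed sum over $(1,3)$-shuffles in which $\sum_i\tau_i(q_{\sigma(1)})b_i=\nabla_{q_{\sigma(1)}}b$ collapses the $b_i$-dependence, yielding terms $\langle\omega(q_{\sigma(2)},q_{\sigma(3)},q_{\sigma(4)}),\nabla_{q_{\sigma(1)}}b\rangle$. What remains is the purely combinatorial check that this signed $(1,3)$-shuffle sum equals $+\sum_{\sigma\in C_4}\sgn(\sigma)\langle\omega(q_{\sigma(1)},q_{\sigma(2)},q_{\sigma(3)}),\nabla_{q_{\sigma(4)}}b\rangle$, i.e. the negative of the cyclic-sum definition of $\langle\omega,\diff_\nabla b\rangle$; this is where all the sign bookkeeping concentrates. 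Throughout, one must also keep track of the conventions of the generalised wedge product $\wedge_h$ when passing between the product in $\cin(\M)$ and the geometric operators $\diff_\nabla^2$, $\nabla_{\partial_B^*}$, $\langle\omega,\diff_\nabla b\rangle$. Finally, since each of the three output summands is $C^\infty(M)$-tensorial in $b$, verifying the identity on a local frame suffices, so the particular choice of expansion $\diff_\nabla b=\sum_i\tau_i\otimes b_i$ is immaterial.
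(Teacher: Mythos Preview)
Your proposal is correct and follows essentially the same route as the paper: write $\diff_\nabla b=\sum_i\tau_i\wedge c_i$, apply the Leibniz rule for $\Q$ using \eqref{Equation 2} and \eqref{Equation 3}, and then identify the three resulting families of terms with $\diff_\nabla^2 b$, $\nabla_{\partial_B^*}b$, and $-\langle\omega,\diff_\nabla b\rangle$ by evaluating on $(\beta_1,\beta_2)$ and on $(q_1,\dots,q_4)$ respectively. Your anticipation that the only nontrivial bookkeeping lies in matching the $(1,3)$-shuffle sum to the cyclic-sum definition of $\langle\omega,\diff_\nabla b\rangle$ is exactly what the paper does as well.
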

	\begin{proof}
		Suppose that $\diff_\nabla b = \sum \tau_i\wedge c_i$, where $\tau_i\in\Omega^1(Q),c_i\in\Gamma(B)$. Using the definition of $\Q$ together with the Leibniz identity, we compute
		\begin{align*}
		\Q(\diff_\nabla b) = &\ \Q\left(\sum_i \tau_i\wedge c_i\right)
		= \sum_i \diff_Q\tau_i\wedge c_i - \tau_i\wedge \diff_\nabla c_i + \partial_B\tau_i\wedge c_i + \tau_i\wedge\langle \omega,c_i \rangle \\
		= &\ \diff_\nabla^2b + \sum_i \partial_B\tau_i\wedge c_i + \sum_i\tau_i\wedge\langle \omega,c_i \rangle.
		\end{align*}
		For the second term, let $\beta_1,\beta_2\in\Gamma(B^*)$. Then we have
		\begin{align*}
		\left(\sum_i \partial_B\tau_i\wedge c_i\right)\left(\beta_1,\beta_2\right) = &\ \sum_i\partial_B\tau_i(\beta_1)c_i(\beta_2) + \partial_B\tau_i(\beta_2)c_i(\beta_1) \\
		= &\ \sum_i\tau_i(\partial_B^*\beta_1)c_i(\beta_2) + \tau_i(\partial_B^*\beta_2)c_i(\beta_1) \\
		= &\ \nabla_{\partial_B^*}b(\beta_1,\beta_2).
		\end{align*}
		To find the last term, consider $q_1,q_2,q_3,q_4\in\Gamma(Q)$. Then we have
		\begin{align*}
		\left(\sum_i\tau_i\wedge\langle \omega,c_i \rangle\right)\left(q_1,q_2,q_3,q_4\right)
		= &\ \sum_i\Big(\langle \omega\left(q_2,q_3,q_4\right),\tau_i(q_1)c_i \rangle
		- \langle \omega\left(q_1,q_3,q_4\right),\tau_i(q_2)c_i \rangle\\
		&\ + \langle \omega\left(q_1,q_2,q_4\right),\tau_i(q_3)c_i \rangle
		- \langle \omega\left(q_1,q_2,q_3\right),\tau_i(q_4)c_i \rangle\Big)\\
		= &\ \langle \omega\left(q_2,q_3,q_4\right),\sum_i\tau_i(q_1)c_i \rangle
		- \langle \omega\left(q_1,q_3,q_4\right),\sum_i\tau_i(q_2)c_i \rangle\\
		&\ + \langle \omega\left(q_1,q_2,q_4\right),\sum_i\tau_i(q_3)c_i \rangle
		- \langle \omega\left(q_1,q_2,q_3\right),\sum_i\tau_i(q_4)c_i \rangle\\
		= &\ \langle \omega,\diff_\nabla b \rangle\left(q_1,q_2,q_3,q_4\right).\qedhere
		\end{align*}
	\end{proof}
    \noindent
	The remaining equations follow now by grouping together the different summands of $\Q^2(b)$ and equalising them to zero.
	
	\section{Signs for the linearity of $\pi^\sharp:\Omega^1(\M)\to\mathfrak{X}(\M)$}\label{Appendix: Signs for the linearity of pi^sharp}
	
	In this appendix, we explain the choice of our signs for the map of DG-modules $\pi^\sharp:\Omega^1(\M)\to\mathfrak{X}(\M)$, where $(\M,\Q)$ is a $\Q$-manifold equipped with a Poisson bracket $\{\cdot\,,\cdot\}_k$.
	
	Recall that on exact $1$-forms $\diff\xi_1\in\Omega^1(\M)$, the map $\pi^\sharp$ is defined as $\pi^\sharp(\diff\xi_1)(\xi_2)=\{\xi_1,\xi_2\}_k$, for all $\xi_1,\xi_2\in\cin(\M)$. In order for $\pi^\sharp$ to be consistent with the properties of the de Rham differential $\diff$ on $\cin(\M)$, we need an appropriate choice of signs for the formulae
	\[
	\pi^\sharp(\xi_1\diff\xi_2) = \pm\, \xi_1\pi^\sharp(\diff\xi_2)
	\qquad \text{and} \qquad
	\pi^\sharp((\diff\xi_1)\xi_2) = \pm\, \pi^\sharp(\diff\xi_1)\,\xi_2.
	\]
	More precisely, given two homogeneous functions $\xi_1,\xi_2\in\cin(\M)$, the following must hold:
	\begin{equation}\label{linearity rules for pi map}
		\{\xi_1\xi_2,\cdot\}_k = \pi^\sharp(\diff(\xi_1\xi_2)) = \pi^\sharp((\diff\xi_1)\xi_2) + (-1)^{|\xi_1|}\pi^\sharp(\xi_1\diff\xi_2).
	\end{equation}
	This is true for the following linearity rules:
	\[
	\pi^\sharp(\xi_1\diff \xi_2) = (-1)^{|\xi_1|}\xi_1\pi^\sharp(\diff\xi_2)
	\qquad \text{and} \qquad
	\pi^\sharp((\diff\xi_1) \xi_2) = (-1)^{|\xi_2|}\pi^\sharp(\diff\xi_1)\,\xi_2.
	\]
	In order to see this, first observe that for homogeneous functions $\xi_1,\xi_2\in\cin(\M)$ and a homogeneous vector field $\X\in\mathfrak{X}^{|\X|}(\M) = (\mathfrak{X}(\M)[k])^{|\X|-k}$ we have
	\[
	(\X\cdot\xi_1)(\xi_2) = (-1)^{|\xi_1|(|\X|+1-k)}\xi_1\X(\xi_2) = (-1)^{|\xi_1|(|\xi_2|+1-k)}\X(\xi_2)\xi_1.
	\]
	Now we compute for a homogeneous $\xi_3\in\cin(\M)$
	\begin{align*}
		\{\xi_1\xi_2,\xi_3\}_k = & -(-1)^{(|\xi_1|+|\xi_2|+k)(|\xi_3|+k)}\{\xi_3,\xi_1\xi_2\}_k \\
		= & -(-1)^{(|\xi_1|+|\xi_2|+k)(|\xi_3|+k)}\{\xi_3,\xi_1\}_k\xi_2 
		- (-1)^{(|\xi_3|+k)(|\xi_2|+k)}\xi_1\{\xi_3,\xi_2\}_k \\
		= &\ (-1)^{(|\xi_3|+k)|\xi_2|}\{\xi_1,\xi_3\}_k\xi_2 
		+ \xi_1\{\xi_2,\xi_3\}_k.
	\end{align*}
	On the other hand, we have
	\begin{align*}
		\pi^\sharp((\diff\xi_1)\xi_2)(\xi_3) = &\  (-1)^{|\xi_2|}(\pi^\sharp(\diff\xi_1)\cdot\xi_2)(\xi_3) \\
		= &\  (-1)^{|\xi_2|}(-1)^{(|\xi_3|+1+k)|\xi_2|}\pi^\sharp(\diff\xi_1)(\xi_3)\cdot\xi_2 \\
		= &\ (-1)^{(|\xi_3|+k)|\xi_2|}\{\xi_1,\xi_3\}_k\xi_2
	\end{align*}
	and
	\[
	\pi^\sharp(\xi_1\diff\xi_2)(\xi_3) = (-1)^{|\xi_1|}\xi_1\pi^\sharp(\diff\xi_2)(\xi_3) =
	(-1)^{|\xi_1|}\xi_1\{\xi_2,\xi_3\}_k.
	\]
	Therefore, equation (\ref{linearity rules for pi map}) holds.
	
	\section{Adjoint representation of a split Lie $2$-algebroid}\label{Appendix: Adjoint representation of a split Lie $2$-algebroid}
	
	In order to make the computations for the equations of the adjoint
	representation that appear in Proposition \ref{Adjoint_representation_of_Lie_2-algebroid}, we will need the following identity.
	
	\begin{lemma}\label{Madeleine_calculation}
		For all $q_1,q_2,q_3\in\Gamma(Q)$ and $X\in\mathfrak{X}(M)$
		we have
		\[
		\left(\diff_{\nabla^{\text{bas},\Hom}}R_\nabla^\text{bas}\right)(q_1,q_2,q_3)X
		=
		\left(\nabla_X^{\Hom}\Jac_{[\cdot\,,\cdot]}\right)(q_1,q_2,q_3).
		\]
	\end{lemma}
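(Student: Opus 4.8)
The plan is to prove this as a \emph{twisted Bianchi identity} for the basic connections, the twist being supplied by the Jacobiator. First I would record that both sides are $C^\infty(M)$-linear and alternating in $q_1,q_2,q_3$ and $C^\infty(M)$-linear in $X$: the left-hand side is by construction the value of a form in $\Omega^3(Q,\Hom(TM,Q))$, while the right-hand side is $(\nabla^{\Hom}_X\Jac_{[\cdot\,,\cdot]})(q_1,q_2,q_3)$ with $\Jac_{[\cdot\,,\cdot]}\in\Omega^3(Q,Q)$ tensorial and $\nabla^{\Hom}$ the $TM$-connection on $\wedge^3Q^*\otimes Q$ induced by $\nabla$. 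Hence the identity is tensorial and it suffices to verify it on arbitrary sections $q_i\in\Gamma(Q)$ and $X\in\mathfrak{X}(M)$. As a sanity check, when $Q$ is a genuine Lie algebroid the Jacobiator vanishes and the statement collapses to $\diff_{\nabla^{\text{bas},\Hom}}R_\nabla^{\text{bas}}=0$, the known cocycle condition for the basic curvature in the adjoint $2$-representation of \cite{GrMe10,ArCr12}.

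The mechanism behind the identity is best isolated through the following elementary computation, which I would carry out first. For \emph{any} $Q$-connection $D$ on \emph{any} vector bundle $E$ over the skew-symmetric dull algebroid $Q$, writing $R_D(q,q')=[D_q,D_{q'}]-D_{[q,q']}$ and letting $D^{\End}$ denote the induced connection on $\End(E)$, one has the universal relation
\[
(\diff_{D^{\End}}R_D)(q_1,q_2,q_3)=-D_{\Jac_{[\cdot\,,\cdot]}(q_1,q_2,q_3)}.
\]
This is obtained by inserting $R_D$ into the Koszul formula: the three double-commutator terms sum to zero by the Jacobi identity for the operator commutator, the six terms carrying an inner $D_{[q_i,q_j]}$ cancel in pairs against the commutator parts of the three $R_D([q_i,q_j],q_k)$ summands, and what survives is $D$ applied to $[[q_1,q_2],q_3]-[[q_1,q_3],q_2]+[[q_2,q_3],q_1]=-\Jac_{[\cdot\,,\cdot]}(q_1,q_2,q_3)$. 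This captures the pure ``bracket-of-bracket'' cancellation that also governs the target; the latter, however, is the finer statement living at the level of $\Hom(TM,Q)$, with an \emph{arbitrary} input $X\in\mathfrak{X}(M)$ rather than one of the form $\rho_Q(q_4)$, so the universal relation alone (applied to $\nabla^{\text{bas},Q}$ together with \eqref{eq_bas_Q_3}) only yields the identity restricted to $\operatorname{im}\rho_Q$ and must be upgraded.

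For the full statement I would expand both sides on sections and match them termwise. On the left, the Koszul formula gives three terms $\nabla^{\text{bas},\Hom}_{q_i}\big(R_\nabla^{\text{bas}}(q_j,q_k)\big)$ and three terms $R_\nabla^{\text{bas}}([q_i,q_j],q_k)$; unfolding $\nabla^{\text{bas},\Hom}_{q}\Phi=\nabla^{\text{bas},Q}_{q}\circ\Phi-\Phi\circ\nabla^{\text{bas},TM}_{q}$ and substituting the explicit formulas for $R_\nabla^{\text{bas}}$, $\nabla^{\text{bas},Q}$ and $\nabla^{\text{bas},TM}$ turns everything into an expression in the dull bracket $[\cdot\,,\cdot]$, the $TM$-connection $\nabla$, and the anchor $\rho_Q$. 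On the right, $(\nabla^{\Hom}_X\Jac)(q_1,q_2,q_3)=\nabla_X\big(\Jac(q_1,q_2,q_3)\big)-\sum_i\Jac(q_1,\dots,\nabla_Xq_i,\dots,q_3)$ with $\Jac(q_1,q_2,q_3)=[q_1,[q_2,q_3]]-[[q_1,q_2],q_3]-[q_2,[q_1,q_3]]$. The cancellations then split into three families: the bracket-of-bracket terms match via the commutator-Jacobi mechanism above; the mixed ``$\nabla_X$ of a bracket'' terms coming from $\nabla_X\Jac$ are produced on the left precisely by the source-correction pieces $-R_\nabla^{\text{bas}}(q_j,q_k)\big(\nabla^{\text{bas},TM}_{q_i}X\big)$ together with the $[q_i,\nabla_Xq_j]$-type summands of $R_\nabla^{\text{bas}}$; and the leftover $\nabla_{\nabla_X(\cdot)}$ and $\nabla_\cdot\nabla_\cdot$ terms are eliminated using the Leibniz identity of $[\cdot\,,\cdot]$ in both entries and the compatibility $\rho_Q[q_1,q_2]=[\rho_Q(q_1),\rho_Q(q_2)]$ of \eqref{comp_anchor_bracket}. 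The main obstacle is purely organisational: the faithful sign bookkeeping across the roughly two dozen resulting terms, and in particular arranging the source-correction terms of $\nabla^{\text{bas},\Hom}$ so that they reproduce exactly the four $\nabla_X$-derivatives on the right. No geometric input beyond the definitions and the structure relations \eqref{comp_anchor_bracket}, \eqref{eq_bas_Q_1}--\eqref{eq_bas_Q_3} should be required.
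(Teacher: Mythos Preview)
Your proposal is correct and the core argument---expand the left side via the Koszul formula for $\diff_{\nabla^{\text{bas},\Hom}}$, unfold $\nabla^{\text{bas},Q}$ and $R_\nabla^{\text{bas}}$ into brackets and $TM$-connections, and match against the expansion of $\nabla^{\Hom}_X\Jac$---is exactly what the paper does, in a single displayed chain of equalities with cyclic-permutation shorthand. Your additional framing (the twisted-Bianchi viewpoint and the universal relation $(\diff_{D^{\End}}R_D)=-D_{\Jac}$) is a nice conceptual layer the paper does not include, but as you yourself note it only pins down the identity on $\operatorname{im}\rho_Q$ and the actual proof still reduces to the same direct expansion.
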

	\begin{proof}
		Let $q_1,q_2,q_3\in\Gamma(Q)$ and $X\in\mathfrak{X}(M)$. We
		compute:
		\begin{align*}
		\left(\diff_{\nabla^{\text{bas},\Hom}}R_\nabla^\text{bas}\right)(q_1,q_2,q_3)X = &\ \nabla_{q_1}^\text{bas}(R_\nabla^\text{bas}(q_2,q_3)X) - R^\text{bas}_\nabla(q_2,q_3)(\nabla_{q_1}^\text{bas} X) \\
		&\ - R^\text{bas}_\nabla([q_1,q_2],q_3)X + \text{c.p.} \\
		= &\ [q_1,R_\nabla^\text{bas}(q_2,q_3)X] + \nabla_{\rho(R_\nabla^\text{bas}(q_2,q_3)X)} q_1 - R_\nabla^\text{bas}(q_2,q_3)(\nabla_{q_1}^\text{bas}X) \\
		&\ + \nabla_X [[q_1,q_2],q_3] - [\nabla_X [q_1,q_2],q_3] -[[q_1,q_2],\nabla_X q_3] \\
		&\ - \nabla_{\nabla_{q_3}^\text{bas}X} [q_1,q_2] + \nabla_{\nabla_{[q_1,q_2]}^\text{bas}X} q_3 + \text{c.p.} \\
		= &\ \nabla_X(\Jac_{[\cdot\,,\cdot]}(q_1,q_2,q_3)) - \Jac_{[\cdot\,,\cdot]}(\nabla_X q_1,q_2,q_3) \\
		&\ - \Jac_{[\cdot\,,\cdot]}(q_1,\nabla_X q_2,q_3) - \Jac_{[\cdot\,,\cdot]}(q_1,q_2,\nabla_X q_3) \\
		= &\ \left(\nabla_X^{\Hom}\Jac_{[\cdot\,,\cdot]}\right)(q_1,q_2,q_3).\qedhere
		\end{align*}
	\end{proof}
	
	\begin{proof}[Proof of Proposition \ref{Adjoint_representation_of_Lie_2-algebroid}]
		A straightforward computation shows that the elements in the
		statement are $C^\infty(M)$-linear in all their arguments
		and thus well defined. Moreover, the equation
		$\rho\circ\ell = 0$, the commutativity of $\rho$ with the
		dull bracket $[\cdot\,,\cdot]$, and equation (ii) of Definition
		\ref{geom split 2-alg} imply that the individual
		$Q$-connections on the vector bundles $B^*,\ Q$ and $TM$
		commute with $\ell$ and $\rho$, and thus define a
		$Q$-connection on the complex $B^*\to Q\to TM$. Hence, we
		only need to check that the equations in the statement of
		Proposition \ref{3-term_representations}, or equivalently
		the seven equations that are stated in the proof, hold. In the following, let
		$\beta,\beta_i\in\Gamma(B^*),\ b\in\Gamma(B),\
		q,q_i\in\Gamma(Q)$ and $X\in\mathfrak{X}(M)$, for
		$i=1,2,\ldots$
		
		\begin{enumerate}[(i)]
			\item For the first equation we have the following
			three computations
			\[
			\Big( [\partial,\omega_2](X) + \diff^2_{\nabla^{\text{bas}}}X \Big)(q_1,q_2) = \rho\left( \omega_2(q_1,q_2)X \right) +  R_{\nabla^{\text{bas}}}(q_1,q_2)X = 0
			\]
			\[
			\Big( [\partial,\omega_2](q) + \diff^2_{\nabla^{\text{bas}}}q \Big)(q_1,q_2)
			= - \ell\left( \omega_2(q_1,q_2)q \right) + \omega_2(q_1,q_2)\rho(q)  +  R_{\nabla^{\text{bas}}}(q_1,q_2)q = 0
			\]
			\[
			\Big( [\partial,\omega_2](\beta) + \diff^2_{\nabla^*}\beta \Big)(q_1,q_2) = R_{\nabla^*}(q_1,q_2)\beta - \omega_2(q_1,q_2)\ell(\beta) = 0
			\]
			where in the third equation
			we use (iv) from Definition \ref{geom split 2-alg}.
			\item For the second equation we compute
			\[
			\Big( [\partial,\phi_0](X) + \partial_B(\diff_{\nabla^{\text{bas}}}X) \Big)(\beta) = \rho(\nabla_X \ell(\beta)) - \rho(\ell(\nabla_X \beta)) + \nabla^\text{bas}_{\ell(\beta)}X  = 0
			\]
			\[
			\Big( [\partial,\phi_0](\beta_1) + \partial_B(\diff_{\nabla^*}\beta_1) \Big)(\beta_2) = - \nabla_{\ell(\beta_2)}^* \beta_1 - \phi_0(\beta_2)\ell(\beta_1) = 0
			\]
			\[
			\Big( [\partial,\phi_0](q) + \partial_B(\diff_{\nabla^{\text{bas}}}q) \Big)(\beta) =  \nabla_{\ell(\beta)}^\text{bas} q + \phi_0(\beta)\rho(q) - \ell(\phi_0(\beta)q) = 0
			\]
			where for the last two equations we used (i)
			and (ii) from Definition
			\ref*{geom split 2-alg}.
			\item The third equation splits into the following two cases
			\begin{align*}
			\Big( [\partial,\omega_3](X) & + [\diff_{\nabla^\text{bas}},\omega_2](X) \Big)(q_1,q_2,q_3) \\
			= &\ \ell(\nabla_X(\omega(q_1,q_2,q_3))) + \left(\nabla_X^{\Hom}\Jac_{[\cdot\,,\cdot]}\right)(q_1,q_2,q_3) - \nabla_X\left(\Jac_{[\cdot\,,\cdot]}(q_1,q_2,q_3)\right)\\
			& + \sum_{i<j}(-1)^{i+j}R^\text{bas}_\nabla([q_i,q_j],q_{k\neq i,j})X + \sum_i(-1)^{i+1}\nabla^\text{bas}_{q_i}\left(R_\nabla^\text{bas}(q_{s\neq i},q_{t\neq i})\right)X\\
			& - R_\nabla^\text{bas}(q_1,q_2)(\nabla_{q_3}^\text{bas} X)
			+R_\nabla^\text{bas}(q_1,q_3)(\nabla_{q_2}^\text{bas} X)
			-R_\nabla^\text{bas}(q_2,q_3)(\nabla_{q_1}^\text{bas} X)\\
			= &\ \ell(\nabla_X(\omega(q_1,q_2,q_3))) + \left(\nabla_X^{\Hom}\Jac_{[\cdot\,,\cdot]}\right)(q_1,q_2,q_3) - \nabla_X\left(\Jac_{[\cdot\,,\cdot]}(q_1,q_2,q_3)\right)\\
			& - \left(d_{\nabla^{\text{bas},\Hom}}R_\nabla^\text{bas}\right)(q_1,q_2,q_3)X\\
			= & \ \langle \omega(q_1,q_2,q_3),\phi_0(X) \rangle,
			\end{align*}
			\begin{align*}
			\Big( \diff_{\nabla^*}(\omega_2(q_4))\ + &\ \omega_2(\diff_{\nabla^\text{bas}} q_4) + \omega_3(\partial(q_4)) \Big) (q_1,q_2,q_3) \\
			= &\ \sum_{i<j}^3 (-1)^{i+j}\omega([q_i,q_j],\ldots,\hat{q_i},\ldots,\hat{q_j},\ldots) + \sum_{i=1}^3 (-1)^{i+1}\nabla^*_{q_i}\left( \omega(\ldots,\hat{q_i},\ldots) \right) \\
			& - \omega(q_1,q_2,\nabla^\text{bas}_{q_3} q_4) + \omega(q_1,q_3,\nabla^\text{bas}_{q_2} q_4) - \omega(q_2,q_3,\nabla^\text{bas}_{q_1} q_4)\\
			& -\nabla_{\rho(q_4)}(\omega(q_1,q_2,q_3)) + \omega(\nabla_{\rho(q_4)} q_1,q_2,q_3)\\
			& + \omega(q_1,\nabla_{\rho(q_4)} q_2,q_3) + \omega(q_1,q_2,\nabla_{\rho(q_4)} q_3)\\
			= &\ \nabla^*_{q_4}(\omega(q_1,q_2,q_3)) - \nabla_{\rho(q_4)}(\omega(q_1,q_2,q_3))\\
			= &\ \langle \omega(q_1,q_2,q_3),\phi_0(q_4) \rangle,
			\end{align*}
			where for the first equation we used Lemma
			\ref{Madeleine_calculation} together with
			(iii) from Definition \ref{geom split 2-alg}
			and for the second equation we used (v) from
			Definition \ref{geom split 2-alg}.
			\item For the fourth equation we compute
			\begin{align*}
			\Big( \diff_{\overline{\nabla}}\phi_0(X) &\ + [\partial,\phi_1](X) +\partial_B(\omega_2(X) \Big)(\beta,q) \\ 
			& = \nabla_q^\text{bas}(\phi_0(\beta)X) - \phi_0(\nabla_q^*\beta)X - \phi_0(\beta)(\nabla^\text{bas}_q X)\\ & - \ell(\phi_1(\beta,q)X) + R_\nabla^\text{bas}(q,\ell(\beta))X = 0,
			\end{align*}
			\begin{align*}
			\Big( \diff_{\overline{\nabla}}\phi_0(q_1) &\ + [\partial,\phi_1](q_1) +\partial_B(\omega_2(q_1) \Big)(\beta,q_2)\\
			& = \nabla_{q_2}^\text{bas}(\phi_0(\beta)q_1) - \phi_0(\nabla_{q_2}^*\beta)q_1 - \phi_0(\beta)(\nabla^\text{bas}_{q_2} q_1)\\
			& + \omega_2(\ell(\beta),q_2)q_1 + \phi_1(\beta,q_2)(\rho(q_1)) = 0
			\end{align*}
			where in the first calculation we used (ii)
			from Definition \ref{geom split 2-alg} and
			$\rho\circ\ell = 0$, and in the second
			calculation we used (iv) from Definition
			\ref{geom split 2-alg}.
			\item For the fifth equation we have
			\begin{align*}
			\Big( \diff_{\nabla^*}(\omega_3(X))\ + &\ \omega_2(\omega_2(X)) + \omega_3(\diff_{\nabla^\text{bas}} X) \Big)(q_1,q_2,q_3,q_4) \\
			= &\ \sum_{i<j}^4 (-1)^{i+j}\omega_3([q_i,q_j],\ldots,\hat{q_i},\ldots,\hat{q_j},\ldots) +
			\sum_{i=1}^{4} (-1)^{i+1}\nabla^*_{q_i}(\omega_3(\ldots,\hat{q_i},\ldots)) \\
			&\ + \omega_2(q_1,q_2)(\omega_2(q_3,q_4)X)
			- \omega_2(q_1,q_3)(\omega_2(q_2,q_4)X) \\
			&\ + \omega_2(q_1,q_4)(\omega_2(q_2,q_3)X)
			+ \omega_2(q_2,q_3)(\omega_2(q_1,q_4)X) \\
			&\ - \omega_2(q_2,q_4)(\omega_2(q_1,q_3)X)
			+ \omega_2(q_3,q_4)(\omega_2(q_1,q_2)X) \\
			&\ - \omega_3(q_2,q_3,q_4)(\nabla^\text{bas}_{q_1} X)
			+ \omega_3(q_1,q_3,q_4)(\nabla^\text{bas}_{q_2} X) \\
			&\ - \omega_3(q_1,q_2,q_4)(\nabla^\text{bas}_{q_3} X)
			+ \omega_3(q_1,q_2,q_3)(\nabla^\text{bas}_{q_4} X) \\
			&\ - \phi_1(\omega(q_2,q_3,q_4),q_1)X 
			+ \phi_1(\omega(q_1,q_3,q_4),q_2)X \\
			&\ - \phi_1(\omega(q_1,q_2,q_4),q_3)X
			+ \phi_1(\omega(q_1,q_2,q_3),q_4)X \\
			= &\ \langle \omega,\phi(X) \rangle(q_1,q_2,q_3,q_4),
			\end{align*}
			where we used (v) from Definition
			\ref{geom split 2-alg}.
			\item The sixth equation becomes
			\begin{align*}
			\Big( \diff_{\overline{\nabla}}\phi_1 (X) + \omega_2(\phi_0(X))\ + &\ \phi_0(\omega_2(X)) + \partial_B(\omega_3(X)) \Big)(\beta,q_1,q_2)\\
			= &\ - \phi_1(\beta,[q_1,q_2])X + \phi_1(\nabla^*_{q_2}\beta,q_1)X - \phi_1(\nabla^*_{q_1}\beta,q_2)X \\
			&\ - \phi_1(\beta,q_2)(\nabla^*_{q_1}X) + \phi_1(\beta,q_1)(\nabla^*_{q_2}X) + \nabla_{q_1}^*(\phi_1(\beta,q_2)X) \\
			&\ - \nabla_{q_2}^*(\phi_1(\beta,q_1)X) + \omega(q_1,q_2,\ell(\nabla_X\beta)) - \omega(q_1,q_2,\nabla_X\ell(\beta)) \\
			&\ - \phi_0(\beta)(R_\nabla^\text{bas}(q_1,q_2)X) + \omega_3(q_1,q_2,\ell(\beta))X \\
			= &\ 0,
			\end{align*}
			where we used (iv) from Definition
			\ref{geom split 2-alg} and that
			$\rho(R_\nabla^\text{bas}(q_1,q_2)X) =
			R_{\nabla^\text{bas}}(q_1,q_2)X$.
			\item For the last equation we compute
			\begin{align*}
			\Big( \phi_0(\phi_0(X)) + \partial_B(\phi_1(X)) \Big)(\beta_1,\beta_2) = &\ \phi_0(\phi_0(\beta_1)X,\beta_2) + \phi_0(\phi_0(\beta_2)X,\beta_1)\\
			& + \phi_1(\beta_1,\ell(\beta_2))X + \phi_1(\beta_2,\ell(\beta_1))X\\
			= &\ \nabla^*_{\ell(\nabla_X \beta_1)} \beta_2 + \nabla^*_{\ell(\nabla_X \beta_2)} \beta_1 + \nabla^*_{\ell(\beta_2)}\nabla_X\beta_1 \\
			& -\nabla_X\nabla^*_{\ell(\beta_2)}\beta_1 +\nabla^*_{\ell(\beta_1)}\nabla_X\beta_2 - \nabla_X\nabla^*_{\ell(\beta_1)}\beta_2 \\
			= &\ 0,
			\end{align*}
			where in the last line we used (i) from Definition \ref{geom split 2-alg}.\qedhere
		\end{enumerate}
	\end{proof} 

\end{appendices}

\def\cprime{$'$} \def\polhk#1{\setbox0=\hbox{#1}{\ooalign{\hidewidth
			\lower1.5ex\hbox{`}\hidewidth\crcr\unhbox0}}} \def\cprime{$'$}
\def\cprime{$'$}


\begin{thebibliography}{}
	
	\bibitem[\protect\astroncite{{Abouzaid} and {Boyarchenko}}{2006}]{AbBo06}
	{Abouzaid}, M. and {Boyarchenko}, M. (2006).
	\newblock {Local structure of generalized complex manifolds.}
	\newblock {\em {J. Symplectic Geom.}}, 4(1):43--62.
	
	\bibitem[\protect\astroncite{Arias~Abad}{2008}]{Arias08t}
	Arias~Abad, C. (2008).
	\newblock Representations up to homotopy and cohomology of classifying spaces.
	\newblock {\em Ph. D. Thesis Utrecht University}.
	
	\bibitem[\protect\astroncite{Arias~Abad and Crainic}{2011}]{ArCr11}
	Arias~Abad, C. and Crainic, M. (2011).
	\newblock The {W}eil algebra and the {V}an {E}st isomorphism.
	\newblock {\em Ann. Inst. Fourier (Grenoble)}, 61(3):927--970.
	
	\bibitem[\protect\astroncite{Arias~Abad and Crainic}{2012}]{ArCr12}
	Arias~Abad, C. and Crainic, M. (2012).
	\newblock Representations up to homotopy of {L}ie algebroids.
	\newblock {\em J. Reine Angew. Math.}, 663:91--126.
	
	\bibitem[\protect\astroncite{Arias~Abad and Crainic}{2013}]{ArCr13}
	Arias~Abad, C. and Crainic, M. (2013).
	\newblock Representations up to homotopy and {B}ott's spectral sequence for
	{L}ie groupoids.
	\newblock {\em Adv. Math.}, 248:416--452.
	
	\bibitem[\protect\astroncite{Arias~Abad et~al.}{2011}]{ArCrDh11}
	Arias~Abad, C., Crainic, M., and Dherin, B. (2011).
	\newblock Tensor products of representations up to homotopy.
	\newblock {\em J. Homotopy Relat. Struct.}, 6(2):239--288.
	
	\bibitem[\protect\astroncite{Arias~Abad and Sch\"{a}tz}{2011}]{ArSc11}
	Arias~Abad, C. and Sch\"{a}tz, F. (2011).
	\newblock Deformations of {L}ie brackets and representations up to homotopy.
	\newblock {\em Indag. Math. (N.S.)}, 22(1-2):27--54.
	
	\bibitem[\protect\astroncite{Arias~Abad and Sch\"{a}tz}{2013}]{ArSc13}
	Arias~Abad, C. and Sch\"{a}tz, F. (2013).
	\newblock The {$\textbf{A}_\infty$} de {R}ham theorem and integration of
	representations up to homotopy.
	\newblock {\em Int. Math. Res. Not. IMRN}, (16):3790--3855.
	
	\bibitem[\protect\astroncite{Baez and Crans}{2004}]{BaCr04}
	Baez, J.~C. and Crans, A.~S. (2004).
	\newblock Higher-dimensional algebra. {VI}. {L}ie 2-algebras.
	\newblock {\em Theory Appl. Categ.}, 12:492--538.
	
	\bibitem[\protect\astroncite{Berezin}{1966}]{Berezin66}
	Berezin, F. (1966).
	\newblock {\em The Method of Second Quantization}.
	\newblock Translated from the Russian by Nobumichi Mugibayashi and Alan
	Jeffrey. Pure and Applied Physics, Vol. 24. Academic Press, New York-London.
	
	\bibitem[\protect\astroncite{Berezin}{1987}]{Berezin87}
	Berezin, F.~A. (1987).
	\newblock {\em Introduction to superanalysis}, volume~9 of {\em Mathematical
		Physics and Applied Mathematics}.
	\newblock D. Reidel Publishing Co., Dordrecht.
	\newblock Edited and with a foreword by A. A. Kirillov, With an appendix by V.
	I. Ogievetsky, Translated from the Russian by J. Niederle and R. Koteck\'y,
	Translation edited by Dimitri Le\u\i tes.
	
	\bibitem[\protect\astroncite{{Bischoff} et~al.}{2020}]{BiBuMe20}
	{Bischoff}, F., {Bursztyn}, H., {Lima}, H., and {Meinrenken}, E. (2020).
	\newblock {Deformation spaces and normal forms around transversals.}
	\newblock {\em {Compos. Math.}}, 156(4):697--732.
	
	\bibitem[\protect\astroncite{{Blohmann}}{2017}]{Blohmann17}
	{Blohmann}, C. (2017).
	\newblock {Removable presymplectic singularities and the local splitting of
		Dirac structures.}
	\newblock {\em {Int. Math. Res. Not.}}, 2017(23):7344--7374.
	
	\bibitem[\protect\astroncite{Bonavolont{\`a} and Poncin}{2013}]{BoPo13}
	Bonavolont{\`a}, G. and Poncin, N. (2013).
	\newblock On the category of {L}ie {$n$}-algebroids.
	\newblock {\em J. Geom. Phys.}, 73:70--90.
	
	\bibitem[\protect\astroncite{Bott}{1972}]{Bott72}
	Bott, R. (1972).
	\newblock {Lectures on characteristic classes and foliations. Notes by Lawrence
		Conlon. Appendices by J. Stasheff.}
	\newblock {Lectures algebraic diff. Topology, Lect. Notes Math. 279, 1-94
		(1972).}
	
	\bibitem[\protect\astroncite{Brahic and Ortiz}{2019}]{BrOr19}
	Brahic, O. and Ortiz, C. (2019).
	\newblock Integration of {$2$}-term representations up to homotopy via
	{$2$}-functors.
	\newblock {\em Trans. Amer. Math. Soc.}, 372(1):503--543.
	
	\bibitem[\protect\astroncite{Bruce}{2010}]{Bruce10}
	Bruce, A.~J. (2010).
	\newblock On higher poisson and koszul--schouten brackets.
	\newblock {\em {arXiv e-prints}}, page arXiv:0910.1992.
	
	\bibitem[\protect\astroncite{Bursztyn et~al.}{2009}]{BuCaOr09}
	Bursztyn, H., Cabrera, A., and Ortiz, C. (2009).
	\newblock Linear and multiplicative 2-forms.
	\newblock {\em Lett. Math. Phys.}, 90(1-3):59--83.
	
	\bibitem[\protect\astroncite{Cabrera et~al.}{2018}]{CaBrOr18}
	Cabrera, A., Brahic, O., and Ortiz, C. (2018).
	\newblock Obstructions to the integrability of {$\mathcal{VB}$}-algebroids.
	\newblock {\em J. Symplectic Geom.}, 16(2):439--483.
	
	\bibitem[\protect\astroncite{{Carmeli} et~al.}{2011}]{CaCaFi11}
	{Carmeli}, C., {Caston}, L., and {Fioresi}, R. (2011).
	\newblock {\em {Mathematical foundations of supersymmetry.}}
	\newblock Z\"urich: European Mathematical Society (EMS).
	
	\bibitem[\protect\astroncite{{Cattaneo}}{2008}]{Cattaneo08}
	{Cattaneo}, A.~S. (2008).
	\newblock {Deformation quantization and reduction}.
	\newblock In {\em {Poisson geometry in mathematics and physics. Proceedings of
			the international conference, Tokyo, Japan, June 5--9, 2006}}, pages 79--101.
	Providence, RI: American Mathematical Society (AMS).
	
	\bibitem[\protect\astroncite{Cattaneo and Felder}{2001}]{CaFe01}
	Cattaneo, A.~S. and Felder, G. (2001).
	\newblock Poisson sigma models and symplectic groupoids.
	\newblock In {\em Quantization of singular symplectic quotients}, volume 198 of
	{\em Progr. Math.}, pages 61--93. Birkh\"auser, Basel.
	
	\bibitem[\protect\astroncite{{Cattaneo} and {Felder}}{2004}]{CaFe04}
	{Cattaneo}, A.~S. and {Felder}, G. (2004).
	\newblock {Coisotropic submanifolds in Poisson geometry and branes in the
		Poisson sigma model}.
	\newblock {\em {Lett. Math. Phys.}}, 69:157--175.
	
	\bibitem[\protect\astroncite{{Cattaneo} and {Felder}}{2007}]{CaFe07}
	{Cattaneo}, A.~S. and {Felder}, G. (2007).
	\newblock {Relative formality theorem and quantisation of coisotropic
		submanifolds}.
	\newblock {\em {Adv. Math.}}, 208(2):521--548.
	
	\bibitem[\protect\astroncite{{Courant}}{1990}]{CourantTheo90}
	{Courant}, T.~J. (1990).
	\newblock {Dirac manifolds.}
	\newblock {\em {Trans. Am. Math. Soc.}}, 319(2):631--661.
	
	\bibitem[\protect\astroncite{Crainic and Fernandes}{2003}]{CrFe03}
	Crainic, M. and Fernandes, R.~L. (2003).
	\newblock Integrability of {L}ie brackets.
	\newblock {\em Ann. of Math. (2)}, 157(2):575--620.
	
	\bibitem[\protect\astroncite{{Crainic} et~al.}{2014}]{CrScSt14}
	{Crainic}, M., {Sch\"atz}, F., and {Struchiner}, I. (2014).
	\newblock {A survey on stability and rigidity results for Lie algebras}.
	\newblock {\em {Indag. Math., New Ser.}}, 25(5):957--976.
	
	\bibitem[\protect\astroncite{{Cueca}}{2019}]{Cueca19a}
	{Cueca}, M. (2019).
	\newblock {\em Applications of graded manifolds to Poisson geometry}.
	\newblock PhD thesis, IMPA, available at
	\url{www.impa.br/wp-content/uploads/2019/10/tese_dout_Miquel-Cueca.pdf}, Rio
	de Janeiro.
	
	\bibitem[\protect\astroncite{Cueca}{2019}]{Cueca19}
	Cueca, M. (2019).
	\newblock {The geometry of graded cotangent bundles}.
	\newblock {\em arXiv e-prints}, page arXiv:1905.13245.
	
	\bibitem[\protect\astroncite{Cueca and Mehta}{2021}]{CuMe21}
	Cueca, M. and Mehta, R.~A. (2021).
	\newblock Courant cohomology, cartan calculus, connections, curvature,
	characteristic classes.
	\newblock {\em Communications in Mathematical Physics}, 381(3):1091--1113.
	
	\bibitem[\protect\astroncite{da~Silva et~al.}{1999}]{daSiWe99}
	da~Silva, A., Weinstein, A., Society, A.~M., for Pure, B.~C., and Mathematics,
	A. (1999).
	\newblock {\em Geometric Models for Noncommutative Algebras}.
	\newblock Berkeley mathematics lecture notes. American Mathematical Society.
	
	\bibitem[\protect\astroncite{del Carpio-Marek}{2015}]{delCarpio-Marek15}
	del Carpio-Marek, F. (2015).
	\newblock {\em Geometric structures on degree 2 manifolds}.
	\newblock PhD thesis, IMPA, available at
	\url{www.impa.br/wp-content/uploads/2017/05/Fernando\_Del\_Carpio.pdf}, Rio
	de Janeiro.
	
	\bibitem[\protect\astroncite{Deligne and Morgan}{1999}]{DeMo99}
	Deligne, P. and Morgan, J.~W. (1999).
	\newblock Notes on supersymmetry (following {J}oseph {B}ernstein).
	\newblock In {\em Quantum fields and strings: a course for mathematicians,
		{V}ol. 1, 2 ({P}rinceton, {NJ}, 1996/1997)}, pages 41--97. Amer. Math. Soc.,
	Providence, RI.
	
	\bibitem[\protect\astroncite{Dorfman}{1993}]{Dorfman93}
	Dorfman, I. (1993).
	\newblock {\em Dirac {S}tructures and {I}ntegrability of {N}onlinear
		{E}volution {E}quations}.
	\newblock Nonlinear Science: Theory and Applications. John Wiley \& Sons Ltd.,
	Chichester.
	
	\bibitem[\protect\astroncite{Drummond et~al.}{2015}]{DrJoOr15}
	Drummond, T., Jotz, M., and Ortiz, C. (2015).
	\newblock {VB}-algebroid morphisms and representations up to homotopy.
	\newblock {\em Differential Geometry and its Applications}, 40:332--357.
	
	\bibitem[\protect\astroncite{{Dufour}}{2001}]{Dufour01}
	{Dufour}, J.-P. (2001).
	\newblock {Normal forms for Lie algebroids.}
	\newblock In {\em {Lie algebroids and related topics in differential geometry.
			Proceedings of the conference, Warsaw, Poland, June 12--18, 2000}}, pages
	35--41. Warsaw: Polish Academy of Sciences, Institute of Mathematics.
	
	\bibitem[\protect\astroncite{{Dufour} and {Nguyen Tien Zung}}{2005}]{DuZu05}
	{Dufour}, J.-P. and {Nguyen Tien Zung} (2005).
	\newblock {\em {Poisson structures and their normal forms.}}, volume 242.
	\newblock Basel: Birkh\"auser.
	
	\bibitem[\protect\astroncite{Evens et~al.}{1999}]{EvLuWe99}
	Evens, S., Lu, J.-H., and Weinstein, A. (1999).
	\newblock Transverse measures, the modular class and a cohomology pairing for
	{L}ie algebroids.
	\newblock {\em Quart. J. Math. Oxford Ser. (2)}, 50(200):417--436.
	
	\bibitem[\protect\astroncite{{Fernandes}}{2002}]{Fernandes02}
	{Fernandes}, R.~L. (2002).
	\newblock {Lie algebroids, holonomy and characteristic classes.}
	\newblock {\em {Adv. Math.}}, 170(1):119--179.
	
	\bibitem[\protect\astroncite{Grabowski and Rotkiewicz}{2009}]{GrRo09}
	Grabowski, J. and Rotkiewicz, M. (2009).
	\newblock Higher vector bundles and multi-graded symplectic manifolds.
	\newblock {\em J. Geom. Phys.}, 59(9):1285--1305.
	
	\bibitem[\protect\astroncite{Gracia-Saz et~al.}{2018}]{GrJoMaMe18}
	Gracia-Saz, A., Jotz~Lean, M., Mackenzie, K. C.~H., and Mehta, R.~A. (2018).
	\newblock Double {L}ie algebroids and representations up to homotopy.
	\newblock {\em J. Homotopy Relat. Struct.}, 13(2):287--319.
	
	\bibitem[\protect\astroncite{{Gracia-Saz} and {Mehta}}{2010}]{GrMe10}
	{Gracia-Saz}, A. and {Mehta}, R.~A. (2010).
	\newblock {Lie algebroid structures on double vector bundles and representation
		theory of Lie algebroids.}
	\newblock {\em {Adv. Math.}}, 223(4):1236--1275.
	
	\bibitem[\protect\astroncite{Gracia-Saz and Mehta}{2017}]{GrMe17}
	Gracia-Saz, A. and Mehta, R.~A. (2017).
	\newblock {$\mathcal{VB}$}-groupoids and representation theory of {L}ie
	groupoids.
	\newblock {\em J. Symplectic Geom.}, 15(3):741--783.
	
	\bibitem[\protect\astroncite{{Hamilton}}{1835}]{Hamilton1835}
	{Hamilton}, W.~R. (1835).
	\newblock {Second essay on a general method in dynamics.}
	\newblock {\em {Phil. Trans. R. Soc.}}, 125:95--144.
	
	\bibitem[\protect\astroncite{{Hartshorne}}{1977}]{Hartshorne77}
	{Hartshorne}, R. (1977).
	\newblock {\em {Algebraic geometry}}, volume~52.
	\newblock Springer, New York, NY.
	
	\bibitem[\protect\astroncite{Heuer and Lean}{2018}]{HeJo18}
	Heuer, M. and Lean, M.~J. (2018).
	\newblock Multiple vector bundles: cores, splittings and decompositions.
	
	\bibitem[\protect\astroncite{{Jacobi}}{1884}]{Jacobi1884}
	{Jacobi}, C. G.~J. (1884).
	\newblock {C. G. J. Jacobi's Gesammelte Werke. Herausgegeben auf Veranlassung
		der K\"oniglich Preussischen Akademie der Wissenschaften. Supplementband.
		Herausgegeben von E. Lottner. Vorlesungen \"uber Dynamik.}
	\newblock {Berlin. G. Reimer (1884,1891).}
	
	\bibitem[\protect\astroncite{Jotz~Lean}{2018a}]{Jotz18a}
	Jotz~Lean, M. (2018a).
	\newblock Dorfman connections and {C}ourant algebroids.
	\newblock {\em J. Math. Pures Appl. (9)}, 116:1--39.
	
	\bibitem[\protect\astroncite{Jotz~Lean}{2018b}]{Jotz18b}
	Jotz~Lean, M. (2018b).
	\newblock The geometrization of {$\mathbb N$}-manifolds of degree 2.
	\newblock {\em Journal of Geometry and Physics}, 133:113 -- 140.
	
	\bibitem[\protect\astroncite{{Jotz Lean}}{2019}]{Jotz19b}
	{Jotz Lean}, M. (2019).
	\newblock {Lie 2-algebroids and matched pairs of 2-representations: a geometric
		approach}.
	\newblock {\em {Pac. J. Math.}}, 301(1):143--188.
	
	\bibitem[\protect\astroncite{Jotz~Lean}{2019}]{Jotz19c}
	Jotz~Lean, M. (2019).
	\newblock Obstructions to representations up to homotopy and ideals.
	\newblock {\em arXiv e-prints}, page arXiv:1905.10237.
	
	\bibitem[\protect\astroncite{{Jotz Lean}}{2020}]{Jotz18d}
	{Jotz Lean}, M. (2020).
	\newblock {On LA-Courant algebroids and Poisson Lie 2-algebroids}.
	\newblock {\em {Math. Phys. Anal. Geom.}}, 23(3):40.
	\newblock Id/No 31.
	
	\bibitem[\protect\astroncite{{Jotz Lean} et~al.}{2020}]{JoMePa19}
	{Jotz Lean}, M., {Amit Mehta}, R., and {Papantonis}, T. (2020).
	\newblock {Modules and representations up to homotopy of Lie $n$-algebroids}.
	\newblock {\em arXiv e-prints}, page arXiv:2001.01101.
	
	\bibitem[\protect\astroncite{Jotz~Lean and Ortiz}{2014}]{JoOr14}
	Jotz~Lean, M. and Ortiz, C. (2014).
	\newblock Foliated groupoids and infinitesimal ideal systems.
	\newblock {\em Indag. Math. (N.S.)}, 25(5):1019--1053.
	
	\bibitem[\protect\astroncite{Kajiura and Stasheff}{2006}]{KaSt06}
	Kajiura, H. and Stasheff, J. (2006).
	\newblock Homotopy algebras inspired by classical open-closed string field
	theory.
	\newblock {\em Comm. Math. Phys.}, 263(3):553--581.
	
	\bibitem[\protect\astroncite{Keller and Waldmann}{2015}]{KeWa15}
	Keller, F. and Waldmann, S. (2015).
	\newblock Deformation theory of {C}ourant algebroids via the {R}othstein
	algebra.
	\newblock {\em J. Pure Appl. Algebra}, 219(8):3391--3426.
	
	\bibitem[\protect\astroncite{{Kontsevich}}{2003}]{Kontsevich03}
	{Kontsevich}, M. (2003).
	\newblock {Deformation quantization of Poisson manifolds.}
	\newblock {\em {Lett. Math. Phys.}}, 66(3):157--216.
	
	\bibitem[\protect\astroncite{Kosmann-Schwarzbach}{1995}]{Kosmann95}
	Kosmann-Schwarzbach, Y. (1995).
	\newblock Exact {G}erstenhaber algebras and {L}ie bialgebroids.
	\newblock {\em Acta Appl. Math.}, 41(1-3):153--165.
	\newblock Geometric and algebraic structures in differential equations.
	
	\bibitem[\protect\astroncite{Kosmann-Schwarzbach}{2005}]{Ko-Sc05}
	Kosmann-Schwarzbach, Y. (2005).
	\newblock {\em Quasi, twisted, and all that... in Poisson geometry and Lie
		algebroid theory}, pages 363--389.
	\newblock Birkh{\"a}user Boston, Boston, MA.
	
	\bibitem[\protect\astroncite{Kostant}{1977}]{Kostant77}
	Kostant, B. (1977).
	\newblock Graded manifolds, graded {L}ie theory, and prequantization.
	\newblock In {\em Differential geometrical methods in mathematical physics
		({P}roc. {S}ympos., {U}niv. {B}onn, {B}onn, 1975)}, pages 177--306. Lecture
	Notes in Math., Vol. 570. Springer, Berlin.
	
	\bibitem[\protect\astroncite{{La Pastina}}{2020}]{LaPastina20}
	{La Pastina}, P.~P. (2020).
	\newblock {Deformations of vector bundles in the categories of Lie algebroids
		and groupoids}.
	\newblock {\em arXiv e-prints}, page arXiv:2001.07559.
	
	\bibitem[\protect\astroncite{{La Pastina} and {Vitagliano}}{2019a}]{LaVi18}
	{La Pastina}, P.~P. and {Vitagliano}, L. (2019a).
	\newblock {Deformations of linear Lie brackets}.
	\newblock {\em {Pac. J. Math.}}, 303(1):265--298.
	
	\bibitem[\protect\astroncite{{La Pastina} and {Vitagliano}}{2019b}]{LaVi19}
	{La Pastina}, P.~P. and {Vitagliano}, L. (2019b).
	\newblock {Deformations of Vector Bundles over Lie Groupoids}.
	\newblock {\em arXiv e-prints}, page arXiv:1907.05670.
	
	\bibitem[\protect\astroncite{{Laurent-Gengoux} et~al.}{2012}]{GePiVa12}
	{Laurent-Gengoux}, C., {Pichereau}, A., and {Vanhaecke}, P. (2012).
	\newblock {\em {Poisson structures.}}, volume 347.
	\newblock Berlin: Springer.
	
	\bibitem[\protect\astroncite{Li-Bland}{2012}]{Li-Bland12}
	Li-Bland, D. (2012).
	\newblock Phd thesis: {LA}-{C}ourant {A}lgebroids and their {A}pplications.
	\newblock {\em arXiv:1204.2796}.
	
	\bibitem[\protect\astroncite{Liu et~al.}{1997}]{LiWeXu97}
	Liu, Z.-J., Weinstein, A., and Xu, P. (1997).
	\newblock Manin triples for {L}ie bialgebroids.
	\newblock {\em J. Differential Geom.}, 45(3):547--574.
	
	\bibitem[\protect\astroncite{Mackenzie}{2005}]{Mackenzie05}
	Mackenzie, K.~C.~H. (2005).
	\newblock {\em General {T}heory of {L}ie {G}roupoids and {L}ie {A}lgebroids},
	volume 213 of {\em London Mathematical Society Lecture Note Series}.
	\newblock Cambridge University Press, Cambridge.
	
	\bibitem[\protect\astroncite{Mackenzie and Xu}{1994}]{MaXu94}
	Mackenzie, K.~C.~H. and Xu, P. (1994).
	\newblock {Lie bialgebroids and Poisson groupoids.}
	\newblock {\em Duke Math. J.}, 73(2):415--452.
	
	\bibitem[\protect\astroncite{Mackenzie and Xu}{2000}]{MaXu00}
	Mackenzie, K.~C.~H. and Xu, P. (2000).
	\newblock {Integration of Lie bialgebroids.}
	\newblock {\em Topology}, 39(3):445--467.
	
	\bibitem[\protect\astroncite{{Manin}}{1997}]{Manin97}
	{Manin}, Y.~I. (1997).
	\newblock {\em {Gauge field theory and complex geometry. Transl. from the
			Russian by N. Koblitz and J. R. King. With an appendix by S. Merkulov. 2nd
			ed.}}, volume 289.
	\newblock Berlin: Springer, 2nd ed. edition.
	
	\bibitem[\protect\astroncite{Mehta}{2006}]{Mehta06}
	Mehta, R. (2006).
	\newblock Supergroupoids, double structures, and equivariant cohomology.
	\newblock {\em arXiv:math/0605356}.
	
	\bibitem[\protect\astroncite{Mehta}{2009}]{Mehta09a}
	Mehta, R.~A. (2009).
	\newblock {$Q$}-groupoids and their cohomology.
	\newblock {\em Pacific J. Math.}, 242(2):311--332.
	
	\bibitem[\protect\astroncite{{Mehta}}{2009}]{Mehta09}
	{Mehta}, R.~A. (2009).
	\newblock {\(Q\)-algebroids and their cohomology.}
	\newblock {\em {J. Symplectic Geom.}}, 7(3):263--293.
	
	\bibitem[\protect\astroncite{{Mehta}}{2011}]{Mehta11}
	{Mehta}, R.~A. (2011).
	\newblock {On homotopy Poisson actions and reduction of symplectic
		\(Q\)-manifolds}.
	\newblock {\em {Differ. Geom. Appl.}}, 29(3):319--328.
	
	\bibitem[\protect\astroncite{{Mehta}}{2014}]{Mehta14}
	{Mehta}, R.~A. (2014).
	\newblock {Lie algebroid modules and representations up to homotopy.}
	\newblock {\em {Indag. Math., New Ser.}}, 25(5):1122--1134.
	
	\bibitem[\protect\astroncite{Mehta}{2015}]{Mehta15}
	Mehta, R.~A. (2015).
	\newblock Modular classes of {L}ie groupoid representations up to homotopy.
	\newblock {\em SIGMA Symmetry Integrability Geom. Methods Appl.}, 11:Paper 058,
	10.
	
	\bibitem[\protect\astroncite{Meinrenken and Pike}{2020}]{MePi19}
	Meinrenken, E. and Pike, J. (2020).
	\newblock {The Weil Algebra of a Double Lie Algebroid}.
	\newblock {\em International Mathematics Research Notices}.
	\newblock rnz361.
	
	\bibitem[\protect\astroncite{{Poisson}}{1809}]{Poisson1809}
	{Poisson}, S.~D. (1809).
	\newblock {Sur la variation des constantes arbitraires dans les questions de
		m\'{e}canique.}
	\newblock {\em {J. \'{E}cole Polytechnique}}, 8:265--344.
	
	\bibitem[\protect\astroncite{{Popescu} and {Popescu}}{2019}]{PoPo19}
	{Popescu}, M. and {Popescu}, P. (2019).
	\newblock {Almost Lie algebroids and characteristic classes}.
	\newblock {\em {SIGMA, Symmetry Integrability Geom. Methods Appl.}}, 15:paper
	021, 12.
	
	\bibitem[\protect\astroncite{Pradines}{1977}]{Pradines77}
	Pradines, J. (1977).
	\newblock {\em Fibr\'es vectoriels doubles et calcul des jets non holonomes},
	volume~29 of {\em Esquisses Math\'ematiques [Mathematical Sketches]}.
	\newblock Universit\'e d'Amiens U.E.R. de Math\'ematiques, Amiens.
	
	\bibitem[\protect\astroncite{Quillen}{1985}]{Quillen85}
	Quillen, D. (1985).
	\newblock Superconnections and the {C}hern character.
	\newblock {\em Topology}, 24(1):89--95.
	
	\bibitem[\protect\astroncite{Roytenberg}{1999}]{Roytenberg99}
	Roytenberg, D. (1999).
	\newblock {\em Courant algebroids, derived brackets and even symplectic
		supermanifolds}.
	\newblock ProQuest LLC, Ann Arbor, MI.
	\newblock Thesis (Ph.D.)--University of California, Berkeley.
	
	\bibitem[\protect\astroncite{Roytenberg}{2002}]{Roytenberg02}
	Roytenberg, D. (2002).
	\newblock On the structure of graded symplectic supermanifolds and {C}ourant
	algebroids.
	\newblock In {\em Quantization, {P}oisson brackets and beyond ({M}anchester,
		2001)}, volume 315 of {\em Contemp. Math.}, pages 169--185. Amer. Math. Soc.,
	Providence, RI.
	
	\bibitem[\protect\astroncite{Salam and Strathdee}{1974}]{SaSt74}
	Salam, A. and Strathdee, J. (1974).
	\newblock Super-gauge transformations.
	\newblock {\em Nuclear Physics B}, 76(3):477 -- 482.
	
	\bibitem[\protect\astroncite{Sch\"{a}tz}{2009}]{Schatz09}
	Sch\"{a}tz, F. (2009).
	\newblock B{FV}-complex and higher homotopy structures.
	\newblock {\em Comm. Math. Phys.}, 286(2):399--443.
	
	\bibitem[\protect\astroncite{{Sch\"atz}}{2009}]{Schaetz09}
	{Sch\"atz}, F. (2009).
	\newblock {\em Coisotropic submanifolds and the BFV-complex}.
	\newblock PhD thesis, Universit\"at Z\"urich.
	
	\bibitem[\protect\astroncite{{\v{S}}evera}{2005}]{Severa05}
	{\v{S}}evera, P. (2005).
	\newblock Some title containing the words ``homotopy'' and ``symplectic'', e.g.
	this one.
	\newblock In {\em Travaux math\'ematiques. {F}asc. {XVI}}, Trav. Math., XVI,
	pages 121--137. Univ. Luxemb., Luxembourg.
	
	\bibitem[\protect\astroncite{Sheng and Zhu}{2017}]{ShZh17}
	Sheng, Y. and Zhu, C. (2017).
	\newblock Higher extensions of {L}ie algebroids.
	\newblock {\em Commun. Contemp. Math.}, 19(3):1650034, 41.
	
	\bibitem[\protect\astroncite{Stefani}{2019}]{Stefani19}
	Stefani, D. (2019).
	\newblock Representations up to homotopy and perfect complexes over
	differentiable stacks.
	\newblock {\em PhD thesis}.
	
	\bibitem[\protect\astroncite{Trentinaglia and Zhu}{2016}]{TrZh16}
	Trentinaglia, G. and Zhu, C. (2016).
	\newblock Some remarks on representations up to homotopy.
	\newblock {\em Int. J. Geom. Methods Mod. Phys.}, 13(3):1650024, 15.
	
	\bibitem[\protect\astroncite{Tu}{2017}]{Tu17}
	Tu, L.~W. (2017).
	\newblock {\em Differential geometry}, volume 275 of {\em Graduate Texts in
		Mathematics}.
	\newblock Springer, Cham.
	\newblock Connections, curvature, and characteristic classes.
	
	\bibitem[\protect\astroncite{{Tuynman}}{2004}]{Tuynman04}
	{Tuynman}, G.~M. (2004).
	\newblock {\em {Supermanifolds and supergroups. Basic theory.}}
	\newblock Dordrecht: Kluwer Academic Publishers.
	
	\bibitem[\protect\astroncite{Va{\u\i}ntrob}{1997}]{Vaintrob97}
	Va{\u\i}ntrob, A.~Y. (1997).
	\newblock Lie algebroids and homological vector fields.
	\newblock {\em Uspekhi Mat. Nauk}, 52(2(314)):161--162.
	
	\bibitem[\protect\astroncite{{Vaisman}}{1994}]{Vaisman94}
	{Vaisman}, I. (1994).
	\newblock {\em {Lectures on the geometry of Poisson manifolds.}}, volume 118.
	\newblock Basel: Birkh\"auser.
	
	\bibitem[\protect\astroncite{van Est}{1962a}]{vEs62a}
	van Est, W.~T. (1962a).
	\newblock Local and global groups. {I}.
	\newblock {\em Nederl. Akad. Wetensch. Proc. Ser. A 65 = Indag. Math.},
	24:391--408.
	
	\bibitem[\protect\astroncite{van Est}{1962b}]{vEs62b}
	van Est, W.~T. (1962b).
	\newblock Local and global groups. {II}.
	\newblock {\em Nederl. Akad. Wetensch. Proc. Ser. A 65 = Indag. Math.},
	24:409--425.
	
	\bibitem[\protect\astroncite{Varadarajan}{2004}]{Varadarajan04}
	Varadarajan, V.~S. (2004).
	\newblock {\em Supersymmetry for mathematicians: an introduction}, volume~11 of
	{\em Courant Lecture Notes in Mathematics}.
	\newblock New York University, Courant Institute of Mathematical Sciences, New
	York; American Mathematical Society, Providence, RI.
	
	\bibitem[\protect\astroncite{Vitagliano}{2015}]{Vitagliano15b}
	Vitagliano, L. (2015).
	\newblock Representations of homotopy {L}ie-{R}inehart algebras.
	\newblock {\em Math. Proc. Cambridge Philos. Soc.}, 158(1):155--191.
	
	\bibitem[\protect\astroncite{Voronov}{2005}]{Voronov05}
	Voronov, T. (2005).
	\newblock Higher derived brackets and homotopy algebras.
	\newblock {\em J. Pure Appl. Algebra}, 202(1-3):133--153.
	
	\bibitem[\protect\astroncite{Voronov}{2002}]{Voronov02}
	Voronov, T.~T. (2002).
	\newblock Graded manifolds and {D}rinfeld doubles for {L}ie bialgebroids.
	\newblock In {\em Quantization, {P}oisson brackets and beyond ({M}anchester,
		2001)}, volume 315 of {\em Contemp. Math.}, pages 131--168. Amer. Math. Soc.,
	Providence, RI.
	
	\bibitem[\protect\astroncite{{Voronov}}{2005}]{Voronov05a}
	{Voronov}, T.~T. (2005).
	\newblock {Higher derived brackets for arbitrary derivations}.
	\newblock In {\em {Proceedings of the 4th conference on Poisson geometry,
			Luxembourg, June 7--11, 2004}}, pages 163--186. Luxembourg: Universit\'e du
	Luxembourg.
	
	\bibitem[\protect\astroncite{Voronov}{2012}]{Voronov12}
	Voronov, T.~T. (2012).
	\newblock Q-{M}anifolds and {M}ackenzie {T}heory.
	\newblock {\em Comm. Math. Phys.}, 315(2):279--310.
	
	\bibitem[\protect\astroncite{{Waldmann}}{2007}]{Waldmann07}
	{Waldmann}, S. (2007).
	\newblock {\em {Poisson-Geometrie und Deformationsquantisierung. Eine
			Einf\"uhrung.}}
	\newblock Berlin: Springer.
	
	\bibitem[\protect\astroncite{{Weinstein}}{1983}]{Weinstein83}
	{Weinstein}, A. (1983).
	\newblock {The local structure of Poisson manifolds.}
	\newblock {\em {J. Differ. Geom.}}, 18:523--557.
	
	\bibitem[\protect\astroncite{{Weinstein}}{2000}]{Weinstein00}
	{Weinstein}, A. (2000).
	\newblock {Almost invariant submanifolds for compact group actions.}
	\newblock {\em {J. Eur. Math. Soc. (JEMS)}}, 2(1):53--86.
	
	\bibitem[\protect\astroncite{Weinstein}{2001}]{Weinstein01}
	Weinstein, A. (2001).
	\newblock Groupoids: unifying internal and external symmetry. {A} tour through
	some examples.
	\newblock In {\em Groupoids in analysis, geometry, and physics ({B}oulder,
		{CO}, 1999)}, volume 282 of {\em Contemp. Math.}, pages 1--19. Amer. Math.
	Soc., Providence, RI.
	
	\bibitem[\protect\astroncite{{Zhu}}{2009a}]{Zhu09b}
	{Zhu}, C. (2009a).
	\newblock {Kan replacement of simplicial manifolds}.
	\newblock {\em {Lett. Math. Phys.}}, 90(1-3):383--405.
	
	\bibitem[\protect\astroncite{{Zhu}}{2009b}]{Zhu09}
	{Zhu}, C. (2009b).
	\newblock {\(n\)-groupoids and stacky groupoids}.
	\newblock {\em {Int. Math. Res. Not.}}, 2009(21):4087--4141.
	
\end{thebibliography}

\clearpage
\phantomsection
\addcontentsline{toc}{chapter}{Alphabetical Index}
\begin{theindex}
	
	\noindent\textbf{Symbols}\par\nopagebreak
	
	\item $\Q$-manifold, \hyperpage{6}, \hyperpage{35}
	\subitem linear, \hyperpage{9}, \hyperpage{93}
	\subitem morphism, \hyperpage{35}
	\subitem product, \hyperpage{35}
	\item $\mathbb{N}$-graded manifold, \hyperpage{5}, \hyperpage{29}
	\subitem split, \hyperpage{31}
	\item $\mathbb{N}$-manifold, \hyperpage{29}
	\item $\mathbb{N}\Q$-manifold, \hyperpage{6}, \hyperpage{35}
	\item $\mathbb{N}\mathcal{P}$-manifold, \hyperpage{41}
	\subitem morphism, \hyperpage{41}
	\item $\mathbb{N}\mathcal{P}_k$-manifold, \hyperpage{41}
	\item $\mathbb{Z}$-graded manifold, \hyperpage{5}, \hyperpage{29}
	\subitem split, \hyperpage{31}
	\item $\mathbb{Z}$-manifold, \hyperpage{29}
	\item $\mathbb{Z}_2$-graded supermanifold, \hyperpage{5}
	\item $\mathcal{PQ}$-manifold, \hyperpage{6}, \hyperpage{42}
	\subitem morphism, \hyperpage{42}
	\item $\mathcal{P}$-manifold, \hyperpage{6}, \hyperpage{41}
	\subitem linear, \hyperpage{9}
	\subitem morphism, \hyperpage{41}
	\item $\mathcal{P}_k$-manifold, \hyperpage{41}
	\item $\n$-manifold, \hyperpage{29}
	\item $n$-connection, \hyperpage{123}
	
	\indexspace
	
	\noindent\textbf{A}\par\nopagebreak
	
	\item adjoint representation, \hyperpage{6}, \hyperpage{78}, 
	\hyperpage{82}
	\item almost Lie algebroid, \hyperpage{17}
	\item anchor map, \hyperpage{3}, \hyperpage{16}
	
	\indexspace
	
	\noindent\textbf{B}\par\nopagebreak
	
	\item basic connections, \hyperpage{18}
	\item basic curvature, \hyperpage{18}
	\item Bianchi identity, \hyperpage{125}
	\item bracket, \hyperpage{16}
	
	\indexspace
	
	\noindent\textbf{C}\par\nopagebreak
	
	\item canonical involution, \hyperpage{21}
	\item Chern-Weil morphism, \hyperpage{122}
	\item coadjoint representation, \hyperpage{78}, \hyperpage{82}
	\item cochain complex of vector bundles, \hyperpage{16}
	\subitem $k$-morphism, \hyperpage{16}
	\subitem degree $k$ morphism, \hyperpage{16}
	\subitem differential, \hyperpage{16}
	\item cohesive module, \hyperpage{114}
	\item connection up to homotopy, \hyperpage{122}
	\subitem flat, \hyperpage{123}
	\item Courant algebroid, \hyperpage{4}
	\subitem $k$-cochain, \hyperpage{61}
	\subitem adjoint representation, \hyperpage{77}
	\subitem Keller-Waldmann algebra, \hyperpage{60}
	\subitem symbol of a $k$-cochain, \hyperpage{61}
	\item Courant bracket, \hyperpage{4}
	\item curvature of a connection, \hyperpage{18}
	
	\indexspace
	
	\noindent\textbf{D}\par\nopagebreak
	
	\item de Rham cohomology, \hyperpage{46}
	\item de Rham complex, \hyperpage{46}
	\item de Rham differential, \hyperpage{45}
	\item decomposed double vector bundle, \hyperpage{20}
	\subitem linear horizontal lift, \hyperpage{20}
	\subitem morphism, \hyperpage{20}
	\item degree-homogeneous element, \hyperpage{15}
	\item derivation Lie 2-algebra, \hyperpage{38}
	\item derivation Lie algebroid, \hyperpage{7}
	\item derived bracket formula, \hyperpage{47, 48}
	\item DG-manifold, \hyperpage{6}
	\item differential graded manifold, \hyperpage{6}
	\item Dirac structure, \hyperpage{4}
	\item direct limit, \hyperpage{26}
	\item direct system, \hyperpage{26}
	\item Dorfman connection, \hyperpage{127}
	\item double $\Q$-manifold, \hyperpage{115}
	\item double Lie $n$-algebroid, \hyperpage{115}
	\item double tangent bundle, \hyperpage{21}
	\item double vector bundle, \hyperpage{19}
	\subitem core, \hyperpage{19}
	\subitem core section, \hyperpage{19}
	\subitem core-linear section, \hyperpage{20}
	\subitem decomposition, \hyperpage{21}
	\subitem dual linear splitting, \hyperpage{22}
	\subitem duals, \hyperpage{22}
	\subitem linear horizontal lift, \hyperpage{22}
	\subitem linear section, \hyperpage{19}
	\subitem linear splitting, \hyperpage{21}
	\subitem morphism, \hyperpage{19}
	\subitem sides, \hyperpage{19}
	\subitem Weil algebra, \hyperpage{104}
	\item dull algebroid, \hyperpage{16}
	\subitem connection on a complex of vector bundles, \hyperpage{17}
	\subitem connection on a graded vector bundle, \hyperpage{17}
	\subitem connection on a vector bundle, \hyperpage{17}
	\subitem dual connection, \hyperpage{17}
	\subitem flat connection, \hyperpage{18}
	\subitem forms, \hyperpage{17}
	
	\indexspace
	
	\noindent\textbf{E}\par\nopagebreak
	
	\item equaliser, \hyperpage{24}
	
	\indexspace
	
	\noindent\textbf{G}\par\nopagebreak
	
	\item gauge transformation, \hyperpage{126}
	\item generalised wedge product, \hyperpage{39}
	\item germ, \hyperpage{26}
	\item graded commutator, \hyperpage{122}
	\item graded double vector budnle, \hyperpage{119}
	\subitem decomposed, \hyperpage{119}
	\item graded Lie algebroid, \hyperpage{95}
	\item graded manifold, \hyperpage{29}
	\subitem base manifold, \hyperpage{29}
	\subitem body, \hyperpage{29}
	\subitem linear manifold, \hyperpage{32}
	\subitem local coordinates, \hyperpage{29}
	\subitem morphism, \hyperpage{29}
	\subitem product, \hyperpage{32}
	\item graded Poisson bracket, \hyperpage{41}
	\subitem linear, \hyperpage{94}
	\item graded skew symmetry, \hyperpage{36}
	\item graded trace operator, \hyperpage{122}
	\item graded vector bundle over a smooth manifold, \hyperpage{15}
	\item groupoid, \hyperpage{112}
	\subitem arrows, \hyperpage{112}
	\subitem identity map, \hyperpage{113}
	\subitem inversion map, \hyperpage{113}
	\subitem multiplication map, \hyperpage{112}
	\subitem objects, \hyperpage{112}
	\subitem source, \hyperpage{112}
	\subitem target, \hyperpage{112}
	
	\indexspace
	
	\noindent\textbf{H}\par\nopagebreak
	
	\item Hamilton's equations of motion, \hyperpage{2}
	\item Hamiltonian function, \hyperpage{2}
	\item Hamiltonian vector field, \hyperpage{2}, \hyperpage{41}, 
	\hyperpage{57}
	\item homological vector field, \hyperpage{35}
	\subitem linear, \hyperpage{93}
	\item homotopy Poisson structure, \hyperpage{51}
	\subitem deformation, \hyperpage{52}
	\subitem formal deformation, \hyperpage{52}
	\subitem infinitesimal deformation, \hyperpage{52}
	\subitem linear, \hyperpage{94}
	
	\indexspace
	
	\noindent\textbf{J}\par\nopagebreak
	
	\item Jacobi identity, \hyperpage{2}
	
	\indexspace
	
	\noindent\textbf{L}\par\nopagebreak
	
	\item Leibniz identity, \hyperpage{3}
	\item Lichnerowicz complex, \hyperpage{50}
	\item Lie $n$-algebroid, \hyperpage{6}, \hyperpage{35}
	\subitem $k$-ismorphism of DG-modules, \hyperpage{65}
	\subitem $k$-morphism of DG-modules, \hyperpage{65}
	\subitem $k$-representation, \hyperpage{71}
	\subitem  $k$-term representation, \hyperpage{71}
	\subitem adjoint module, \hyperpage{66}
	\subitem anti-symmetric powers module, \hyperpage{65}
	\subitem category of DG-modules, \hyperpage{65}
	\subitem coadjoint module, \hyperpage{66}
	\subitem cohomology of a DG-module, \hyperpage{64}
	\subitem degree $0$-morphism of DG-modules, \hyperpage{64}
	\subitem degree $k$-morphism of DG-modules, \hyperpage{65}
	\subitem differential graded bimodule, \hyperpage{64}
	\subitem direct sum module, \hyperpage{65}
	\subitem dual module, \hyperpage{64}
	\subitem fat, \hyperpage{105}
	\subitem generalised functions, \hyperpage{39}
	\subitem left DG-module, \hyperpage{64}
	\subitem left differential graded module, \hyperpage{63}
	\subitem left representation up to homotopy, \hyperpage{71}
	\subitem linear, \hyperpage{100}
	\subitem module of homomorphisms, \hyperpage{64}
	\subitem morphism, \hyperpage{35}
	\subitem morphism of DG-modules, \hyperpage{64}
	\subitem right DG-module, \hyperpage{64}
	\subitem right differential graded module, \hyperpage{63}
	\subitem right representation up to homotopy, \hyperpage{71}
	\subitem shifted module, \hyperpage{65}
	\subitem split, \hyperpage{6}, \hyperpage{35}
	\subitem symmetric powers module, \hyperpage{65}
	\subitem tangent prolongation, \hyperpage{101}
	\subitem tensor product module, \hyperpage{64}
	\subitem vector valued functions, \hyperpage{39}
	\item Lie $n$-groupoid, \hyperpage{114}
	\subitem representation, \hyperpage{114}
	\subitem representation up to homotopy, \hyperpage{114}
	\item Lie 2-algebra, \hyperpage{37}
	\item Lie algebroid, \hyperpage{3}, \hyperpage{17}
	\subitem differential, \hyperpage{4}
	\subitem Lie bracket, \hyperpage{3}
	\subitem module, \hyperpage{8}, \hyperpage{63}
	\subitem representation up to homotopy, \hyperpage{71}
	\subitem self-dual representation, \hyperpage{87}
	\item Lie bialgebroid, \hyperpage{53}
	\item Lie bracket of vector fields, \hyperpage{34}
	\item Lie cohomology, \hyperpage{46}
	\item Lie complex, \hyperpage{46}
	\item Lie complex of (pseudo)multivector fields, \hyperpage{50}
	\item Lie derivative, \hyperpage{46}, \hyperpage{49}
	\item Lie groupoid, \hyperpage{3}, \hyperpage{113}
	\subitem cohesive module, \hyperpage{114}
	\subitem nerve, \hyperpage{113}
	\item Lie quasi-bialgebroid, \hyperpage{55}
	\item linear vector field, \hyperpage{20}
	
	\indexspace
	
	\noindent\textbf{M}\par\nopagebreak
	
	\item Master equation, \hyperpage{60}
	\item Maurer-Cartan element, \hyperpage{53}
	
	\indexspace
	
	\noindent\textbf{P}\par\nopagebreak
	
	\item Poisson bivector field, \hyperpage{2}
	\item Poisson bracket, \hyperpage{2}
	\item Poisson complex, \hyperpage{50}
	\item Poisson homological vector field, \hyperpage{42}
	\item Poisson Lie $n$-algebroid, \hyperpage{6}, \hyperpage{42}
	\item Poisson vector field, \hyperpage{41}
	\item Poisson-Weil algebra, \hyperpage{50}
	\item Poisson-Weil double complex, \hyperpage{50}
	\item Pontryagin algebra, \hyperpage{121}, \hyperpage{125}
	\item presheaf, \hyperpage{23}
	\subitem isomorphism, \hyperpage{24}
	\subitem morphism, \hyperpage{24}
	\subitem restriction maps, \hyperpage{24}
	\subitem sections, \hyperpage{24}
	\subitem stalk, \hyperpage{26}
	\item principal part of morphism of split manifolds, \hyperpage{33}
	\item pseudomultivector fields, \hyperpage{47}
	
	\indexspace
	
	\noindent\textbf{Q}\par\nopagebreak
	
	\item Quasi-Lie bialgebroid, \hyperpage{54}
	
	\indexspace
	
	\noindent\textbf{R}\par\nopagebreak
	
	\item representation, \hyperpage{6}
	\subitem of a Lie algebra, \hyperpage{6}
	\subitem of a Lie algebroid, \hyperpage{7}
	\subitem of a Lie group, \hyperpage{6}
	\subitem of a Lie groupoid, \hyperpage{7}
	\subitem up to homotopy, \hyperpage{7}
	\item representation theory, \hyperpage{6}
	\item ringed space, \hyperpage{27}
	\subitem morphism, \hyperpage{27}
	
	\indexspace
	
	\noindent\textbf{S}\par\nopagebreak
	
	\item Schouten bracket of bidegree $(-1,k)$, \hyperpage{47}
	\item sheaf, \hyperpage{25}
	\subitem associated to a presheaf, \hyperpage{27}
	\subitem direct image, \hyperpage{27}
	\subitem inverse image, \hyperpage{27}
	\subitem morphism, \hyperpage{25}
	\subitem morphism of modules, \hyperpage{28}
	\subitem of modules, \hyperpage{27}
	\subitem pull-back, \hyperpage{28}
	\subitem tensor product, \hyperpage{28}
	\item shift functor $[k]$ for complexes of vector bundles, 
	\hyperpage{16}
	\item shifted cotangent bundle, \hyperpage{47}
	\item shifted tangent bundle, \hyperpage{45}
	\item split Lie $2$-algebroid, \hyperpage{37}
	\subitem 3-term representation, \hyperpage{73}
	\subitem adjoint complex, \hyperpage{76}
	\subitem adjoint representation, \hyperpage{78}
	\subitem coadjoint complex, \hyperpage{78}
	\subitem coadjoint representation, \hyperpage{78}
	\subitem representation, \hyperpage{72}
	\item split Lie $n$-algebroid
	\subitem adjoint representation, \hyperpage{82}
	\subitem Weil algebra, \hyperpage{83}
	\item split symplectic Lie 2-algebroid, \hyperpage{38}
	\item strong homotopy Jacobi identity, \hyperpage{36}
	\item super trace operator, \hyperpage{122}
	\item symplectic form, \hyperpage{2}, \hyperpage{57}
	\item symplectic Lie $n$-algebroid, \hyperpage{42}
	\item symplectic manifold, \hyperpage{3}
	\item symplectic structure, \hyperpage{2}
	\item symplectic vector field, \hyperpage{57}
	
	\indexspace
	
	\noindent\textbf{T}\par\nopagebreak
	
	\item tangent $\mathcal  {Q}$-manifold, \hyperpage{93}
	\item tangent bundle of a graded manifold, \hyperpage{44}
	\item tangent prolongation of a vector bundle, \hyperpage{20}
	
	\indexspace
	
	\noindent\textbf{V}\par\nopagebreak
	
	\item VB-Lie $n$-algebroid, \hyperpage{9}, \hyperpage{100}
	\subitem decomposed, \hyperpage{101}
	\subitem morphism, \hyperpage{101}
	\subitem split, \hyperpage{9}
	\item vector bundle over a graded manifold, \hyperpage{43}
	\subitem basic coordinates, \hyperpage{44}
	\subitem basic functions, \hyperpage{44}
	\subitem derivation, \hyperpage{91}
	\subitem flat derivation, \hyperpage{92}
	\subitem linear coordinates, \hyperpage{44}
	\subitem linear functions, \hyperpage{44}
	\subitem linear multivector field, \hyperpage{92}
	\subitem linear vector field, \hyperpage{91}
	\subitem sections, \hyperpage{43}
	\item vector field, \hyperpage{34}
	\item vector valued differential form, \hyperpage{39}
	\item vertical lift, \hyperpage{20}
	
	\indexspace
	
	\noindent\textbf{W}\par\nopagebreak
	
	\item Weil algebra, \hyperpage{46}
	\item Weil cohomology, \hyperpage{47}
	\item Weil complex, \hyperpage{47}
	
\end{theindex}


\end{document}